\newtheorem{theorem}{Theorem}[section]
\newtheorem{lemma}[theorem]{Lemma}
\newtheorem{remark}[theorem]{Remark}
\newtheorem{example}[theorem]{Example}
\newtheorem{corollary}[theorem]{Corollary}
\newtheorem{proposition}[theorem]{Proposition}
\newtheorem{test}[theorem]{Test}
\newcommand{\E}{\mathbb{E}}
\newcommand{\N}{\mathbb{N}}
\newcommand{\R}{\mathbb{R}}
\newcommand{\Prob}{\mathbb{P}}
\newcommand{\Askript}{\mathcal{A}}
\newcommand{\Bskript}{\mathcal{B}}
\newcommand{\Cskript}{\mathcal{C}}
\newcommand{\Mskript}{\mathcal{M}}
\newcommand{\Iskript}{\mathcal{I}}
\newcommand{\Jskript}{\mathcal{J}}
\newcommand{\Qskript}{\mathcal{Q}}
\newcommand{\Rskript}{\mathcal{R}}
\newcommand{\Sskript}{\mathcal{S}}
\newcommand{\eqd}{\stackrel{d}{=}}
\newcommand{\Cov}{\operatorname{Cov}}
\newcommand{\Var}{\operatorname{Var}}
\newcommand{\sgn}{\operatorname{sign}}
\newcommand{\Mcor}{M\mathrm{cor}}
\newcommand{\ii}{\mathrm{i}}
\newcommand{\ee}{\mathrm{e}}
\newcommand{\Mfrac}[2]{#1} 
\newcommand{\hN}{\mbox{}^{\scriptscriptstyle N}\kern-1.5pt}
\newcommand{\draftremark}[1]{\ifdraft{%
   \textcolor{red}{// {#1} //}%
   \ifthenelse{\isodd{\value{page}}}{%
	\normalmarginpar\marginpar{\textcolor{red}{$\maltese$}}}{%
	\reversemarginpar\marginpar{\textcolor{red}{$\maltese$}}}}{}}
\newcommand{\off}[1]{}
\begin{document}

\title{Dependence and dependence structures: estimation and visualization using the unifying concept of distance multivariance}
\author{Bj\"{o}rn B\"{o}ttcher\footnote{TU Dresden, Fakult\"at Mathematik, Institut f\"{u}r Mathematische Stochastik, 01062 Dresden, Germany, email: \href{mailto:bjoern.boettcher@tu-dresden.de}{bjoern.boettcher@tu-dresden.de}}}
\date{\today}
\maketitle


\begin{abstract}
Distance multivariance is a multivariate dependence measure, which can detect dependencies between an arbitrary number of random vectors each of which can have a distinct dimension. Here we discuss several new aspects, present a concise overview and use it as the basis for several new results and concepts: In particular, we show that distance multivariance unifies (and extends) distance covariance and the Hilbert-Schmidt independence criterion HSIC, moreover also the classical linear dependence measures: covariance, Pearson's correlation and the RV coefficient appear as limiting cases. Based on distance multivariance several new measures are defined: a multicorrelation which satisfies a natural set of multivariate dependence measure axioms and $m$-multivariance which is a dependence measure yielding tests for pairwise independence and independence of higher order. These tests are computationally feasible and under very mild moment conditions they are consistent against all alternatives.
Moreover, a general visualization scheme for higher order dependencies is proposed, including consistent estimators (based on distance multivariance) for the dependence structure.

Many illustrative examples are provided. All functions for the use of distance multivariance in applications are published in the R-package \texttt{multivariance}.
\end{abstract}

\noindent \textit{Keywords:} 
\newcommand{\sep}{, }
multivariate dependence\sep 
testing independence\sep
visualizing higher order dependence\sep
test of pairwise independence\sep 
multivariate dependence measure axioms\sep distance covariance\sep Hilbert Schmidt independence criterion\sep HSIC\sep joint distance covariance.

\noindent \textit{2010 MSC:} Primary 	62H15\sep   	
Secondary 62H20   	



\section{Introduction}
The detection of dependence is a common statistical task, which is crucial in many applications. There have been many methods employed and proposed, see e.g.\ \cite{JossHolm2016}, \cite{TjoesOtneStoev2018} and \cite{LiuPenaZhen2018} for recent surveys. Usually these focus on the (functional) dependence of pairs of variables. Thus when the dependence of many variables is studied the resulting networks (correlation networks, graphical models) only show the pairwise dependence. As long as pairwise dependence is present, this might be sufficient (and also for the detection of such dependencies total multivariance and $m$-multivariance provide efficient tests). But recall that pairwise independence does not imply the independence of all variables if more than two variables are considered. Thus, in particular if all variables are pairwise independent many methods of classical statistical inference would have discarded the data. Although there might be some higher order dependence present. This can only be detected directly with a multivariate dependence measure. The classical examples featuring 3-dependence are a dice in the shape of a tetrahedron with specially colored sides (see Example \ref{ex:tetra}) and certain events in multiple coin throws (Examples \ref{ex:2coins}). In Example \ref{ex:ncoins} a generalization to higher orders is presented.

To avoid misconceptions when talking about independence one should note that the term ``mutual independence'' is ambiguous, some authors use it as a synonym for pairwise independence, others for independence. For the latter also the terms ``total independence'' or ``joint independence'' are used. We use the terms: pairwise independence, its extension $m$-independence (see Section \ref{sec:defs}) and independence.

In \cite{BoetKellSchi2018,BoetKellSchi2019} the basics of distance multivariance and total distance multivariance were developed, which can be used to measure multivariate dependence. Incidentally, a variant of total distance multivariance based on the Euclidean distance was simultaneously developed in \cite{ChakZhan2019}. Moreover, distance multivariance names and extends a concept introduced in \cite{BiloNang2017}.
Here we recall and extend the main definitions and properties (Sections \ref{sec:defs} and \ref{sec:test}). In particular, the moment conditions required in \cite{BoetKellSchi2019} for the independence tests  are considerably relaxed (Theorem \ref{thm:convergence}, Tests \ref{testA} and \ref{testB}), invariance properties are explicitly discussed (Propositions \ref{prop:inv} and \ref{prop:scaleinv}) and resampling tests are introduced (Section \ref{sec:test}). Moreover, on this basis the following new results and concepts are developed:
\begin{itemize}
\item A general \textbf{scheme for the visualization of higher order dependence} which can be used with any multivariate dependence measure (Section \ref{sec:algo}). For the setting of multivariance we provide explicit \textbf{consistent estimators for the (higher order) dependence structure}. In particular the method for the clustered dependence structure is based on the fact that multivariance is a truely multivariate dependence measure: On the one hand it can measure the dependence of multiple (more than 2) random variables. On the other hand each random variable can be multivariate and each can have a distinct dimension. 
\begin{figure}[H]
\includegraphics[width = 0.25\textwidth]{./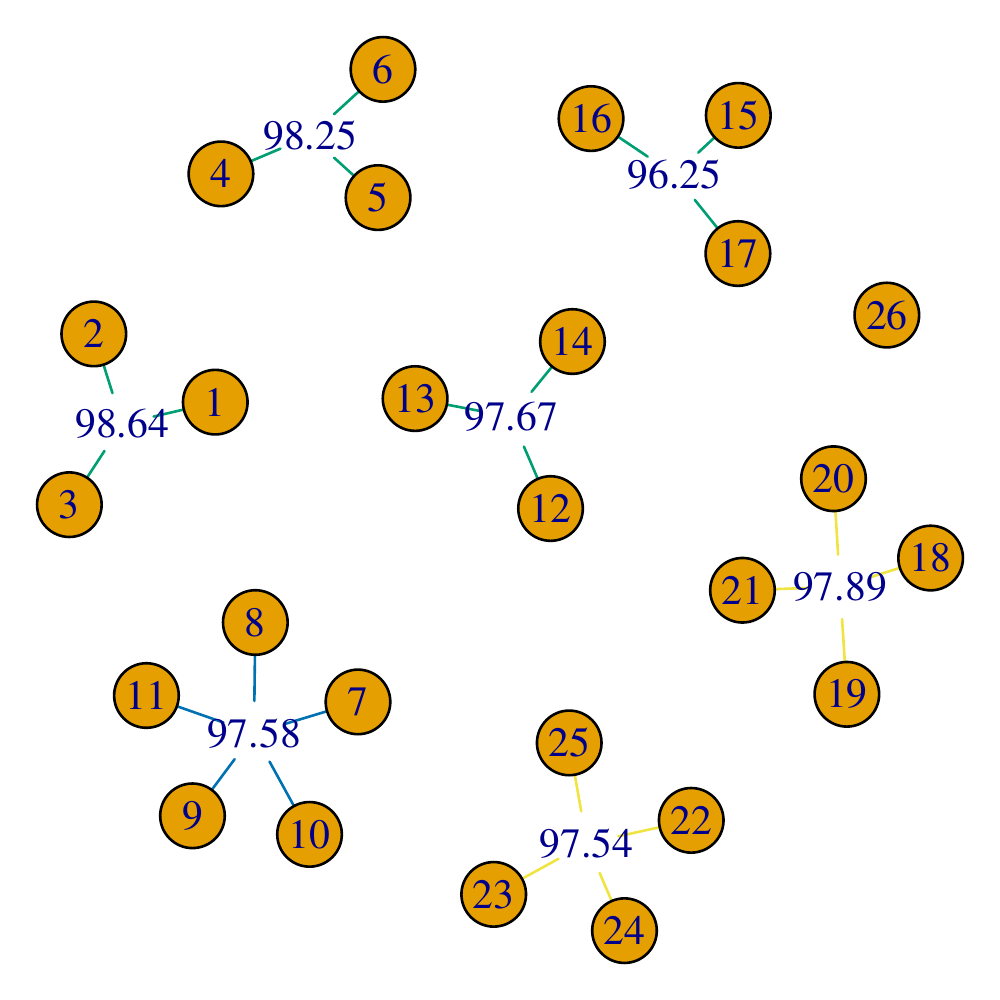}
\quad\quad\quad
\includegraphics[width = 0.25\textwidth]{./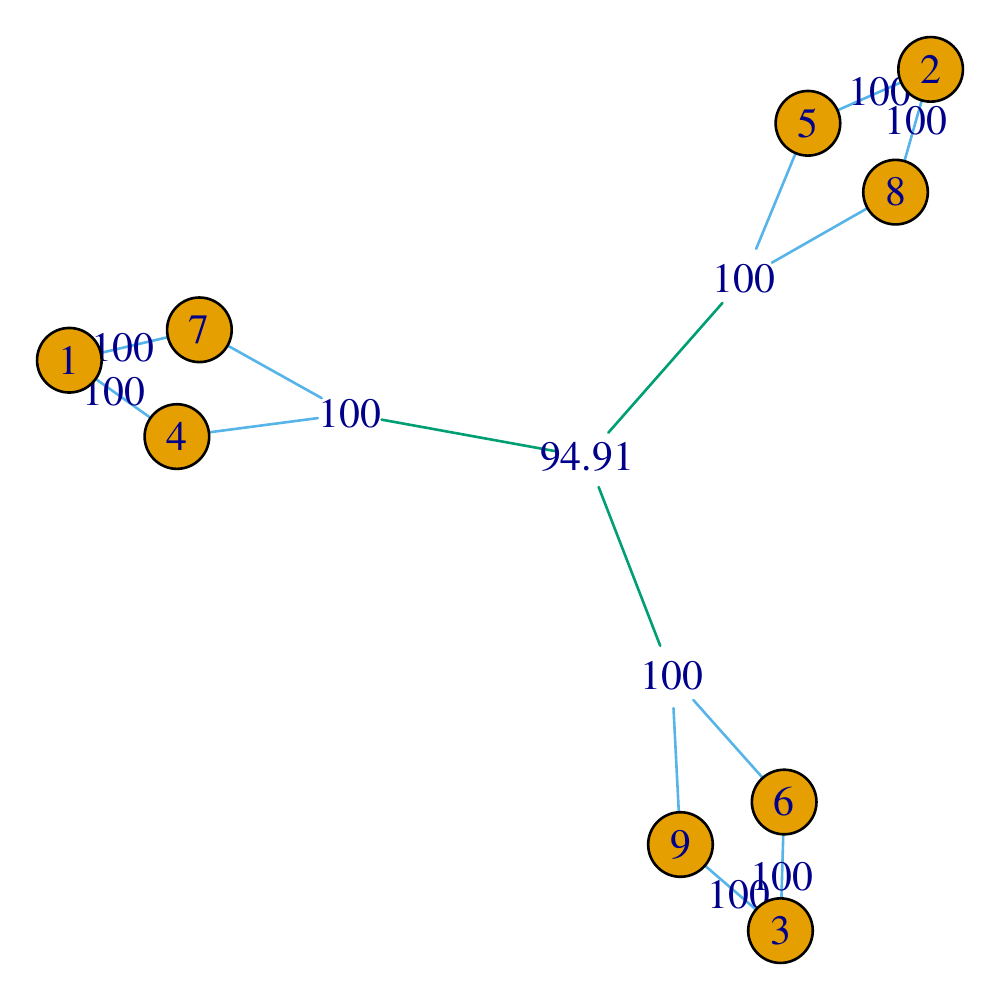}
\quad\quad\quad
\includegraphics[width = 0.25\textwidth]{./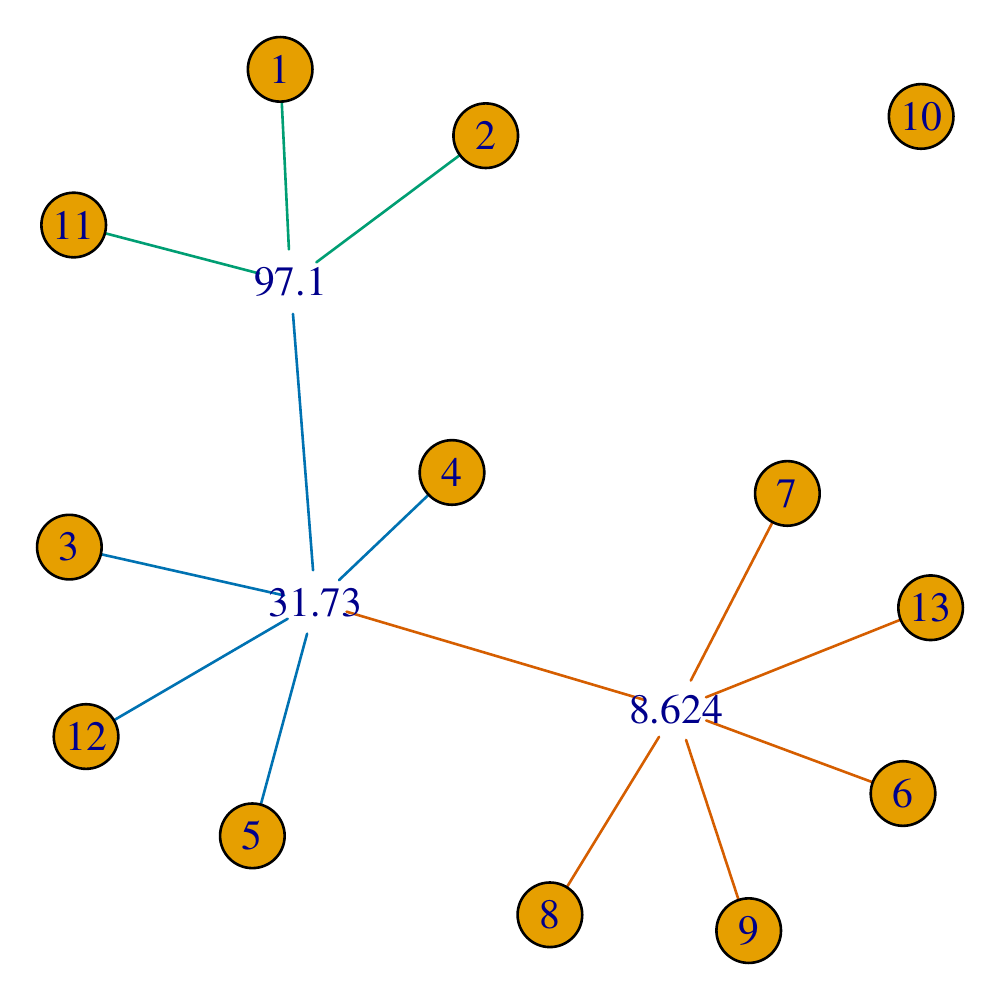}
\caption{Visualized dependence structures of Examples \ref{ex:several}, \ref{ex:star} and \ref{ex:iterated}.}		\label{fig:intro}
\end{figure}
\item \textbf{Global tests for pairwise (and higher order) independence}: Pairwise independence is a fundamental requisite e.g.\ for the law of large numbers (in its basic form; a result which is used in almost every statistical estimation). Recently in \cite{YaoZhanShao2017} a test for pairwise independence of identically distributed random variables was proposed. We derive in Section \ref{sec:mmulti} a test for pairwise (and higher order) independence which is applicable to any mixture of marginal distributions and dimensions.
\item \textbf{Unification of dependence measures} (Section \ref{sec:compare}): 
\begin{itemize}
\item In \cite{SejdSripGretFuku2013} it was shown that for independence testing methods based on reproducing kernels and methods based on distances are equivalent. They considered the bivariate setting. We present in Section \ref{sec:unify-dcov-hsic} a different, explicit and very elementary relation. Moreover, this transfers also to the setting of multiple random vectors. 
\item It was clear that the RV-coefficent structurally corresponds to distance covariance, see e.g.\ \cite{JossHolm2016}. We show in Section \ref{sec:Cov-RV-Pearson} that the RV-coefficent and, in particular, covariance (the most classical dependence measure of all!) are covered as limiting cases by the framework of multivariance.
\item Hilbert Space methods require characteristic kernels. Multivariance requires the full support of the underlying measures. We show that these assumptions are equivalent (Proposition \ref{prop:fullsupport}).
\end{itemize}
\item \textbf{Multivariate correlations}: A formalization of desired properties of dependence measures goes back at least to Reni's Axioms \cite{Reny1959}. We discuss in Section \ref{sec:axioms} a multivariate extension of the Axioms and provide several multicorrelations, e.g.\ \eqref{eq:multicorr}, \eqref{eq:Mcor} and \eqref{eq:mcor2}.
\end{itemize}
Recently also several other dependence measures for multiple random vectors were proposed, e.g.\ \cite{YaoZhanShao2017} propose tests for pairwise independence, banded independence and independence based on distance covariance or based on the approach of \cite{PfisBuehSchoPete2017}. The latter presented tests for independence of multiple random vectors using kernels. In \cite{JinMatt2018} also distance covariance (a measure for the dependence of two random vectors; introduced in \cite{SzekRizzBaki2007}) was generalized to tests of independence of multiple random vectors. All these approaches are related to distance multivariance, see Section \ref{sec:compare} for a detailed discussion. Empirical comparisons can be found in Examples \ref{ex:JinMatt2017}  and  \ref{ex:YaoZhanShao2017}. 

It is remarkable that, although the above measures are able to detect higher order dependencies, all real data examples which were provided so far feature only pairwise dependence. Certainly the predominant statistical methods cause a publication bias for such datasets. Nevertheless, we want to point out that many available datasets feature higher order dependence. Based on a data driven study we collected over 350 datasets featuring statistically significant higher order dependencies \cite{Boet2019-tech}. One of these datasets is discussed in further details in Section \ref{sec:ex-real-data}, and all of these datasets are distributed as part of various R-packages without the context of higher order dependence. This indicates that higher order dependence can be detected frequently, but what remains open (with the potential to be the basis for various new research projects) are intrinsic explanations of higher order dependence within each field of research of the underlying data. 

Besides the real data examples the presentation of this paper is complemented by a comprehensive collection of further examples (in Section \ref{sec:examp} and in the supplementary\footnote{pages \pageref{sec:supp-example} ff.\ of this manuscript} Section \ref{sec:supp-example}): illustrating higher order dependencies (Section \ref{sec:ex-higher-order}), discussing various properties of distance multivariance (Section \ref{sec:ex-properties}), comparisons to other dependence measures (Section \ref{sec:ex-compare}). Technical details and further results are collected in Section \ref{sec:appendix}. 




The R code for the evaluation of distance multivariance and the corresponding tests is provided in the R-package \texttt{multivariance} \cite{Boett2019R-2.2.0}. Finally, based on (some of) the results of this paper we have the following recommendations for questions common in independence testing:
\begin{enumerate}
\item \textit{Are at least some variables dependent? Detection of any kind of dependence:}
\begin{enumerate}
\item The global independence test based on total multivariance can be used to detect any kind of dependence, alternatively 2-multivariance can be used to test for pairwise (in)dependence. The latter and $m$-multivariance can also be used to reduce the statistical curse of dimension which total multivariance might suffer. For all settings fast distribution free (conservative) tests exist and these are applicable for large samples and a large number of random vectors. The computation of the test statistic takes in its current implementation for 100 variables with 1000 samples each (or for 1000 variables with 300 samples each) less than 2 seconds on a dated i7-6500U CPU Laptop. Slower, but approximately sharp, are the corresponding resampling tests. Faster approximately sharp tests are discussed in \cite{BersBoet2018v2}.
\item  As a complementary approach to the global tests one could perform multiple tests as suggested in \cite{BiloNang2017}. This requires $2^n-n-1$ individual tests, where $n$ denotes the number of random vectors. Hence it is only applicable for small $n$. \cite{BiloNang2017} also provides a multiple testing approach to $m$-dependence.
\end{enumerate}
\item \textit{Which variables depend on each other? Dependence structure:}
Especially if some dependence was detected the algorithm of Section \ref{sec:algo} can be used to analyze which variables depend on each other, yielding either a full or clustered dependence structure. The method is based on multiple tests, but variables are clustered (or related tuples are excluded from further tests) as soon as a positive detection occurred. This can considerably reduce the computation time in comparison to 1.(b).
\end{enumerate}

\off{In the next sections we will discuss the following topics: 
\begin{itemize}
\item Definition and properties of multivariance. (Section \ref{sec:defs})
\item Prerequisites for using multivariance: $\psi_i$, $\rho$ and the moment conditions. (Section \ref{sec:prereq})
\item Testing independence using multivariance. (Section \ref{sec:test})
\item Detecting $m$-independence. (Section \ref{sec:mmulti})
\item Detect and visualize dependence structures using multivariance. (Section \ref{sec:algo})
\item Examples. (Section \ref{sec:example})
\end{itemize}
}

\section{Distance multivariance} \label{sec:defs}
In the following distance multivariance is introduced. Some parts are essential for the (less technical) comparison to other dependence measures in Section \ref{sec:compare}, other parts are required for the introduction of $m$-multivariance (Section \ref{sec:mmulti}). Furthermore, several new results are included which make distance multivariance more accessible and applicable. Tests using distance multivariance will be discussed in Section \ref{sec:test}.

Let $X_i$ be $\R^{d_i}$ valued random variables with characteristic functions $f_{X_i}(t_i) = \E(\ee^{\ii t_i X_i})$ for $i =1,\ldots,n$. Then \textbf{distance multivariance} is defined by 
\begin{equation} \label{def:multi}
M_\rho(X_1,\ldots,X_n) := M_\rho(X_i, i\in\{1,\ldots,n\}) := \sqrt{ \int \left| \E\left(\prod_{i=1}^n (\ee^{\ii X_i t_i} - f_{X_i}(t_i))\right)\right|^2\, \rho(dt) }
\end{equation}
and \textbf{total distance multivariance} is
\begin{equation} \label{eq:totaldm}
\overline{M_\rho}(X_1,\ldots,X_n) :=  \sqrt{\sum_{\substack{1\leq i_1< \ldots < i_m \leq n\\2 \leq m \leq n}} M_{\otimes_{k=1}^m \rho_{i_k}}^2(X_{i_1},\ldots,X_{i_m})},
\end{equation}
where $\rho = \otimes_{i=1}^n \rho_i$ and each $\rho_i$ is a symmetric measure with full topological support\footnote{A measure $\rho$ has \textbf{full topological support} on $\R^{d}$ if and only if $\rho(O)>0$ for all open sets $O \subset \R^d$. See also Proposition \ref{prop:fullsupport}.} on $\R^{d_i}$ such that $\int 1 \land |t_i|^2 \,\rho_i(dt_i)<\infty$ and $t=(t_1,\ldots,t_n)$ with $t_i \in \R^{d_i}$. To simplify notation and definitions we will just use the term 'multivariance' instead of 'distance multivariance', and we will drop the subscript $\rho$ if the measure is the full measure $\rho$. 

Random variables $X_1,\ldots,X_n$ are called \textbf{$\bm{m}$-independent}, if $X_{i_1},\ldots,X_{i_m}$ are independent for any distinct $i_j \in \{1,\ldots,n\}$ for $j=1,\ldots,m.$ This concept is essential for the statement (and proof) of the following theorem. It is also the basis for the estimators for $m$-independence which will be developed in Section \ref{sec:mmulti}. 

\begin{theorem}[Characterization of independence, \protect{\cite[Theorem 3.4.]{BoetKellSchi2019}}] \label{thm:indep} For random variables $X_1,\ldots, X_n$ the following are equivalent:
\begin{enumerate}[i)]
\item $X_1,\ldots,X_n$ are independent, 
\item $M(X_1,\ldots,X_n) = 0$ and  $X_1,\ldots,X_n$ are $(n-1)$-independent,
\item $\overline{M}(X_1,\ldots,X_n) = 0$.
\end{enumerate}
\end{theorem}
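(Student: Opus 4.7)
The natural approach is to handle the three implications in the cycle i) $\Rightarrow$ ii) $\Rightarrow$ i) and then i) $\Leftrightarrow$ iii), with the second equivalence falling out by induction.

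For i) $\Rightarrow$ ii): if $X_1,\ldots,X_n$ are independent, then they are in particular $(n-1)$-independent, and independence lets us pull the expectation inside the product in \eqref{def:multi}, so each factor $\E(\ee^{\ii X_i t_i}-f_{X_i}(t_i))=f_{X_i}(t_i)-f_{X_i}(t_i)$ vanishes; thus $M=0$. This step is essentially a check.

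The heart of the proof is ii) $\Rightarrow$ i). The plan is to expand the product
\[
\prod_{i=1}^n\bigl(\ee^{\ii X_i t_i} - f_{X_i}(t_i)\bigr)=\sum_{S\subseteq\{1,\ldots,n\}}(-1)^{n-|S|}\prod_{i\in S}\ee^{\ii X_i t_i}\prod_{i\notin S}f_{X_i}(t_i)
\]
and take expectations. Under $(n-1)$-independence, for every proper subset $S\subsetneq\{1,\ldots,n\}$ the joint characteristic function $\E\prod_{i\in S}\ee^{\ii X_i t_i}$ factors as $\prod_{i\in S}f_{X_i}(t_i)$, so each such term collapses to $(-1)^{n-|S|}\prod_{i=1}^n f_{X_i}(t_i)$. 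The only remaining term is $|S|=n$, contributing the joint characteristic function $f_{X_1,\ldots,X_n}(t)$. Using $\sum_{k=0}^n(-1)^{n-k}\binom{n}{k}=0$ the lower-order terms sum to $-\prod_i f_{X_i}(t_i)$, so the integrand in \eqref{def:multi} equals $|f_{X_1,\ldots,X_n}(t)-\prod_i f_{X_i}(t_i)|^2$. Since $M=0$ means this nonnegative continuous function vanishes $\rho$-a.e., and $\rho=\bigotimes\rho_i$ has full topological support (hence is nonzero on every nonempty open set), it vanishes identically. By the uniqueness theorem for characteristic functions this forces $X_1,\ldots,X_n$ to be independent.

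For i) $\Leftrightarrow$ iii): by \eqref{eq:totaldm}, $\overline{M}=0$ is equivalent to $M(X_{i_1},\ldots,X_{i_m})=0$ for every subset of size $m\in\{2,\ldots,n\}$. Independence of $X_1,\ldots,X_n$ trivially makes every subset independent, so each such $M$ is $0$ by i) $\Rightarrow$ ii), proving i) $\Rightarrow$ iii). For iii) $\Rightarrow$ i) I would induct on $m$: for $m=2$ the $(m-1)$-independence condition is vacuous, so $M(X_i,X_j)=0$ yields pairwise independence by ii) $\Rightarrow$ i); inductively, once we know $(m-1)$-independence of all $m$-subsets, $M(X_{i_1},\ldots,X_{i_m})=0$ combined with ii) $\Rightarrow$ i) upgrades this to $m$-independence. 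Reaching $m=n$ completes the proof.

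The main obstacle is the algebraic collapse in the expansion step together with the passage from the $L^2(\rho)$-statement $M=0$ to the pointwise identity of characteristic functions; here the continuity of the integrand and the full-support assumption on each $\rho_i$ are doing essential work, and the $(n-1)$-independence hypothesis is precisely what is needed to eliminate all mixed terms except the desired $f_{X_1,\ldots,X_n}-\prod_i f_{X_i}$.
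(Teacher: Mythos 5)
The paper does not prove this theorem itself but imports it from \cite[Theorem 3.4]{BoetKellSchi2019}; your argument is correct and follows essentially the same route as that source, namely the expansion of $\prod_i(\ee^{\ii X_it_i}-f_{X_i}(t_i))$ whose mixed terms collapse under $(n-1)$-independence (the identity the present paper quotes as \cite[Corollary 3.3]{BoetKellSchi2018}), the passage from vanishing $\rho$-a.e.\ to a pointwise identity of characteristic functions via continuity and the full topological support of $\rho=\otimes_i\rho_i$, and the induction over subset sizes for iii). No gaps.
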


For statistical applications the following representations, which require the moment condition \eqref{d1}, are very useful. Let $(X_1',\ldots,X_n')$ be an independent copy of $(X_1,\ldots,X_n)$ and $\psi_i(y_i) := \int_{\R^{d_i}\backslash \{0\}} 1-\cos(y_i\cdot t_i) \,\rho_i(dt_i)$ then
\begin{equation}
\label{eq:M} M_\rho^2(X_1,\ldots,X_n) = \E\left(\prod_{i=1}^n \Psi_i(X_i,X_i') \right) \quad \text{ and } \quad 
\overline{M_\rho}^2(X_1,\ldots,X_n) = \E\left(\prod_{i=1}^n (1+\Psi_i(X_i,X_i')) \right) - 1
\end{equation}
where
\begin{equation}
\label{eq:Psi}\Psi_i(X_i,X_i'):=-\psi_i(X_i-X_i') + \E(\psi_i(X_i-X_i')\mid X_i) + \E(\psi_i(X_i-X_i')\mid X_i') -  \E(\psi_i(X_i-X_i')).
\end{equation}

Note that in \cite{BoetKellSchi2019} a technical looking moment condition was required for the above representations, we show in Section \ref{sec:momcon} that the following more comprehensible condition is equivalent to it (for non constant random variables)
\begin{equation}
\label{d1}
\text{\textbf{finite joint $\psi$-moments}: } \text{ for all $S\subset\{1,\ldots,n\}: $ } \E \left( \prod_{i\in S} \psi_i(X_{i})\right) < \infty.
\end{equation}
A direct consequence of \eqref{eq:M} is the factorization of $M$ and $\overline M$ for independent subsets, i.e., if $(X_i)_{i\in I}$ and $(X_i)_{i\in I^c}$ are independent for some $I \subset \{1,\ldots,n\}$ then
\begin{align} \label{eq:factorization}
M(X_i, i\in I\cup I^c) &= M_{\otimes_{i\in I}\rho_i}(X_i,i\in I) \cdot M_{\otimes_{i\in I^c}\rho_i}(X_i,i\in I^c), \\
\overline M^2(X_i, i\in I\cup I^c) + 1 &=  (\overline M^2_{\otimes_{i\in I}\rho_i}(X_i,i\in I)+1) \cdot (\overline M^2_{\otimes_{i\in I^c}\rho_i}(X_i,i\in I^c)+1).
\end{align}
Furthermore, the expectations in \eqref{eq:M} yield strongly consistent (see \cite[Theorem 4.3]{BoetKellSchi2019} and Corollary \ref{cor:betaM-consistency}) and numerically feasible estimators. Hereto denote samples of $(X_1,\ldots,X_n)$ by $\bm{x}^{(k)} = (x_1^{(k)}, \ldots ,x_n^{(k)})\in \R^{d_1}\times \ldots \times \R^{d_n}$  for $k = 1,\ldots, N$. Then \textbf{sample multivariance} is defined by 
\begin{equation}
\label{def:smulti} \hN M^2(\bm{x}^{(1)},\ldots,\bm{x}^{(N)}) :=   \frac{1}{N^2} \sum_{j,k=1}^N (A_1)_{jk} \cdot \ldots \cdot (A_n)_{jk}
\end{equation}
and \textbf{sample total multivariance} is
\begin{equation}
\label{def:stotalmulti} \hN \overline{M}^2(\bm{x}^{(1)},\ldots,\bm{x}^{(N)}) :=  \left[\frac{1}{N^2} \sum_{j,k=1}^N (1+(A_1)_{jk}) \cdot \ldots \cdot (1+(A_n)_{jk})\right] - 1,
\end{equation}
where $(A_i)_{jk}$ denotes the element in the $j$-th row and $k$-th column of the doubly centered distance matrix $A_i$ defined by
\begin{equation}
\label{eq:A} A_i := -CB_iC \text{ with }B_i := \left(\psi_i(x_i^{(j)}-x_i^{(k)})\right)_{j,k = 1,\ldots,N}\text{ and } C := \left(\delta_{jk} -\frac{1}{N}\right)_{j,k = 1,\ldots, N}. 
\end{equation}  
The matrices $A_i$ are positive definite \cite[Remark 4.2.b]{BoetKellSchi2019}, since the considered distances $\psi_i(. - .)$ are given by 
\begin{equation} \label{eq:psi}
\psi_i(y_i) := \int_{\R^{d_i}\backslash \{0\}} 1-\cos(y_i\cdot t_i) \,\rho_i(dt_i) \text{ for }y_i\in \R^{d_i}.
\end{equation} 
Functions defined via \eqref{eq:psi} appear in various areas: e.g.\ they are called variogram (e.g.\ \cite{Math1963}), continuous negative definite function (e.g.\ \cite{BergFors75}) or characteristic exponent of a L\'evy process with L\'evy measure $\rho_i$ (e.g.\ \cite{Sato99}), and they are closely related to the symbol of generators of Markov processes (\cite{Jaco2001}, \cite{BoetSchiWang2013}). The choice of $\rho_i$ and $\psi_i$ is discussed in more detail in Remark \ref{rem:choosepsi}, the standard choices are the Euclidean distance $\psi_i(t_i) = |t_i|$ and for $\alpha_i \in (0,2)$ stable distances $\psi_i(t_i) = |t_i|^{\alpha_i}$ and bounded functions of the form  $\psi_i(t_i)=1- \exp(-\delta_i|t_i|^{\alpha_i})$ with $\delta_i>0.$ But also other functions like Minkowski distances are possible, various examples are given in \cite[Table 1]{BoetKellSchi2018}. 

We call a function $\psi$ \textbf{characterizing} if for any random vector $X$ the function $z\mapsto \E(\psi(X-z))$ characterizes the distribution of $X$ uniquely, or equivalently, if for finite measures $\mu$ the function $\mu \mapsto \int \psi(x-.) \mu(dx)$ is injective. The following Proposition provides a characterization of the required support property of $\rho$ in terms of $\psi$, it actually solves an open problem of \cite{BoetKellSchi2018}. Moreover it also links the setting of multivariance to other dependence measures, see Section \ref{sec:compare}.
\begin{proposition}\label{prop:fullsupport}
Let $\psi_i$ be given by \eqref{eq:psi} for a symmetric measure $\rho_i$ such that $\int 1\land|t_i|^2\,\rho_i(t_i) < \infty.$ Then $\psi_i$ is characterizing if and only if $\rho_i$ has full topological support.
\end{proposition}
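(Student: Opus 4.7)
The plan is to pass into the Fourier domain via one explicit identity. By definition $\psi_i$ is characterizing iff, for any two probability measures $\mu,\nu$ (with enough $\psi_i$-integrability for the expression below to make sense), the vanishing of $z\mapsto\int\psi_i(x-z)\,\sigma(dx)$ with $\sigma:=\mu-\nu$ forces $\sigma=0$. Plugging \eqref{eq:psi} in, applying Fubini--Tonelli and using $\sigma(\R^{d_i})=0$ to absorb the constant term, one arrives at the cornerstone identity
\[
 \int\psi_i(x-z)\,\sigma(dx)\;=\;-\int \mathrm{Re}\bigl(\ee^{-\ii z\cdot t}\widehat{\sigma}(t)\bigr)\,\rho_i(dt),
\]
where $\widehat{\sigma}(t):=\int\ee^{\ii x\cdot t}\sigma(dx)$ is the (continuous) characteristic function of $\sigma$. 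Integrating this vanishing quantity once more against $\sigma$, using $\int\!\!\int\cos((x-z)\cdot t)\,\sigma(dx)\sigma(dz)=|\widehat{\sigma}(t)|^2$ together with $\sigma(\R^{d_i})^2=0$, gives
\[
 0\;=\;\int\!\!\int\psi_i(x-z)\,\sigma(dx)\sigma(dz)\;=\;-\int|\widehat{\sigma}(t)|^2\,\rho_i(dt).
\]
Under the assumption of full topological support of $\rho_i$, continuity of $\widehat{\sigma}$ then forces $\widehat{\sigma}\equiv 0$, whence $\sigma=0$ by Fourier uniqueness; this establishes the implication ``$\rho_i$ has full support $\Rightarrow\psi_i$ characterizing''.

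For the converse I argue by contraposition. Assume $\mathrm{supp}(\rho_i)\neq\R^{d_i}$ and let $U\subset\R^{d_i}\setminus\mathrm{supp}(\rho_i)$ be a non-empty open set; shrinking $U$ I may further assume $0\notin U$. By symmetry of $\rho_i$ the set $U':=U\cup(-U)$ is then symmetric, open, non-empty, disjoint from $\mathrm{supp}(\rho_i)$, and avoids $0$. Picking any real non-zero $h\in C_c^\infty(U)$ and setting $g(t):=h(t)+h(-t)\in C_c^\infty(U')$ yields a non-zero, even, real bump with $g(0)=0$. Its inverse Fourier transform $f$ is then a non-zero real Schwartz function, and $\sigma(dx):=f(x)\,dx$ is a non-zero real finite signed measure whose characteristic function $\widehat{\sigma}=g$ vanishes on $\mathrm{supp}(\rho_i)$ and satisfies $\sigma(\R^{d_i})=g(0)=0$. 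The cornerstone identity now produces $\int\psi_i(x-z)\,\sigma(dx)\equiv 0$, and the Jordan decomposition $\sigma=c(\mu-\nu)$ provides two distinct probability measures with identical $\psi_i$-transform, showing that $\psi_i$ fails to be characterizing.

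The main obstacle is purely technical: justifying Fubini in the cornerstone identity when $\rho_i$ is only $\sigma$-finite and $\psi_i$ may be unbounded. This is resolved by the standard split into $\{|t|\le 1\}$ and $\{|t|>1\}$ and the corresponding two regimes of the bound $1-\cos(u)\le 2\wedge u^2/2$, combined with the hypothesis $\int 1\wedge|t|^2\,\rho_i(dt)<\infty$ and the standing $\psi_i$-moment assumption on $\mu,\nu$. Once the identity is in hand, both directions follow from elementary Fourier analysis.
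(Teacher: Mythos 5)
Your argument is correct and, in its core, is the same mechanism the paper uses: your ``cornerstone identity'' is exactly the identity $\int 1-\mathrm{Re}\bigl(f_X(t)f_Z(-t)\bigr)\,\rho(dt)=\E(\psi(X-Z))$ underlying Theorem \ref{thm:rhochar}, only phrased with the signed measure $\sigma=\mu-\nu$ instead of independent copies, and your ``integrate once more against $\sigma$'' step is the paper's chain of implications ending in $\int|f_X(t)-f_Y(t)|^2\,\rho(dt)=0$; from there both proofs invoke continuity of characteristic functions plus density of the support. Where you genuinely add something is the converse: the paper disposes of ``characterizing $\Rightarrow$ full support'' with a one-line reduction to Theorem \ref{thm:rhochar} and the dense-subset remark, leaving the existence of two distinct laws whose characteristic functions agree off a non-full support implicit (it is essentially the content of the cited reference \cite{ZingKakoKleb1992}); your explicit Fourier bump construction supplies that missing witness and makes the proposition self-contained. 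Two small repairs are needed in that construction. First, ``any real non-zero $h\in C_c^\infty(U)$'' does not guarantee $g:=h+h(-\cdot)\neq 0$ (if $U$ is symmetric and $h$ is odd, $g$ vanishes); take $h\geq 0$, $h\not\equiv 0$, and the symmetrization is then strictly positive somewhere. Second, you should record why the standing $\psi_i$-moment assumption holds for the measures you build: since $1-\cos u\leq 2\wedge(u^2/2)$ one has $\psi_i(x)\leq C(1+|x|^2)$ with $C$ depending only on $\int 1\wedge|t_i|^2\,\rho_i(dt_i)$, and a Schwartz density has all polynomial moments, so the cornerstone identity (and the Fubini argument behind it, which you correctly justify via the same split) applies to $\mu$ and $\nu$. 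With these two remarks inserted, the proof is complete.
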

\begin{proof}
The statement is a consequence of Theorem \ref{thm:rhochar} (see Section \ref{sec:appendix}). Hereto note that the distributions of two random vectors coincide if and only if their characteristic functions coincide on a dense subset, i.e., $\mu$ almost surely for a measure $\mu$ with full topological support.
\end{proof}

There are important scaled versions of the estimators in \eqref{def:smulti} and \eqref{def:stotalmulti}:
 
\begin{itemize}
\item 
\textbf{normalized sample (total) multivariance}: We write $\Mskript$ instead of $M$, if each $A_i$ in \eqref{def:smulti} and \eqref{def:stotalmulti} is replaced by 
\begin{equation} \label{eq:normalizedm}
\Askript_i := \begin{cases}\frac{1}{\hN a_i} A_i &\text{ if $\hN a_i>0$}\\0 &\text{ if $\hN a_i=0$}
\end{cases} \text{, where }\hN a_i:=\frac{1}{N^2} \sum_{j,k=1}^N \psi_i(x_i^{(j)}-x_i^{(k)}) \text{ which estimates }\E(\psi_i(X_i-X_i')).
\end{equation}
In the case of normalized sample total multivariance the sum in \eqref{def:stotalmulti} is additionally scaled by the number of summands in the definition of total multivariance \eqref{eq:totaldm}, i.e., 
\begin{equation} \label{eq:sampletm}
\hN \overline{\Mskript}^2(\bm{x}^{(1)},\ldots,\bm{x}^{(N)}) := 
 \frac{1}{2^n-n-1} \left\{ \left[\frac{1}{N^2} \sum_{j,k=1}^N \prod_{l =1}^n (1+(\Askript_l)_{jk}) \right] - 1 \right\}. 
\end{equation}
By this scaling the test statistics for multivariance and total multivariance have expectation 1 (in the case of independent variables).
\item \textbf{sample multicorrelation}:   We write $\mathcal{R}$ instead of $M$, if each $A_i$ in \eqref{def:smulti} is replaced by  
\begin{equation}
\label{eq:multicorr} 
\Bskript_i := \begin{cases}\frac{1}{\hN b_i} A_i &\text{ if $\hN b_i>0$}\\0 &\text{ if $\hN b_i=0$}
\end{cases}\text{, where } 
\hN b_i:= \left(\frac{1}{N^2}  \sum_{j,k=1}^N |(A_i)_{kl}|^n\right)^{1/n}\text{ which estimates }\left(\E(|\Psi_i(X_i,X_i')|^n)\right)^{1/n}.
\end{equation}
\item \textbf{unnormalized sample multicorrelation}: We write $\Mcor$ instead of $M$, if each $A_i$ in \eqref{def:smulti} is replaced by 
\begin{equation}
\label{eq:Mcor} 
\Cskript_i := \begin{cases}\frac{1}{\hN c_i} A_i &\text{ if $\hN c_i>0$}\\0 &\text{ if $\hN c_i=0$} \end{cases}\text{, where }
 \hN c_i:= \left(\frac{1}{N^2}  \sum_{j,k=1}^N (A_i)_{kl}^n\right)^{1/n} \text{ which estimates }\left(M^2_{\otimes_{k=1}^n \rho_i}(\underbrace{X_i,\ldots,X_i}_{n\text{-times}})\right)^{1/n}.
\end{equation}
\end{itemize}
Note that $\Mcor$ is newly introduced in this paper, see in particular Table \ref{tab:axioms} for a comparison. For even $n$ multicorrelation $\mathcal{R}$ and $\Mcor$ coincide, but for odd $n$ they differ. Only $\mathcal{R}$ is always bounded by 1, hence $\Mcor$ is called \textit{unnormalized}. But only for $\Mcor$ the value 1 has an explicit interpretation. The population versions of the above sample measures are given by scaling $\Psi_i$ in \eqref{eq:M} with the terms on the right of \eqref{eq:normalizedm}, \eqref{eq:multicorr} and \eqref{eq:Mcor}, e.g.\ normalized multivariance and normalized total multivariance are
\begin{equation} \label{eq:popMMt}
\Mskript_\rho^2(X_1,\ldots,X_n) = \frac{M^2(X_1,\ldots,X_n)}{\prod_{i=1}^n \E(\psi_i(X_i-X_i'))}\quad \text{ and }  \quad
\overline{\Mskript}_\rho^2(X_1,\ldots,X_n) =\frac{\E\left(\prod_{i=1}^n \left(1+\frac{\Psi_i(X_i,X_i')}{ \E(\psi_i(X_i-X_i'))}\right)\right)-1}{2^n-n-1},
\end{equation}
where implicitly the finiteness of the corresponding moments is assumed, i.e.,
\begin{equation} \label{eq:mom-1}
\text{\textbf{finite first $\psi$-moments}: }\text{ for all }i = 1,\ldots,n:\ \E(\psi_i(X_i))<\infty. 
\end{equation}
For the scaling of the multicorrelations one has to assume
\begin{equation} \label{eq:mom-n}
\text{\textbf{finite $\psi$-moments of order $\bm{n}$}: }\text{ for all }i = 1,\ldots,n:\ \E(\psi_i^n(X_i))<\infty.
\end{equation}
Note that the scaling factors given in \eqref{eq:multicorr} and \eqref{eq:Mcor} depend on $n$ thus the total multicorrelations do not have such a simple representation as $\overline{\Mskript}$ (or its sample version \eqref{eq:sampletm}) in fact the following holds (analogously also for $\Mcor$):
\begin{align} \label{eq:totalmcor}
\overline{\Rskript_\rho}^2(X_1,\ldots,X_n) &:=  \frac{1}{{2^n-n-1}}\sum_{\substack{1\leq i_1< \ldots < i_m \leq n\\2 \leq m \leq n}} \frac{M_{\otimes_{k=1}^m \rho_{i_k}}^2(X_{i_1},\ldots,X_{i_m})}{\prod_{k=1}^m \left(\E(|\Psi_{i_k}(X_{i_k},X_{i_k}')|^m)\right)^{1/m}} \\
\label{eq:totalmcor-bound}&\geq \frac{\E\left(\prod_{i=1}^n \left(1+\frac{\Psi_i(X_i,X_i')}{ \left(\E(|\Psi_{i}(X_{i},X_{i}')|^n)\right)^{1/n}}\right)\right)-1}{2^n-n-1}.
\end{align}
Therefore the total multicorrelations seem more of a theoretic interest, but the corresponding $m$-multicorrelations (which will be defined in Remark \ref{rem:mmulti}.\ref{remsub:mmulticor}) have efficient estimators. Moreover, also the lower bound in \eqref{eq:totalmcor-bound} has an efficient sample version analogously to \eqref{eq:sampletm}.  For a comparison of these multicorrelations see Section \ref{sec:axioms}.

The introduced dependence measures and their sample versions have in particular the following properties.
\begin{proposition}[Invariance properties of multivariance] \label{prop:inv}
The following properties hold for $M, \overline{M}, \mathcal{M}, \overline{\mathcal{M}}, \mathcal{R}, \overline{\mathcal{R}},  \Mcor,$ $ \overline{\Mcor}$ and the corresponding sample versions, to avoid redundancy we only explicitly state them for $M$: 
\begin{enumerate}
\item \textbf{trivial for single variables}, i.e., $M_{\rho_i}(X_i) = 0$ for all $i\in\{1,\ldots,n\}.$
\item \textbf{permutation invariant}, i.e.,
$
M(X_1,\ldots,X_n) =  M_{\otimes_{i=1}^n\rho_{k_i}}(X_{k_1},\ldots,X_{k_n})$ for all permutations $k_1,\ldots,k_n$ of $1,\ldots,n.$ Moreover, the sample versions are in addition invariant with respect to permutations of the samples, i.e., the equality $\hN M(x^{(1)},\ldots,x^{(N)}) = \hN M(x^{(l_1)},\ldots,x^{(l_N)})$ holds for all permutations $l_1,\ldots,l_N$ of $1,\ldots,N.$ (This should not be confused with the permutations for a resampling test, where components of the samples are permuted separately, see \eqref{eq:Rresampling}.)
\item \label{eq:Msymmetric} \textbf{symmetric in each variable}, i.e., $ M(X_1,\ldots,X_n) = M(c_1 X_1,\ldots,c_n X_n)$  for all $c_i \in \{-1,1\}.
$
\item \label{eq:Mtransinv} \textbf{translation invariant}, i.e.,  $M(X_1-r_1, \ldots, X_n-r_n) = M(X_1, \ldots, X_n)$ for all $r_i\in \R^{d_i}.$\\
Note that the latter and \eqref{eq:Msymmetric} imply that for dichotomous 0-1 coded data a swap of the coding does not change the value of the multivariance.
\item  \textbf{rotation invariant for isotropic $\bm{\psi_i}$}, i.e., if $\psi_i(x_i) = g_i(|x_i|)$ for some $g_i$ and all $i=1,\ldots,n$, then  $ M(X_1,\ldots,X_n) = M(R_1 X_1,\ldots,R_n X_n)$ for all rotations $R_i$ on $\R^{d_i}$.\\
Note that in this case, since also \eqref{eq:Msymmetric} and \eqref{eq:Mtransinv} hold, $M$ is invariant with respect to Euclidean isometries.
\end{enumerate}
\end{proposition}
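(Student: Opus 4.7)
The plan is to derive all five invariances first at the population level for $M_\rho$, using whichever of the two representations \eqref{def:multi} or \eqref{eq:M} makes the argument shortest, and then observe that the same invariances automatically propagate to $\overline{M}$, to the normalized and multicorrelation variants, and to the sample versions. The guiding observation is that the scaling constants in \eqref{eq:normalizedm}, \eqref{eq:multicorr} and \eqref{eq:Mcor} depend only on $\psi_i(X_i-X_i')$ or on $\Psi_i(X_i,X_i')$, and every $(A_i)_{jk}$ in the sample definitions depends on the data only through $\psi_i(x_i^{(j)}-x_i^{(k)})$; therefore any invariance established for $\psi_i$ of the difference of two copies transfers uniformly to every quantity in the list. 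Likewise $\overline{M}^2$ is a sum over subsets of squared multivariances, so invariance summand by summand yields the result.

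For (1), inserting $n=1$ into \eqref{def:multi} makes the integrand $|\E\ee^{\ii X_i t_i}-f_{X_i}(t_i)|^2\equiv 0$. For (2), the integrand in \eqref{def:multi} is manifestly symmetric in the factors of the product, the product measure $\rho=\otimes_i\rho_i$ is reindexed accordingly, and for the sample version the double sum $\sum_{j,k}(A_1)_{jk}\cdots(A_n)_{jk}$ is symmetric in the matrices $A_i$ and invariant under simultaneous permutation of their rows and columns (which is what a joint sample permutation produces). For (3) and (4) I would use \eqref{eq:M}: symmetry of $\rho_i$ makes $\psi_i$ from \eqref{eq:psi} even, so $\psi_i(c_i(X_i-X_i'))=\psi_i(X_i-X_i')$ for $c_i\in\{-1,1\}$, and $\psi_i((X_i-r_i)-(X_i'-r_i))=\psi_i(X_i-X_i')$ trivially; in both cases each $\Psi_i$ in \eqref{eq:Psi} is pointwise unchanged, and so is the expectation defining $M^2$. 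An alternative route for (3) and (4) that also serves as a check goes through \eqref{def:multi}: the translation pulls out a phase $\ee^{-\ii t_i r_i}$ from $\ee^{\ii X_i t_i}-f_{X_i}(t_i)$, whose modulus is $1$; the sign flip $X_i\mapsto -X_i$ replaces $t_i$ by $-t_i$ in the integrand and the symmetry of $\rho_i$ absorbs this via the change of variables $t_i\mapsto -t_i$.

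For (5), if $\psi_i(x)=g_i(|x|)$ then $\psi_i(R_i(X_i-X_i'))=g_i(|R_i(X_i-X_i')|)=g_i(|X_i-X_i'|)=\psi_i(X_i-X_i')$ because $R_i$ preserves the Euclidean norm, and the argument of (3)/(4) applies verbatim. The main (and only mild) obstacle is bookkeeping: one must confirm that the sample side really is governed by $\psi_i$ of differences, which is immediate from the definition of $B_i$ in \eqref{eq:A}, and that double centring through $C$ commutes with this (it does, since $C$ acts on sample indices, not on data values). Once this is noted, each of $M, \overline{M}, \mathcal{M}, \overline{\mathcal{M}}, \mathcal{R}, \overline{\mathcal{R}}, \Mcor, \overline{\Mcor}$ and their sample versions inherits the full list of invariances at once.
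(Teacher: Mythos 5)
Your proof is correct and follows essentially the same route as the paper's: establish each invariance at the level of $\psi_i$ (evenness from the symmetry of $\rho_i$, cancellation of translations, norm preservation under rotations), then propagate it to $\overline{M}$, the scaled variants and the sample versions by noting that the scaling factors and the matrices $A_i$ depend on the data only through $\psi_i$ of differences. The only cosmetic difference is that you use the characteristic-function representation \eqref{def:multi} for (1) and (2) where the paper invokes \eqref{eq:M}; both are equally valid here.
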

\begin{proof}
For multivariance $M$ the property (a) follows by direct calculation using \eqref{eq:M} and \eqref{eq:Psi}, (b) is obvious by \eqref{eq:M}, (c) holds since $\psi_i$ is symmetric and for (d) note that the translations cancel in \eqref{eq:Psi}. Moreover, since a rotation preserves Euclidean distances also (e) holds. Total multivariance $\overline{M}$ is just a sum of multivariances, hence it inherits these properties. 

For sample multivariance $\hN M$ the same arguments apply using \eqref{def:smulti} and \eqref{eq:A}. For the sample permutation invariance in (b) note that permutations of samples correspond to permutations of rows and columns of the centered distance matrices.  Analogously also the scaling factors given in \eqref{eq:normalizedm}, \eqref{eq:multicorr} and \eqref{eq:Mcor} have these properties. Therefore they hold also for all scaled and sample versions.
\end{proof} 
Moreover, for special functions $\psi_i$ the scaled dependence measures feature one further invariance.
\begin{proposition}[{Scale invariance of scaled multivariance for ${\psi_i(x_i) = |x_i|^{\alpha_i}}$}] \label{prop:scaleinv}
Let $\psi_i(x_i) = |x_i|^{\alpha_i}$ with $\alpha_i \in (0,2)$. Then the scaled measures $\mathcal{M}, \overline{\mathcal{M}}, \mathcal{R}, \overline{\mathcal{R}},  $ $\Mcor, \overline{\Mcor}$ and the corresponding sample versions are scale invariant, that is, 
\begin{equation}
\mathcal{M} (r_1 X_1,\ldots, r_n X_n) = \mathcal{M} (X_1,\ldots,X_n) \text{ for all } r_i \in \R\backslash\{0\}.
\end{equation}
\end{proposition}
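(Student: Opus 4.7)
The plan is to exploit the homogeneity $\psi_i(r_i x_i)=|r_i|^{\alpha_i}\psi_i(x_i)$ of $\psi_i(\cdot)=|\cdot|^{\alpha_i}$, and to track how this scalar factor propagates through the definitions of $\Psi_i$, $M$, the scaling quantities $\hN a_i,\hN b_i,\hN c_i$ (and their population analogues), and finally through the total and sample variants. In each scaled measure the same power $|r_i|^{\alpha_i}$ appears in numerator and denominator, so it cancels.

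First I would write $\psi_i(r_ix_i)=|r_i|^{\alpha_i}\psi_i(x_i)$ and feed this into \eqref{eq:Psi}. All four terms defining $\Psi_i(X_i,X_i')$ carry the same factor $|r_i|^{\alpha_i}$, so $\Psi_i(r_iX_i,r_iX_i')=|r_i|^{\alpha_i}\Psi_i(X_i,X_i')$. Substituting into \eqref{eq:M} yields
\begin{equation*}
M^2(r_1X_1,\ldots,r_nX_n)=\Bigl(\prod_{i=1}^n|r_i|^{\alpha_i}\Bigr)\,M^2(X_1,\ldots,X_n).
\end{equation*}
The population scaling quantities transform the same way: $\E(\psi_i(r_iX_i-r_iX_i'))=|r_i|^{\alpha_i}\E(\psi_i(X_i-X_i'))$, and analogously $(\E(|\Psi_i(r_iX_i,r_iX_i')|^n))^{1/n}=|r_i|^{\alpha_i}(\E(|\Psi_i(X_i,X_i')|^n))^{1/n}$ and the corresponding scaling for $\Mcor$. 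Plugging these into \eqref{eq:popMMt} and \eqref{eq:multicorr}/\eqref{eq:Mcor} shows that the $|r_i|^{\alpha_i}$ factors cancel, proving scale invariance of $\mathcal{M},\mathcal{R},\Mcor$.

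For the total versions $\overline{\mathcal{M}},\overline{\mathcal{R}},\overline{\Mcor}$ I would note that by \eqref{eq:totaldm} and \eqref{eq:totalmcor} these are sums over subsets $\{i_1,\ldots,i_m\}$ of normalized $m$-variances of $(X_{i_1},\ldots,X_{i_m})$; each summand is scale invariant by the argument above, hence so is the sum. For the sample versions the same logic applies to the matrices in \eqref{eq:A}: replacing $x_i^{(j)}$ by $r_ix_i^{(j)}$ multiplies $B_i$, and hence $A_i=-CB_iC$, by $|r_i|^{\alpha_i}$, and the sample scaling factors $\hN a_i,\hN b_i,\hN c_i$ pick up exactly the same factor, so it cancels in $\Askript_i,\Bskript_i,\Cskript_i$. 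Substituting into \eqref{def:smulti}, \eqref{def:stotalmulti} and \eqref{eq:sampletm} finishes the proof.

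There is no real obstacle here; the only thing to be careful about is the edge case where the scaling denominators vanish (handled by the cases in \eqref{eq:normalizedm}, \eqref{eq:multicorr}, \eqref{eq:Mcor}), but since $|r_i|^{\alpha_i}>0$ for $r_i\neq 0$ the event $\hN a_i=0$ (respectively $\hN b_i=0$, $\hN c_i=0$) is preserved under rescaling, so the piecewise definitions match on both sides.
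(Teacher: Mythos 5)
Your proposal is correct and follows essentially the same route as the paper's proof: homogeneity $\Psi_i(r_iX_i,r_iX_i')=|r_i|^{\alpha_i}\Psi_i(X_i,X_i')$, the resulting $\alpha$-homogeneity of $M$, $\hN M$ and of the scaling factors in \eqref{eq:normalizedm}, \eqref{eq:multicorr}, \eqref{eq:Mcor}, and cancellation. Your extra observation that the degenerate cases $\hN a_i=0$ etc.\ are preserved under rescaling is a small careful addition the paper leaves implicit.
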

\begin{proof}
For $\psi_i(x_i)= |x_i|^{\alpha_i}$ note that $\Psi_i$ given in \eqref{eq:Psi} satisfies $\Psi_i(r_iX_i,r_iX_i') = |r_i|^{\alpha_i} \Psi_i(X_i,X_i').$ Thus multivariance is $\alpha$-homogeneous, i.e., $M(r_1X_1,\ldots,r_nX_n) =  M(X_1,\ldots,X_n) \prod_{i=1}^n |r_i|^{\alpha_i}.$ The same holds (using \eqref{eq:A}) for $\hN M$ and also for the scaling factors given in \eqref{eq:normalizedm}, \eqref{eq:multicorr} and \eqref{eq:Mcor}. Thus the factors $|r_i|^{\alpha_i}$ cancel by the scaling.
\end{proof}

The key for statistical tests based on multivariance is the following convergence result. The presented result relaxes the required moments considerably in comparison to \cite[Thm.\ 4.5, 4.10, Cor.\ 4.16, 4.18]{BoetKellSchi2019}, moreover also a new parameter $\beta$ is introduced which will be useful in Section \ref{sec:algo}.

\begin{theorem}[Asymptotics of sample multivariance] \label{thm:convergence} Let $X_i,$ $i=1,\ldots,n$ be non-constant random variables and let $\bm{X}^{(k)}, k=1,\ldots,N$ be independent copies of $\bm{X}=(X_1,\ldots,X_n)$. Let either of the following conditions hold 
\begin{equation} \label{eq:psibounded}
\text{ all }\psi_i \text{ are bounded }
\end{equation}
or 
\begin{equation} \label{eq:mom1-log}
\text{ for all }i = 1,\ldots,n:\ 
\E(\psi_i(X_i))<\infty \text{ and } \E\left[(\log(1+|X_i|^2))^{1+\varepsilon} \right] <\infty\text{ for some }\varepsilon>0.
\end{equation}
Then for any $\beta > 0$ 
\begin{align}
\label{eq:estimator-divergence} N^\beta \cdot \hN \Mskript^2(\bm{X}^{(1)},\ldots, \bm{X}^{(N)}) &\xrightarrow[N\to\infty]{a.e.} \infty& \text{ if $X_1,\ldots,X_n$ are dependent but $(n-1)$-independent,}\\
\label{eq:estimator-convergence}N \cdot \hN \Mskript^2(\bm{X}^{(1)},\ldots, \bm{X}^{(N)}) &\xrightarrow[N \to \infty]{d} Q& \quad \text{ if $X_1,\ldots,X_n$ are independent,}\\
\label{eq:totalestimator-divergence}\Mfrac{N^\beta \cdot \hN \overline{\Mskript}^2(\bm{X}^{(1)},\ldots, \bm{X}^{(N)})}{2^n-n-1}  &\xrightarrow[N\to\infty]{a.e.} \infty& \text{ if $X_1,\ldots,X_n$ are dependent,}\\
\label{eq:totalestimator-convergence}\Mfrac{N \cdot \hN \overline{\Mskript}^2(\bm{X}^{(1)},\ldots, \bm{X}^{(N)})}{2^n-n-1}  &\xrightarrow[N \to \infty]{d} \overline{Q} & \quad \text{ if $X_1,\ldots,X_n$ are independent,}
\end{align}
where $Q$ and $\overline{Q}$ are Gaussian quadratic forms with $\E Q = 1 = \E \overline{Q}.$
\end{theorem}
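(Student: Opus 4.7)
The plan is to reduce all four assertions to two ingredients: (a) almost sure convergence of $\hN \Mskript^2$ (respectively $\hN \overline{\Mskript}^2$) to the corresponding population quantities $\Mskript^2$ and $\overline{\Mskript}^2$, and (b) a degenerate V-statistic central limit theorem under independence. Given (a), the divergence statements \eqref{eq:estimator-divergence} and \eqref{eq:totalestimator-divergence} follow immediately: by Theorem \ref{thm:indep} (together with the normalization in \eqref{eq:popMMt}) the population multivariances are strictly positive precisely under the stated dependence hypotheses, so $N^\beta \cdot \hN \Mskript^2 \to \infty$ a.s.\ for any $\beta > 0$. Given (b), \eqref{eq:estimator-convergence} and \eqref{eq:totalestimator-convergence} are exactly the identification of the limit as a Gaussian quadratic form, and the expectations $\E Q = \E \overline{Q} = 1$ can be read off from the scaling choice \eqref{eq:normalizedm} and \eqref{eq:sampletm}.

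Under the bounded condition \eqref{eq:psibounded}, each entry $(A_i)_{jk}$ is uniformly bounded, so $\hN M^2$, $\hN \overline{M}^2$ and the normalizers $\hN a_i$ are V-statistics with bounded symmetric kernel. Here the strong law for V-statistics gives (a) directly, while (b) is the degenerate V-statistic CLT, which is the content of [BoetKellSchi2019, Thm.\ 4.5, 4.10] specialized to bounded kernels and whose proof transfers verbatim. Slutsky's theorem then moves the results from $M, \overline{M}$ to the scaled quantities $\Mskript, \overline{\Mskript}$ (on the event $\hN a_i > 0$, which occurs eventually since $X_i$ is non-constant and $\E(\psi_i(X_i - X_i')) > 0$).

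The main work is the relaxation to \eqref{eq:mom1-log}. Here I would use a truncation argument: set $\psi_i^{(K)} := \psi_i \wedge K$ and let $\Mskript^{(K)}$ denote the corresponding multivariance. The bounded case applies to $\Mskript^{(K)}$, so it suffices to show that, as $K \to \infty$, the truncation error vanishes uniformly in $N$ in the a.s.\ sense (for the divergence statements) and in the sense of weak convergence (for the CLT). The finiteness of $\E(\psi_i(X_i))$ in \eqref{eq:mom1-log} ensures that $\Psi_i(X_i, X_i')$ is integrable, hence that the population limit $\Mskript^2$ is well-defined and that the V-statistic's $L^1$-expectation converges, which handles (a) by a standard truncation-plus-SLLN step. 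For (b) the log-moment $\E[(\log(1+|X_i|^2))^{1+\varepsilon}] < \infty$ is used to control the tails of the degenerate kernel: since $\psi_i$ in \eqref{eq:psi} grows at most like $|y_i|^2 \wedge 1$ integrated against $\rho_i$, a careful estimate shows that the kernel variance grows only polylogarithmically in the cut-off, which is exactly enough to verify the Lindeberg-type condition needed for the degenerate V-statistic CLT uniformly in the truncation level.

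The principal obstacle I expect is precisely the last point: proving that \eqref{eq:mom1-log} is sharp enough to yield the CLT under independence, i.e.\ that no stronger power-moment condition is required. The key estimate is a bound of the form $\E[|\Psi_i(X_i, X_i')|^2 \One_{|X_i| > R}] \leq c (\log R)^{-\varepsilon}$, combined with a $\chi^2$-type tail control for the quadratic V-statistic, to conclude tightness of $N \cdot \hN M^2$ and uniform integrability of its $L^2$ norm as $K \to \infty$. Once this tightness is in place the limit distributions $Q$ and $\overline{Q}$ coincide with those obtained in the bounded case (by taking $K \to \infty$ in the spectral representation of the limiting quadratic form), and the normalization in \eqref{eq:popMMt} delivers the stated means.
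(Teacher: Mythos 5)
Your high-level decomposition (a.s.\ lower bound for the divergence statements, degenerate V-statistic CLT for the convergence statements, bounded case first, then relax) matches the paper's strategy, and your treatment of the bounded case \eqref{eq:psibounded} is essentially the paper's Theorem \ref{thm:asymp-Vstat}. However, both of your extension steps to \eqref{eq:mom1-log} contain genuine gaps.

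First, for the divergence statements you assert that $\E(\psi_i(X_i))<\infty$ makes the population limit well-defined and that $\hN\Mskript^2$ converges a.s.\ to it. This is false in general: finiteness of the expectation representation \eqref{eq:M} requires the \emph{joint} moment condition \eqref{d1}, i.e.\ $\E\bigl(\prod_{i\in S}\psi_i(X_i)\bigr)<\infty$ for all $S$, which does not follow from the marginal first moments in \eqref{eq:mom1-log} when the variables are dependent --- and the dependent case is exactly the one at issue. Your repair via the truncation $\psi_i^{(K)}:=\psi_i\wedge K$ does not work either, because $\psi_i\wedge K$ is in general no longer of the form \eqref{eq:psi} for a symmetric measure; the truncated object is then not a multivariance, so neither the bounded-case limit theory nor the positivity/characterization of Theorem \ref{thm:indep} applies to it. The paper avoids both problems by truncating the \emph{measure} $\rho_i$ away from the origin (yielding bounded continuous negative definite functions $\psi_{i,\varepsilon}$ that are genuine multivariance kernels) and proving only the one-sided bound $\liminf_N \hN M_\rho \geq M_\rho$ with \emph{no} moment assumptions (Lemma \ref{lem:liminfNM}), via monotone convergence in the characteristic-function representation and an exchange of $\sup_\varepsilon$ and $\liminf_N$. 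Since $M_\rho>0$ under the stated dependence hypotheses, this suffices for any $\beta>0$.

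Second, for the CLT under \eqref{eq:mom1-log} your key estimate $\E\bigl[|\Psi_i(X_i,X_i')|^2\One_{\{|X_i|>R\}}\bigr]\leq c(\log R)^{-\varepsilon}$ presupposes $\E(\psi_i^2(X_i))<\infty$, which \eqref{eq:mom1-log} does not give (take $\psi_i(x)=|x|$ and $X_i$ with finite first but infinite second moment: the left-hand side is then $+\infty$ for every $R$). So the Lindeberg-type verification for the V-statistic kernel cannot be carried out under \eqref{eq:mom1-log}, and there is no continuous passage from the bounded V-statistic CLT to the log-moment case along the lines you sketch. In the paper the two hypotheses correspond to two genuinely different proofs: under \eqref{eq:mom1-log} the convergence is obtained through the empirical characteristic function machinery of \cite{BoetKellSchi2019} (where the log-moment condition is exactly what Cs\"org\H{o}'s condition requires for convergence in the weighted $L^2(\rho)$ space), while the V-statistic route requires the second-moment condition \eqref{eq:secondmom}, which is implied by boundedness but not by \eqref{eq:mom1-log}. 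Neither condition implies the other, which is why the theorem states them as alternatives rather than deriving one case from the other by truncation.
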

\begin{proof} Here we explain the main new ideas, the details are provided in Section \ref{sec:asymptoticsthm}. 

For the convergence in \eqref{eq:estimator-convergence} and \eqref{eq:totalestimator-convergence} exist at least two methods of proof: As in \cite{BoetKellSchi2019} the convergence of empirical characteristic functions can be used. For this step a slightly relaxed (but technical) version of the log moment condition (see Remark \ref{rem:logmom}) is necessary and sufficient, cf.\ \cite[Remark 4.6.b]{BoetKellSchi2019}. An alternative approach (Theorem \ref{thm:asymp-Vstat} in Section \ref{sec:appendix}) uses the theory of degenerate V-statistics, this requires moments of second order with respect to $\psi_i$, but no further condition. Thus, in particular, for bounded $\psi_i$ the latter removes the log moment condition. 

For $\beta = 1$ the divergence in \eqref{eq:estimator-divergence} and \eqref{eq:totalestimator-divergence} was proved in \cite{BoetKellSchi2019} under the condition \eqref{d1}, which ensures that the representations \eqref{eq:M} of the limits of sample (total) multivariance are well defined and finite. Using the characteristic function representation \eqref{def:multi} (which is always well defined, but possibly infinite) the divergence can be proved without \eqref{d1}, see Section \ref{sec:appendix} Lemma \ref{lem:liminfNM} ff.. Moreover, the arguments used therein work for any $\beta > 0.$
\end{proof}

\begin{remark} \label{rem:logmom} The log moment condition $\E\left[(\log(1+|X_i|^2))^{1+\varepsilon} \right]<\infty$ in \eqref{eq:mom1-log} can be slightly relaxed to \cite[Condition $(\star)$]{Csoe1985}. But the latter is practically infeasible, thus we opted for a comprehensible condition. Moreover the log moment condition is trivially satisfied, if $\psi_i$ satisfies a minimal growth, i.e., $\psi_i(x_i) \geq c \log(1+|x_i|^2)^{1+\varepsilon}$ for some $c,\varepsilon>0$. Also the condition \eqref{eq:psibounded} is stated here for clarity, in fact \eqref{eq:secondmom} is sufficient.
\end{remark} 

Note that in Theorem \ref{thm:convergence} the parameter $\beta$ was only considered in the dependent cases. In the case of independent random variables one obtains the following result.

\begin{corollary}[Strong consistency of $N^\beta$-scaled multivariance in the case of independence] \label{cor:betaM-consistency}
Let $X_i,$ $i=1,\ldots,n$ be independent random variables and let $\bm{X}^{(k)}, k=1,\ldots,N$ be independent copies of $\bm{X}=(X_1,\ldots,X_n)$. If either \eqref{eq:psibounded} or \eqref{eq:mom1-log} holds, then for any $\beta < 1$ 
\begin{align}
\label{eq:estimator-consistent}N^\beta \cdot \hN \Mskript^2(\bm{X}^{(1)},\ldots, \bm{X}^{(N)}) &\xrightarrow[N \to \infty]{a.e.} 0,\\
\label{eq:totalestimator-consistent}\Mfrac{N^\beta \cdot \hN \overline{\Mskript}^2(\bm{X}^{(1)},\ldots, \bm{X}^{(N)})}{2^n-n-1}  &\xrightarrow[N \to \infty]{a.e.} 0. 
\end{align}
\end{corollary}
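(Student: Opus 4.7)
The plan is to combine the distributional limit of Theorem \ref{thm:convergence} with a $V$-statistic moment bound and a Borel--Cantelli argument, upgrading convergence in probability (which is immediate) to almost sure convergence at the rate $N^{-\beta}$ for every $\beta<1$.

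First, Theorem \ref{thm:convergence} gives $N\cdot\hN\Mskript^2 \xrightarrow{d} Q$ under independence, with $\E Q = 1$, and analogously for $\hN\overline{\Mskript}^2$. Writing
\[
N^\beta\cdot \hN\Mskript^2 \;=\; N^{\beta-1}\cdot\bigl(N\cdot\hN\Mskript^2\bigr),
\]
the second factor is tight, so for any $\beta<1$ one already has $N^\beta\cdot \hN\Mskript^2 \to 0$ in probability (and likewise for the total version).

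To strengthen this to a.e.\ convergence, I would exploit the $V$-statistic structure of $\hN\Mskript^2$. Under independence the kernel $\prod_{i=1}^n \Psi_i(X_i,X_i')$ entering \eqref{eq:M} is, by the construction in \eqref{eq:Psi}, completely degenerate: $\E[\Psi_i(X_i,X_i')\mid X_i]=\E[\Psi_i(X_i,X_i')\mid X_i']=0$ for each $i$. The classical moment inequalities for canonical (completely degenerate) $V$-statistics then yield, whenever the relevant $\psi_i$-moments are finite,
\[
\E\bigl[(N\cdot\hN\Mskript^2)^p\bigr] \;\leq\; C_p \qquad \text{for all } N\geq 1 \text{ and every }p\geq 1.
\]
Combined with Markov this gives
\[
\Prob\!\bigl(N^\beta\,\hN\Mskript^2>\varepsilon\bigr)
\;=\; \Prob\!\bigl((N\cdot\hN\Mskript^2)^p > \varepsilon^p N^{(1-\beta)p}\bigr)
\;\leq\; \frac{C_p}{\varepsilon^p\,N^{(1-\beta)p}},
\]
and choosing $p$ so that $p(1-\beta)>1$ makes the right-hand side summable. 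Borel--Cantelli then produces \eqref{eq:estimator-consistent}. The assertion \eqref{eq:totalestimator-consistent} follows identically, since under full independence $\hN\overline{\Mskript}^2$ is again a canonical $V$-statistic with kernel $\prod_{i=1}^n(1+\Psi_i(X_i,X_i'))-1$, which is completely degenerate for the same reason.

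The main obstacle is justifying the uniform $L^p$ bound for arbitrarily large $p$ under the weakest moment assumption \eqref{eq:mom1-log}, where only first $\psi$-moments are guaranteed. Under the bounded case \eqref{eq:psibounded} every such moment is trivially finite and the moment inequality applies directly via the appendix result Theorem \ref{thm:asymp-Vstat}. Under \eqref{eq:mom1-log} one truncates $\psi_i$ at a growing level, bounds the truncated part by the bounded-kernel $V$-statistic moment inequality, and controls the residual tail using the log-moment condition on $|X_i|$; this is exactly the truncation scheme used in proving the distributional convergence in Theorem \ref{thm:convergence}. A second-moment-only argument would at best give $\beta<1/2$, so genuinely using higher moments (or the iterated-logarithm refinement available for canonical $V$-statistics, which would sharpen $N^\beta$ to $N/(\log\log N)$) is essential to obtain the full range $\beta<1$.
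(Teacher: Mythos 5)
Your first step---writing $N^\beta\hN\Mskript^2=N^{\beta-1}(N\hN\Mskript^2)$ and invoking tightness to get convergence in probability---is exactly how the paper opens its proof. For the almost sure upgrade you take a genuinely different route: the paper goes back inside the proof of \eqref{eq:estimator-convergence}, notes that the factor $N$ there is produced by a central limit theorem applied to the (uniformly bounded) empirical characteristic function increments $\ee^{\ii X_i t_i}-f_{X_i}(t_i)$, and simply replaces that CLT step by Marcinkiewicz's strong law of large numbers (or the law of the iterated logarithm) to obtain the a.e.\ null limit for every $\beta<1$; you instead work with the degenerate $V$-statistic representation and propose uniform $L^p$ bounds of arbitrary order plus Markov and Borel--Cantelli. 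Your observation that $\prod_i\Psi_i(X_i,X_i')$ is completely degenerate under independence is correct, and in the bounded case \eqref{eq:psibounded} your argument does close (all moments of the kernel are finite, the canonical moment inequalities apply, and the empirical centering and normalization converge a.s.\ to constants), so there it is a legitimate alternative that buys an explicit polynomial tail bound the paper's sketch does not provide.

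The gap is in the case \eqref{eq:mom1-log}. The uniform bound $\E[(N\hN\Mskript^2)^p]\le C_p$ requires $\E\bigl[\bigl|\prod_{i=1}^n\Psi_i(X_i,X_i')\bigr|^p\bigr]<\infty$, which under independence factorizes as $\prod_{i=1}^n\E[|\Psi_i(X_i,X_i')|^p]$ and hence needs $p$-th $\psi$-moments of each marginal --- essentially \eqref{eq:mom-n} with $n$ replaced by $p$, strictly stronger than the assumed first $\psi$-moments; for the range $\beta$ close to $1$ you need $p$ arbitrarily large, so no finite amount of extra integrability short of boundedness rescues the naive Borel--Cantelli count. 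The truncation remedy you sketch is only asserted: the truncation $\rho_{i,\varepsilon}$ appears in the paper in the proof of the \emph{divergence} statements (Lemma \ref{lem:liminfNM}), not of the distributional convergence, and the log-moment condition enters the convergence proof only through Cs\"org\H{o}-type almost sure convergence of empirical characteristic functions, not through any $L^p$ control of the kernel; showing that the residual after truncation vanishes a.e.\ at rate $N^{-(1-\beta)}$ is precisely the hard part and does not follow from anything you cite. The paper's Marcinkiewicz argument sidesteps this entirely because the summands $\ee^{\ii X_i^{(j)}t_i}-f_{X_i}(t_i)$ are bounded by $2$ irrespective of the moment assumptions on $\psi_i$, so the strong law applies with any exponent $p<2$ without further integrability.
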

\begin{proof}
The statements \eqref{eq:estimator-consistent} and \eqref{eq:totalestimator-consistent} are a direct consequence of \eqref{eq:estimator-convergence} and \eqref{eq:totalestimator-convergence}, if one considers convergence in probability instead of 'a.e.', see e.g.\ \cite[proof of Corollary 4.7]{BoetKellSchi2019} for the case $\beta= 0.$ 

For almost sure convergence one has to look at the proof(s) of \eqref{eq:estimator-convergence} and \eqref{eq:totalestimator-convergence}. Therein a key step is an application of the central limit theorem, which requires (in the given setting) exactly the factor $N$ for convergence (in distribution) to a standard normally distributed random variable. Using therein, for $N$ replaced by $N^\beta$ with $\beta < 1$, Marcinkiewicz's law of large numbers, e.g.\ \cite[Theorem 3.23]{Kall1997}, (or the law of the iterated logarithm) yields the limit 0 almost surely.
\end{proof}

The choice of $\psi_i$ is intertwined with the invariance properties (Propostions \ref{prop:inv} and \ref{prop:scaleinv}) and the moment conditions \eqref{eq:psibounded} and \eqref{eq:mom1-log}. For the population measures also condition \eqref{d1} and for the scaled measures also \eqref{eq:mom-1} and \eqref{eq:mom-n} have to be considered. In particular, it is possible to choose $\psi_i$ (or to transform the random variables) such that these conditions are satisfied regardless of the underlying distributions.

\begin{remark} \label{rem:choosepsi}
\begin{enumerate}
\item (Comments on choosing $\psi$) Based on Propositions \ref{prop:inv} and \ref{prop:scaleinv} the canonical choice for $\psi_i$ is $\psi_i(x_i)= |x_i|^{\alpha_i}$ with $\alpha_i \in (0,2)$, classically with $\alpha_i = 1$ (other choices might provide higher power in tests; a general $\alpha_i$ selection procedure is to our knowledge not yet available).

Nevertheless there are many other options for $\psi_i$, see \cite[Table 1]{BoetKellSchi2018} for various examples, and there are at least a few reasons why one might choose a $\psi_i$ which is not (a power of) the Euclidean distance:
\begin{enumerate}[i)]
\item For unbounded $\psi_i$ condition \eqref{eq:mom1-log} is required in Theorem \ref{thm:convergence}. If the existence of these moments is unknown for the underlying distribution the convergence results might not hold. Here the use of a slower growing or bounded $\psi_i$ is a safer approach, see Example \ref{ex:mom}.
\item The empirical size/power of the tests (details are given in Section \ref{sec:test}) can depend on the functions $\psi_i$ used, see Example \ref{ex:variousm}. Especially if some information on the dependence scale is known the parameter $\delta_i >0$ in $\psi_i(x_i) = 1- e^{-\delta_i|x_i|^{\alpha_i}}$ can be adapted accordingly, see \cite[Example 5.2]{BoetKellSchi2019} for an example using multivariance. Adaptive procedures for $\delta_i$ can be found in \cite{Guet2017}. 
\item A non-linear dependence of multivariance on sample distances might be desired, e.g.\ there might be application based reasons to use the Minkowski distance \cite[Section 2.4.4]{HanPeiKamb2011}. 
\end{enumerate}
An alternative approach to ensure the moment conditions is the following.
\item \label{rem:boundedrv} (Transformation to bounded random variables) 
Recall a basic result on independence: For $i=1,..,n$ let $X_i:\Omega \to \R^{d_i}$  be random variables and $f_i:\R^{d_i} \to D_i\subset  \R^{s_i}$ be measurable functions, then:
\begin{equation*}
X_i, i=1,\ldots,n \text{ are independent } \quad \Rightarrow \quad f_i(X_i), i=1,\ldots,n \text{ are independent.}
\end{equation*}
Moreover, if $d_i = s_i$ and $f_i$ are bijective then also the converse implication holds. Thus one way to ensure all moment conditions in Theorem \ref{thm:convergence} -- and preserve the (in)dependence -- is to transform the random variables by bounded (bounded $D_i$) bijective functions $f_i$. But beware that with this approach the multivariance is neither translation invariant nor homogeneous, cf.\ Example \ref{ex:mom}.
\end{enumerate}\end{remark}

\section{Comparison of multivariance to other dependence measures}\label{sec:compare}
In this section we compare multivariance to other dependence measures for random vectors $X_i\in \R^{d_i}.$ We only consider dependence measures which are closely related, in the sense that they are also based on characteristic functions or appear as special cases.
In the papers introducing and discussing these measures comparisons with further dependence measures can be found. 

Recall that multivariance (squared), $M^2(X_1,\ldots,X_n)$, is structurally of the form 
\begin{equation}\label{eq:mv-structure}
 \int \left|\E\left(\prod_{i=1}^{n} (\ee^{\ii X_it_i} - f_{X_i}(t_i))\right) \right|^2\, \rho(dt) = \E\left[ \prod_{i=1}^n \Psi_i(X_i,X_i')\right] 
\end{equation}
with $\Psi_i$ given in \eqref{eq:Psi} and $(X_1',\ldots,X_n')$ is an independent copy of $(X_1,\ldots,X_n)$.

\subsection{Classical covariance, Pearson's correlation and the RV coefficient are limiting cases of multivariance} \label{sec:Cov-RV-Pearson}
Let $n=2$ and $\psi_i(x_i) = |x_i|^2$. Note that $|.|^2$ is not characterizing in the sense of Proposition \ref{prop:fullsupport}. It actually does not correspond to a L\'evy measure, cf.\ \cite[Table 1]{BoetKellSchi2018}. Thus the characteristic function representation (left hand side of \eqref{eq:mv-structure}) does not hold and a value 0 of the right hand side does not characterize independence. Nevertheless, $|.|^2$  is a continuous negative definite function and it is the limit for $\alpha_i \uparrow 2$ of $|.|^{\alpha_i}$ which are valid for multivariance. Moreover, the right hand side of \eqref{eq:mv-structure} is also for $|.|^2$  well defined, and it corresponds to classical linear dependence measures: Hereto denote by $X_{i,k}$ the components of the vectors $X_i$, i.e., $X_i = (X_{i,1},\ldots,X_{i,d_i})$ where $X_{i,k} \in \R$. By direct (but extensive calculation) the expectation representation in \eqref{eq:mv-structure} of $M^2(X_1,X_2)$ with $\psi_i(x_i) = |x_i|^2 = \sum_{k=1}^{d_i} x_{i,k}^2$ simplifies to
\begin{equation}
\sum_{k=1}^{d_1} \sum_{l=1}^{d_2} (2\Cov(X_{1,k},X_{2,l}))^2.
\end{equation}
Especially for $d_1 = d_2 = 1$ the (absolute value of) classical covariance is recovered.
Note that for $n=2$ and $\psi_i(.) = |.|^2$ the scaling constants \eqref{eq:normalizedm}, \eqref{eq:multicorr} and \eqref{eq:Mcor} become $2\Var(X_i)$, thus normalized multivariance coincides in this setting with both multicorrelations and with the absolute value of classical correlation. For arbitrary $d_1$ and $d_2$ the multicorrelations (squared) also coincide with the extension of correlation to random vectors developed in \cite{Esco1973}. The corresponding sample versions $\hN \Mskript, \hN \Rskript$ and $\hN \Mcor$ coincide for $d_1=d_2=1$ with (the absolute value of) Pearson's correlation coefficient and $\hN \Rskript^2$ and $\hN \Mcor^2$ coincide for arbitrary $d_1$ and $d_2$ with the RV coefficient of \cite{RobeEsco1976} (see also \cite{JossHolm2016}).

Note that also for $n>2$ the right hand side of \eqref{eq:mv-structure} with $\psi_i(x_i) = |x_i|^2$ is a well defined expression, which can be understood as an extension of covariance, Pearson's correlation and the RV coefficient to more than two random vectors.

\subsection{Multivariance unifies distance covariance and HSIC} \label{sec:unify-dcov-hsic}
In the case of two random variables (that is, $n=2$) multivariance coincides with generalized distance covariance \cite{BoetKellSchi2018} and the following (simplified) representations hold (using \cite[Eq.\ (30)]{BoetKellSchi2018}, direct calculations, \cite[Eq.\ (3.2)]{JossHolm2016} and the notation $\overline{\psi} = 1-\psi$)
\begin{align}
 M^2(X_1,X_2) &= \iint |f_{(X_1,X_2)}(t_1,t_2) - f_{X_1}(t_1)f_{X_2}(t_2)|^2 \, \rho_1(dt_1)\rho_2(dt_2) = \E\left[ \prod_{i=1}^2 \Psi_i(X_i,X_i')\right]\\
\label{eq:hsic} &= \E\left[\prod_{i=1}^2 (\overline{\psi_i}(X_i-X_i')) \right] - 2 \E\left[ \prod_{i=1}^2 \E[\overline{\psi_i}(X_i-X_i')\mid X_i]\right] +  \prod_{i=1}^2 \E\left[\overline{\psi_i}(X_i-X_i')\right]\\
\label{eq:dcov} &= \E\left[\prod_{i=1}^2 \psi_i(X_i-X_i') \right] - 2 \E\left[ \prod_{i=1}^2 \E[\psi_i(X_i-X_i')\mid X_i]\right] +  \prod_{i=1}^2 \E\left[\psi_i(X_i-X_i')\right]\\
&= \Cov\left(\psi_1(X_1-X_1'), \psi_2(X_2-X_2')\right) - 2 \Cov\left( \psi_1(X_1-X_1'), \psi_2(X_2-X_2'')\right). 
\end{align}
The last line is included to emphasize that further interesting representations exist - this one actually provides a characterization of independence using (the classical linear dependence measure) covariance. Other equivalent representations are Brownian distance covariance \cite[]{SzekRizz2009} (for $\psi(.) = |.|$) and its generalization Gaussian distance covariance \cite[Section 7]{BoetKellSchi2018}.

Note that \eqref{eq:dcov} is for $\psi_i(x_i) = |x_i|^{\alpha_i}$ distance covariance \cite[]{SzekRizzBaki2007} and \eqref{eq:hsic} is for $\psi_i(x_i) = 1 - e^{-\delta_i\tilde \psi_i(x)}$ (where $\tilde\psi_i$ can be any continuous real-valued negative definite function, e.g.\ $|.|^{\alpha_i}$, and $\delta_i>0$) the Hilbert Schmidt Independence Criterion (HSIC, \cite{GretFukuTeoSongScho2008}) with kernel $k_i(x,y) = e^{-\delta_i\tilde \psi_i(x-y)}$.\footnote{HSIC (and dHSIC in \cite{PfisBuehSchoPete2017}) require bounded, continuous, symmetric, positive definite kernels $k_i$. If $k_i$ is additionally translation invariant, then $k_i(x_i,x_i') = k_i(x_i-x_i',0) =: \phi(x_i-x_i')$ and $\phi$ is a continuous positive definite function. For the non translation invariant case see Section \ref{sec:extensions}. Moreover, note that we assume here $\phi(0)=1$ to avoid distracting constants in the presentation. HSIC and dHSIC additionally require that the kernel is characterizing, which is by Proposition \ref{prop:fullsupport} equivalent to the full support property of $\rho.$} For the latter just note that for any continuous positive definite function $\phi$ the function $\phi(0)-\phi$ is continuous negative definite (cf.\ \cite[Corollary 3.6.10]{Jaco2001}), i.e., it fits into the framework of multivariance. The equivalence of kernel based approaches and distance based approaches was noted in \cite{SejdSripGretFuku2013}, see also \cite{ShenVoge2018} for a recent discussion. But note that the approach in \cite{SejdSripGretFuku2013} to the correspondence of kernels and distance functions only works for the case $n=2$, whereas the above correspondence also extends to the multivariate setting.

In other words, in the case $n=2$ multivariance with bounded measures $\rho_i$ coincides with HSIC and special cases of unbounded $\rho_i$ yield distance covariance. Therefore, in general, multivariance is an extension of these measures to more than two variables. But note that there is also at least one alternative extension as we will discuss in the next section.

As discussed in Remark \ref{rem:choosepsi}, the cases with bounded measures have the advantage that most moment conditions are trivially satisfied and that in the case of HSIC the parameters $\delta_i$ provide a somehow natural bandwidth selection parameter. In contrast, using unbounded measures $\rho_i$ corresponding to $|.|^{\alpha_i}$ provide (scaled) measures with superior invariance properties (Propositions \ref{prop:inv} and \ref{prop:scaleinv}). Note, that also in this case the parameters $\alpha_i$ offer some variability. 

As a side remark, note that by the above it is straight forward that multivariance with $\tilde \psi_i$ is the derivative (in the bandwidth parameter at $\delta_i=0$) of multivariance corresponding to $1 - e^{-\delta_i\tilde \psi_i(x)}$, this relation of distance covariance and HSIC was noted in \cite{BiloNang2017}. Incidentally, it is also the key for relating L\'evy processes to their generators, e.g.\ see the introduction of \cite{BoetSchiWang2013}.

Finally note that also the other measures discussed in the next section reduce for the case $n=2$ to the above setting, thus they are included (or closely related as \cite{JinMatt2018}, which considers a joint measure $\rho$ without product structure).

\subsection{Independence of more than two random vectors} \label{sec:compare-3}

As a consequence of Theorem \ref{thm:indep} the multivariances of all subfamilies of the variables $X_1,\ldots,X_n$ characterize jointly their independence. In fact, this was suggested in \cite{BiloNang2017} as an approach to independence via multiple testing, i.e., via computing the p-value for each of these $2^n-n-1$ multivariances separately. The approach is complementary to the global test using total multivariance.

In \cite{BiloNang2017} multivariance is considered in disguise: expanding the integrand of \eqref{eq:mv-structure} and using the linearity of the expectation yields $\E\left(\prod_{i=1}^{n} (\ee^{\ii X_it_i} - f_{X_i}(t_i))\right) = \sum_{S\subset\{1,\ldots n\}} \E(\prod_{i\in S} (\ee^{\ii X_it_i}) \prod_{i \in S^c}  (- f_{X_i}(t_i))$. This representation of the product is also called M\"obius transformation of the characteristic functions. Without the characteristic function representation (with $\psi_i$ based on kernels $k_i$) the multivariance of 3 random variables appeared before under the name ``(complete) Lancaster interaction'' in \cite{SejdGretBerg2013}. 

Other popular multivariate dependence measures based on characteristic functions are of the following form, which is here stated using our setting (with  the notation $\overline{\psi} = 1-\psi$ and $\rho_i(\R^{d_i}) = 1$; to reformulate it for positive definite kernels use the correspondence provided in Section \ref{sec:unify-dcov-hsic}): 
\begin{subequations} \label{eq:kan-structure}
\begin{align} 
\label{eq:kan-structurecf} \int \Bigg|\E\left[\prod_{i=1}^{n}  \ee^{\ii X_it_i}\right] - \prod_{i=1}^{n} f_{X_i}(t_i) \Bigg|^2\,& \rho(dt) \\
\label{eq:kan-structureE}= & \ \E\left[\prod_{i=1}^n (\overline{\psi_i}(X_i-X_i')) \right] - 2 \E\left[ \prod_{i=1}^n \E[\overline{\psi_i}(X_i-X_i')\mid X_i]\right] +  \prod_{i=1}^n \E\left[\overline{\psi_i}(X_i-X_i')\right].
\end{align}
\end{subequations}
Such dependence measures go back at least to \cite[(1.3)]{Kank1995}. It is important to note that the equality in \eqref{eq:kan-structure} does not hold in general for unbounded measures $\rho_i$, e.g.\ for $n=3$, $X_1,X_2$ dependent (satisfying \eqref{d1}) and $X_3$ constant the term \eqref{eq:kan-structurecf} is infinite but \eqref{eq:kan-structureE} is finite. Nevertheless, dependence measures of type \eqref{eq:kan-structure} for $\rho = \otimes_{i=1}^n\rho_i$ with bounded and unbounded $\rho_i$ were recently discussed in \cite{FanMichPeneSalo2017} (in the unbounded case \cite[Lemma 1a]{FanMichPeneSalo2017} only provides a rather complicated sample version, which actually corresponds to \eqref{eq:kan-structureE}, a proof can be found in Section \ref{sec:fanstruct}), for finite $\rho_i$ representation \eqref{eq:kan-structure} corresponds to the also recently introduced measure dHSIC of \cite{PfisBuehSchoPete2017} and for an unbounded (joint measure) $\rho$ associated to $\psi(.)=|.|$ it was considered in \cite{JinMatt2018} (in this case \eqref{eq:kan-structureE} has a slightly different form).

The above illustrates that also for measures derived via \eqref{eq:kan-structure} various approaches can be unified using the framework of continuous negative definite functions and L\'evy measures.

To compare \eqref{eq:kan-structure} with multivariance, note that in \cite[Section 3.5]{BoetKellSchi2019} it was shown that for any given multivariance there exist special kernels (beyond the restrictions of the above papers) which turn \eqref{eq:kan-structureE} into multivariance. With the usual kernels the following holds:  $ \E\left(\prod_{i=1}^{n} (\ee^{\ii X_it_i} - f_{X_i}(t_i))\right) = \E\left[\prod_{i=1}^{n}  \ee^{\ii X_it_i}\right] - \prod_{i=1}^{n} f_{X_i}(t_i)$ if the given random variables are $(n-1)$-independent \cite[Corollary 3.3]{BoetKellSchi2018}. Thus the left hand sides of \eqref{eq:mv-structure} and \eqref{eq:kan-structure} coincide in the case of $(n-1)$-independence. Without $(n-1)$-independence multivariance does not characterize independence, but total multivariance $\overline M (X_1,\ldots,X_n)$, given by 
\begin{equation}\label{eq:tm-struct}
\sum_{\substack{S\subset\{1,\dots,n\}\\ |S|>1}} \int \left|\E\left(\prod_{i\in S} (\ee^{\ii X_it_i} - f_{X_i}(t_i))\right) \right|^2\, \underset{i\in S}{\otimes} \rho_i(dt_i) = \E\left(\prod_{i=1}^n (\Psi_i(X_i,X_i') + 1)\right) - 1,
\end{equation}
does characterize independence.

The approach \eqref{eq:kan-structure} and total multivariance \eqref{eq:tm-struct} require similar moment conditions\footnote{Based on the method of proof and based on the focus of the papers (sample or population versions; bounded or unbounded $\psi_i$) the stated conditions differ. But it seems a reasonable guess that these can be unified to those of multivariance, cf.\ the discussion in the proof of Theorem \ref{thm:convergence}.} (the variant in \cite{JinMatt2018} requires a joint first moment) and the computational complexity of the sample versions is similar (the variant in \cite{JinMatt2018} has a higher complexity, but they also provide an approximate estimator with the same complexity). Total multivariance needs one product of doubly centered distance matrices whereas \eqref{eq:kan-structureE} needs three products of different distance matrices (which actually coincide with those used for the double centering). Nevertheless, both approaches differ: In the Section \ref{sec:diffHSICtm} we calculate explicitly the difference of the population measures for the case $n=3$, indicating that it is by no means theoretically obvious which approach might be more advantageous. Here certainly further investigations are required. A practical difference is the fact that the current implementation of dHSIC \cite[]{PfisPete2019} requires $N>2n$, for multivariance there is no such an explicit restriction.

Generally, papers on dependence measures differ not only in their measures, but also in their methods of testing. For the approach \eqref{eq:kan-structure} various methods have been proposed, of which the resampling method seems most popular. For multivariance we introduce the resampling method in Section \ref{sec:test}. But there are also further (and faster) methods available for multivariance: Distribution free tests are used in \cite{BoetKellSchi2019} (see Theorem \ref{thm:testqf}) and in \cite{BersBoet2018v2} tests based on moments of the finite sample or limit distribution and/or using eigenvalues of the associated Gaussian process are developed, see also \cite{Guet2017}.

\subsection{Pairwise independence}

In Section \ref{sec:mmulti} we introduce $m$-multivariance. In particular, $2$-multivariance provides a global test for pairwise independence without any condition (when using bounded $\psi_i$) or under the mild moment condition \eqref{eq:mom1-log}, see Test \ref{testD}. A related approach to pairwise independence using distance covariance was developed in \cite{YaoZhanShao2017}, but in contrast it required assumptions which are necessary for applications of a (generalized) central limit theorem. The methods are compared in Example \ref{ex:YaoZhanShao2017}.

\subsection{Generalizations} \label{sec:extensions}

The setting of HSIC and also extensions of distance covariance are applicable to more general spaces than $\R^d$. In this settings the representation via characteristic functions and the characterization of independence (might) fail. Nevertheless, the representations given in \eqref{eq:M} can canonically be extend to negative definite kernels $n(x_i,x_i')$ replacing $\psi(x_i-x_i').$ Thus it seems a natural guess that the key properties required for testing can be recovered in this setting, but to our knowledge this has not been studied yet.

For distance covariance exist also further modifications, like the affinely invariant distance correlation in \cite{Edel2015}. Also this extension seems possible for multivariance. It is only defined for random vectors with non singular covariance matrices and in this setting it would be a candidate to satisfy the set of axioms given in the next section \cite[Example 3]{MoriSzek2018}.

\subsection{Axiomatic classification of dependence measures} \label{sec:axioms}

R\'enyi \cite{Reny1959} proposed in 1959 a set of axioms which a dependence measure should satisfy. These have been challenged over the years, most recently e.g.\ in \cite{MoriSzek2018}. They propose ``four simple axioms'' which a dependence measure $d$ should satisfy, and distance correlation is called the ``simplest and most appealing'' measure which satisfies these axioms. All axioms were proposed for pairwise comparisons of random variables or vectors. We present here a multivariate extension to $n$ non-constant random vectors (constants are removed to avoid technical difficulties, cf.\ \cite{MoriSzek2018}):
\begin{itemize}
\item[(A1)] \textit{characterization of independence}: $d(X_1,\ldots,X_n) = 0$ if and only if $X_i$ are independent.
\item[(A2)] \textit{invariance}: $d(X_1,\ldots,X_n) = d(S_1(X_1),\ldots,S_n(X_n))$ for all similarity transforms\footnote{A \textbf{similarity transform} is any combination of translations, rotations, and reflections and non zero scalings (using the same scaling factor for all components of a vector), cf.\ \cite{MoriSzek2018}.} $S_i$.
\item[(A3)] \textit{reference value}: $d(X_1,\ldots,X_n)= 1$ if $X_1,\ldots,X_n$ are related by similarity transforms (see \eqref{eq:similarityrel} for details).
\item[(A4)] \textit{continuity}: $d(X_1^{(k)},\ldots,X_n^{(k)}) \xrightarrow{k\to \infty} d(X_1,\ldots,X_n)$, if $(X_1^{(k)},\ldots,X_n^{(k)})$ converges in distribution to $(X_1,\ldots,X_n)$ (under a uniform moment condition, which ensures the finiteness of the measures).
\end{itemize}
Note that \cite{MoriSzek2018} uses a further common axiom --- \textit{normalization}: $d(...) \in[0,1]$ --- which was only indirectly assumed and (A3) was stronger: it contained ``if and only if'' with a seemingly more restrictive relation which actually forced explicitly the dimensions of the random vectors to be identical. Note that in the related (original) axiom \cite[Axiom E]{Reny1959} also only the ``if'' part was required and a footnote explicitly advised against strengthening it.

In the setting of multivariance we say that random variables $X_i$ and $X_k$ are related by similarity transforms $S_i$ and $S_k$ if
\begin{equation}\label{eq:similarityrel}
\psi_i(S_i(X_i)-S_i(X_i')) = \psi_k(S_k(X_k) - S_k(X_k')).
\end{equation}

A prerequisite for the continuity (A4) is the finiteness of the measure $d$, cf.\ \cite{MoriSzek2018}. Thus all considerations for (normalized) multivariance are under the moment condition \eqref{d1} and for the multicorrelation we have to assume \eqref{eq:mom-n}.   
Based on Propositions \ref{prop:inv} and \ref{prop:scaleinv} the invariance with respect to similarity transforms holds for $\psi(x) = |x|^\alpha$, and it seems (cf.\ \cite[Section 5]{BoetKellSchi2018}) that for other unbounded and all bounded $\psi$ the invariance fails. Therefore we only consider $\psi(x) = |x|^\alpha$. Table \ref{tab:axioms} indicates which axioms are satisfied by the measures, all properties follow by direct calculations (the continuity uses the dominated convergence theorem; for the normalization a generalized H\"older inequality is used, see also \cite[Proposition 4.13]{BoetKellSchi2019}). For multicorrelation the properties vary as the number of variables is even or odd, and $\mathcal{R}$ yields always a measure with values in $[0,1]$ whereas $\Mcor$ yields always the reference value 1 for variables related by similarity transforms. Note that for a multivariate normal distribution the value of total distance multivariance is (for the special case $\psi(x)=|x|$) linked to its correlation by \cite[Proposition 2]{ChakZhan2019}.

By Table \ref{tab:axioms} the four axioms are simultaneously satisfied by $\overline{\Mcor}$. But recall that $\overline{\Rskript}$ and $\overline{\Mcor}$ lack efficient sample versions. In the sample setting also $N\cdot \hN\Mskript^2$ and $N\cdot \hN\overline\Mskript^2$ provide statistical interpretable values (indirectly: via the corresponding p-value; yielding also a rough direct interpretation: they are positive and their expectation is 1 for independent random variables. Thus values much larger than one hint at dependence). Moreover, normalized multivariance requires only the moment condition \eqref{d1} whereas multicorrelation requires the more restrictive condition \eqref{eq:mom-n}. Finally, note that in the case $n=2$ the multicorrelations coincide. Thus, in particular, $\Mcor_2$ (defined in Section \ref{sec:mmulti}) provides a measure with efficient sample estimator. For this measure a value of 0 only characterizes pairwise independence, but the value 1 occurs if and only if the random variables are related by similarity transforms.

A first discussion of the behavior of (total) multivariance when one enlarges the family of random variables can be found in \cite[Proposition 3.7, Remark 3.8]{BoetKellSchi2019}, which translates directly to multicorrelation. 

\newcommand{\yes}{\checkmark}
\newcommand{\no}{---}

\begin{table}
\caption{Dependence measure axioms which are satisfied by (variants) of (total) multi\-variance for $\psi_i(x_i) = |x_i|^{\alpha_i}$ with $\alpha_i \in (0,2)$.}
\centering
\begin{tabular}{|l|c|c|c|c|c|}
	\hline
	 \phantom{multiva} axioms &      (A1)       &    (A2)    &      (A3)       &    (A4)    &               \\
	             & characterization of independence & invariance &  reference value & continuity &  normalization\\

 \hline 
	multivariance                             &                 &            &                 &            &               \\
	$M$                                       &      $n=2$      &    \no     &       \no       &    \yes    &      \no      \\
	$\overline{M}$                            &      \yes       &    \no     &       \no       &    \yes    &      \no      \\ \hline
	normalized                  &                 &            &                 &            &               \\
multivariance & & & & & \\
	$\Mskript$                                &      $n=2$      &    \yes    &       \no       &    \yes    &      \no      \\
	$\overline{\Mskript}$                     &      \yes       &    \yes    &       \no       &    \yes    &      \no      \\ \hline
	multicorrelation                          &                 &            &                 &            &               \\
	$\mathcal{R}$                             &      $n=2$      &    \yes    &    $n$ even     &    \yes    &     \yes      \\
	$\overline{\mathcal{R}}$                  &      \yes       &    \yes    &     $n= 2$      &    \yes    &     \yes      \\
	$\Mcor$                                   &      $n=2$      &    \yes    &      \yes       &    \yes    &   $n$ even    \\
	$\overline{\Mcor}$                        &      \yes       &    \yes    &      \yes       &    \yes    &    $n = 2$    \\ \hline
	2-multivariance   & char. of pairwise independence &            &  (A3) and iff   &            &               \\
(Section \ref{sec:mmulti}) & & & & & \\
 	$M_2$                                     &      \yes       &    \no     &       \no       &    \yes    &      \no      \\

	$\Mskript_2$                              &      \yes       &    \yes    &       \no       &    \yes    &      \no      \\
	$ \Mcor_2\ (= \mathcal{R}_2 )$                 &      \yes       &    \yes    &      \yes       &    \yes    &     \yes      \\ \hline
\end{tabular}
\label{tab:axioms}
\end{table}

\section{Testing independence using multivariance} \label{sec:test}

In this section we extend the discussion of \cite[Section 4.5]{BoetKellSchi2019}. We use the notation of Section \ref{sec:defs}, in particular $\bm{x}^{(k)}= (x_1^{(k)},\ldots,x_n^{(k)})$ are samples of independent copies of $(X_1,\ldots,X_n).$ Based on Theorem \ref{thm:convergence}, and recalling the fact that constant random variables are always independent, the following structure of a test for independence is obvious.

\begin{test}[Test for $n$-independence, given $(n-1)$-independence] \label{testA} 
Let $\psi_i$ be bounded or \eqref{eq:mom1-log} be satisfied. Then a test for independence is given by: Reject $n$-independence if $X_1,\ldots,X_n$ are $(n-1)$-independent and 
\begin{equation} \label{eq:testA}
N \cdot \hN \Mskript^2(\bm{x}^{(1)},\ldots,\bm{x}^{(N)}) > R.
\end{equation}
The value $R$ will be discussed below.
\end{test}
\begin{remark} \label{rem:testA}
Note that also without the assumption of $(n-1)$-independence \eqref{eq:testA} provides a test for independence for which the type I error can be controlled by the choice of $R$, since the distribution of the test statistic under the hypothesis of independence is known, see \eqref{eq:estimator-convergence}. But in this case it is unknown if the test statistic diverges if the hypothesis does not hold. Thus one can not control the Type II error and it will not be consistent against all alternatives (regardless of the satisfied moment conditions). A trivial example hereto would be the case where one random variable is constant, and thus the test statistic is always 0. But note that with the assumption of $(n-1)$-independence this problem does not appear, since the $(n-1)$-independence implies (given that at least one random variable is constant) that the random variables are independent.
\end{remark}
Analogous to Test \ref{testA}, using total multivariance instead of multivariance, one gets the test for independence.
\begin{test}[Test for ($n$-)independence] \label{testB} Let $\psi_i$ be bounded or \eqref{eq:mom1-log} be satisfied. Then a test for independence is given by: Reject independence if
\begin{equation} \label{eq:testtotal}
\Mfrac{N \cdot \hN \overline{\Mskript}^2(\bm{x}^{(1)},\ldots,\bm{x}^{(N)})}{2^n-n-1} > R.
\end{equation}
\end{test}

To get a test with significance level $\alpha \in (0,1)$  the natural choice for $R$ in \eqref{eq:testA} and \eqref{eq:testtotal} is the $(1-\alpha)$-quantile of the (limiting) distributions of the test statistics under $H_0$, i.e., assuming that the $X_i$ are independent. To find this distribution explicitly or at least to have good estimates is non trivial, see \cite{BersBoet2018v2} for an extensive discussion. As a starting point, one can follow \cite[Theorem 6]{SzekRizzBaki2007} where a general estimate for quadratic forms of Gaussian random variables given in \cite{SzekBaki2003} is used to construct a test for independence based on distance covariance. In our setting this directly yields the following result.

\begin{theorem}[Rejection level for the distribution-free tests] \label{thm:testqf}
Let $\alpha \in (0,0.215]$. Then Test \ref{testA} and \ref{testB} with
\begin{equation} \label{def:R}
R := F_{\chi_1^2}^{-1}(1-\alpha)
\end{equation}
are (conservative) tests with significance level $\alpha$. Here $F_{\chi_1^2}$ is the distribution function of the Chi-squared distribution with one degree of freedom.
\end{theorem}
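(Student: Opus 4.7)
The plan is to invoke Theorem \ref{thm:convergence} to identify the limiting null distribution and then apply the general Gaussian quadratic-form tail bound of \cite{SzekBaki2003}, exactly as was done by \cite{SzekRizzBaki2007} for distance covariance.

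First I would note that under the hypothesis of independence the conditions of Theorem \ref{thm:convergence} imply, respectively, that $N\cdot \hN\Mskript^2 \xrightarrow{d} Q$ and $\tfrac{N\cdot \hN\overline{\Mskript}^2}{2^n-n-1} \xrightarrow{d} \overline{Q}$, where $Q$ and $\overline{Q}$ are Gaussian quadratic forms with $\E Q = 1 = \E\overline{Q}$. In particular both limits are non-negative and have expectation $1$, which is the structural property that will enable the Gaussian tail bound.

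Next I would recall the Székely--Bakirov inequality \cite{SzekBaki2003}: for any non-negative quadratic form $\xi$ in centered Gaussian random variables with $\E\xi = 1$ and for every $\alpha \in (0, 0.215]$,
\begin{equation*}
\Prob\bigl(\xi > F_{\chi_1^2}^{-1}(1-\alpha)\bigr) \leq \alpha.
\end{equation*}
Applying this to $\xi = Q$ and $\xi = \overline{Q}$ gives, via the Portmanteau theorem (the set $(R,\infty)$ is a continuity set of the absolutely continuous limits $Q$ and $\overline{Q}$),
\begin{equation*}
\limsup_{N\to\infty} \Prob\bigl(N\cdot\hN\Mskript^2 > R\bigr) \leq \alpha,\qquad \limsup_{N\to\infty} \Prob\!\left(\tfrac{N\cdot \hN\overline{\Mskript}^2}{2^n-n-1} > R\right) \leq \alpha,
\end{equation*}
for $R = F_{\chi_1^2}^{-1}(1-\alpha)$. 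This is the desired conservative-level statement for Test \ref{testB} directly, and for Test \ref{testA} upon observing that its null hypothesis (independence given $(n-1)$-independence) is a subset of the hypothesis of full independence used above, so the same bound applies.

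The main obstacle is really just the invocation of \cite{SzekBaki2003}, which is the only non-routine input; once one has (i) the limit distribution is a Gaussian quadratic form and (ii) its expectation equals $1$, the theorem follows immediately. The restriction $\alpha \leq 0.215$ is inherited from that inequality, since outside this range the $\chi_1^2$ quantile is no longer guaranteed to dominate the quantile of an arbitrary normalized non-negative Gaussian quadratic form. No further moment work is needed beyond what was already established in Theorem \ref{thm:convergence}, so there is nothing to prove about the dependent case here: the significance level only concerns the behaviour under $H_0$.
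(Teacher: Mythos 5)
Your proposal is correct and follows exactly the route the paper takes: the paper states the theorem as a direct consequence of the convergence to Gaussian quadratic forms with expectation $1$ in Theorem \ref{thm:convergence} combined with the quadratic-form quantile bound of \cite{SzekBaki2003}, precisely as in \cite[Theorem 6]{SzekRizzBaki2007}. Your added details (the Portmanteau step on the continuity set $(R,\infty)$ and the observation that the null of Test \ref{testA} is covered by the independence case) only make explicit what the paper leaves implicit.
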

In the case of univariate Bernoulli random variables the significance level $\alpha$ is achieved (in the limit) by Test \ref{testA} with $R$ given in \eqref{def:R}, see \cite[Remark 4.27]{BersBoet2018v2}. But for other cases it might be very conservative, e.g.\ Example \ref{ex:variousm} (Figure \ref{fig:mdist-marginal}). Recall that total multivariance is the sum of $2^n-n-1$ distance multivariances (this is the number of summands in \eqref{eq:totaldm}). Thus one distance multivariance with a large value might be averaged out by many small summands, see Example \ref{ex:averaging}. Hereto $m$-multivariance (which will be introduced in the next section) provides an intermediate remedy. It is also the sum of multivariances, but it has less summands. Thus the 'averaging out' (also known as 'statistical curse of dimension') will be still present but less dramatic.

Note that $R$ in Theorem \ref{thm:testqf} is provided by a general estimate for quadratic forms. It yields in general conservative tests, since it does not consider the specific underlying (marginal) distributions. Less conservative tests can be constructed if the distributions are known or by estimating these distributions. The latter can be done by a resampling approach or by a spectral approach, similarly to the case of distance covariance (see \cite[Section 7.3.]{SejdSripGretFuku2013}). Methods related to the spectral approach are developed in \cite{BersBoet2018v2}.

In the following the resampling approach for $\mathcal{M}$ is introduced. The procedure is certainly standard to experts, never the less it seems important to recall it (to avoid ambiguity): Suppose we are given i.i.d.\  samples\footnote{Here we use a common abuse of terminology: An \textit{independent sample} is a sample based on independent random variables. Analogously, an i.i.d.\  (independent and identical distributed) sample, is a sample of i.i.d.\  random variables. Moreover, note that here the random variables are in general random vectors with possibly dependent components.} $\bm{x}^{(1)},\ldots,\bm{x}^{(N)}$ with unknown dependence, i.e., for each $i$ the dependence of the components $x^{(i)}_1,\ldots,x^{(i)}_n$ is unknown. Now, resampling each component separately yields (almost) independent components. Thus Test \ref{testA} (respectively Test \ref{testB} with $\overline{\Mskript}$) becomes a \textbf{resampling test} (resampling without replacement / permutation test) with $L \in \N$ replications using the rejection level $R$ given by
\begin{equation}\label{eq:Rresampling}
R_{\text{rs}} := Q_{1-\alpha}\left(\left\{N\cdot \hN \Mskript^2\left(x_1^{(p^{(l)}_1(i))},\ldots, x_n^{(p^{(l)}_n(i))},\ i=1,\ldots,N \right), \quad l = 1,\ldots,L \right\}\right)
\end{equation}
where each $p^{(l)}_k(1),\ldots, p^{(l)}_k(N)$ is a random permutation of $1,\ldots,N$ (and these are i.i.d.\ for $k=1,\ldots,n$ and $l = 1,\ldots,L$) and $\bm{x}^{(i)} = (x^{(i)}_1,\ldots,x^{(i)}_n)$ are the samples given for the test. Here $Q_{1-\alpha}(S)$ denotes the empirical $(1-\alpha)$-quantile of the samples in the set $S$. Instead of random permutations one could allow $p^{(l)}_k(1),\ldots, p^{(l)}_k(N)$ to be any sample of $1,\ldots,N$, this would also be a resampling test (resampling with replacement / bootstrap test), but note that the permutation test can be implemented more efficiently.  Similarly, Test \ref{testA} (respectively Test \ref{testB} with $\overline{\Mskript}$) becomes a \textbf{Monte Carlo test} with $L \in N$ replications using 
\begin{equation}\label{eq:RMC}
R_{\text{MC}} := Q_{1-\alpha}\left(\left\{N \cdot \hN \Mskript^2(x_1^{(i,l)},\ldots, x_n^{(i,l)},\ i=1,\ldots,N ), \quad l = 1,\ldots,L \right\}\right)
\end{equation}
where $x_k^{(i,l)}, k=1,\ldots,n, i=1,\ldots,N, l=1,\ldots,L$ are independent samples and for each fixed $k$ the $x_k^{(i,l)},  i=1,\ldots,N, l=1,\ldots,L$ are i.i.d.\  samples of $X_k$.

\begin{remark}
In \cite[Section 3.2]{PfisBuehSchoPete2017} two related resampling tests are introduced for dHSIC. But note that they use slightly different terminology, i.e., therein the 
'permutation test' considers samples as in \eqref{eq:Rresampling} but instead of random permutations \textit{all} permutations are considered. For the 'bootstrap test' they use \textit{all} resamplings of the sample distribution of each variable. This yields $L = (N!)^n$ and $L = N^{Nd}$, respectively. Which is infeasible even for relatively small $N$, thus in \cite[Section 4.2]{PfisBuehSchoPete2017} they also use randomly selected samples instead of \textit{all} samples, and they call the resulting estimators 'Monte-Carlo approximations' of the estimators. 
\end{remark}

\section{\lowercase{$m$}-multivariance} \label{sec:mmulti}
Pairwise independence is the prime requirement for various fundamental tools in stochastics, e.g.\ the classical law of large numbers. Especially when working with many variables ($n$ large) a multiple testing approach might not be feasible. Thus a global test for pairwise independence has many applications, see also the motivation in \cite{YaoZhanShao2017}. Here we construct such a test, together with further generalizations which allow the successive testing of 2-independence, 3-independence, etc.

Define for $m\in\{2,\ldots,n\}$ the \textbf{\boldmath$m$-multivariance} $M_{m}$ by
\begin{equation} \label{def:mmulti}
M_{m,\rho}^2(X_1,\ldots,X_n) :=  \sum_{1\leq i_1 < \ldots < i_m \leq n} M_{\otimes_{k=1}^m \rho_{i_k}}^2(X_{i_1},\ldots,X_{i_m}).
\end{equation}

Instantly Theorem \ref{thm:indep} yields the following characterization.

\begin{proposition}[Characterization of $m$-independence] \label{prop:mindep}
For random variables\\ $X_1,\ldots, X_n$ the following are equivalent:
\begin{enumerate}[i)]
\item $X_1,\ldots,X_n$ are $m$-independent,
\item $M_{m,\rho}(X_1,\ldots,X_n) = 0$ and $X_1,\ldots,X_n$ are $(m-1)$-independent.
\end{enumerate}
In particular, $M_2(X_1,\ldots,X_n) = 0$ characterizes pairwise independence.
\end{proposition}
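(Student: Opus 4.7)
The plan is to reduce Proposition \ref{prop:mindep} entirely to Theorem \ref{thm:indep}, applied sub-family by sub-family, exploiting the fact that by construction \eqref{def:mmulti} $M_{m,\rho}^2$ is a sum of non-negative multivariances indexed over all $m$-subsets of $\{1,\ldots,n\}$. The bookkeeping should be routine.

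For the direction (i) $\Rightarrow$ (ii) I would first observe that $m$-independence implies $(m-1)$-independence, since any $(m-1)$-subset is contained in some $m$-subset whose joint independence passes to its sub-collections. Next, fixing any $m$-subset $\{i_1<\cdots<i_m\}$, the joint independence of $X_{i_1},\ldots,X_{i_m}$ forces $\E\bigl(\prod_{k=1}^m (\ee^{\ii X_{i_k} t_{i_k}} - f_{X_{i_k}}(t_{i_k}))\bigr) = 0$ pointwise in $t$, so the integrand in the definition \eqref{def:multi} of $M_{\otimes_{k=1}^m \rho_{i_k}}(X_{i_1},\ldots,X_{i_m})$ vanishes identically. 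Summing over all $m$-subsets gives $M_{m,\rho}(X_1,\ldots,X_n) = 0$.

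For (ii) $\Rightarrow$ (i) I would use that a finite sum of non-negative quantities vanishes iff every summand vanishes, so $M_{\otimes_{k=1}^m \rho_{i_k}}(X_{i_1},\ldots,X_{i_m}) = 0$ for every $m$-subset. Meanwhile the ambient $(m-1)$-independence hypothesis restricts to any such $m$-subset and says that the restricted $m$-tuple is $(m-1)$-independent in its own right; this is exactly hypothesis (ii) of Theorem \ref{thm:indep} with $n$ replaced by $m$. Theorem \ref{thm:indep} then upgrades the tuple to full (joint) independence of $X_{i_1},\ldots,X_{i_m}$, and since the $m$-subset was arbitrary, the whole family is $m$-independent. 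The final sentence of the proposition follows at once because $1$-independence is vacuous (single random variables are trivially ``independent''), so for $m=2$ the condition reduces to $M_2(X_1,\ldots,X_n) = 0$ alone.

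I do not foresee any serious obstacle; the only point to state carefully is the implication ``$(m-1)$-independence of the full family $\Rightarrow$ $(m-1)$-independence of each $m$-subset'', which is immediate from the definition but is the precise ingredient needed to invoke Theorem \ref{thm:indep} on each subset.
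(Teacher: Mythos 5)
Your argument is correct and is precisely the spelled-out version of what the paper does: the paper offers no written proof, stating only that the characterization follows ``instantly'' from Theorem \ref{thm:indep}, and your subfamily-by-subfamily application of that theorem (together with the observation that a finite sum of non-negative multivariances vanishes iff each summand does) is exactly the intended reduction. No gaps.
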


As in the case of multivariance, using \eqref{def:smulti}, a strongly consistent estimator for $M_m$ is the \textbf{sample $\mathbf{m}$-multivariance}
\begin{align} \label{eq:mindepempirical}
\hN M_m(\bm{x}^{(1)},\ldots,\bm{x}^{(N)}) &= \sqrt{\sum_{1\leq i_1 < \ldots < i_m \leq n} \frac{1}{N^2} \sum_{j,k=1}^N (A_{i_1})_{jk} \cdot \ldots \cdot (A_{i_m})_{jk} }.
\end{align}
Analogous to the case of normalized (total) multivariance   the \textbf{normalized sample $\mathbf{m}$-multivariance} $\hN\Mskript_m$ is given by
\begin{equation} \label{eq:nsm-multi}
\hN \Mskript_m^2(\bm{x}^{(1)},\ldots,\bm{x}^{(N)}) = \genfrac(){0pt}{}{n}{m}^{-1}\sum_{1\leq i_1 < \ldots < i_m \leq n} \frac{1}{N^2} \sum_{j,k=1}^N (\Askript_{i_1})_{jk} \cdot \ldots \cdot (\Askript_{i_m})_{jk},
\end{equation}
where $\Askript_i$ are the normalized matrices defined in \eqref{eq:normalizedm}. For (sample) $m$-multivariance the invariance properties (Propositions \ref{prop:inv} and \ref{prop:scaleinv}) hold analogously. To ensure that the expectation representation of $m$-multivariance (analogous to \eqref{eq:M}) is finite the following condition (which is weaker than \eqref{d1}) is required:
\begin{align}
 \text{\textbf{finite joint $\psi$-moments for families of size $m$}: } 
\label{d1m}\text{ for all $S\subset\{1,\ldots,n\} $ with $|S| \leq m$: } \E \left( \prod_{i\in S} \psi_i(X_{i})\right) < \infty.
\end{align}

Note that the sum $\sum_{1\leq i_1 < \ldots < i_m \leq n}$ has $ \genfrac(){0pt}{}{n}{m} $ summands, which might be a lot to compute. These sums can be simplified using the multinomial theorem, $(A_1 + \ldots + A_n)^m = \sum_{k_1 + \ldots +k_n = m} \frac{m!}{k_1!\cdot\ldots\cdot k_n!} A_1^{k_1}\cdot\ldots\cdot A_n^{k_n}.$ In particular, for $m=2,3$ the following expressions of sample $m$-multivariance are easier to evaluate (analogous representations hold for the normalized sample $m$-multivariance): 
\begin{align}
\label{def:2mmulti} 
\hN M_2(\bm{x}^{(1)},\ldots,\bm{x}^{(N)}) &= \sqrt{ \frac{1}{2} \frac{1}{N^2}\sum_{k,l=1}^N \left( \left((A_1 + \ldots + A_n)_{kl}\right)^2 - \sum_{i=1}^n \left((A_i)_{kl}\right)^2 \right) },\\
\label{def:3mmulti}
\hN M_3(\bm{x}^{(1)},\ldots,\bm{x}^{(N)})
& = \sqrt{ \frac{1}{3} \frac{1}{N^2}\sum_{k,l=1}^N \left( \left(\left(\sum_{i=1}^n A_i\right)_{kl}\right)^3 - 3 \left(\sum_{i=1}^n A_i\right)_{kl} \sum_{i=1}^n \left((A_i)_{kl}\right)^2 + 2 \sum_{i=1}^n \left((A_i)_{kl}\right)^3 \right) }.
\end{align}
Thus at least for small $m$ these estimators are easy to compute and -- analogous to the case of (total) multivariance -- these can be used to test $m$-independence by the next results.

\begin{theorem}(Asymptotics of sample $m$-multivariance)\label{thm:convergencem} Let $X_i, i=1,\ldots,n$ be non-constant random variables and let $\bm{X}^{(k)}, k=1,\ldots,N$ be independent copies of $(X_1,\ldots,X_n)$. If either the $\psi_i$ are bounded or \eqref{eq:mom1-log} holds, then for $m \leq n$
\begin{align}
\label{eq:convergencem}\Mfrac{N \cdot \hN \Mskript_m^2(\bm{X}^{(1)},\ldots, \bm{X}^{(N)})}{{n \choose m}}  &\xrightarrow[N \to \infty]{d} Q  &\text{ if $X_1,\ldots,X_n$ are $m$-independent,}\\
\label{eq:mestimator-divergence} \Mfrac{N \cdot \hN \Mskript_m^2(\bm{X}^{(1)},\ldots, \bm{X}^{(N)})}{{n \choose m}}  &\xrightarrow[N\to\infty]{a.e.} \infty &\text{ if $X_1,\ldots,X_n$ are $m$-dependent but $(m-1)$-independent.}
\end{align}
where $Q$ is a Gaussian quadratic form with $\E Q = 1$.
\end{theorem}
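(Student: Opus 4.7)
The plan is to reduce Theorem~\ref{thm:convergencem} to Theorem~\ref{thm:convergence} applied separately on each $m$-subset. By the definition \eqref{eq:nsm-multi},
\begin{equation*}
\hN \Mskript_m^2(\bm{X}^{(1)},\ldots,\bm{X}^{(N)})
= \binom{n}{m}^{-1} \sum_{1\leq i_1<\ldots<i_m\leq n} \hN \Mskript^2_{\otimes_{k=1}^m\rho_{i_k}}\bigl((X_{i_1}^{(l)},\ldots,X_{i_m}^{(l)})_{l=1,\ldots,N}\bigr),
\end{equation*}
so everything comes down to controlling these $\binom{n}{m}$ subset statistics, which are built from the \emph{same} underlying sample.

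For \eqref{eq:convergencem}, $m$-independence forces every $m$-subset $(X_{i_1},\ldots,X_{i_m})$ to be jointly independent, so Theorem~\ref{thm:convergence}\eqref{eq:estimator-convergence} yields marginal convergence of each rescaled subset statistic to a Gaussian quadratic form with mean $1$. The crucial additional step is \emph{joint} weak convergence of all $\binom{n}{m}$ statistics. Both proof strategies underlying \eqref{eq:estimator-convergence} mentioned in the proof sketch of Theorem~\ref{thm:convergence} — empirical characteristic function convergence in $L^2(\rho)$, or degenerate V-statistics — realize the individual limits as quadratic forms in a single common Gaussian element indexed by $L^2(\rho)$, so the full vector of subset statistics converges jointly to a vector of quadratic forms in that \emph{same} Gaussian process. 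By the continuous mapping theorem the normalized sum converges in distribution to a Gaussian quadratic form $Q$, and linearity of expectation gives $\E Q = \binom{n}{m}^{-1}\sum_S \E Q_S = 1$.

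For \eqref{eq:mestimator-divergence}, the global $(m-1)$-independence hypothesis descends to every $m$-subset, while the assumption of $m$-dependence produces at least one $m$-subset $S^*=\{i_1^*,\ldots,i_m^*\}$ that is not jointly independent. Applying Theorem~\ref{thm:indep} inside $S^*$ (where we now have $(m-1)$-independence and non-independence) yields $M^2_{\otimes_{i\in S^*}\rho_i}(X_{i_1^*},\ldots,X_{i_m^*}) > 0$, hence $\Mskript_m^2(X_1,\ldots,X_n) > 0$ at the population level as a sum in which at least one summand is strictly positive. Strong consistency of each sample multivariance (the result cited after \eqref{eq:M}, matching the $\beta=0$ instance of Corollary~\ref{cor:betaM-consistency}) together with the structural identity above then gives $\hN\Mskript_m^2 \to \Mskript_m^2 > 0$ almost surely, and multiplying by $N\to\infty$ closes the divergence claim.

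The main obstacle is the joint weak convergence step in \eqref{eq:convergencem}: marginal convergence is automatic, but identifying the limiting weighted sum as a \emph{single} Gaussian quadratic form (rather than just as the distribution of a sum of marginally Gaussian-quadratic limits with unspecified dependence) requires that the convergence to the underlying Gaussian object be joint across all $m$-subsets. This is immediate from either route to \eqref{eq:estimator-convergence} since both operate on the same empirical process (respectively, the same V-statistic kernel) evaluated on the same sample, but it must be invoked explicitly rather than read off from marginal statements.
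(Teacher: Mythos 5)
Your treatment of the convergence \eqref{eq:convergencem} matches the paper's proof: marginal convergence of each of the $\binom{n}{m}$ summands via \eqref{eq:estimator-convergence}, the observation that the normalizing factor forces expectation $1$, and the identification of the limit as a \emph{single} Gaussian quadratic form because all subset limits are quadratic forms in the same underlying Gaussian object (the paper phrases this as all quadratic forms being expressible as stochastic integrals with respect to the same process, and handles the bounded case by running the V-statistic argument of Theorem \ref{thm:asymp-Vstat} on the summed kernel). You correctly flag joint convergence as the non-automatic step; that is exactly the point the paper makes.

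There is, however, a genuine gap in your divergence argument. You reduce to a subset $S^*$ that is dependent but $(m-1)$-independent and conclude via Theorem \ref{thm:indep} that its population multivariance is positive --- so far identical to the paper --- but you then invoke ``strong consistency'' $\hN\Mskript^2 \to \Mskript^2$ to pass to the sample version. The strong consistency cited after \eqref{eq:M} is \cite[Theorem 4.3]{BoetKellSchi2019}, which requires the joint $\psi$-moment condition \eqref{d1} (here \eqref{d1m}), i.e.\ finiteness of $\E\bigl(\prod_{i\in S}\psi_i(X_i)\bigr)$ for the relevant subsets. Under the hypotheses of Theorem \ref{thm:convergencem} this is guaranteed only when the $\psi_i$ are bounded: for unbounded $\psi_i$, condition \eqref{eq:mom1-log} controls only the \emph{marginal} first $\psi$-moments, and for dependent coordinates the joint moment (hence the expectation representation of the population $m$-multivariance, and the consistency theorem) can fail. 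Your parenthetical appeal to Corollary \ref{cor:betaM-consistency} does not help either, since that corollary is stated only for independent random variables. The paper closes exactly this hole by proving the divergence \eqref{eq:estimator-divergence} without any moment assumption, via Lemma \ref{lem:liminfNM} ($\liminf_{N}\hN M_\rho \geq M_\rho$, obtained by truncating $\rho$ and using monotone convergence); the divergence of $\hN\Mskript_m^2$ then follows because the summand for $S^*$ diverges and all other summands are non-negative. Your argument is repairable by substituting this liminf bound for the strong-consistency claim, but as written it does not cover the unbounded case permitted by the theorem.
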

\begin{proof}
Let the assumptions of Theorem \ref{thm:convergencem} be satisfied. 
Then \eqref{eq:convergencem} holds, since in this case \eqref{eq:estimator-convergence} implies the convergence of each of the $\genfrac(){0pt}{}{n}{m}$ summands of \eqref{eq:nsm-multi} to a Gaussian quadratic form with expectation $1.$ Thus, due to the normalizing factor in \eqref{eq:nsm-multi}, the limiting distribution has expectation 1. Further note that given \eqref{eq:mom1-log} all these quadratic forms can be expressed as a stochastic integral with respect to the same process, cf.\ \cite[Eq.\ (S.15)]{BoetKellSchi2019-supp}. This yields (by the same arguments as in the case of total multivariance \cite[Section 4.3]{BoetKellSchi2019}) that the limiting distribution is in fact the distribution of a Gaussian quadratic form. If all $\psi_i$ are bounded, a proof analogous to the one for the convergence of total multivariance in Theorem \ref{thm:asymp-Vstat} shows the result.

The divergence \eqref{eq:mestimator-divergence} follows by \eqref{eq:estimator-divergence}. The latter implies under the given assumptions that at least one summand of \eqref{eq:nsm-multi} diverges.
\end{proof}

Analogous to the case of (total) multivariance the above theorem immediately yields a test for $m$-independence which is (under the given moment conditions) consistent against all alternatives.

\begin{test}[Test for $m$-independence, given $(m-1)$-independence] \label{testC} 
If either the $\psi_i$ are bounded or \eqref{eq:mom1-log} holds, then a test for $m$-independence is given by:
Reject $m$-independence if $X_1,\ldots,X_n$ are $(m-1)$-independent and 
\begin{equation} \label{eq:testmm}
N \cdot \hN \Mskript_m^2(\bm{x}^{(1)},\ldots,\bm{x}^{(N)}) > R,
\end{equation}
with $R$ as discussed in Section \ref{sec:test}. (Note that one has to replace $\Mskript$ by $\Mskript_m$ in \eqref{eq:Rresampling} and \eqref{eq:RMC} to get $R$ for the resampling test and the Monte Carlo test, respectively.)
\end{test}

For a test of $m$-independence (without controllable type II error) one can drop in Test \ref{testC} the assumption of $(m-1)$-independence, cf.\ Remark \ref{rem:testA}.

As a special case, for $m=2$, the Test \ref{testC} becomes a test for pairwise independence.

\begin{test}[Test for pairwise independence] \label{testD} 
If either the $\psi_i$ are bounded or \eqref{eq:mom1-log} holds, then a test for pairwise independence is given by:
Reject pairwise independence if 
\begin{equation} \label{eq:test-pairwise}
N \cdot \hN \Mskript_2^2(\bm{x}^{(1)},\ldots,\bm{x}^{(N)}) > R,
\end{equation}
with $R$ as discussed in Section \ref{sec:test}. (Note that one has to replace $\Mskript$ by $\Mskript_2$ in \eqref{eq:Rresampling} and \eqref{eq:RMC} to get $R$ for the resampling test and the Monte Carlo test, respectively.)
\end{test}

Examples of the use of $m$-multivariance are given in the Section \ref{sec:examp}, e.g.\ Example \ref{ex:YaoZhanShao2017}, and in the supplement. To roundup this section we discuss some related estimators.

\begin{remark} \label{rem:mmulti}
\begin{enumerate}
\item Analogously to total multivariance one can define \textbf{total $\bm{m}$-multivariance} for $\bm{X} = (X_1,\ldots,X_n)$ by
\begin{equation}
\overline{M}_{m,\rho}^2(\bm{X}) :=  \sum_{\substack{1\leq i_1 < \ldots < i_r \leq n \\ 2 \leq l \leq m}} M_{\otimes_{k=1}^l \rho_{i_k}}^2(X_{i_1},\ldots,X_{i_l}) =  \sum_{l =2}^m M_{l,\rho}^2(\bm{X})
\end{equation}
and calculate its sample version. There might be computationally simpler representations using formulas for $(A_1 + \ldots + A_n + 1)^m$. Moreover, also the complements of these measures, e.g.\ $\overline{M}-M_3-M_2 = \overline{M}-\overline{M}_3$, might be of interest for multiple testing of higher order dependencies with disjoint hypotheses.
\item The simple form of the sample $2$-multivariance in \eqref{def:2mmulti} might suggest other generalizations. For example one could also consider
\begin{align}
 \hN \widetilde M_3(\bm{x}^{(1)},\ldots,\bm{x}^{(N)}) &:= \sqrt{ \frac{1}{2} \frac{1}{N^2}\sum_{k,l=1}^N \left( \left((A_1 + \ldots + A_n)_{kl}\right)^3 - \sum_{i=1}^n \left((A_i)_{kl}\right)^3 \right) }
\end{align}
as an estimator for 3-independence. In fact in the case of 2-independence this provides (assuming \eqref{d1m} and using \cite[Corollary 4.7]{BoetKellSchi2019}) a weakly consistent estimator for $M_3$. Hereto just note that the sums of all mixed terms of the form $((A_i)_{kl})^2 (A_j)_{kl}$ with $i \neq j$ are estimators for multivariances like $M(X_i,X_i,X_j)$, and the factorization for independent subsets \eqref{eq:factorization} yields  $M(X_i,X_i,X_j)= M(X_i,X_i) M(X_j) = 0$. But note that the estimators for these terms squared and scaled by $N$ do usually not vanish for $N\to \infty$. Thus a result like Theorem \ref{thm:convergencem} fails to hold.
\item \label{remsub:mmulticor} A further natural extension is to introduce the corresponding global scaled measures of $m$-dependence, i.e.\ $m$-multicorrelations. These require finite $\psi$-moments of order $m$ (cf.\ \eqref{eq:mom-n}). E.g.\ $\bm{2}$\textbf{-multicorrelation} is given by 
\begin{equation} \label{eq:mcor2}
\Mcor_{2,\rho}(X_1,\ldots,X_n) :=  \sqrt{ \genfrac(){0pt}{}{n}{2}^{-1} \sum_{1\leq i < j \leq n} \frac{M_{\rho_i \otimes \rho_{j}}^2(X_{i},X_{j})}{\sqrt{M_{\rho_i\otimes\rho_{i}}^2(X_{i},X_{i})M_{\rho_j\otimes\rho_{j}}^2(X_{j},X_{j})}}}
\end{equation}
and it coincides with the (analogously defined) $\Rskript_2$ since for $n=2$ the scaling factors in \eqref{eq:multicorr} and \eqref{eq:Mcor} coincide. Moreover these factors have for each summand in \eqref{eq:mcor2} the same exponent, thus (in contrast to $\overline{\Rskript}$ and $\overline{\Mcor}$) one gets efficient sample representations by replacing the $A_i$ in \eqref{def:2mmulti} by those in \eqref{eq:multicorr} (or equivalently \eqref{eq:Mcor}). This correlation satisfies all the dependence measure axioms of Section \ref{sec:axioms} when one replaces (A1) by the \textit{characterization of pairwise independence}, see Table \ref{tab:axioms}.
\end{enumerate}
\end{remark}

\section{Dependence structure visualization and detection} \label{sec:algo}

In this section a visualization of higher order dependencies of random variables $X_1,\ldots,X_n$ using an undirected graph is introduced. The population version and estimation procedures are discussed. In Section \ref{sec:examp} and in the supplement various examples are presented. The implementation of the visualization in R relies in particular on the package \texttt{igraph} \cite{CsarNepu2006}. 

The dependence structure graph consists of three elements (cf.\ Figure \ref{fig:intro}):
\begin{itemize}
	\item \textbf{Circled nodes} denote random variables.
    \item \textbf{Edges} denote dependencies. 
		\item \textbf{Non-circled nodes} ('dependency nodes') are primarily used to denote the dependence of the connected nodes and a label might denote the strength of the dependence or a related quantity (in our sample setting it is the value of the test statistic $N \cdot \hN \Mskript^2$ or the order of dependence) of the connected nodes. Secondarily they might be used to represent the 'random variable' which consists of all components of a connected cluster, e.g.\ in the right graph in Figure \ref{fig:intro} the node with label '97.1' represents the cluster of $X_1,X_2,X_{11}$.
	\end{itemize}

%

A visualization of the \textbf{full dependence structure} is constructed by adding the corresponding 'dependency nodes' and edges for any $m$-tuple of $X_i,\ldots,X_n$ which is $m$-dependent but $(m-1)$-independent.
In general this graph can be very overloaded, see Example \ref{ex:full}.

The direct approach to the full dependence structure based on samples is to test successively all $(m-1)$-independent $m$-tuples for $m$-independence for $m=2,\ldots,n$, adjust the p-values appropriately for multiple testing and add the significant dependency nodes and edges. For such a  test procedure a direct visualization of the tests p-values was introduced in \cite{GeneRemi2004}: the dependogram. Note that the full dependence structure visualizes the (lowest order) significant findings in a dependogram, see Example \ref{ex:dependogram} for more details. In practice a visualization of the full dependence structure is only feasible for small $n$, since for $n$ random variables there are $2^n-n-1 = \sum_{k=2}^n \genfrac(){0pt}{}{n}{k}$ tuples to consider.

To overcome (or at least to reduce) the drawbacks of the full dependence structure one can use a clustered dependence structure. Hereto each set of connected vertices in an undirected graph will be called cluster. Then the \textbf{clustered dependence structure} graph is constructed by the following algorithm: 0. Include the circled nodes for $X_1,\ldots,X_n.$
\begin{enumerate}[1.]
\item Let $k$ be the number of clusters currently in the graph and $Y_i$, $i=1,\ldots,k$ be random variables which have as components the connected $X_j$ of cluster $i$, e.g. $Y_1= (X_1,X_2,X_{11})$ if $X_1,X_2,X_{11}$ are connected via some edges. (In the very first run this amounts to: $k:=n$ and $Y_i:= X_i$.) Moreover, set $m=2.$
\item If $m>k$ the graph construction is finished, otherwise: For all $m$-dependent subsets of $Y_1,\ldots,Y_k$ add the corresponding dependency nodes and edges (connected to some non-circled node representing the cluster, if the cluster consists of more than one random variable) to the graph. If new nodes were introduced, go to step 2 otherwise repeat this step with $m$ increased by 1. 
\end{enumerate}
Since dependence and independence are not transitive, some information might be lost in the clustered dependence structure. Nevertheless, note that clustering preserves dependence, e.g.\ if at least one of the random variables $X_i,i\in  I$ is dependent with one of $X_k,k\in K$ then also $(X_i, i\in I\cup J)$ is dependent with $(X_k, k\in K\cup L).$

The visualization algorithm for a clustered dependence structure based on samples is analogous to the above, just in step 2 the $m$-independence has to be tested. Here one can (we do so) choose to skip sets of variables which have been tested before, i.e., sets which remained unchanged after the last cluster detection. But in any case the p-values have to be adjusted appropriately for multiple testing.

The \textbf{\textit{appropriate} adjustment of p-values} due to multiple testing is the basis for many debates. For the full dependence structure and for the clustered dependence structure the situation is complicated by the fact that the total number of tests is unknown at the beginning, and the result of the tests in one step influence (by indicating that some tuples are lower order dependent or by clustering) the data for the tests thereafter. Thus adjusting p-values after clustering would usually require new tests. An approach which avoids this uses Holm's method separately for each set of multiple tests in step 2 of the algorithm, but one has to keep in mind that by this the global type I error bound increases with each set of tests.

In general one might also distinguish between visualizations of the results of \textbf{tests using a given significance level} (in this case there is a bound for a type I error based on the significance level and it depends also on the correction for multiple testing used) or visualizations using \textbf{consistent estimators} (if these exist they might also be based on tests, but then the significance level or rejection level is adapted based on the sample size, which might make it harder to get explicit error estimates). In the case of tests with a fixed significance level, the range of significance levels which yield the same test results might give an additional indication of the \textit{reliability} of the detection.

\begin{remark}[Comment on detection errors]\label{rem:detect-error}
The probability of a type I error can be estimate and/or bounded by the choice of the rejection level or significance level. But a type II error bound or estimate might not be available. In this case one has to keep in mind that, due to the successive estimation/testing procedure a type II error (i.e., a not detected dependence) for some tuple can yield a detected higher order dependence for a superset of the tuple. Thus in this case the higher order dependence still indicates that the components of the tuple are not independent (but they might not be lower order independent).
\end{remark}

All of the above applies to the use of any multivariate dependence measure or test in the dependence structure detection algorithms. Now we turn explicitly to the case of multivariance.

\subsection{Dependence structure detection using distance multivariance}

For consistent estimates using multivariance the following observation is essential (cf.\ Theorem \ref{thm:convergence} and Corollary \ref{cor:betaM-consistency}):
\begin{corollary}[Consistent dependence estimation]\label{cor:consistent-dep-struct}
Let $X_i, i=1,\ldots,n$ be $(n-1)$-independent non-constant random variables and let $\bm{X}^{(k)}, k=1,\ldots,N$ be independent copies of $(X_1,\ldots,X_n)$. If either the $\psi_i$ are bounded or \eqref{eq:mom1-log} holds, then for any $\beta \in (0,1)$
\begin{equation}
N^\beta \cdot \Mskript^2(\bm{X}^{(1)},\ldots,\bm{X}^{(N)}) \xrightarrow[N\to\infty]{a.e.} \begin{cases}
\infty&\text{,if $X_i,\ldots,X_n$ are dependent},\\
0&\text{,if $X_i,\ldots,X_n$ are independent.}
\end{cases}
\end{equation}
\end{corollary}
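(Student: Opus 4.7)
The proof plan is essentially to observe that the statement is a direct combination of two earlier results, namely the divergence part of Theorem \ref{thm:convergence} and the consistency part of Corollary \ref{cor:betaM-consistency}, once we verify that the hypotheses of each apply.

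First, for the dependent branch, I would note that the hypothesis of the corollary already assumes $(n-1)$-independence together with non-constancy of the $X_i$. Thus if in addition $X_1,\ldots,X_n$ are dependent, the assumptions of \eqref{eq:estimator-divergence} in Theorem \ref{thm:convergence} are exactly met (the moment hypothesis is the same: bounded $\psi_i$ or \eqref{eq:mom1-log}). That theorem gives $N^\beta\cdot{}^{N}\!\Mskript^2\to\infty$ almost everywhere for any $\beta>0$, and in particular for any $\beta\in(0,1)$.

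Second, for the independent branch, the hypotheses of Corollary \ref{cor:betaM-consistency} coincide with what is assumed here (non-constant independent $X_i$, plus either bounded $\psi_i$ or \eqref{eq:mom1-log}). That corollary gives $N^\beta\cdot{}^{N}\!\Mskript^2\to 0$ almost everywhere for any $\beta<1$, so in particular for $\beta\in(0,1)$.

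Putting the two branches together yields the dichotomy claimed in the corollary. No new argument is needed; the only thing to check is that the range $\beta\in(0,1)$ lies in the intersection of the ranges $\beta>0$ (for divergence) and $\beta<1$ (for consistency), which it does. There is no real obstacle — the interest of the statement lies in this common window $\beta\in(0,1)$ providing a \emph{single} scaling that consistently separates independence from dependence (given $(n-1)$-independence), which is precisely what is needed to drive the successive-testing scheme of Section \ref{sec:algo}.
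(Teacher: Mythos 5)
Your proposal is correct and matches the paper's own justification exactly: the corollary is stated as a direct consequence of the divergence \eqref{eq:estimator-divergence} in Theorem \ref{thm:convergence} (valid for all $\beta>0$ under $(n-1)$-independence plus dependence) and the almost-sure convergence \eqref{eq:estimator-consistent} in Corollary \ref{cor:betaM-consistency} (valid for all $\beta<1$ under independence), with $\beta\in(0,1)$ being precisely the common window. Nothing further is needed.
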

Hence using $R=R(N):= N^{1-\beta} \cdot C$ for any fixed constants $\beta \in (0,1), C>0$ in the independence Tests \ref{testA}, \ref{testB}, \ref{testC}, \ref{testD} provides strongly consistent tests, in the sense that (under the assumptions of the tests) the test result converges almost surely to the correct statement as $N\to \infty.$ Clearly the convergence speed of the estimator depends on the choice of $\beta$ and $C$ (see below for a rough error estimate). 

Therefore there are several options for the dependence structure detection using Test \ref{testA}: the very fast but conservative rejection level given in Theorem \ref{thm:testqf}, the (extremely) slow but approximately sharp rejection level provided by the resampling approach \eqref{eq:Rresampling} (or by the Monte Carlo approach \eqref{eq:RMC}) and the value $R:=N^{1-\beta}\cdot C$ for consistent tests. 

Of the above options the conservative estimate and the resampling approach provide (almost) directly also the corresponding p-values, but only the conservative and the consistent approach are feasible. For the resampling approach the sample size would have to be adapted (increased!) if the p-values are adjusted -- yielding in general an extremely slow algorithm. For the consistent estimator the corresponding p-value can only be estimated (using one of the other methods) and the convergence rates have not been analyzed in detail yet, thus the actual type I error for a given finite sample is not directly available. Nevertheless note that fast and approximately sharp methods to estimate the p-values of multivariance are developed in the preprint \cite{BersBoet2018v2}, see also \cite{Guet2017}. Moreover, an approximation of an upper bound for the type I error of the consistent estimator is given by the following: If one performs $k$ independent sharp tests with significance levels $\gamma_i$ then the probability of a type I error is $1-\prod_{i = 1}^k (1-\gamma_i)$. In the setting of multivariance the tests are in the limit (under $H_0$) independent and $\gamma_i \leq F_{\chi_1^2}(N^{1-\beta} \cdot C),$ thus posterior to testing the number of tests performed is known, say $k$, and the bound becomes $1-(1-F_{\chi_1^2}(N^{1-\beta} \cdot C))^k$ for the consistent estimator. Concerning $\beta$ and $C$ note that for the estimator discussed in Corollary \ref{cor:consistent-dep-struct} with $\beta$ close to 1 the convergence to 0 (in the case of independence) becomes slower, for $\beta$ close to 0 the divergence to $\infty$ (in the case of dependence) becomes slower, here $\beta = 1/2$ seems a balanced choice. For the value of $C$ an optimal recommendation is still open -- we will use $C = 2$ in our examples where it seems a reasonable choice. Naturally, the constant $C$ could also be based on a rejection level for a fixed sample size, e.g.\ choose $C$ such that $5\cdot C$ is the rejection level for a sample of size 25 for a significance level of 0.05. Then at least for the this sample size the probability of a type I error is known, but this would still require some p-value estimation.   

Finally, note that the above are basic algorithms. There are certainly several variants and extensions possible, e.g.\ a further speedup might be obtained by using total multivariance and $m$-multivariance for initial tests of independence (but beware of the problem of multiple vs.\ single tests). Furthermore, if pairwise dependence is detected (and clustered) this can be further analyzed in the framework of graphical models. Hereto also note that \cite[Section 5.2]{PfisBuehSchoPete2017} (see also \cite[Section 6]{ChakZhan2019}) provides a method for the detection of causal relations of variables using multivariate dependence measures, this can be used to refine an undirected graph visualizing the dependence structure into a directed graph. Moreover, clearly the visual layout allows many variants, e.g.\ one might also use different line types and thicknesses to indicate the dependence strength or order, also the denoted values could for example be replaced by p-values (since for given marginals there is a one-to-one correspondence between the value of multivariance and its (exact) p-value).

\section{Examples} \label{sec:examp}

In the supplementary\footnote{pages \pageref{sec:supp-example} ff.\ of this manuscript} Section \ref{sec:supp-example} a comprehensive collection of illustrating examples is provided. These discuss in detail several (toy-)examples of higher order dependence and their visualization, including the full and clustered dependence structure detection. Moreover also the detection power, empirical size and various other properties of multivariance are studied. 

Here in the main text we will only discuss two types of examples: Comparisons with other multivariate dependence measures and two basic real data examples.

The presented tables will contain additionally a test called 'Comb' which combines the tests of $m$- and total multivariance by Holm's method. This provides a reference for readers with an interest in a joint test procedure, rather than comparing individual tests in their realm. For a full explanation of the setting, terms and parameter values of the studies we refer to the introduction of the supplementary Section \ref{sec:supp-example}. 

\subsection{Empirical comparison of multivariance with other dependence measures} \label{sec:ex-compare}

As discussed in Section \ref{sec:compare-3} there are several dependence measures closely related to distance multivariance and its variants. For these empirical power comparisons are provided in Examples \ref{ex:JinMatt2017} and \ref{ex:YaoZhanShao2017}. But we begin with an example of a different visualization of higher order dependence which was proposed alongside a copula based dependence measure.

\begin{example}[Dependogram vs. visualization]\label{ex:dependogram}
In \cite{GeneRemi2004} copula based higher order dependence tests were proposed together with a dependogram, which provides a graphical representation of the test results of multiple testing. Our proposed visualization is closely related, it provides a visualization of the (lowest order) significant dependencies.

In Figure \ref{fig:dependogram} the dependogram and the corresponding dependence structure (which is here actually detected using the same samples and distance multivariance) are depicted for the example provide in \cite[Section 4.2]{GeneRemi2004}: Let $Z_i, i=1,\ldots,5$ be independent standard normal variables and let $X_1:=|Z_i| \sgn(Z_1Z_2),$ $X_i := Z_i$ for $i=2,3,4$ and $X_5:= Z_4/2 + \sqrt{3}Z_5/2$. Now consider $N=50$ samples of $(X_1,\ldots,X_5).$

\begin{figure}[H]\centering
\includegraphics[width = 0.4\textwidth]{./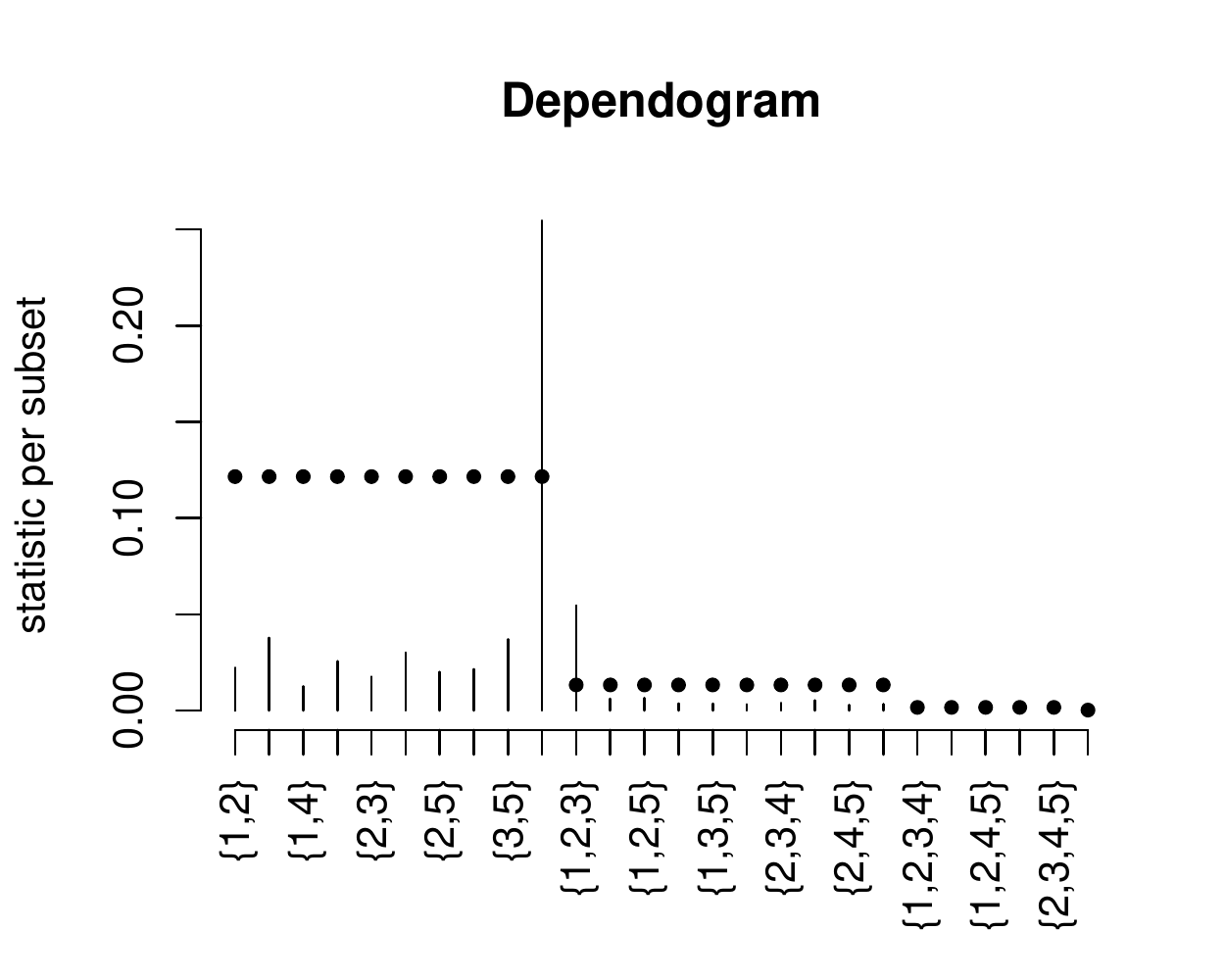}
\quad \includegraphics[width = 0.4\textwidth]{./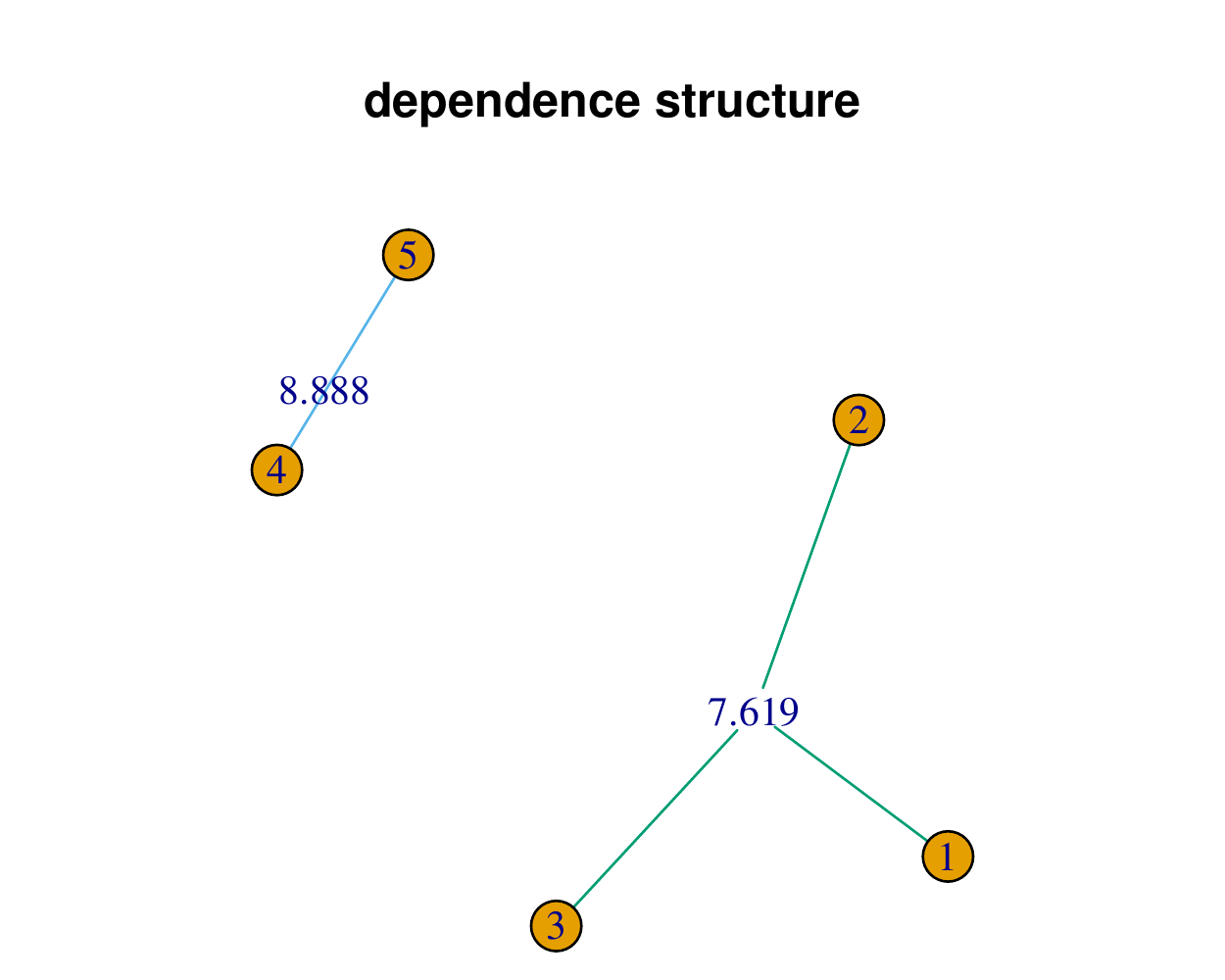}
\caption{Ex.\ \ref{ex:dependogram}: dependogram (see \cite{GeneRemi2004}; implemented in \cite{HofeKojaMaecYan2018}) and the corresponding dependence structure.}	
	\label{fig:dependogram}
\end{figure}
\end{example}

\begin{example}[Comparison with the methods of \cite{JinMatt2018}] \label{ex:JinMatt2017} Here we compare our estimators to those presented in  \cite{JinMatt2018}. To avoid confusion, note that in their main representation results is a sign typo: the second sum in Lemmata 1 and 2 should have a minus sign. In general, one should note that the estimators for multivariance have complexity $O(N^2)$ whereas the exact estimators of \cite{JinMatt2018} (e.g.\  $\Qskript_N,\Sskript_N$) have higher complexity. To reduce the complexity they introduce approximate estimators (e.g.\ $\Qskript_N^\star, \Jskript^\star_N$) which have the same complexity as ours. Note that these approximate estimators are not permutation invariant with respect to the order of the samples. In fact their positive finding (significant p-values) in the real data example \cite[6.2 Financial data]{JinMatt2018} is an artifact due to this shortcoming. Their estimators yield for the same date with permuted samples p-values about 0.3 and above. Therefore we strongly advise against the use of their approximate estimators in the given form. This problem can be reduced by permuting the samples prior to the use of their estimators.

Nevertheless, we decided to use their estimators for a comparison, since these are the most recent estimators related to the approach discussed in Section \ref{sec:compare-3} corresponding to \eqref{eq:kan-structure}. Moreover \cite{JinMatt2018} also provides several variants and comparative tables including other measures. The following tables are computed with their parameter settings, e.g.\ $\alpha = 0.1$. We only include their best exact and approximate estimators (for each particular example), for further comparisons see the full tables in \cite{JinMatt2018}. 

	  

The example \cite[Example 3]{JinMatt2018} considers random variables $X_i$ with values in $\R^5$ such that $(X_1,X_2,X_3)\sim N_{15}(0,\Sigma)$ with $\Sigma_{ij} = 1$ for $i=j$ and $0.1$ otherwise. For this example total multivariance and $2$-multivariance match the power of the exact estimator and outperform the approximate estimator (Figure \ref{fig:jm-multinorm}).
\begin{figure}[H] 
\centering
\includegraphics[width = 0.3\textwidth]{./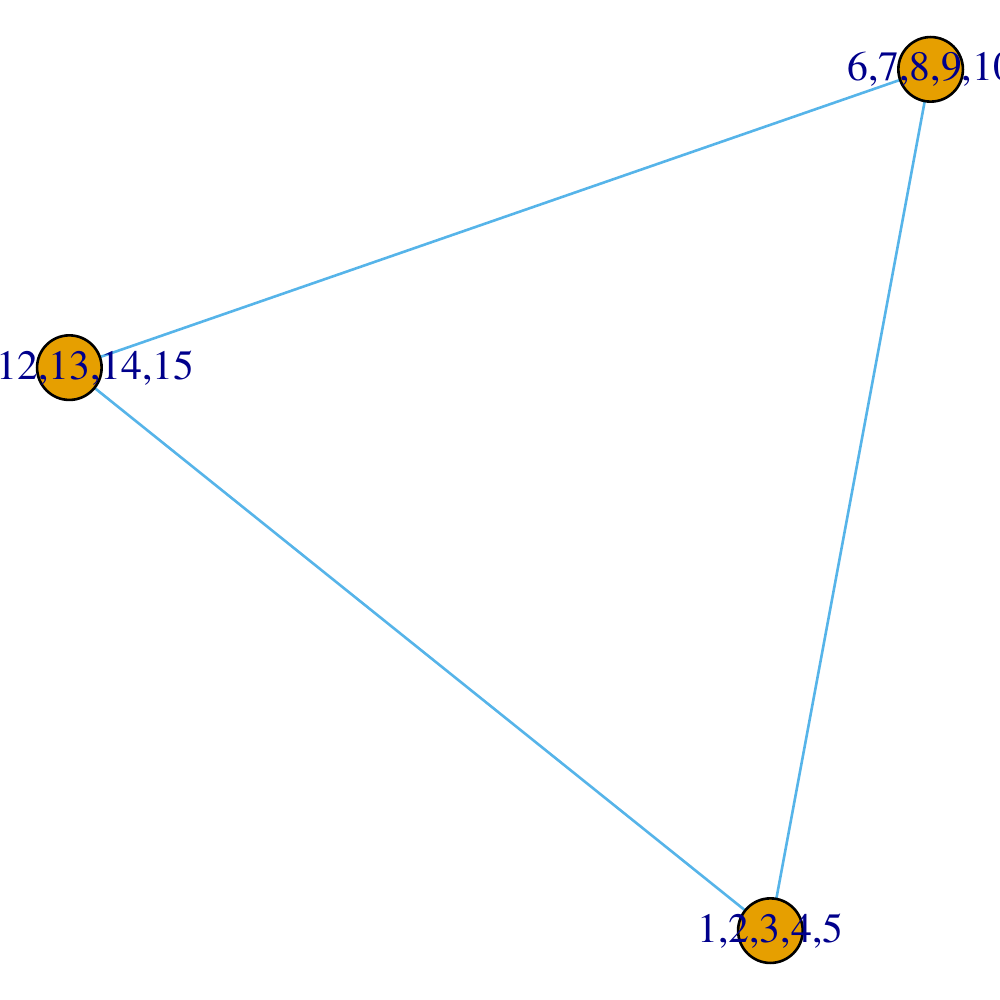}
\quad
\begin{tabular}[b]{r|cc|c|ccc}
	& \multicolumn{3}{c|}{resampling} & \multicolumn{3}{c}{*}\\	$N$ & $\hN \overline{\Mskript}$ & $\hN \Mskript_2$ & Comb & $\Qskript_N^\star$ & $\Sskript_N$ & $dHSIC$ \\ 	\hline
	25  & 0.408 & 0.417 & 0.359 & 0.220 & 0.418 & 0.982 \\
	50  & 0.712 & 0.722 & 0.631 & 0.378 & 0.719 & 1.000 \\
	100 & 0.960 & 0.970 & 0.941 & 0.707 & 0.961 & 1.000 \\
	150 & 0.995 & 0.995 & 0.993 & 0.873 & 0.996 & 1.000 \\
	200 & 1.000 & 1.000 & 1.000 & 0.946 & 1.000 & 1.000 \\
	300 & 1.000 & 1.000 & 1.000 & 0.997 & 1.000 & 1.000 \\
	500 & 1.000 & 1.000 & 1.000 & 1.000 & 1.000 & 1.000 \\ \hline
	\multicolumn{7}{r}{* values from \cite[Table 6]{JinMatt2018}}
\end{tabular}

\caption{Dependence structure sketch and empirical power comparison with \cite[Example 3]{JinMatt2018} (Ex.\ \ref{ex:JinMatt2017}).} \label{fig:jm-multinorm}
\end{figure}

As second example \cite[Example 4]{JinMatt2018} we consider $(Y_1,\ldots,Y_{15})\sim N_{15}(0,\Sigma)$ with $\Sigma_{ij} = 1$ for $i=j$ and $0.4$ otherwise and set $X_i:=(\ln(Y^2_{5i}),\ldots,\ln(Y^2_{5i + 4}))$ for $i=1,2,3$. Again, total multivariance and $2$-multivariance are close to the power of the exact estimator and outperform the approximate estimator (Figure \ref{fig:ex-JM-4}).

\begin{figure}[H] 
\centering
\includegraphics[width = 0.3\textwidth]{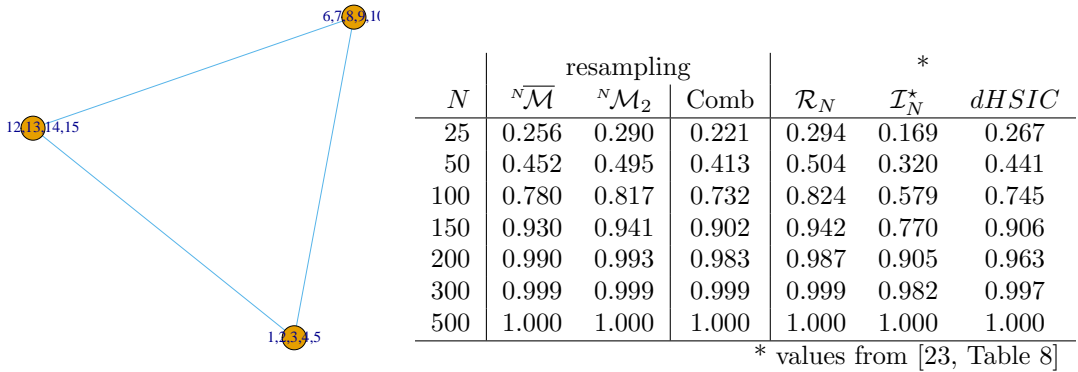}
\quad
\begin{tabular}[b]{r|cc|c|ccc}
	& \multicolumn{3}{c|}{resampling} & \multicolumn{3}{c}{*}\\
	$N$ & $\hN \overline{\Mskript}$ & $\hN \Mskript_2$ & Comb & $\Rskript_N$ & $\Iskript^\star_N$ & $dHSIC$\\ 
	\hline
	25 & 0.256 & 0.290 & 0.221 & 0.294 & 0.169 & 0.267\\ 
	50 & 0.452 & 0.495 & 0.413 & 0.504 & 0.320 & 0.441\\ 
	100 & 0.780 & 0.817 & 0.732 & 0.824 & 0.579 & 0.745\\ 
	150 & 0.930 & 0.941 & 0.902 & 0.942 & 0.770 & 0.906\\ 
	200 & 0.990 & 0.993 & 0.983 & 0.987 & 0.905 & 0.963\\ 
	300 & 0.999 & 0.999 & 0.999 & 0.999 & 0.982 & 0.997\\ 
	500 & 1.000 & 1.000 & 1.000 & 1.000 & 1.000 & 1.000\\ 
	\hline 
			   \multicolumn{7}{r}{* values from \cite[Table 8]{JinMatt2018}}
\end{tabular}
\caption{Dependence structure sketch and empirical power comparison with \cite[Example 4]{JinMatt2018} (Ex.\ \ref{ex:JinMatt2017}).}\label{fig:ex-JM-4}
\end{figure}

Finally, we discuss \cite[Example 5]{JinMatt2018}: for dimensions $n\in \{5,10,15,20,$ $25,30,50\}$ and sample size $N = 100$ we consider $(X_1,\ldots,X_{n})\sim N_{n}(0,\Sigma)$ with $\Sigma_{ij} = 1$ for $i=j$ and $0.1$ otherwise. Here $2$-multivariance is close to the power of the exact estimator and outperforms the approximate estimator (Figure \ref{fig:ex-JM-5}).

One might argue that the comparison with 2-multivariance is unjust, since it is a measure of pairwise dependence, whereas the other measures detect any kind of dependence. Hereto note that also the combination of the measures in 'Comb' has a higher detection rate than the approximate estimators.
\begin{figure}[H]
	\centering
	\includegraphics[width = 0.3\textwidth]{./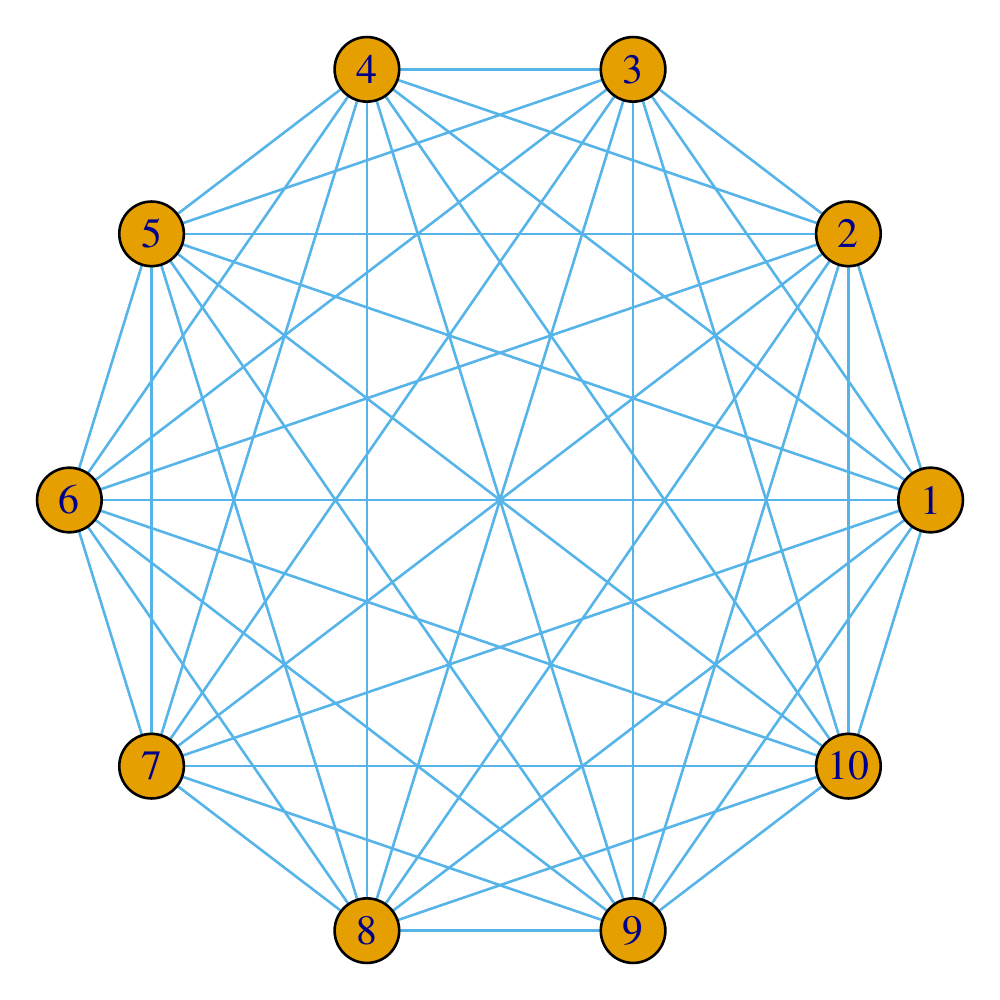}
	\quad
			\begin{tabular}[b]{r|cc|c|cc|cc}
	& \multicolumn{3}{c|}{resampling} & \multicolumn{2}{c|}{distribution-free} &  \multicolumn{2}{c}{* }\\
		$n$ & $^{100} \overline{\Mskript}$ & $^{100} \Mskript_2$ & Comb & $^{100} \overline{\Mskript}$ & $^{100} \Mskript_2$  & $\Qskript_{100}^\star$ & $\Sskript_{100}$ \\ 
		\hline		5 & 0.423 & 0.515 & 0.409 & 0.000 & 0.000 & 0.298 & 0.557 \\ 
		10 & 0.252 & 0.873 & 0.780 & 0.003 & 0.000 & 0.557 & 0.915 \\ 
		15 & 0.374 & 0.972 & 0.946 & 0.012 & 0.000 & 0.822 & 0.982\\ 
		20 & 0.443 & 0.995 & 0.988 & 0.054 & 0.000 & 0.924 & 0.999 \\ 
		25 & 0.532 & 1.000 & 0.999 & 0.164 & 0.000 & 0.977 & 0.999\\ 
		30 & 0.588 & 1.000 & 1.000 & 0.234 & 0.000 & 0.980 & 1.000\\ 
		50 & 0.821 & 1.000 & 1.000 & 0.657 & 0.000 & 0.998 & 1.000\\ 
		\hline 
		   \multicolumn{8}{r}{* values from \cite[Table 10]{JinMatt2018}}
	\end{tabular}
	\caption{Dependence structure sketch ($n=10$) and empirical power comparison with \cite[Example 5]{JinMatt2018} (Ex.\ \ref{ex:JinMatt2017}).} \label{fig:ex-JM-5}
\end{figure}
\end{example}

\begin{example}[Comparison with the methods of \cite{YaoZhanShao2017}] \label{ex:YaoZhanShao2017} 
In \cite{YaoZhanShao2017} several measures of dependence were introduced. The main contribution is a measure $dCov$ for pairwise dependence, which is closely related to $\hN \Mskript_2$. The examples in \cite{YaoZhanShao2017} use the parameters $N \in \{60,100\}$ and $n \in \{50,100,200,400,800\}$ and $\alpha = 0.05$, which we also use here to provide values which can be compared to other dependence measures given in their tables. Let $X_i,i = 1,\ldots,n$ be random variables with values in $\R^1$ such that $(X_1,\ldots,X_n) \sim N(0,\Sigma)$. We consider \cite[Example 2]{YaoZhanShao2017}, hereto let $\Sigma \in \R^{n\times n},$ $\Sigma_{ij} = 1$ for $i=j$ and otherwise (for $i\neq j$) set:
\begin{enumerate}[a)]
\item auto-regressive structure: $\Sigma_{ij}=(0.25)^{|i-j|},$ 
\item band structure: $\Sigma_{ij}=0.25$ for $0<|i-j|<3$ and 0 otherwise,
\item block structure: $\Sigma= I_{\lfloor n/5 \rfloor} \otimes A$ where $I_k\in\R^{k\times k}$ is the identity matrix and $A\in \R^{k\times k}$  with $A_{ij}=1$ for $i=j$ and  $0.25$ otherwise.
\end{enumerate}
\enlargethispage{0.7\baselineskip}
\begin{figure}[H]
	\centering
	\hspace{1.9cm}
	 \includegraphics[width = 0.25\textwidth]{./Figs/plain-graphs-1.pdf}
	\quad\quad
	\includegraphics[width = 0.25\textwidth]{./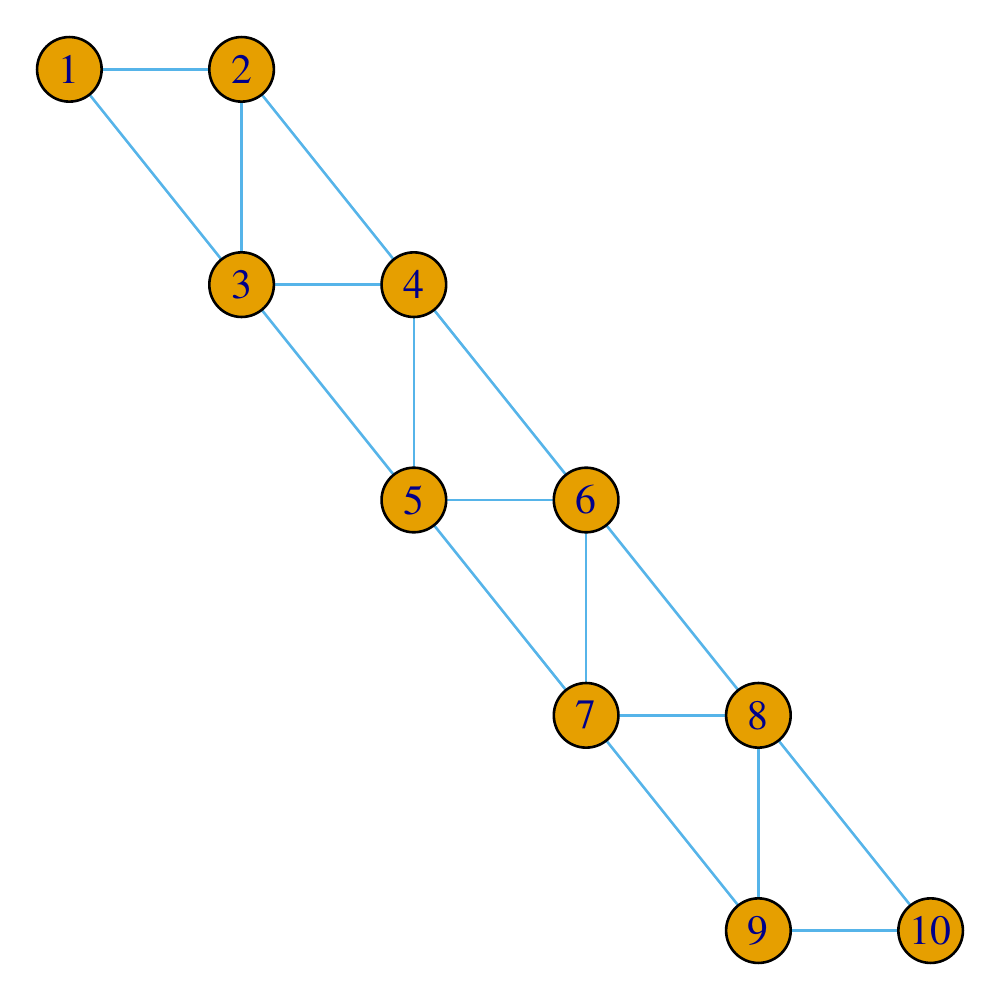}
	\quad\quad 
	\includegraphics[width = 0.25\textwidth]{./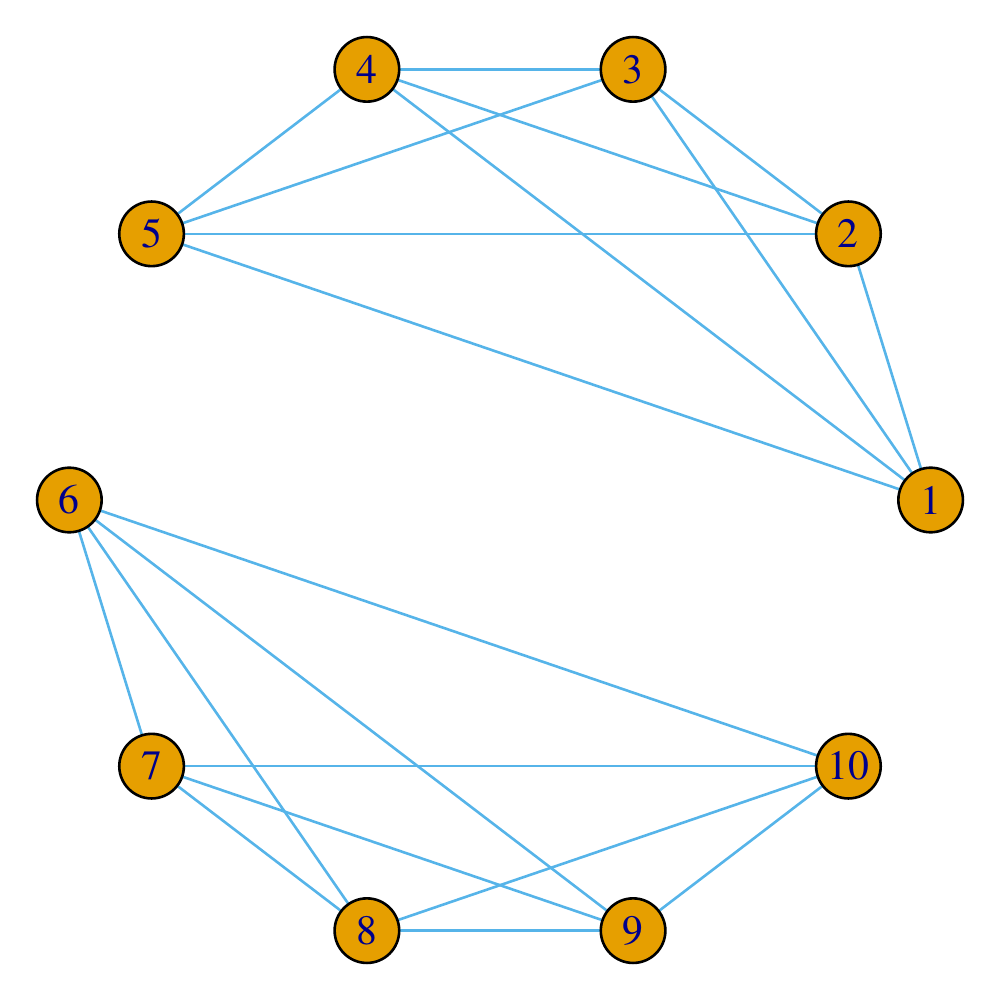}
	\begin{tabular}[b]{rr|cc|c|c|cc|c|c|cc|c|c}
	&& \multicolumn{4}{c|}{auto-regressive}& \multicolumn{4}{c|}{band structure}& \multicolumn{4}{c}{block structure}\\
	&& \multicolumn{3}{c|}{resampling} &  \multicolumn{1}{c|}{*}& \multicolumn{3}{c|}{resampling} &  \multicolumn{1}{c|}{*}& \multicolumn{3}{c|}{resampling} &  \multicolumn{1}{c}{*}\\
		 		$n$ & $N$ & $\hN \overline{\Mskript}$ & $\hN \Mskript_2$ & Comb &$ dCov$& $\hN \overline{\Mskript}$ & $\hN \Mskript_2$& Comb &$ dCov$& $\hN \overline{\Mskript}$ & $\hN \Mskript_2$& Comb &$ dCov$ \\		  \hline
50 & 60 & 0.052 & 0.898 & 0.768 & 0.886& 0.159 & 0.999 & 0.999 &1.000& 0.232 & 0.998 & 0.994 &0.999\\ 
  100 & 60 & 0.108 & 0.873 & 0.807 & 0.906& 0.192 & 1.000 & 1.000 &0.999& 0.234 & 1.000 & 0.998 &1.000\\ 
  200 & 60 & 0.104 & 0.896 & 0.765 & 0.909& 0.167 & 1.000 & 0.999 &1.000& 0.139 & 0.999 & 0.998 &1.000\\ 
  400 & 60 & 0.111 & 0.924 & 0.812 & 0.909& 0.174 & 0.998 & 0.998 &1.000& 0.177 & 1.000 & 0.999 &1.000\\ 
  800 & 60 & 0.101 & 0.937 & 0.843 & 0.908& 0.105 & 1.000 & 1.000 &1.000& 0.128 & 1.000 & 1.000 &1.000\\ 
  50 & 100 & 0.115 & 0.999 & 0.996 & 0.998& 0.137 & 1.000 & 1.000 &1.000& 0.195 & 1.000 & 1.000 &1.000\\ 
  100 & 100 & 0.071 & 0.999 & 0.986 & 0.999& 0.153 & 1.000 & 1.000 & 1.000& 0.170 & 1.000 & 1.000 &1.000\\ 
  200 & 100 & 0.128 & 1.000 & 0.999 & 1.000& 0.142 & 1.000 & 1.000 &1.000& 0.222 & 1.000 & 1.000 &1.000\\ 
  400 & 100 & 0.073 & 1.000 & 1.000 & 0.999& 0.168 & 1.000 & 1.000 &1.000& 0.169 & 1.000 & 1.000 &1.000\\ 
  800 & 100 & 0.084 & 1.000 & 0.998 & 0.999& 0.139 & 1.000 & 1.000 &1.000& 0.191 & 1.000 & 1.000 &1.000\\ 
   \hline
   \multicolumn{14}{r}{* the $dCov$ values are from \cite[Table 2]{YaoZhanShao2017}}
		\end{tabular}
		\vspace{-0.7cm}
			\caption{Dependence structure sketches ($n=10$) and empirical power comparison with \cite[Example 2.a]{YaoZhanShao2017} (Ex.\ \ref{ex:YaoZhanShao2017}).} \label{fig:yao_ar}
\end{figure}
In all cases the performance of $2$-multivariance is very similar to their estimator, see Figure \ref{fig:yao_ar}. Note that due to computation time restrictions we used for the table in Figure \ref{fig:yao_ar} the resampling distribution of one sample to compute all resampling p-values (instead of resampling each sample separately).

In \cite[Example 6]{YaoZhanShao2017} random variables $(X_1,\ldots,X_n)$ are considered where the 3-tuples $(X_1,X_2,X_3)$, $(X_4,X_5,X_6)$, ...\ are independent and each 3-tuple consists of pairwise independent but 3-dependent Bernoulli random variables (as explicitly constructed in Example \ref{ex:2coins}). Here only the sample sizes and dimensions $(N,n) \in \{(60,18),(100,36),(200,72)\}$ are used. Figure \ref{fig:multi-coins} shows that $3$-multivariance (and also the combined test 'Comb') clearly outperforms all measures included in their table (of which we only cite two in our table).

\begin{figure}[H]
\centering
\includegraphics[width = 0.3\textwidth]{./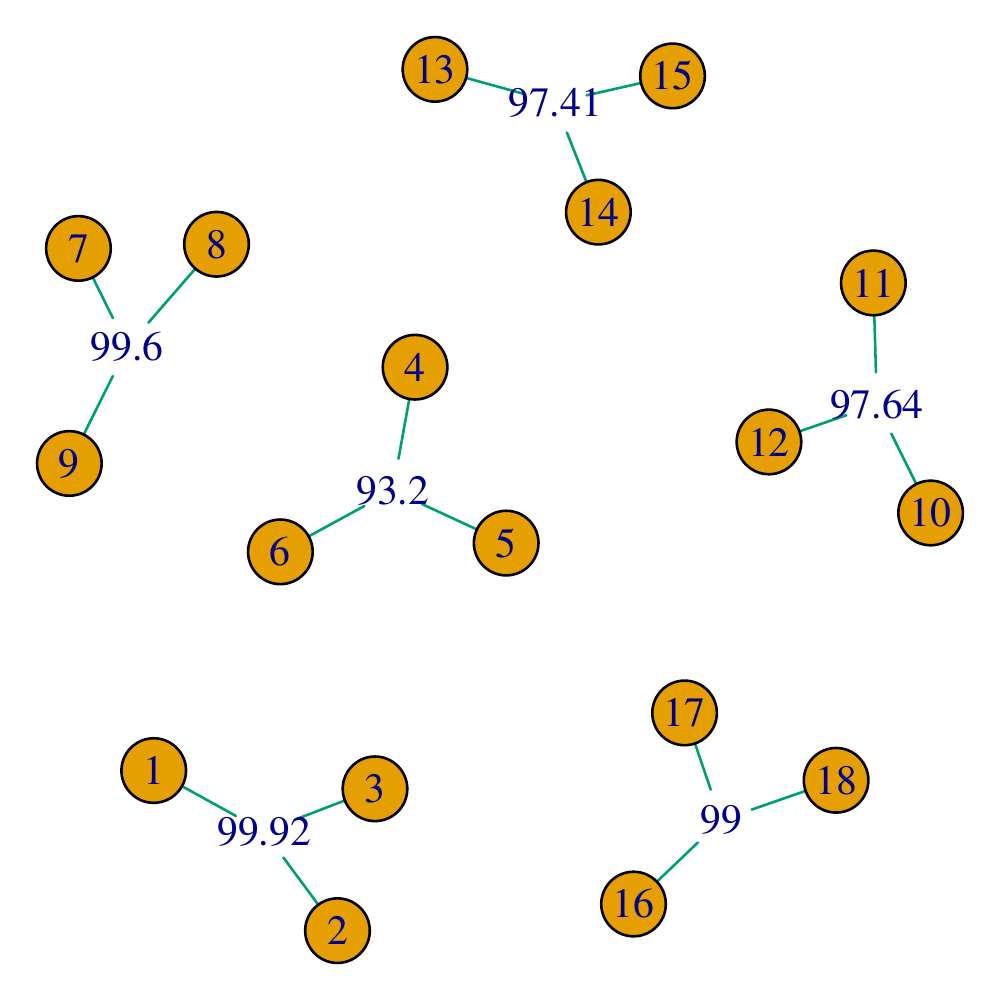}
\quad
\begin{tabular}[b]{rr|cc|c|cc}
	&& \multicolumn{3}{c|}{resampling} &  \multicolumn{2}{c}{\cite[Table 4]{YaoZhanShao2017}}\\
$n$ & $N$ & $\hN \overline{\Mskript}$ & $\hN \Mskript_3$ &Comb & $dCov$ & $dHSIC(3)$\\ 
  \hline
  18 & 60  & 0.112 & 1.000 & 1.000 & 0.051 & 0.708\\ 
  36 & 100 & 0.044 & 1.000 & 1.000 & 0.048 & 0.314\\ 
  72 & 200 & 0.047 & 1.000 & 1.000 & 0.057 & 0.073\\ 
   \hline
\end{tabular}
\caption{Dependence structure of \cite[Example 6]{YaoZhanShao2017} (with $n=18$) and the empirical power comparison. Note that here dHSIC(3) denotes dHSIC with a special choice of the bandwidth parameter, see \cite{YaoZhanShao2017} for details. (Ex.\ \ref{ex:YaoZhanShao2017}).} \label{fig:multi-coins}
\end{figure}
\end{example}

\subsection{Real data examples} \label{sec:ex-real-data}

As stated in the introduction, looking at other papers considering multivariate dependence measures (e.g.\ those discussed in Section \ref{sec:compare-3}) one notices that although these are capable of detecting dependencies of higher order the real data examples feature pairwise dependence. From our point of view this seems first of all to be due to the fact that the concept of higher order dependencies is not popular (or even unknown) in applied statistics. Therefore, on the one hand there is a very strong publication bias for datasets with pairwise dependencies, on the other hand even if datasets statistically feature higher order dependencies an explanation by field experts is yet missing. Nevertheless, we refer to \cite{Boet2019-tech} for a collection of more than 350 datasets which feature higher order dependence.

In the following we present two examples for which 2-multivariance and total multivariance detect some dependence. In terms of dependence structure detection they are more delicate: The first example illustrates the difference between the clustered and full dependence structure and it indicates an application of higher order dependencies to model selection. The second example discusses detected higher order dependencies which are actually based on pairwise dependence due to the small sample size, a conservative detection method and the chosen significance level (see also Remark \ref{rem:detect-error}).





\begin{example}[Quine's student survey data]\label{ex:quine} We consider a classical data set of a student survey \cite{Aitk1978} (see also \cite[R-package: \texttt{MASS}, dataset: \texttt{quine}]{VenaRipl2002}), which contains 146 samples of the variables: age (actually the class level), gender, cultural background, type of learner and the number of days school was missed. The dataset was extensively used in \cite{Aitk1978} to discuss model selection in a multi-factor analysis of variance to model the number missed school days. 

The conservative tests using 2-multivariance and total multivariance detect no dependencies (p-values: 0.0767, 0.1565), the corresponding resampling tests reject independence with actual p-values of 0.00.

The dependence structure detection yields the structures shown in Figure \ref{fig:quine}. Here the full structure provides a refinement of the clustered structure. For the detection we used resampling tests with 10000 resamples and significance level $\alpha = 0.01$. Based on the actually performed multiple tests the approximate probability of a type I error is 0.0297 for the clustered structure and 0.0199 for the full structure. By the large number of resamples used this example might just seem to be an (impractical) proof of concept, but note that the same results can also be obtained with the faster methods developed in \cite{BersBoet2018v2}.

For the variables: age, gender and missed-days 3-dependence (with lower order independence) was detected (Figure \ref{fig:quine}). To judge if this is really a sensible finding in terms of the field of study is beyond our expertise. Nevertheless the found dependencies naturally suggest candidates for a minimal model for the number of missed days: Based on the detected full dependence structure the missed days depend only on the cultural background and on the interaction term of age and gender. 
\begin{figure}[H]\centering
\includegraphics[width = 0.3\textwidth]{./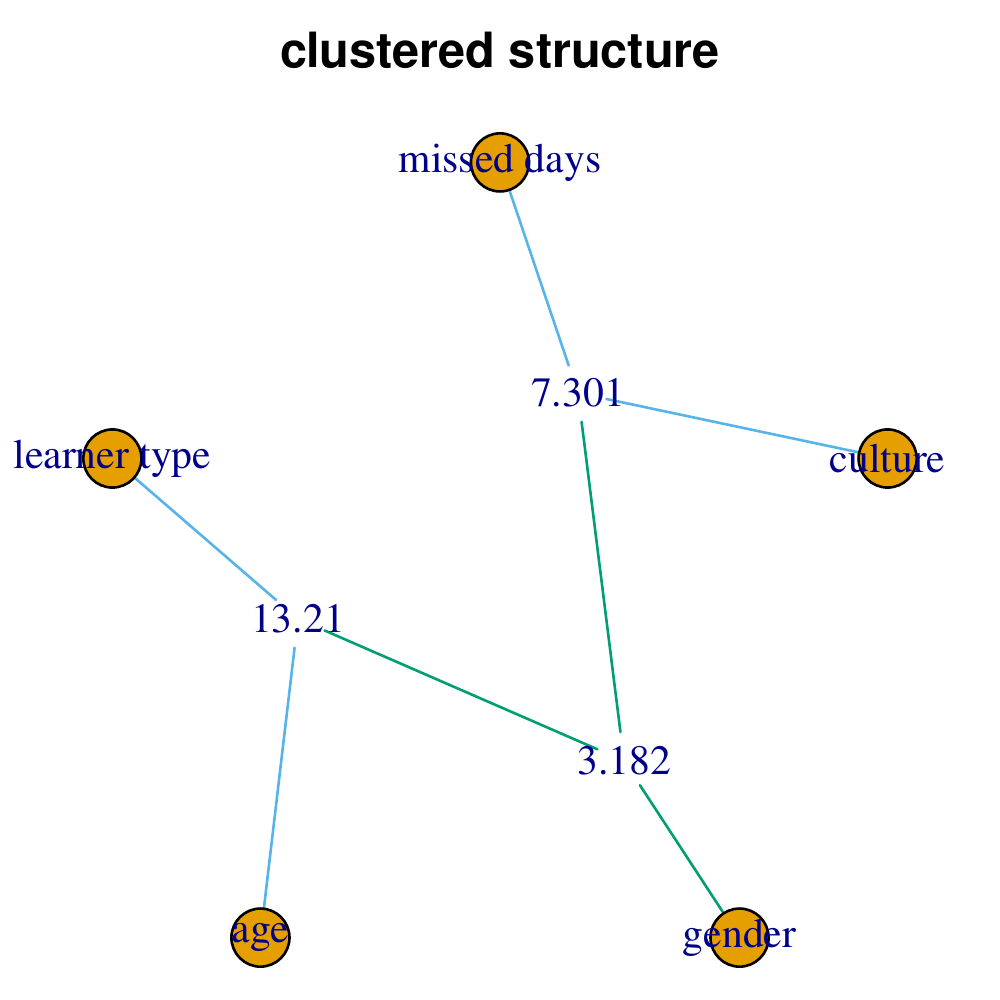}\quad
\quad \includegraphics[width = 0.3\textwidth]{./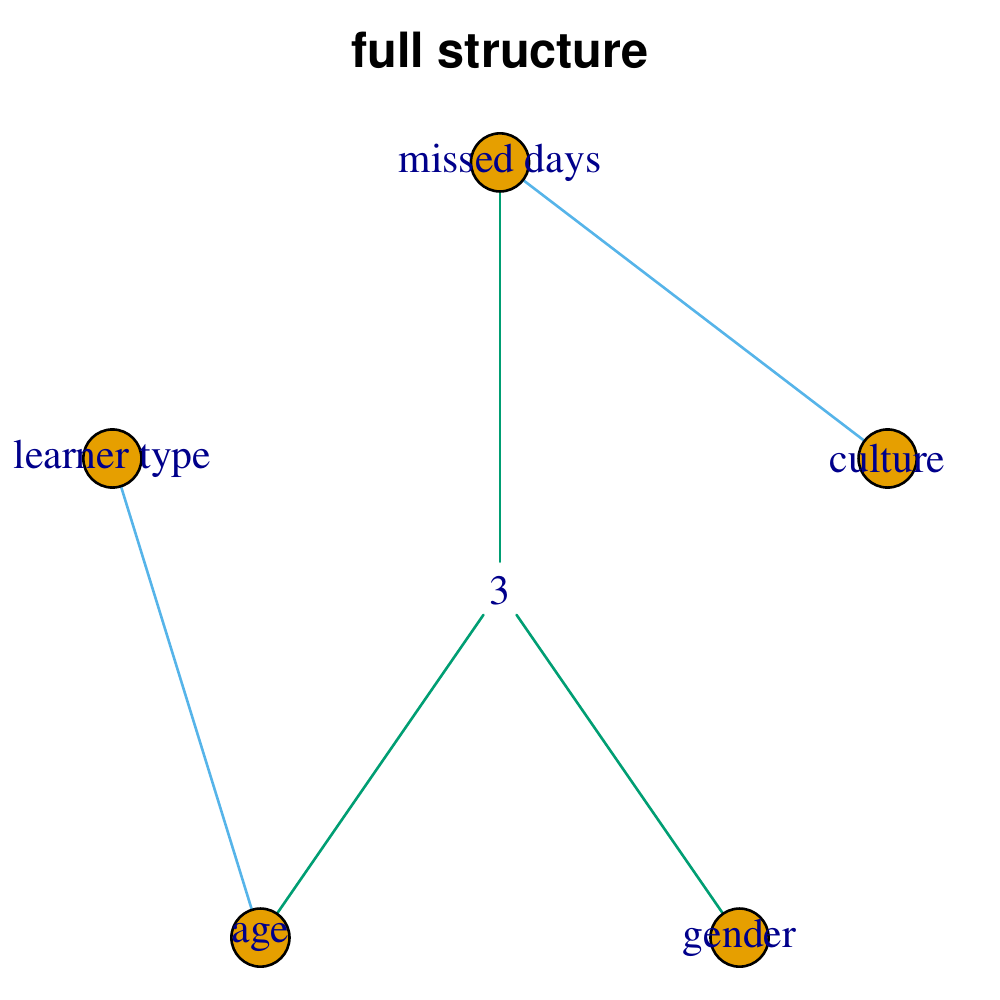}
\caption{Student survey data (Ex.\ \ref{ex:quine}): detected dependence structures.}	
	\label{fig:quine}
\end{figure}
\end{example}

\begin{example}[Decathlon]\label{ex:decathlon}
The results of decathlon athletes from 1985 to 2006 are provided by \cite{Unwi2015}. To consider these (and smaller subsets) as independent samples we only keep the personal best of each athlete, leaving 2709 samples, and order these by the achieved total points in increasing order (the field is denser for lower points, constituting more to the required i.i.d.\ setting for the samples tested). It is well known that the 10 disciplines are dependent, e.g.\ \cite{CoxDunn2002,WoolAnslBidg2007}. We are interested how many samples (using the real measurements of the results in each discipline) are required to detect a dependence using the presented methods: For $2$-multivariance $M_2$ the dependence is, for a significance level of $\alpha = 0.05$, first detected for $N = 5$ and finally for all $N>11$. For total-multivariance $\overline{M}$ it is detected also for all $N > 11.$

Here we used the dependence structure detection based on conservative tests, thus it is interesting to see which structures are detected for various sample sizes, see Figure \ref{fig:decathlon}. The detection of the higher order dependence indicates early on that these variables are dependent, but due to the conservative tests, the actual lower order dependence is missed. With increasing sample size only the dominant pairwise dependencies are detected. Also note that due to the repeated testing and the given significance level the probability of a type I error is large. Using the consistent estimator only some pairwise dependencies are detected for $N = 2706$ and no dependencies are detected for $N \in \{50,100,200\}.$ 

Notably there are some natural variants: 1.\ Instead of the results one could consider the achieved points in each discipline, which are obtained by non linear transformations of the results. This yields almost the same inference. 2.\ Starting with the elite athletes instead of our order causes a change in the detection: In this case $2$-multivariance detects a dependence for all $N>25$ but total-multivariance requires much more samples: $N>177$. Thus here a curse of dimension is at work (compare with Example \ref{ex:averaging}), which might indicate that for top athletes some disciplines are less dependent than for other athletes. We leave further analysis and interpretation to field experts. The setting also naturally yields to clustering methods (for dependent random variables) based on distance multivariance, a topic which is beyond the current paper.

\begin{figure}[H]\centering
\includegraphics[width = 0.22\textwidth]{./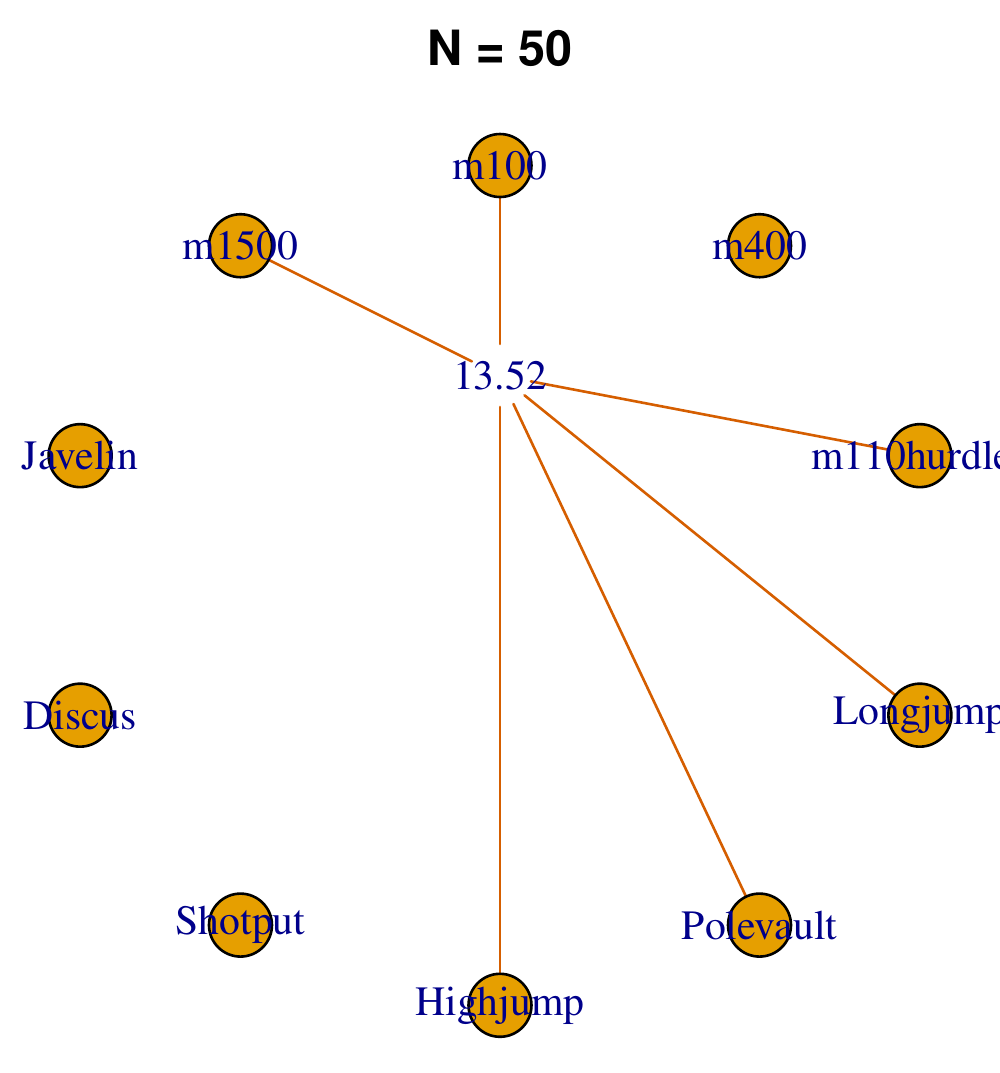}
\quad \includegraphics[width = 0.22\textwidth]{./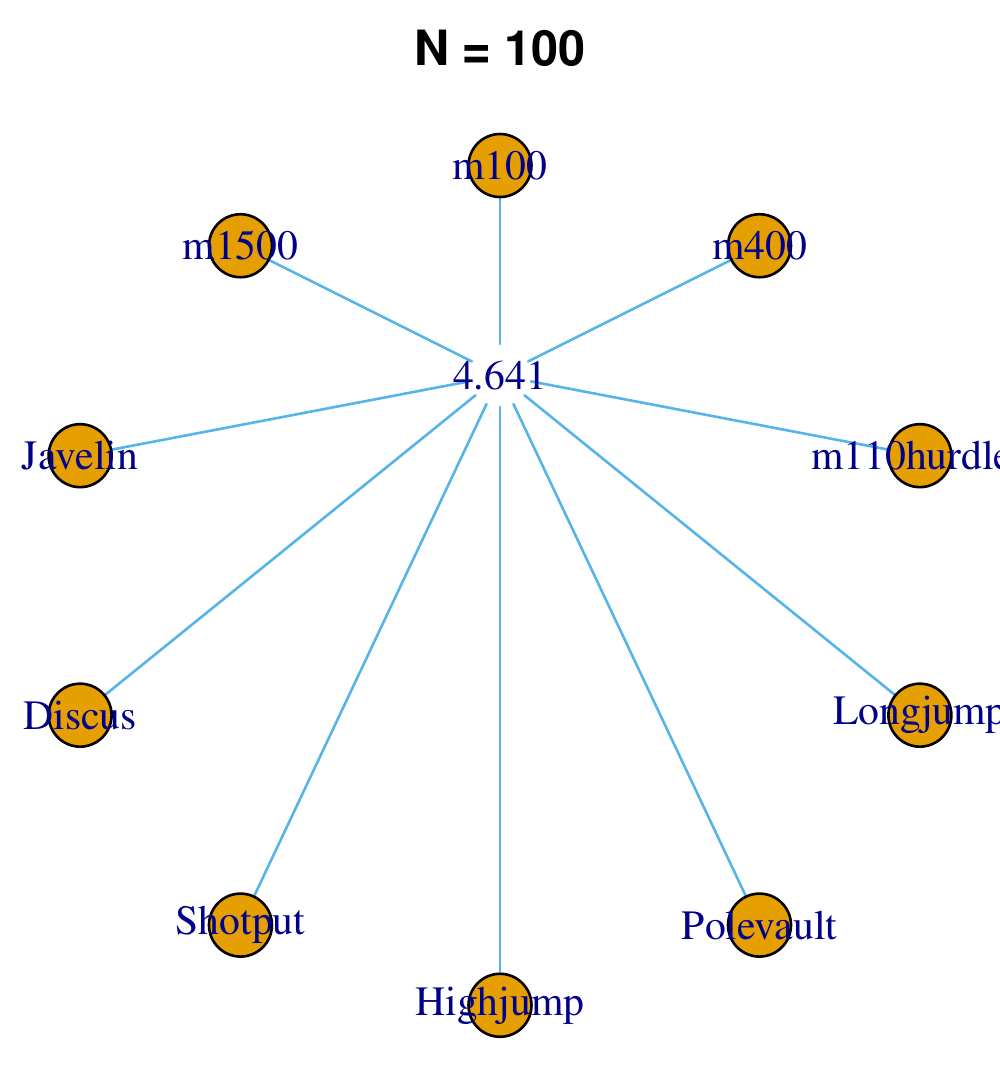}
\quad \includegraphics[width = 0.22\textwidth]{./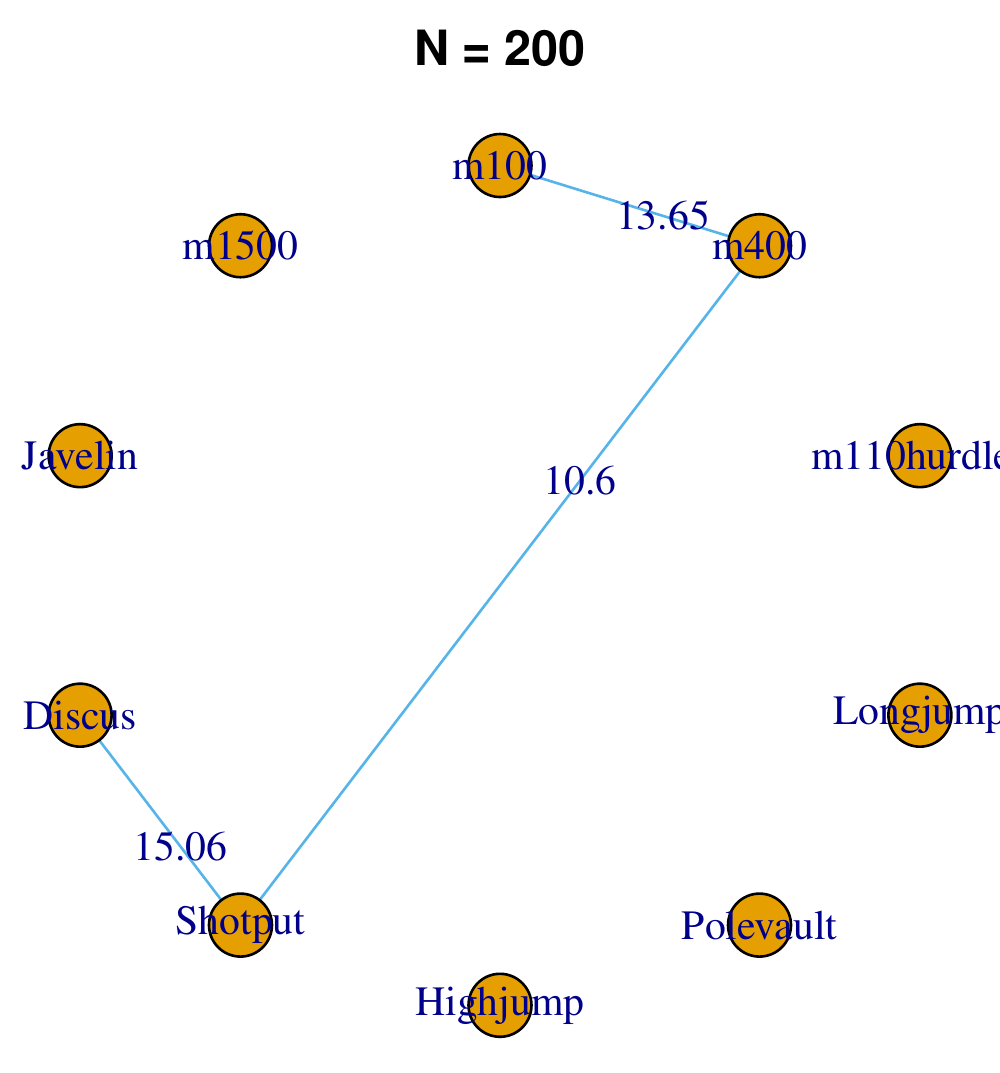}
\quad \includegraphics[width = 0.22\textwidth]{./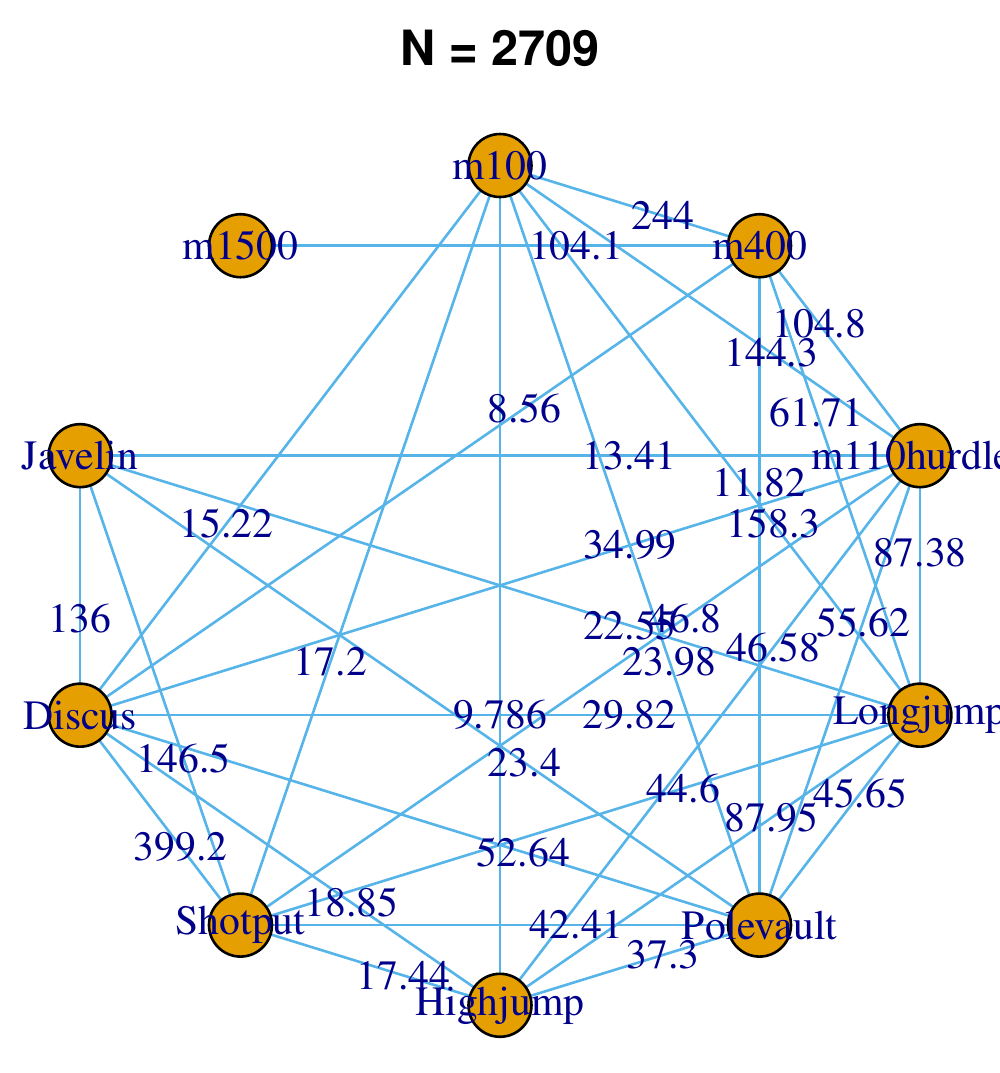}
\caption{Decathlon (Ex.\ \ref{ex:decathlon}): detected dependence structures for 50, 100, 200 and all 2709 samples.}	
	\label{fig:decathlon}
\end{figure}
\end{example}

\section{Further results and proofs} \label{sec:appendix}

Here we collect several results which are essential for (parts of) the previous sections, but which were postponed to this section due to their technicality.

\subsection{A theorem characterizing the support of L\'evy measures}

Note that in \cite[after Defintion 2.3]{BoetKellSchi2018} it was stated that it is unknown how to characterize the (full) support of L\'evy measures in terms of the corresponding continuous negative definite function. The following result provides a characterization (via Proposition \ref{prop:fullsupport}), it is related to \cite[Corollary 2]{ZingKakoKleb1992}.

\begin{theorem}\label{thm:rhochar} Let $\psi(x):= \int_{\R^d} 1-\cos(x t)\,\rho(dt)$ where $\rho$ is a symmetric measure integrating $1\land |.|^2$, and $X$, $Y$ be $\R^d$-valued random vectors with characteristic functions $f_X, f_Y$, and assume $\E(\psi(X))<\infty$ and $\E(\psi(Y))<\infty$. Then 
\begin{equation}
f_X = f_Y \text{ $\rho$-a.s. } \quad \Leftrightarrow \quad \text{ for all }z\in \R^d:\ \E(\psi(X-z))= \E(\psi(Y-z)).
\end{equation}
\end{theorem}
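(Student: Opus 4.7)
The plan is to first recast the hypothesis as a weak integral identity for $g:=f_X-f_Y$ against $\rho$, and then pass from this identity to pointwise equality $\rho$-a.e. Tonelli applied to the non-negative integrand $1-\cos((X-z)\cdot t)$ gives the representation
\[
\E\psi(X-z)=\int\bigl[1-\mathrm{Re}(e^{-\ii z\cdot t}f_X(t))\bigr]\,\rho(dt)\qquad(z\in\R^d),
\]
which is finite for every $z$ thanks to the pseudo-metric property $\sqrt{\psi(x-y)}\leq\sqrt{\psi(x)}+\sqrt{\psi(y)}$ of any real continuous negative definite function, giving $\E\psi(X-z)\leq 2\E\psi(X)+2\psi(z)<\infty$. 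The direction ``$f_X=f_Y$ $\rho$-a.s.\ $\Rightarrow$ $\E\psi(X-z)=\E\psi(Y-z)$'' is then immediate, since the two integrands agree $\rho$-a.e.

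For the converse I would test the identity $\E\psi(X-z)=\E\psi(Y-z)$ against real Schwartz functions $\phi:\R^d\to\R$ whose Fourier transforms $\hat\phi$ lie in $C_c^\infty(\R^d\setminus\{0\};\C)$ and are Hermitian; such $\phi$ are abundant and automatically satisfy $\int\phi\,dz=\hat\phi(0)=0$. Fubini applied to
\[
\int\phi(z)\,\E\psi(X-z)\,dz=\iint\phi(z)\bigl[1-\mathrm{Re}(e^{-\ii z\cdot t}f_X(t))\bigr]\,\rho(dt)\,dz
\]
is legitimate because the absolute integrand is controlled by $2|\phi(z)|(\E\psi(X)+\psi(z))$, and the standard polynomial bound $\psi(z)\leq c(1+|z|^2)$ (a direct consequence of $\int(1\wedge|t|^2)\,\rho(dt)<\infty$), together with $\phi$ Schwartz and $\E\psi(X)<\infty$, makes the double integral absolutely convergent. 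Interchanging and using $\int\phi=0$ rewrites the left-hand side as $-\int\mathrm{Re}(f_X\overline{\hat\phi})\,\rho(dt)$, an ordinary Lebesgue integral since $\hat\phi$ is bounded with compact support disjoint from $0$ (where $\rho$ has finite mass). Doing the same for $Y$ and subtracting, the hypothesis reduces to
\[
\int\mathrm{Re}\bigl(g(t)\,\overline{\hat\phi(t)}\bigr)\,\rho(dt)=0
\]
for every Hermitian $\hat\phi\in C_c^\infty(\R^d\setminus\{0\};\C)$.

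In the final step I split $g=g_1+\ii g_2$, where $g_1$ is even and $g_2$ is odd (since $f_X,f_Y$ are Hermitian), and decompose a Hermitian $\hat\phi$ as $u+\ii v$ with $u$ even real and $v$ odd real, both in $C_c^\infty(\R^d\setminus\{0\};\R)$; the identity becomes $\int(g_1 u+g_2 v)\,d\rho=0$. Taking $v\equiv 0$ gives $\int g_1 u\,d\rho=0$ for every even $u$; the odd part of an arbitrary $w\in C_c^\infty(\R^d\setminus\{0\};\R)$ multiplied by the even $g_1$ integrates to zero against the symmetric $\rho$, so in fact $\int g_1 w\,d\rho=0$ for every such $w$, and analogously $\int g_2 w\,d\rho=0$. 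A standard Riesz/mollification argument (each $g_i$ is continuous and bounded, and $\rho$ is finite on $\{|t|\geq\varepsilon\}$ for every $\varepsilon>0$) then forces $g_1=g_2=0$ $\rho$-a.e.\ on $\R^d\setminus\{0\}$, and $g(0)=0$ disposes of the origin, yielding $f_X=f_Y$ $\rho$-a.s. The hard part is the integrability bookkeeping forced by the possibly infinite mass of $\rho$ near $0$: it necessitates test functions vanishing at $0$, and it is the balance between this restriction and retaining \emph{enough} test functions, delivered through the parity/symmetry considerations above, that makes the argument go through.
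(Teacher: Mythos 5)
Your proof is correct, but the converse direction takes a genuinely different route from the paper's. You both start identically (Tonelli plus the triangle inequality $\psi(x+y)\le 2\psi(x)+2\psi(y)$ to write $\E(\psi(X-z))=\int 1-\mathrm{Re}(\ee^{-\ii zt}f_X(t))\,\rho(dt)<\infty$, which disposes of the forward implication), but for the converse the paper never introduces a class of test functions: it simply integrates the hypothesis over $z$ distributed as independent copies $X'\eqd X$ and $Y'\eqd Y$, obtaining $\E(\psi(X-X'))=\E(\psi(Y-X'))$ and $\E(\psi(X-Y'))=\E(\psi(Y-Y'))$, which translate into $\int |f_X|^2-\mathrm{Re}(f_Y\overline{f_X})\,d\rho=0$ and $\int |f_Y|^2-\mathrm{Re}(f_X\overline{f_Y})\,d\rho=0$; summing completes the square and gives $\int|f_X-f_Y|^2\,d\rho=0$ in two lines. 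In effect the paper tests the identity only against the laws of $X$ and $Y$ themselves, which is exactly enough and sidesteps all the bookkeeping you need: the polynomial bound $\psi(z)\le c(1+|z|^2)$, the restriction to Hermitian $\hat\phi\in C_c^\infty(\R^d\setminus\{0\})$ forced by the possibly infinite mass of $\rho$ at the origin, the parity decomposition, and the final mollification step. What your longer argument buys is robustness: it makes transparent why the single linear constraint $\hat\phi(0)=0$ is the only obstruction, and it would still work if the identity were known only for Lebesgue-almost every $z$ or in the sense of distributions, whereas the paper's polarization trick needs the identity on the supports of $X$ and $Y$. Both are valid; the paper's is the more economical.
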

\begin{proof}
Additionally to the stated assumptions let $Z,X',Y'$ be independent random variables which are also independent of $X, Y$ and satisfy $\E(\psi(Z))<\infty$ and $X' \eqd X$, $Y'\eqd Y$. Note that
\begin{align}
 \int 1 - Re(f_X(t)f_Z(-t)) \,\rho(dt) = \iiint 1-\cos((x-z)t)\,\rho(dt)\Prob(X\in dx)\Prob(Z\in dz)
 = \E(\psi(X-Z)) < \infty 
\end{align}
by Tonelli and using the (generalized) triangle inequality for continuous negative definite functions \eqref{eq:psi-tri}. Thus the following implications hold:
\begin{align}
f_X = f_Y \text{ $\rho$-a.s. } \Rightarrow&  \quad  \text{for all }z \in \R^d:\ \int Re((f_X(t)-f_Y(t))\ee^{-\ii zt}) \, \rho(dt) = 0\\
\Leftrightarrow &\quad \text{for all }z \in \R^d:\ \E(\psi(X-z)) = \E(\psi(Y-z)) \\
 \Rightarrow & \quad\E(\psi(X-X')) = \E(\psi(Y-X')) \text{ and }\E(\psi(X-Y')) = \E(\psi(Y-Y')) \\
\Leftrightarrow & \quad \int |f_X(t)|^2 - Re(f_Y(t)f_{X'}(-t))\, \rho(dt) = 0 \quad \text{ and }  \quad \int |f_Y(t)|^2 - Re(f_X(t)f_{Y'}(-t))\, \rho(dt) = 0\\
\Rightarrow & \quad\int |f_X(t)-f_Y(t)|^2\, \rho(dt) = 0
\end{align}
and the last line is equivalent to the start. This completes the proof.
\end{proof}

\subsection{Moment condition} \label{sec:momcon}
In \cite{BoetKellSchi2019} the following condition was used: 
\begin{align}
\text{\textbf{mixed $\psi$-moment condition}: }
\label{d1p} \quad \E\left(\prod_{i=1}^n \psi_i(X_{k_i,i}-X_{l_i,i}')\right) < \infty\text{ for all $k_i,l_i \in \{0,1\}, i =1,\ldots,n$} 
\end{align}
where $\bm{X}_{0},\bm{X}_{0}',\bm{X}_{1},\bm{X}_{1}'$ are independent and have the same marginal distributions as $\bm{X}$ (for the dimensions $d_i$), $\bm{X}_{1},\bm{X}_{1}'$ have also the same joint distribution, but the marginal distributions of $\bm{X}_{0},\bm{X}_{0}'$ are independent (for further details see \cite[Def. 2.3.a]{BoetKellSchi2019}).

We show that for non constant random vectors $X_i$ the joint $\psi$-moment condition \eqref{d1} and \eqref{d1p} are equivalent. If a random vector is constant condition \eqref{d1p} becomes trivial since the corresponding factor is equal to 0.

Recall the (generalized) triangle inequality for any real valued negative definite function $\psi$ \cite[Equation (8)]{BoetKellSchi2018}:
\begin{equation}\label{eq:psi-tri}
\psi(x+y) \leq 2\psi(x) + 2 \psi(y). 
\end{equation}
By this inequality \eqref{d1} implies \eqref{d1p}. For the converse implication we begin with the following observation.

\begin{lemma} \label{lem:momn} For random variables $(X_1,\ldots,X_n)$ the following are equivalent:
\begin{enumerate}[i)]
\item for all $S\subset\{1,\ldots,n\}:$ $ \E \left( \prod_{i\in S} \psi_i(X_{i})\right) < \infty,$
\item for all $S\subset\{1,\ldots,n\}:$ $\E \left( \prod_{i\in S} \psi_i(X_{i}-x_i)\right) < \infty$ for some $(x_1,\ldots,x_n)$,
\item for all $S\subset\{1,\ldots,n\}:$ $\E \left( \prod_{i\in S} \psi_i(X_{i}-\tilde x_i)\right) < \infty$ for all $(\tilde x_1,\ldots,\tilde x_n).$
\end{enumerate}
\end{lemma}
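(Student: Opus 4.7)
The plan is to reduce everything to the generalized triangle inequality \eqref{eq:psi-tri}, $\psi_i(u+v)\leq 2\psi_i(u)+2\psi_i(v)$, which (together with symmetry of $\psi_i$) lets one bound $\psi_i(X_i-y)$ by $2\psi_i(X_i)+2\psi_i(y)$ and, swapping roles, $\psi_i(X_i)$ by $2\psi_i(X_i-y)+2\psi_i(y)$ for any deterministic $y$. I will establish the cycle (iii)$\Rightarrow$(ii)$\Rightarrow$(i)$\Rightarrow$(iii); the implication (iii)$\Rightarrow$(ii) is trivial (specialize to any single $(x_1,\ldots,x_n)$), so only the other two implications require work.

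For (i)$\Rightarrow$(iii), I would fix $S\subseteq\{1,\ldots,n\}$ and $\tilde x=(\tilde x_1,\ldots,\tilde x_n)$, apply the inequality factorwise, and expand the resulting product:
\begin{equation*}
\prod_{i\in S}\psi_i(X_i-\tilde x_i)\leq \prod_{i\in S}\bigl(2\psi_i(X_i)+2\psi_i(\tilde x_i)\bigr)=\sum_{T\subseteq S}2^{|S|}\Bigl(\prod_{i\in T}\psi_i(X_i)\Bigr)\Bigl(\prod_{i\in S\setminus T}\psi_i(\tilde x_i)\Bigr).
\end{equation*}
Taking expectations gives a finite sum whose each summand is a constant times $\E\prod_{i\in T}\psi_i(X_i)$, finite by (i) applied to $T\subseteq S$. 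The bookkeeping is purely combinatorial and uses only that all $\psi_i$ are non-negative and that $S$ is finite.

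For (ii)$\Rightarrow$(i) I would run the same expansion in the opposite direction: fix $S$ and the $(x_1,\ldots,x_n)$ supplied by (ii), and bound
\begin{equation*}
\prod_{i\in S}\psi_i(X_i)\leq \sum_{T\subseteq S}2^{|S|}\Bigl(\prod_{i\in T}\psi_i(X_i-x_i)\Bigr)\Bigl(\prod_{i\in S\setminus T}\psi_i(x_i)\Bigr).
\end{equation*}
Under the natural reading of (ii) as ``there exists $(x_1,\ldots,x_n)$ such that the expectation is finite for every $S$'', each $\E\prod_{i\in T}\psi_i(X_i-x_i)$ appearing in the expansion is finite directly. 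Under a strictly weaker reading (where the tuple $(x_1,\ldots,x_n)$ may depend on $S$), I would instead do a strong induction on $|S|$: the subsets $T\subsetneq S$ arising in the expansion are handled by the inductive hypothesis after one further application of \eqref{eq:psi-tri} which trades $\psi_i(X_i-x_i)$ back for $\psi_i(X_i)$ plus a constant.

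The only step with any subtlety is the last paragraph: one has to keep the subset quantifier and the choice of translation tuple straight, because $\psi_i$ can vanish, so $\E\prod_{i\in T}\psi_i(X_i-x_i)<\infty$ is not automatic from $\E\prod_{i\in S}\psi_i(X_i-x_i)<\infty$ for $T\subsetneq S$; the induction (or the strong reading of (ii)) bypasses this issue cleanly. Everything else is mechanical expansion driven by \eqref{eq:psi-tri}.
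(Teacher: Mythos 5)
Your proposal is correct and uses exactly the paper's tool: the generalized triangle inequality \eqref{eq:psi-tri} applied factorwise, followed by expansion of the product over subsets $T\subseteq S$, with the observation that the ``for all $S$'' quantifier is what makes the cross terms finite. The only difference is organizational --- the paper runs the cycle iii)$\Rightarrow$i)$\Rightarrow$ii) trivially and does the single nontrivial step ii)$\Rightarrow$iii), whereas you prove two nontrivial implications; your extra care about whether the tuple in ii) may depend on $S$ is a legitimate refinement of a point the paper glosses over, but not a different method.
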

\begin{proof}
Obviously iii) with $\tilde x_i = 0, i=1,\ldots,n$ is i) which implies ii). Finally, iii) follows from ii) by $\psi_i(X_i-\tilde x_i) \leq 2 \psi_i(X_i-x_i) + 2\psi(x_i - \tilde x_i)$ applied to each component. Note that hereto it is essential that the expectations are finite for all subsets $S$.
\end{proof}

Now note that $\E(\psi_i(X_i-X_i')) >0$ for non-constant random variables. Thus the expectations of independent components (i.e., for $k_i = l_i = 0$ in \eqref{d1p}) which factor out in \eqref{d1p} yield strictly positive factors. Therefore, due to the independence of $(X_1,\ldots,X_n)$ and $(X'_1,\ldots,X'_n)$, the condition \eqref{d1p} implies for all $S\subset\{1,\ldots,n\}:$ $\E \left( \prod_{i\in S} \psi_i(X_{i}-x_i)\right) < \infty$ for $\Prob_{(X_1,\ldots,X_n)}$-almost all $(x_1,\ldots,x_n).$ Hence the joint $\psi$-moment condition \eqref{d1} holds by Lemma \ref{lem:momn}.
 
\subsection{Proof of the asymptotics of sample distance multivariance (Theorem \ref{thm:convergence})} \label{sec:asymptoticsthm}
Here we are in the setting of Section \ref{sec:defs}. The asymptotics \eqref{eq:estimator-convergence} and \eqref{eq:totalestimator-convergence} of the test statistic were proved in \cite[Thm.\ 4.5, 4.10, Cor.\ 4.16, 4.18]{BoetKellSchi2019} and \cite[Cor.\ 4.8]{BoetKellSchi2018} under the condition \eqref{eq:mom1-log}. The following theorem provides a proof using an alternative condition. Combining the results yields  the convergence statements \eqref{eq:estimator-convergence} and \eqref{eq:totalestimator-convergence} of Theorem \ref{thm:convergence}.

\begin{theorem} \label{thm:asymp-Vstat}
 Let $X_i,$ $i=1,\ldots,n$ be non-constant random variables such that
 \begin{equation} \label{eq:secondmom}
 \E(\psi_i^2(X_i))< \infty \text{ for all $i =1,\ldots,n$}
 \end{equation}
and let $\bm{X}^{(k)}, k=1,\ldots,N$ be independent copies of $\bm{X}=(X_1,\ldots,X_n)$.
Then
\begin{align}
\label{eq:estimator-convergence-Vstat}N \cdot \hN \Mskript^2(\bm{X}^{(1)},\ldots, \bm{X}^{(N)}) &\xrightarrow[N \to \infty]{d} Q& \quad \text{ if $X_1,\ldots,X_n$ are independent,}\\
\label{eq:totalestimator-convergence-Vstat}\Mfrac{N \cdot \hN \overline{\Mskript}^2(\bm{X}^{(1)},\ldots, \bm{X}^{(N)})}{2^n-n-1}  &\xrightarrow[N \to \infty]{d} \overline{Q} & \quad \text{ if $X_1,\ldots,X_n$ are independent,}
\end{align}
where $Q$ and $\overline{Q}$ are Gaussian quadratic forms with $\E Q = 1 = \E \overline{Q}$. 
\end{theorem}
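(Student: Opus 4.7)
The plan is to recognize $N \cdot \hN \Mskript^2$, up to an asymptotically negligible remainder, as $N$ times a standard degree-$2$ V-statistic whose kernel is canonically degenerate in $L^2$, and then invoke the classical limit theorem for such V-statistics (e.g., Serfling, \emph{Approximation Theorems of Mathematical Statistics}, Section 5.5, or Korolyuk-Borovskich). The relevant kernel is
\begin{equation*}
h(\bm{x}, \bm{y}) := \prod_{i=1}^n \frac{\Psi_i(x_i, y_i)}{a_i}, \qquad a_i := \E \psi_i(X_i - X_i') > 0,
\end{equation*}
with $\Psi_i$ given in \eqref{eq:Psi}. Two preliminary facts are needed. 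First, \eqref{eq:secondmom} combined with the triangle inequality \eqref{eq:psi-tri} and conditional Jensen gives $\E \Psi_i^2(X_i, X_i') < \infty$, so under independence of $X_1,\ldots,X_n$ the product kernel satisfies $h \in L^2(\mu \otimes \mu)$, where $\mu$ is the joint law of $\bm{X}$. Second, a direct computation from \eqref{eq:Psi} yields $\E[\Psi_i(X_i, y_i)] = 0$ for every fixed $y_i$, which under independence factorizes to $\E[h(\bm{X}, \bm{y})] = 0$ for every $\bm{y}$; i.e., $h$ is canonically degenerate.

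The approximation step establishes
\begin{equation*}
N \cdot \hN \Mskript^2 \;=\; \frac{1}{N} \sum_{j,k=1}^N h(\bm{X}^{(j)}, \bm{X}^{(k)}) \;+\; o_P(1).
\end{equation*}
By \eqref{eq:A} and \eqref{eq:normalizedm}, $(\Askript_i)_{jk}$ is obtained from $\psi_i(x_i^{(j)}-x_i^{(k)})$ by exactly the empirical analogue of the centering that defines $\Psi_i/a_i$. Under \eqref{eq:secondmom} the strong law gives $\hN a_i \to a_i$ a.s., and the empirical row, column and grand means of $B_i$ converge to their population targets at $L^2$-rate $N^{-1/2}$. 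Expanding $\prod_i (\Askript_i)_{jk}$ around $\prod_i \Psi_i(x_i^{(j)}, x_i^{(k)})/a_i$ and bounding the resulting cross terms by Cauchy--Schwarz produces the stated remainder. The V-statistic limit theorem for canonically degenerate $L^2$ kernels then yields
\begin{equation*}
\frac{1}{N}\sum_{j,k=1}^N h(\bm{X}^{(j)}, \bm{X}^{(k)}) \xrightarrow[N\to\infty]{d} Q := \sum_{k=1}^\infty \lambda_k Z_k^2,
\end{equation*}
with $(Z_k)$ i.i.d.\ $N(0,1)$ and $(\lambda_k)$ the eigenvalues of the compact Hilbert--Schmidt operator $f \mapsto \int h(\cdot, \bm{y}) f(\bm{y})\, \mu(d\bm{y})$. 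A trace computation using $\Psi_i(X_i, X_i) = 2\E[\psi_i(X_i - X_i') \mid X_i] - a_i$ (together with $\psi_i(0)=0$) gives $\E \Psi_i(X_i, X_i) = a_i$, hence $\E Q = \sum_k \lambda_k = \E h(\bm{X},\bm{X}) = \prod_i \E \Psi_i(X_i,X_i)/a_i = 1$.

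The argument for $\hN \overline{\Mskript}^2$ is parallel. Expand
\begin{equation*}
\prod_{i=1}^n(1 + (\Askript_i)_{jk}) - 1 \;=\; \sum_{\emptyset \neq S \subset \{1,\ldots,n\}} \prod_{i \in S}(\Askript_i)_{jk};
\end{equation*}
the $|S|=1$ summands vanish identically because each $\Askript_i$ is doubly centered ($\sum_j (\Askript_i)_{jk} = 0$). The remaining $2^n - n - 1$ sample multivariances combine into a single V-statistic with kernel $\bar{h}(\bm{x},\bm{y}) := \sum_{|S|\geq 2} \prod_{i\in S}\Psi_i(x_i,y_i)/a_i$, which is $L^2$ and canonically degenerate by applying the arguments above to each summand; the normalization $1/(2^n-n-1)$ in \eqref{eq:sampletm} then delivers $\E \overline{Q} = 1$. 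The main obstacle throughout is the approximation step: controlling the discrepancy between empirically and population-centered quantities tightly enough that the cross terms in the product expansion contribute $o_P(1/N)$ to the double sum. This is precisely where the second moment hypothesis \eqref{eq:secondmom} (in contrast to the merely finite first $\psi$-moments available in \eqref{d1}) becomes essential, via Cauchy--Schwarz estimates on the V-statistic remainder sums.
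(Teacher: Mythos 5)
Your proposal follows essentially the same route as the paper's proof: both identify the statistic as a degenerate V-statistic with the product kernel built from $\Psi_i$, verify canonical degeneracy and the $L^2$ kernel condition from \eqref{eq:secondmom}, invoke the Korolyuk--Borovskich limit theorem, reduce the empirical-versus-population centering discrepancy to a second-moment/Slutsky argument, and handle total multivariance by summing the kernels over subsets $|S|\geq 2$ (the paper likewise only sketches the technical remainder estimate, deferring the explicit second-moment computation to a companion paper). The only cosmetic difference is that you absorb the normalization $1/a_i$ into the kernel from the start, whereas the paper works with the unnormalized kernel and lets the factors cancel in the limit.
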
 
\begin{proof}
Let $\bm{X}',\bm{X}^{(k)}, k=1,\ldots,N$ be independent copies of $\bm{X}=(X_1,\ldots,X_n)$ with independent components.
Note, $\hN M^2(\bm{X}^{(1)}, \ldots,\bm{X}^{(N)}) = N^{-2} \sum_{j,k=1}^N \hN\Phi(j,k)$ with $\hN\Phi(j,k) := \hN\Phi_{\{1,\ldots,n\}}(j,k)$ where
\begin{equation}
\begin{split}\label{eq:PhiS}
\hN\Phi_S(j,k) &:= \hN\Phi_S(j,k;\bm{X}^{(1)},\ldots,\bm{X}^{(N)}):= \\
& \prod_{i\in S} \Bigg(-\psi_i(X_i^{(j)}-X_i^{(k)}) + N^{-1}\sum_{m=1}^N \psi_i(X_i^{(j)}-X_i^{(m)}) + N^{-1}\sum_{l=1}^N \psi_i(X_i^{(l)}-X_i^{(k)}) - N^{-2}\sum_{l,m=1}^N \psi_i(X_i^{(l)}-X_i^{(m)})\Bigg).
\end{split}
\end{equation}
Similarly, define $\Phi(\bm{x}^{(j)},\bm{x}^{(k)}) := \Phi_{\{1,\ldots,n\}}(\bm{x}^{(j)},\bm{x}^{(k)})$ with
\begin{equation}
\begin{split}\label{eq:Phi}
\Phi_S(\bm{x}^{(j)},\bm{x}^{(k)}) :=  \prod_{i\in S} \bigg(-\psi_i(x_i^{(j)}-x_i^{(k)}) +\E(\psi_i(x_i^{(j)}-X_i)) + \E( \psi_i(X_i-x_i^{(k)})) - \E(\psi_i(X_i-X_i'))\bigg).
\end{split}
\end{equation}
Then $\E(\Phi(\bm{X},\bm{X})) = \prod_{i=1}^N \E(\psi_i(X_i-X_i'))$ and
\begin{equation} \label{eq:vstatmoms}
\E(\Phi(\bm{x},\bm{X})) = 0,\ \E(\Phi(\bm{X},\bm{X}')) = 0,\ \E(|\Phi(\bm{X},\bm{X})|) < \infty \text{ and }  \E(\Phi(\bm{X},\bm{X}')^2) < \infty.
\end{equation}
where \eqref{eq:secondmom} was used for the bounds.
Therefore $N \cdot N^{-2} \sum_{j,k=1}^N \Phi(\bm{X}^{(j)},\bm{X}^{(k)})$ converges in distribution to a Gaussian quadratic form by \cite[Thm.\ 4.3.2, p.\ 141]{KoroBoro1994}. Note that in the limit in \cite{KoroBoro1994} appear $\E(\Phi(\bm{X},\bm{X}))$ and a sum $\sum_{i=1}^{\infty} \lambda_i$, which cancel in our setting -- this equality also implies that the limit for normalized multivariance has expectation 1, cf.\ \cite[Lemma 2.3 and Remark 4.9.1]{BersBoet2018v2}. Finally, \eqref{eq:estimator-convergence-Vstat} follows by Slutsky's theorem since 
\begin{equation} \label{eq:pnull-slutsky}
 N \cdot N^{-2} \sum_{j,k=1}^N \left(\hN \Phi(j,k)  -  \Phi(\bm{X}^{(j)},\bm{X}^{(k)}) \right) \xrightarrow[N \to \infty]{\Prob} 0.
\end{equation}
To avoid a false impression, note that \eqref{eq:pnull-slutsky} seems natural since the strong law of large numbers implies that the expectations in \eqref{eq:Phi} are approximated by the corresponding sums in \eqref{eq:PhiS}. But the additional factor $N$ in \eqref{eq:pnull-slutsky} makes the proof technical, which we only sketch here: For \eqref{eq:pnull-slutsky} it is, using the Markov inequality, sufficient to show that the second moment of the left hand side converges to 0. This moment and its limit can be calculated explicitly based on and similar to \cite[Theorem 4.15]{BersBoet2018v2}, where the second moment of $\hN \Phi_S(j,k)$ is analyzed in-depth.

Considering analogously $\hN\overline{\Phi}(j,k) := \sum_{\substack{S\subset\{1,\dots,n\}\\ |S|>1}} \hN\Phi_{S}(j,k)$ instead of $\hN \Phi$ yields the result for total multivariance.
\end{proof}

\begin{remark}
Based on the methods developed in the preprint \cite[e.g.\ Section 7.7]{BersBoet2018v2} the second order moment in \eqref{eq:vstatmoms} seems to be already bounded under the weaker assumption $\E(\psi_i(X_i)) < \infty$ for all $i=1,\ldots,n$, see also the special case $\psi_i(x_i)=|x_i|$ discussed in \cite{ChakZhan2019}. To make this rigorous one would have to rewrite (or at least discuss) the steps in \cite{KoroBoro1994} in much more detail, which is beyond the bounds of this paper. Moreover, this clearly also requires a discussion if (and why) the counterexample, which shows that the log moment condition in \eqref{eq:mom1-log} (see also Remark \ref{rem:logmom}) is necessary for the convergence of the empirical characteristic functions, is somehow compensated by the $L^2(\rho)$ norm.
\end{remark}

To prove the divergence in \eqref{eq:estimator-divergence} and \eqref{eq:totalestimator-divergence} we require further notation. Let $\varepsilon>0$ and $\rho_\varepsilon:=\otimes_{i=1}^n\rho_{i,\varepsilon}$ with $\rho_{i,\varepsilon}(.): = \rho_i(.\cap B_{i,\varepsilon}^c)$ where $B_{i,\varepsilon}^c:=\{x\in \R^{d_i} \,:\, |x|>\varepsilon\}$. Note that the corresponding continuous negative definite functions $\psi_{i,\varepsilon}(x_i):= \int 1- \cos(x_i\cdot t_i) \, \rho_{i,\varepsilon}(dt_i)$ are bounded (with non-full support; alternatively one could also use the truncation of \cite[Eq.\ (40)]{BoetKellSchi2018} which preserves the full support). Moreover recall that by \cite[(S.7)]{BoetKellSchi2019-supp}  
\begin{equation}
\hN M_\rho(x^{(1)},\ldots,x^{(N)}) = \sqrt{\int \left| \frac{1}{N} \sum_{j=1}^N \prod_{i=1}^n\left( \ee^{\ii x_i^{(j)}t_i}  - \frac{1}{N} \sum_{k=1}^N  \ee^{\ii x_i^{(k)}t_i} \right) \right|^2 \,\rho(dt)}.
\end{equation}
This and \eqref{def:multi} yield by the monotone convergence theorem:
$\sup_{\varepsilon>0}  \hN M_{\rho_\varepsilon}(\bm{X}^{(1)},\ldots,\bm{X}^{(N)}) = \hN M_{\rho}(\bm{X}^{(1)},\ldots,\bm{X}^{(N)})$  and $\sup_{\varepsilon>0} M_{\rho_\varepsilon}(X_1,\ldots,X_n) = M_{\rho}(X_1,\ldots,X_n).$ Which are the key ingredients for the proof of the following Lemma, which in turn is the key to prove \eqref{eq:estimator-divergence} and \eqref{eq:totalestimator-divergence} without any further moment restrictions. 

\begin{lemma} \label{lem:liminfNM} Let $\bm{X}^{(k)}, k=1,\ldots,N$ be independent copies of $\bm{X}=(X_1,\ldots,X_n)$. Then, without any moment assumptions, we have
\begin{equation}
\liminf_{N\to \infty} \hN M_\rho(\bm{X}^{(1)},\ldots,\bm{X}^{(N)})  \geq  M_\rho(X_1,\ldots,X_n).
\end{equation}
\end{lemma}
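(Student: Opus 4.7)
The plan is to approximate $M_\rho$ from below by the truncated versions $M_{\rho_\varepsilon}$ and exploit that each $\psi_{i,\varepsilon}$ is bounded, so that strong consistency applies to the approximants without any moment assumption on $\bm X$.

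First I would record a pointwise monotonicity. Since $\rho_\varepsilon = \otimes_i \rho_{i,\varepsilon}$ with $\rho_{i,\varepsilon} \leq \rho_i$, and the integrand in the characteristic-function representation displayed just before the lemma is non-negative, one has $\hN M_{\rho_\varepsilon}(\bm X^{(1)},\ldots,\bm X^{(N)}) \leq \hN M_\rho(\bm X^{(1)},\ldots,\bm X^{(N)})$ and analogously $M_{\rho_\varepsilon}(X_1,\ldots,X_n) \leq M_\rho(X_1,\ldots,X_n)$ for every $\varepsilon>0$.

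Next, for each fixed $\varepsilon>0$ the truncated integrand $\psi_{i,\varepsilon}$ is bounded, so condition \eqref{eq:psibounded} and all $\psi$-moment requirements are trivially fulfilled regardless of the distribution of $\bm X$. Hence the strong consistency of sample multivariance (\cite[Thm.\ 4.3]{BoetKellSchi2019}, or equivalently the law of large numbers for the bounded V-statistic underlying Theorem \ref{thm:asymp-Vstat}) applies and yields
\begin{equation*}
\hN M_{\rho_\varepsilon}(\bm X^{(1)},\ldots,\bm X^{(N)}) \xrightarrow[N\to\infty]{a.s.} M_{\rho_\varepsilon}(X_1,\ldots,X_n).
\end{equation*}
Combining this with the pointwise monotonicity gives, almost surely and for every fixed $\varepsilon>0$,
\begin{equation*}
\liminf_{N\to\infty} \hN M_\rho(\bm X^{(1)},\ldots,\bm X^{(N)}) \geq \liminf_{N\to\infty} \hN M_{\rho_\varepsilon}(\bm X^{(1)},\ldots,\bm X^{(N)}) = M_{\rho_\varepsilon}(X_1,\ldots,X_n).
\end{equation*}

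Finally I would pass to the supremum along a countable sequence $\varepsilon_k \downarrow 0$ (to avoid a non-countable intersection of null sets) and invoke the monotone-convergence identity $\sup_{\varepsilon>0} M_{\rho_\varepsilon}(X_1,\ldots,X_n) = M_\rho(X_1,\ldots,X_n)$ quoted just above the lemma, to conclude $\liminf_{N\to\infty} \hN M_\rho \geq M_\rho$ almost surely; the case $M_\rho=\infty$ is covered as well, since then the right-hand side of the intermediate inequality is unbounded in $k$.

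The only delicate point is step two: the strong consistency cited in \cite{BoetKellSchi2019} should hold here without invoking \eqref{eq:mom1-log} or \eqref{d1}, because boundedness of $\psi_{i,\varepsilon}$ already guarantees the necessary integrability for the V-statistic strong law. If an a.s. statement is not directly available in that reference in this regime, it can be replaced by the convergence in probability implied by Theorem \ref{thm:asymp-Vstat} (whose $L^2$ input is trivially satisfied), since convergence in probability of $\hN M_{\rho_\varepsilon}$ to the deterministic limit $M_{\rho_\varepsilon}$ still forces $\liminf_N \hN M_\rho \geq M_{\rho_\varepsilon}$ in probability, which suffices after passing to a subsequence along $\varepsilon_k$ and using a standard diagonal argument.
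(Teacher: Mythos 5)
Your proposal is correct and follows essentially the same route as the paper: truncate $\rho$ to $\rho_\varepsilon$ so that the bounded $\psi_{i,\varepsilon}$ make the moment condition automatic, apply the strong consistency of \cite[Theorem 4.3]{BoetKellSchi2019} to get $\hN M_{\rho_\varepsilon}\to M_{\rho_\varepsilon}$, and combine the identity $\sup_{\varepsilon>0}M_{\rho_\varepsilon}=M_\rho$ with a sup/liminf exchange (which the paper phrases as $\sup\liminf\leq\liminf\sup$ and you phrase equivalently via the monotonicity $\hN M_{\rho_\varepsilon}\leq \hN M_\rho$). Your extra care in restricting to a countable sequence $\varepsilon_k\downarrow 0$ to control the null sets is a sensible refinement but does not change the argument.
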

\begin{proof} In this proof we omit $(\bm{X}^{(1)},\ldots,\bm{X}^{(N)})$ and $(X_1,\ldots,X_n)$ in the notation. Note that for $\rho_{\varepsilon}$ instead of $\rho$ the joint $\psi$-moment condition \eqref{d1} is always satisfied, and therefore $\lim_{N\to \infty} \hN M_{\rho_\varepsilon} =  M_{\rho_\varepsilon}$ by \cite[Theorem 4.3]{BoetKellSchi2019}. Thus 
\begin{equation*}
M_\rho = \sup_{\varepsilon>0} M_{\rho_{\varepsilon}} = \sup_{\varepsilon>0} \lim_{N\to \infty} \hN M_{\rho_{\varepsilon}} 
= \sup_{\varepsilon>0} \liminf_{N\to \infty} \hN M_{\rho_{\varepsilon}} \leq \liminf_{N\to \infty} \sup_{\varepsilon>0}  \hN M_{\rho_{\varepsilon}} =  \liminf_{N\to \infty} \hN M_{\rho}.  \qedhere
\end{equation*}
\end{proof}

Now the proof of the divergence \eqref{eq:estimator-divergence} is identical to \cite[Thm.\ 4.5.b]{BoetKellSchi2019} just replacing \cite[Theorem 4.3]{BoetKellSchi2019} by Lemma \ref{lem:liminfNM}: If the random variables are $(n-1)-$independent but dependent Theorem \ref{thm:indep} implies $M > 0$. Thus $N \cdot \hN M $ diverges for $N \to \infty$ by Lemma \ref{lem:liminfNM}. This also applies in the case of dependence to at least one summand of total multivariance (and the others are non negative) therefore also the divergence \eqref{eq:totalestimator-divergence} follows.

\subsection{The population representation of \cite[Lemma 1a]{FanMichPeneSalo2017}} \label{sec:fanstruct}
Note that the $\gamma,\beta$ terms appearing in \cite[Lemma 1a]{FanMichPeneSalo2017} correspond in our notation to $\gamma_{j,l} = \psi_l(x_l^{(j)}),\ \gamma_{j,j',l} = \psi_l(x_l^{(j)} - x_l^{(j')})$ and
\begin{equation} \label{eq:fanphi}
\beta_{j,j',l} = \psi_l(x_l^{(j)}) + \psi_l(x_l^{(j)}) - \psi_l(x_l^{(j)}-x_l^{(j')})=: \phi_l(x_l^{(j)},x_l^{(j')}).
\end{equation}
Turning their sample sums into expectations and observing the independence implied by the indices yields the population version of their $T_n$: 
\begin{align}
\sum_{\substack{S\subset\{1,\dots,n\}\\ |S|>0}} \sum_{\substack{S'\subset\{1,\dots,n\}\\ |S'|>0}} (-1)^{|S|+|S'|} \Bigg[& \E\left( \prod_{l \in S \cap S'} \phi_l(X_l,X_l') \, \prod_{l \in S \backslash S'}  \psi_l(X_l) \prod_{l \in S' \backslash S}  \psi_l(X_l')\right)\\
&- 2 \E\left( \prod_{l \in S \cap S'} \E(\phi_l(X_l,X_l') \mid X_l) \, \prod_{l \in S \backslash S'}  \psi_l(X_l) \prod_{l \in S' \backslash S} \E\left(\psi_l(X_l')\right)\right)\\
&+ \prod_{l \in S \cap S'} \E(\phi_l(X_l,X_l')) \, \prod_{l \in S \backslash S'}  E( \psi_l(X_l)) \prod_{l \in S' \backslash S} \E (\psi_l(X_l')) \Bigg]
\end{align}
now note that $(-1)^{|S|+|S'|} = (-1)^{|S\backslash S'|+|S' \backslash S|}$ can be distributed as factor $-1$ to each factor in the products corresponding to $S\backslash S'$ and $S' \backslash S$. Then the formula 
\begin{equation}
\begin{split}
\sum_{\substack{S\subset\{1,\dots,n\}\\ |S|>0}} \sum_{\substack{S'\subset\{1,\dots,n\}\\ |S'|>0}} \prod_{l \in S \cap S'} a_l \prod_{l \in S \backslash S'}  (-b_l) \prod_{l \in S' \backslash S} (-c_l) = \prod_{i=1}^n(1 + a_i - b_i - c_i ) - \prod_{i=1}^n (1-b_i) - \prod_{i=1}^n (1-c_i) + 1 
\end{split}
\end{equation}
yields
\begin{align}
&\E\left(\prod_{i=1}^n \left(1+\phi(X_i,X_i') -  \psi_i(X_i) -  \psi_i(X_i')\right)  - \prod_{i=1}^n ( 1-  \psi_i(X_i)) -  \prod_{i=1}^n (1 -  \psi_i(X_i')) + 1\right)\\ 
\label{eq:fan2}&-2 \E\Bigg(\prod_{i=1}^n \left( 1 + \E(\phi(X_i,X_i') \mid X_i) -  \psi_i(X_i) - \E(\psi_i(X_i'))\right) - \prod_{i=1}^n ( 1-  \psi_i(X_i)) -  \prod_{i=1}^n ( 1-   \E(\psi_i(X_i'))) + 1\Bigg)\\
\label{eq:fan3}& + \prod_{i=1}^n (1 + \E(\phi(X_i,X_i')) -  \E(\psi_i(X_i)) -  \E(\psi_i(X_i'))) - \prod_{i=1}^n ( 1-  \E(\psi_i(X_i))) -  \prod_{i=1}^n (1 -  \E(\psi_i(X_i'))) + 1.
\end{align}
Finally, after using the linearity of the expectation, the last two products in the first row cancel with the second product in \eqref{eq:fan2}, and the third product in \eqref{eq:fan2} cancels with the last two products in \eqref{eq:fan3}; also the trailing ''$+1$'' cancel. For the remaining products the linearity of the expectation and the definition of $\phi$ in \eqref{eq:fanphi} yield
\begin{equation}
\E\left(\prod_{i=1}^n (1-\psi_i(X_i-X_i')\right) - 2 \E\left(\prod_{i=1}^n \E(1-\psi_i(X_i-X_i') \mid X_i)\right) + \prod_{i=1}^n \E(1-\psi_i(X_i-X_i')).
\end{equation}

\subsection{The difference of dHSIC and total multivariance for $n=3$} \label{sec:diffHSICtm}
Expanding the product in \eqref{eq:M} yields by careful accounting the representation:
\begin{align}
\label{eq:M3rep1a}M(X_1,X_2,X_3)= & - \E\left(\prod_{i=1}^3 \psi_i(X_i-X_i')\right)  - 4 \E\left( \prod_{i=1}^3 \E(\psi_i(X_i-X_i') \mid X_i)\right)- 4 \prod_{i=1}^3 \E(\psi_i(X_i-X_i'))\\
 &+ \sum_{(i,j,k) \in \pi(1,2,3)} \Big[  \E\left(\psi_i(X_i-X_i' \mid X_i) \psi_j(X_j-X_j') \psi_k(X_k-X_k')\right)\\
& \phantom{+ \sum_{(i,j,k) \in \pi(1,2,3)} \Big[}-\frac{1}{2} \E(\psi_i(X_i-X_i')) \,\E\left(\psi_j(X_j-X_j') \psi_k(X_k-X_k')\right)\\
& \phantom{+ \sum_{(i,j,k) \in \pi(1,2,3)} \Big[}-  \E\left(  \E(\psi_i(X_i-X_i')\mid X_i)\psi_j(X_j-X_j') \E(\psi_i(X_k-X_k') \mid X_k')\right) \\
\label{eq:M3rep5}& \phantom{+ \sum_{(i,j,k) \in \pi(1,2,3)} \Big[}+ 2 \E(\psi_i(X_i-X_i'))\, \E\left( \E(\psi_j(X_j-X_j')\mid X_j) \E(\psi_k(X_k-X_k') \mid X_k)\right)\Big],
\end{align}
where $\pi(1,2,3)$ is the set of all permutations of the vector $(1,2,3).$ Define 
\begin{align}
&H_k(X_1,\ldots,X_n) := \sum_{\substack{S\subset\{1,\dots,n\}\\ |S|=k}} \Bigg[ \E\left(\prod_{i\in S}(-\psi_i(X_i-X_i'))\right) - 2 \E \left(\prod_{i\in S}\E\left(-\psi_i(X_i-X_i')\mid X_i\right)\right) + \prod_{i\in S} \E\left(-\psi_i(X_i-X_i')\right)\Bigg]
\end{align}
and note $H_0 = H_1 = 0$ and $H_2(X_1,\ldots,X_n) = M_2(X_1,\ldots,X_n)$ where $M_2$ is 2-multivariance defined in \eqref{def:2mmulti}.  Using 
$
\prod_{i=1}^n (1- \alpha_i) = 1 + \sum_{k=1}^n \sum_{\substack{S\subset\{1,\dots,n\}\\ |S|=k}} \prod_{i\in S} (-\alpha_i)
$
one finds for arbitrary $n$ that dHSIC is equal to $\sum_{k=2}^n H_k$. Thus, recalling that $\overline M(X_1,\ldots,X_n) = \sum_{k=2}^n M_k(X_1,\ldots,X_n)$ and $M_n(X_1,\ldots,X_n) = M(X_1,\ldots,X_n)$, we find for $n=3$ 
\begin{align}
\label{eq:diffdhsictm} dHSIC(X_1,X_2,X_3) - \overline{M}(X_1,X_2,X_3) = H_2(\ldots) + H_3(\ldots) - M_2(\ldots) - M(\ldots)= H_3(X_1,X_2,X_3) - M(X_1,X_2,X_3).  
\end{align}
Thus the difference in \eqref{eq:diffdhsictm} has almost the same representation as given in \eqref{eq:M3rep1a}-\eqref{eq:M3rep5}, only in \eqref{eq:M3rep1a} the factors change. We did not succeed to find any simplified representation of the remaining terms which would allow a useful distinction. Obviously the values of the measures differ, but it remains an open problem if based on this difference one of the measures should be preferred.

\section{Supplement}
The supplementary Section \ref{sec:supp-example} can be found at pages \pageref{sec:supp-example} ff.\ of the current manuscript.

\section*{Acknowledgments}

We are grateful for detailed comments by Georg Berschneider and some remarks by R\'ene Schilling and Martin Bilodeau. Moreover, we want to thank several referees for their the detailed reports, which helped to bring the paper to its present form.


\newpage

\setcounter{section}{8}
\phantomsection\label{sec:supp-example} 
\section{Supplement - collection of examples}
The examples are arranged in several subsections: \ref{sec:ex-higher-order} discusses dependencies of higher order, \ref{sec:ex-properties} illustrates various properties of multivariance. A comparisons of multivariance with other dependence measures and real data examples can be found in the main body of the paper, Sections \ref{sec:ex-compare} and \ref{sec:ex-real-data}.

If not mentioned otherwise: We use the Euclidean distance $\psi_i(x_i)=|x_i|$, $L = 300$ repetitions for the resampling tests (Tests \ref{testA}, \ref{testB}, \ref{testC} with \eqref{eq:Rresampling}), sample sizes $N\in\{10,20,30,40,50,60,70,80,90,100\}$, significance level $\alpha = 0.05$ and $1000$ runs to compute the empirical power of the tests. 

The clustered dependence structure is detected using the conservative tests with significance level $\alpha$ (with Holm's correction for multiple tests) or using the consistent estimators of Corollary \ref{cor:consistent-dep-struct} with $\beta = \frac{1}{2}$ and $C= 2.$ 

In the power tables the normalized multivariances $\Mskript$, $\overline{\Mskript},$ $\Mskript_2$ and $\Mskript_3$ are studied using the distribution-free and the resampling tests. To avoid overloading we included only a selection of the test results in the tables. Hereto note that using $\Mskript_3$ as a test, should (to provide a test which is consistent against all alternatives) be preceded by a test for pairwise independence, e.g.\ using $\Mskript_2.$ Here we performed these tests independently. But we also include in the tables a test 'Comb' which combines the tests based on $\Mskript_2$, $\Mskript_3$ and for $n>3$ also $\overline{\Mskript}$ to a global test for the same significance level (rejecting independence if at least one p-value, adjusted by Holm's method, is significant).

For most examples the clustered dependence structure is illustrated using the test based scheme of Section \ref{sec:algo} (with conservative p-value). Explicit values in the graphs are based on a successful detection with $N=100$ samples, if not stated otherwise.

\subsection{Detection and visualization of higher order dependencies} \label{sec:ex-higher-order}

The generation of samples with higher order dependencies is explained by a detailed description of the two classical examples (Examples \ref{ex:tetra} and \ref{ex:2coins}) and a basic example for dependence of arbitrary order (Example \ref{ex:ncoins}). These provide reference examples to detect and build more involved dependence structures, which also illustrate different aspects of higher order dependence: higher order dependencies with continuous marginal distributions (Example \ref{ex:2coinsnorm}), disjoint clusters (Example \ref{ex:several}), a mixture of pairwise and higher order dependence (Example \ref{ex:star}), iterated dependencies (Example \ref{ex:iterated}) and joint dependence of all variables (such that all are connected by dependencies of higher order) without any pairwise dependence (Example \ref{ex:ring}). The full dependence structures for the examples are collected in Example \ref{ex:full}.

\begin{example}[Colored tetrahedron]\label{ex:tetra}
Consider a dice shaped as a tetrahedron with sides colored red, green, blue and stripes of all three colors on the fourth side. The events that a particular color is on the bottom side -- when throwing this dice -- are pairwise independent events. But they are not independent. Both properties follow by direct calculation:
\begin{align*}
&\Prob(\text{red}) = \Prob(\text{green}) = \Prob(\text{blue}) = \frac{2}{4}\\
&\Prob(\text{red and green}) = \Prob(\text{red and blue}) = \Prob(\text{green and blue}) = \frac{1}{4}\\
&\Prob(\text{red})\Prob(\text{green}) = \Prob(\text{red})\Prob(\text{blue}) = \Prob(\text{green})\Prob(\text{blue}) = \frac{1}{4}\\
&\Prob(\text{red and green and blue}) = \frac{1}{4} \neq \frac{1}{8} = \Prob(\text{red})\Prob(\text{green})\Prob(\text{blue}).
\end{align*}
Thus this provides an example of three variables which are $2$-independent, but dependent. In Figure \ref{fig:tetra} the empirical powers of the tests are denoted. Maybe it seems surprising that the empirical power of the test based dependence structure detection is not 1 albeit the others have power 1. Hereto recall that the distribution-free test is sharp for Bernoulli random variables, thus (due to the correction for multiple tests) it is expected that in $5\%$ of the cases already a (false) detection of pairwise dependence occurs. Furthermore, note that the distribution-free test for the normalized total multivariance has for $N=10$ an empirical power of 0 due to averaging (see also Example \ref{ex:averaging}).

\begin{figure}[H]
\includegraphics[width = 0.3\textwidth]{./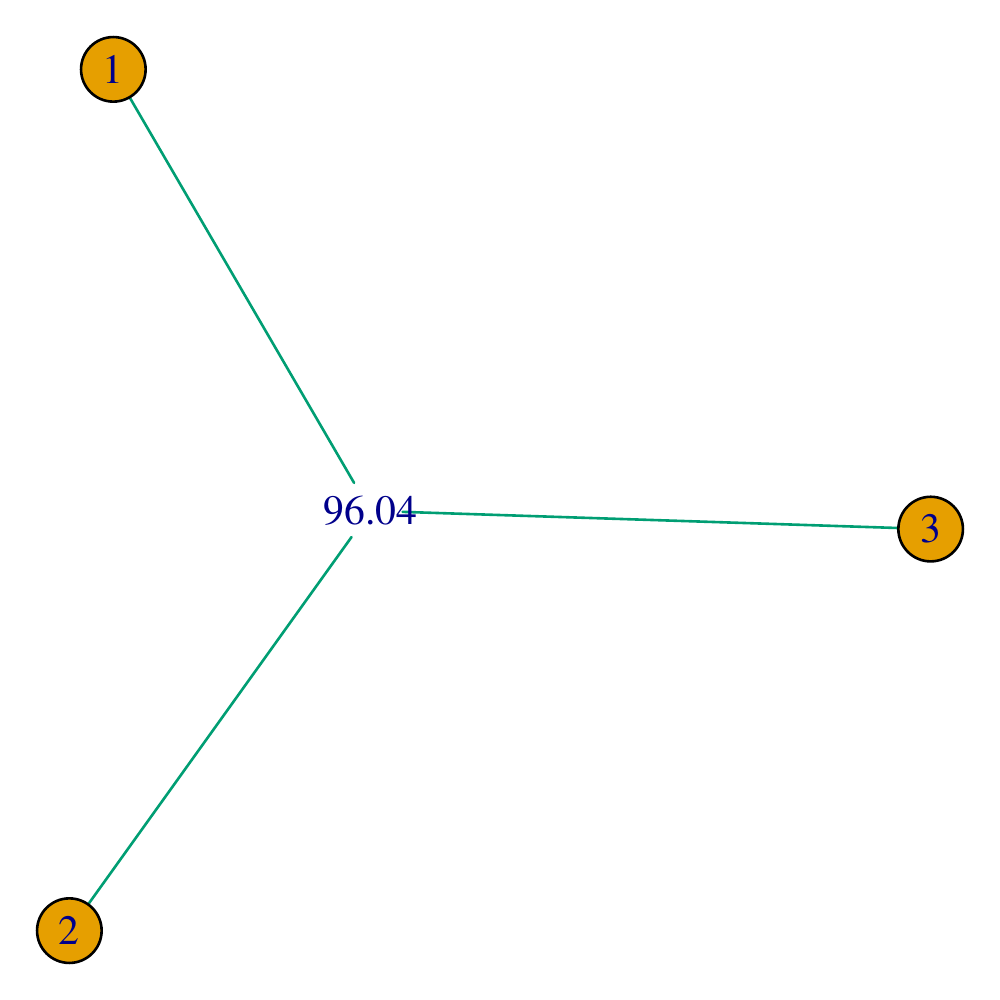}
\quad
\begin{tabular}[b]{r|cc|c|cc|c|c}
  & \multicolumn{3}{c|}{resampling}& \multicolumn{3}{c|}{distribution-free}& consistent\\
$N$ & $\hN \Mskript$ & $\hN \overline{\Mskript}$ & Comb & $\hN \Mskript$ & $\hN \overline{\Mskript}$ & detection & detection \\ 
  \hline
10 & 0.891 & 0.854 & 0.900 & 0.926 & 0.000 & 0.928 & 0.726 \\ 
  20 & 1.000 & 1.000 & 1.000 & 1.000 & 1.000 & 0.955 & 0.991 \\ 
  30 & 1.000 & 1.000 & 1.000 & 1.000 & 1.000 & 0.958 & 0.999 \\ 
  40 & 1.000 & 1.000 & 1.000 & 1.000 & 1.000 & 0.943 & 0.999 \\ 
  50 & 1.000 & 1.000 & 1.000 & 1.000 & 1.000 & 0.960 & 0.999 \\ 
  60 & 1.000 & 1.000 & 1.000 & 1.000 & 1.000 & 0.958 & 1.000 \\ 
  70 & 1.000 & 1.000 & 1.000 & 1.000 & 1.000 & 0.958 & 1.000 \\ 
  80 & 1.000 & 1.000 & 1.000 & 1.000 & 1.000 & 0.943 & 1.000 \\ 
  90 & 1.000 & 1.000 & 1.000 & 1.000 & 1.000 & 0.952 & 1.000 \\ 
  100 & 1.000 & 1.000 & 1.000 & 1.000 & 1.000 & 0.949 & 1.000 \\ 
   \hline
\end{tabular}

\caption{Colored tetrahedron (Ex.\ \ref{ex:tetra}): dependence structure and empirical power.}	
	\label{fig:tetra}
\end{figure}
\end{example}

\begin{example}[Two coins --- three events] \label{ex:2coins}
Throw two fair coins and consider the three events: the first shows head, the second shows tail, both show the same. Then again a direct calculation shows pairwise independence, but dependence. The probability that all three events occur simultaneously is 0.

Alternatively the same (but with a joint probability of $1/4$ as in Example \ref{ex:tetra}) holds for the events: the first shows head, the second shows head, both show the same.

Figure \ref{ex:2coins} shows the dependence structure and empirical power for the case with joint probability 0. The results are indistinguishable from Example \ref{ex:tetra}.

\begin{figure}[H]
\centering
\includegraphics[width = 0.3\textwidth]{./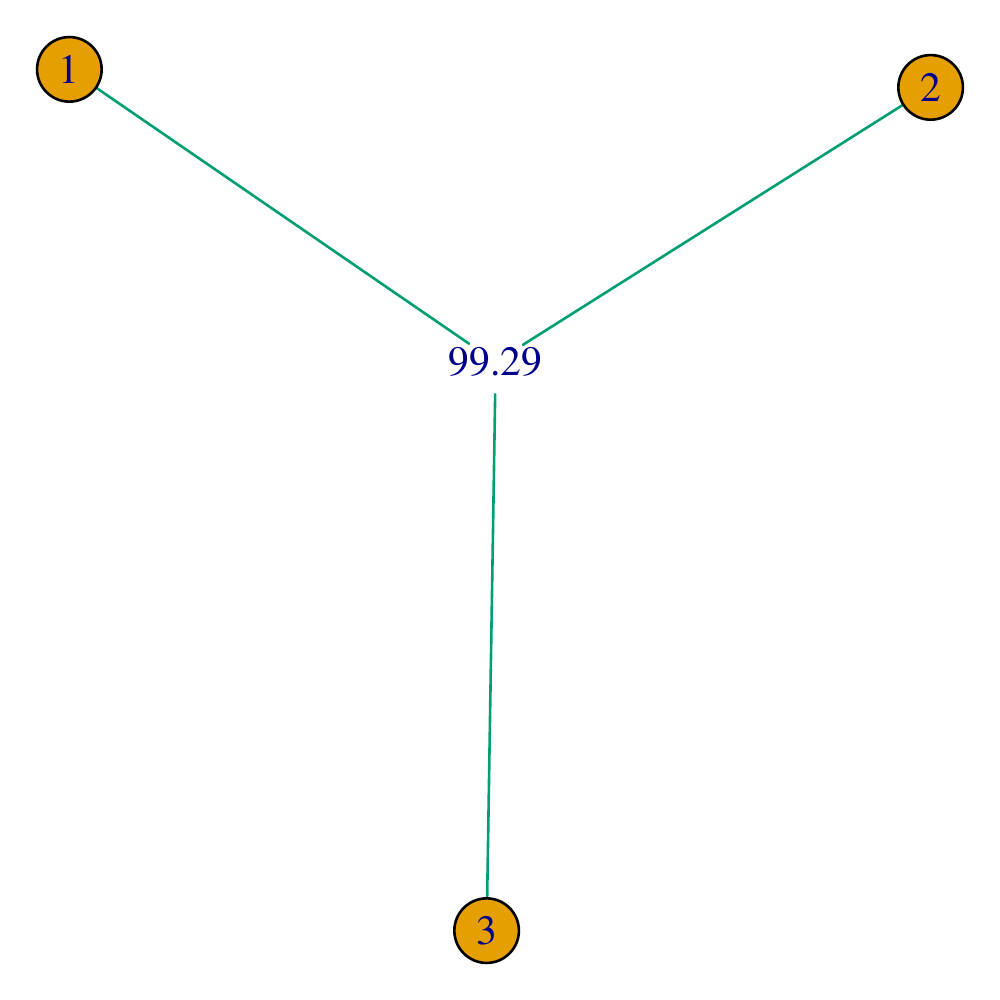}
\quad
\begin{tabular}[b]{r|cc|c|cc|c|c}
  & \multicolumn{3}{c|}{resampling}& \multicolumn{3}{c|}{distribution-free}& consistent\\
$N$ & $\hN \Mskript$ & $\hN \overline{\Mskript}$ & Comb & $\hN \Mskript$ & $\hN \overline{\Mskript}$ & detection & detection \\ 
  \hline
10 & 0.913 & 0.862 & 0.918 & 0.929 & 0.000 & 0.933 & 0.714 \\ 
  20 & 1.000 & 1.000 & 1.000 & 1.000 & 1.000 & 0.957 & 0.988 \\ 
  30 & 1.000 & 1.000 & 1.000 & 1.000 & 1.000 & 0.960 & 0.997 \\ 
  40 & 1.000 & 1.000 & 1.000 & 1.000 & 1.000 & 0.961 & 0.999 \\ 
  50 & 1.000 & 1.000 & 1.000 & 1.000 & 1.000 & 0.948 & 1.000 \\ 
  60 & 1.000 & 1.000 & 1.000 & 1.000 & 1.000 & 0.961 & 1.000 \\ 
  70 & 1.000 & 1.000 & 1.000 & 1.000 & 1.000 & 0.951 & 1.000 \\ 
  80 & 1.000 & 1.000 & 1.000 & 1.000 & 1.000 & 0.948 & 1.000 \\ 
  90 & 1.000 & 1.000 & 1.000 & 1.000 & 1.000 & 0.954 & 1.000 \\ 
  100 & 1.000 & 1.000 & 1.000 & 1.000 & 1.000 & 0.962 & 0.999 \\ 
   \hline
\end{tabular}

\caption{Three events of two coins (Ex.\ \ref{ex:2coins}): dependence structure and empirical power.}		\label{fig:2coins}
\end{figure}
\end{example}

A simple generalization yields the next important example, featuring higher order dependence in its 'purest' form.

\begin{example}[$n$ coins ---  $(n+1)$ events]\label{ex:ncoins}
Throw $n$ fair coins and consider the $n+1$ events: The first shows head, the second shows head, ..., the $n$-th shows head, there is an odd number of heads. Then by direct calculation these are $n$-independent, but dependent (the joint probability of the events is $0$ for even $n$ and it is $(1/2)^n$ for odd $n$). To get an intuition, note that given $n$ of these events one can directly calculate the $(n+1)$th event. But given less, provides not enough information to determine any further event - any option is equally likely.

Figure \ref{fig:ncoins} shows the dependence structure and the empirical power of the dependence measures. The total multivariance suffers a loss of power compared to the previous examples due to the averaging (only one of the $2^n-n-1$ summands diverges, see also Example \ref{ex:averaging}). Moreover one starts to see that the distribution-free method is conservative for total multivariance (recall that also with univariate Bernoulli marginals it is only sharp for multivariance, not for total multivariance). The low detection rate of the test based dependence structure detection is again due to the sharp rejection level for Bernoulli random variables and the p-value adjustment due to multiple testing of all $k$-tuples for each $k\in\{2,\ldots,n+1\}.$  

\begin{figure}[H]
\includegraphics[width = 0.3\textwidth]{./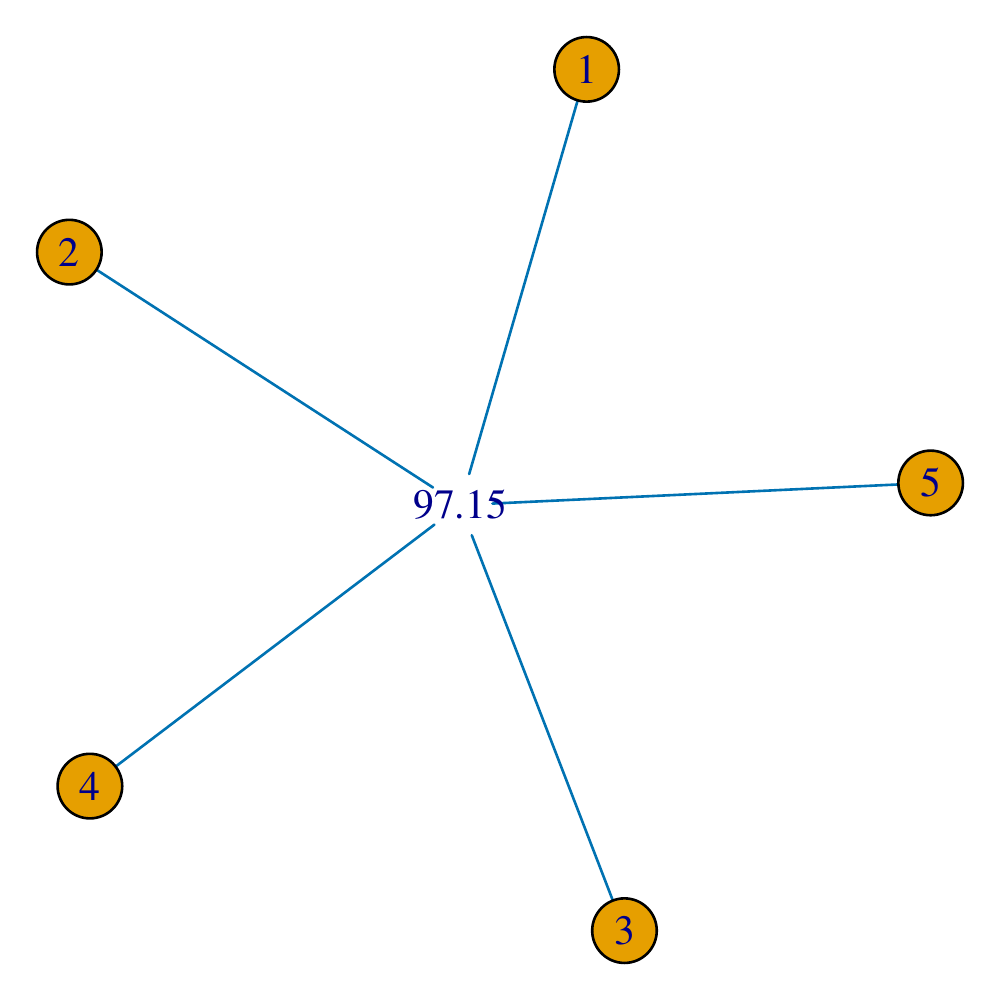}
\begin{tabular}[b]{r|cc|c|cc|c|c}
  & \multicolumn{3}{c|}{resampling}& \multicolumn{3}{c|}{distribution-free}& consistent\\
$N$ & $\hN \Mskript$ & $\hN \overline{\Mskript}$ & Comb & $\hN \Mskript$ & $\hN \overline{\Mskript}$ & detection & detection \\ 
  \hline
10 & 0.814 & 0.165 & 0.076 & 0.792 & 0.001 & 0.755 & 0.348 \\ 
  20 & 0.999 & 0.610 & 0.359 & 1.000 & 0.000 & 0.915 & 0.938 \\ 
  30 & 1.000 & 0.961 & 0.812 & 1.000 & 0.000 & 0.902 & 0.990 \\ 
  40 & 1.000 & 1.000 & 0.997 & 1.000 & 0.001 & 0.905 & 0.993 \\ 
  50 & 1.000 & 1.000 & 1.000 & 1.000 & 0.007 & 0.896 & 0.999 \\ 
  60 & 1.000 & 1.000 & 1.000 & 1.000 & 0.040 & 0.905 & 1.000 \\ 
  70 & 1.000 & 1.000 & 1.000 & 1.000 & 0.137 & 0.906 & 1.000 \\ 
  80 & 1.000 & 1.000 & 1.000 & 1.000 & 0.453 & 0.900 & 0.999 \\ 
  90 & 1.000 & 1.000 & 1.000 & 1.000 & 0.901 & 0.886 & 0.999 \\ 
  100 & 1.000 & 1.000 & 1.000 & 1.000 & 1.000 & 0.909 & 1.000 \\ 
   \hline
\end{tabular}

\medskip

\includegraphics[width = 0.3\textwidth]{./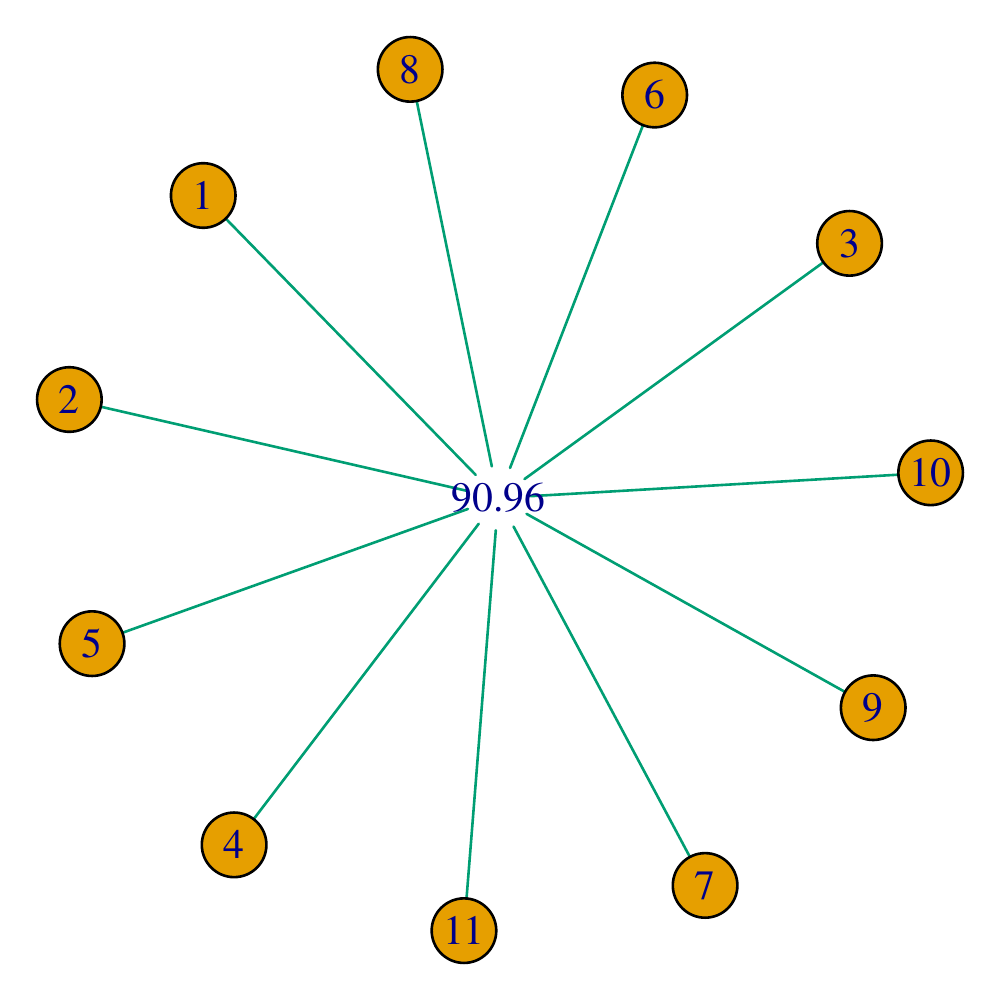}
\begin{tabular}[b]{r|cc|c|cc|c|c}
  & \multicolumn{3}{c|}{resampling}& \multicolumn{3}{c|}{distribution-free}& consistent\\
$N$ & $\hN \Mskript$ & $\hN \overline{\Mskript}$ & Comb & $\hN \Mskript$ & $\hN \overline{\Mskript}$ & detection & detection \\ 
  \hline
10 & 0.353 & 0.045 & 0.046 & 0.340 & 0.002 & 0.294 & 0.001 \\ 
  20 & 0.987 & 0.056 & 0.044 & 0.991 & 0.000 & 0.895 & 0.065 \\ 
  30 & 1.000 & 0.075 & 0.059 & 1.000 & 0.000 & 0.805 & 0.385 \\ 
  40 & 1.000 & 0.114 & 0.077 & 1.000 & 0.000 & 0.805 & 0.671 \\ 
  50 & 1.000 & 0.155 & 0.091 & 1.000 & 0.000 & 0.749 & 0.834 \\ 
  60 & 1.000 & 0.179 & 0.102 & 1.000 & 0.000 & 0.751 & 0.905 \\ 
  70 & 1.000 & 0.220 & 0.122 & 1.000 & 0.000 & 0.731 & 0.949 \\ 
  80 & 1.000 & 0.261 & 0.151 & 1.000 & 0.000 & 0.760 & 0.975 \\ 
  90 & 1.000 & 0.291 & 0.160 & 1.000 & 0.000 & 0.763 & 0.983 \\ 
  100 & 1.000 & 0.337 & 0.226 & 1.000 & 0.000 & 0.738 & 0.990 \\ 
   \hline
\end{tabular}

\caption{Events of 4 and 10 coins (Ex.\ \ref{ex:ncoins}): dependence structure and empirical power.}		\label{fig:ncoins}
\end{figure}

\end{example}

The previous examples only used dichotomous data. Obviously the same dependence structures can also appear (and be detected) for other marginal distributions. A basic example is the following.

\begin{example}[Perturbed coins] \label{ex:2coinsnorm}
Let $(Y_1,Y_2,Y_3)$ be the random variables corresponding to the events of $n = 2$ coins in Example \ref{ex:ncoins} and $Z_1,Z_2,Z_3$ be i.i.d.\  standard normal random variables. Now set $X_i := Y_i + r Z_i$ for $i=1,2,3$ and some fixed $r\in\R.$ For these the same dependence structure as in Example \ref{ex:2coins} (Figure \ref{fig:2coins}) is detected. Figure \ref{fig:2coinsnorm} shows the dependence structure and the empirical power for $r\in\{0.25,0.5,0.75,1\}$. Note that the rate of the detection of  the test based dependence structure algorithm improves in comparison to the previous examples (for $N$ large) whereas the consistent estimator requires larger samples. The former is due to the fact that only in the case of univariate Bernoulli distributed random variables the distribution-free method is sharp for multivariance. In all other cases it becomes conservative and therefore the rate of falsely detected pairwise dependencies is reduced. Increasing the value of $r$ reduces the empirical power. This is expected, since the dependence structure becomes blurred by the variability of the $Z_i$'s. 

\begin{figure}[H]
\includegraphics[width = 0.3\textwidth]{./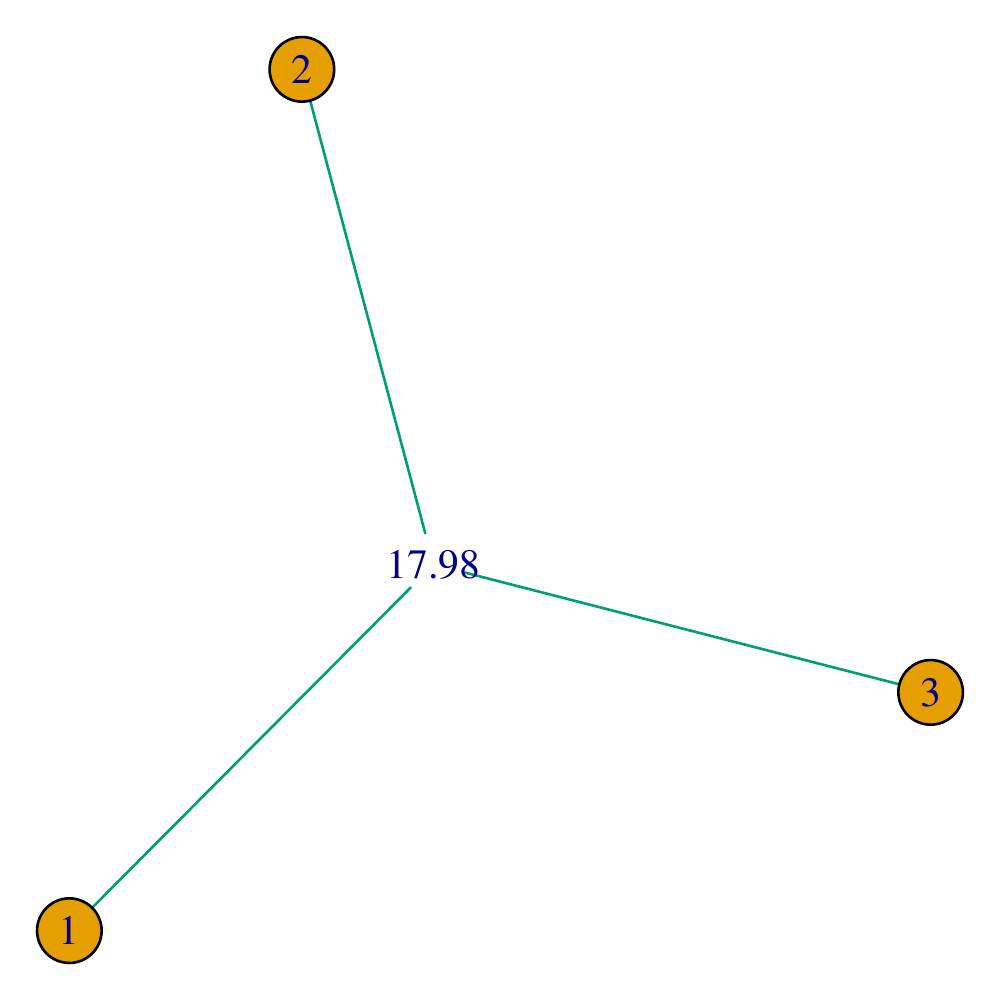}
\quad
\begin{tabular}[b]{r|cc|c|cc|c|c}
  & \multicolumn{3}{c|}{resampling}& \multicolumn{3}{c|}{distribution-free}& consistent\\
$N$ & $\hN \Mskript$ & $\hN \overline{\Mskript}$ & Comb & $\hN \Mskript$ & $\hN \overline{\Mskript}$ & detection & detection \\ 
  \hline
10 & 0.629 & 0.136 & 0.541 & 0.006 & 0.000 & 0.011 & 0.000 \\ 
  20 & 0.987 & 0.788 & 0.991 & 0.481 & 0.000 & 0.490 & 0.000 \\ 
  30 & 1.000 & 0.985 & 1.000 & 0.929 & 0.000 & 0.930 & 0.000 \\ 
  40 & 1.000 & 1.000 & 1.000 & 0.998 & 0.001 & 0.999 & 0.000 \\ 
  50 & 1.000 & 1.000 & 1.000 & 1.000 & 0.014 & 1.000 & 0.000 \\ 
  60 & 1.000 & 1.000 & 1.000 & 1.000 & 0.165 & 0.999 & 0.001 \\ 
  70 & 1.000 & 1.000 & 1.000 & 1.000 & 0.557 & 0.999 & 0.008 \\ 
  80 & 1.000 & 1.000 & 1.000 & 1.000 & 0.873 & 0.999 & 0.016 \\ 
  90 & 1.000 & 1.000 & 1.000 & 1.000 & 0.985 & 0.997 & 0.059 \\ 
  100 & 1.000 & 1.000 & 1.000 & 1.000 & 0.998 & 0.999 & 0.131 \\ 
   \hline
\end{tabular}

\medskip

\hspace{0.3\textwidth}\begin{tabular}[b]{r|cc|cc|cc|cc}
	& \multicolumn{8}{c}{resampling} \\
 & \multicolumn{2}{c|}{$r = 0.25$} &\multicolumn{2}{c|}{$r = 0.5$} &\multicolumn{2}{c|}{$r = 0.75$}&\multicolumn{2}{c}{$r = 1$} \\
	$N$ & $\hN \Mskript$ & $\hN \overline{\Mskript}$ & $\hN \Mskript$ & $\hN \overline{\Mskript}$ & $\hN \Mskript$ & $\hN \overline{\Mskript}$ & $\hN \Mskript$ & $\hN \overline{\Mskript}$ \\ 
	\hline
10 & 0.629 & 0.136 & 0.114 & 0.050 & 0.060 & 0.045 & 0.050 & 0.047 \\ 
  20 & 0.987 & 0.788 & 0.315 & 0.095 & 0.102 & 0.065 & 0.068 & 0.060 \\ 
  30 & 1.000 & 0.985 & 0.500 & 0.159 & 0.122 & 0.078 & 0.072 & 0.054 \\ 
  40 & 1.000 & 1.000 & 0.646 & 0.213 & 0.168 & 0.060 & 0.076 & 0.055 \\ 
  50 & 1.000 & 1.000 & 0.778 & 0.320 & 0.185 & 0.087 & 0.085 & 0.060 \\ 
  60 & 1.000 & 1.000 & 0.853 & 0.437 & 0.260 & 0.099 & 0.092 & 0.066 \\ 
  70 & 1.000 & 1.000 & 0.908 & 0.506 & 0.284 & 0.086 & 0.087 & 0.048 \\ 
  80 & 1.000 & 1.000 & 0.951 & 0.616 & 0.292 & 0.095 & 0.110 & 0.070 \\ 
  90 & 1.000 & 1.000 & 0.969 & 0.670 & 0.361 & 0.114 & 0.127 & 0.069 \\ 
  100 & 1.000 & 1.000 & 0.984 & 0.749 & 0.401 & 0.141 & 0.119 & 0.060 \\ 
   \hline
\end{tabular}

\caption{Normal perturbed events of 2 coins (Ex.\ \ref{ex:2coinsnorm}): dependence structure and empirical power.}		\label{fig:2coinsnorm}
\end{figure}
\end{example}

Now the above examples will be used as building blocks to illustrate the dependence structure detection algorithm. For the following examples the visualized dependence structure is (at least to us) much more comprehensible than the literal description.

\begin{example}[Several disjoint dependence clusters]\label{ex:several}
We look at samples of $(X_1,\ldots,$ $X_{26})$ where $(X_1,X_2,X_3)$ are as in Example \ref{ex:ncoins} with 2 coins, $ (X_7,\ldots,X_{11})$ are as in Example \ref{ex:ncoins} with 4 coins, $(X_4,X_5,X_6)$ and  $(X_{12},X_{13},X_{14})$ and  $(X_{15},X_{16},X_{17})$ are as in Example \ref{ex:tetra}, $(X_{18},\ldots,X_{21})$ and $(X_{22},\ldots,X_{25})$ are as in Example \ref{ex:ncoins} with 3 coins and $X_{26} \sim N(0,1)$. Furthermore, each of these tuples is independent of the others. Note that we added $X_{26}$ to make the detection much harder, since now the factorization for independent subsets \eqref{eq:factorization} implies $\Mskript(X_1,\ldots,X_{26}) = 0$. 

Figure \ref{fig:several} shows that the detection algorithm and the $3$-multivariance (with resampling) perform well, whereas the total multivariance suffers from averaging (see also Example \ref{ex:averaging}) and the distribution-free dependence tests are too conservative.

\begin{figure}[H]
\includegraphics[width = 0.3\textwidth]{./Figs/dep_struct_several-1.pdf}
\quad
\begin{tabular}[b]{r|cc|c|cc|c|c}
  & \multicolumn{3}{c|}{resampling}& \multicolumn{3}{c|}{distribution-free}& consistent\\
$N$ & $\hN \overline{\Mskript}$ & $\hN \Mskript_3$ & Comb & $\hN \overline{\Mskript}$ & $\hN \Mskript_3$ & detection & detection \\ 
  \hline
10 & 0.044 & 0.073 & 0.049 & 0.015 & 0.000 & 0.000 & 0.000 \\ 
  20 & 0.047 & 0.214 & 0.116 & 0.000 & 0.000 & 0.000 & 0.007 \\ 
  30 & 0.041 & 0.424 & 0.242 & 0.000 & 0.000 & 0.908 & 0.177 \\ 
  40 & 0.045 & 0.654 & 0.465 & 0.000 & 0.000 & 0.945 & 0.508 \\ 
  50 & 0.039 & 0.831 & 0.667 & 0.000 & 0.000 & 0.950 & 0.716 \\ 
  60 & 0.053 & 0.947 & 0.855 & 0.000 & 0.000 & 0.920 & 0.835 \\ 
  70 & 0.053 & 0.986 & 0.955 & 0.000 & 0.000 & 0.920 & 0.911 \\ 
  80 & 0.047 & 0.998 & 0.989 & 0.000 & 0.000 & 0.916 & 0.966 \\ 
  90 & 0.034 & 1.000 & 0.999 & 0.000 & 0.000 & 0.915 & 0.973 \\ 
  100 & 0.051 & 1.000 & 1.000 & 0.000 & 0.000 & 0.923 & 0.981 \\ 
   \hline
\end{tabular}

\caption{The dependence structure with several clusters (Ex.\ \ref{ex:several}).}	\label{fig:several}
\end{figure}
\end{example}

\begin{example}[Star dependence structure]\label{ex:star}
Consider samples of $(X_1,X_2,X_3,\,X_1,$ $X_2,X_3,\,X_1,X_2,X_3)$ where $X_1,X_2,X_3$ are as in Example \ref{ex:ncoins} with 2 coins. Then the structure in Figure \ref{fig:star} is detected. Here the graph was slightly cleaned up: vertices representing only pairwise multivariance were reduced to edges with labels.

The variables are Bernoulli distributed and thus (as e.g.\ in Example \ref{ex:ncoins}) the detection rate of $95\%$ reflects the $5\%$ falsely detected pairwise dependencies.
\begin{figure}[H]

\includegraphics[width = 0.3\textwidth]{./Figs/dep_struct_star-1.pdf}
\quad\scriptsize\centering
\begin{tabular}[b]{r|ccc|c|ccc|c|c}
  & \multicolumn{4}{c|}{resampling}& \multicolumn{4}{c|}{distribution-free}& consistent\\
$N$ & $\hN \overline{\Mskript}$ & $\hN \Mskript_2$ & $\hN \Mskript_3$ & Comb & $\hN \overline{\Mskript}$ & $\hN \Mskript_2$ & $\hN \Mskript_3$ & detection & detection \\ 
  \hline
10 & 1.000 & 1.000 & 0.999 & 1.000 & 0.269 & 0.237 & 0.049 & 0.000 & 0.679 \\ 
  20 & 1.000 & 1.000 & 1.000 & 1.000 & 1.000 & 1.000 & 1.000 & 0.973 & 0.988 \\ 
  30 & 1.000 & 1.000 & 1.000 & 1.000 & 1.000 & 1.000 & 1.000 & 0.950 & 0.998 \\ 
  40 & 1.000 & 1.000 & 1.000 & 1.000 & 1.000 & 1.000 & 1.000 & 0.959 & 0.999 \\ 
  50 & 1.000 & 1.000 & 1.000 & 1.000 & 1.000 & 1.000 & 1.000 & 0.960 & 1.000 \\ 
  60 & 1.000 & 1.000 & 1.000 & 1.000 & 1.000 & 1.000 & 1.000 & 0.963 & 1.000 \\ 
  70 & 1.000 & 1.000 & 1.000 & 1.000 & 1.000 & 1.000 & 1.000 & 0.954 & 1.000 \\ 
  80 & 1.000 & 1.000 & 1.000 & 1.000 & 1.000 & 1.000 & 1.000 & 0.955 & 1.000 \\ 
  90 & 1.000 & 1.000 & 1.000 & 1.000 & 1.000 & 1.000 & 1.000 & 0.946 & 1.000 \\ 
  100 & 1.000 & 1.000 & 1.000 & 1.000 & 1.000 & 1.000 & 1.000 & 0.958 & 1.000 \\ 
   \hline
\end{tabular}

\caption{The star dependence structure of Ex.\ \ref{ex:star}.}	\label{fig:star}
\end{figure}

\end{example}

\begin{example}[Iterated dependence structure]\label{ex:iterated}
Consider samples of random variables $(X_1,\ldots,X_{13})$ where $X_1,\ldots,X_{10}$ are independent but $X_1,X_2,X_{11}$ are dependent (but all subtuples are independent), the same holds for $X_1,\ldots,X_5,X_{12}$ and $X_1,\ldots,X_9,X_{13}$. Such examples can be constructed by letting $X_{11} = f(X_1,X_2)$ for some (special) $f$, and analogously for the others. If such a structure is detected the graph looks like Figure \ref{fig:iterated}.

For the dependence we used $f(x_1,\ldots,x_k) = \sum_{i=1}^k x_i \mod 2$, and $X_i,i=1,\ldots,10$ were i.i.d.\ Bernoulli random variables. The dependence structure is reasonably detected given 100 samples by the test based algorithm, the consistent estimator requires a much large sample size. Both total multivariance and 3-multivariance also detect the dependence, see Figure \ref{fig:iterated}.

\begin{figure}[H]
	\includegraphics[width = 0.3\textwidth]{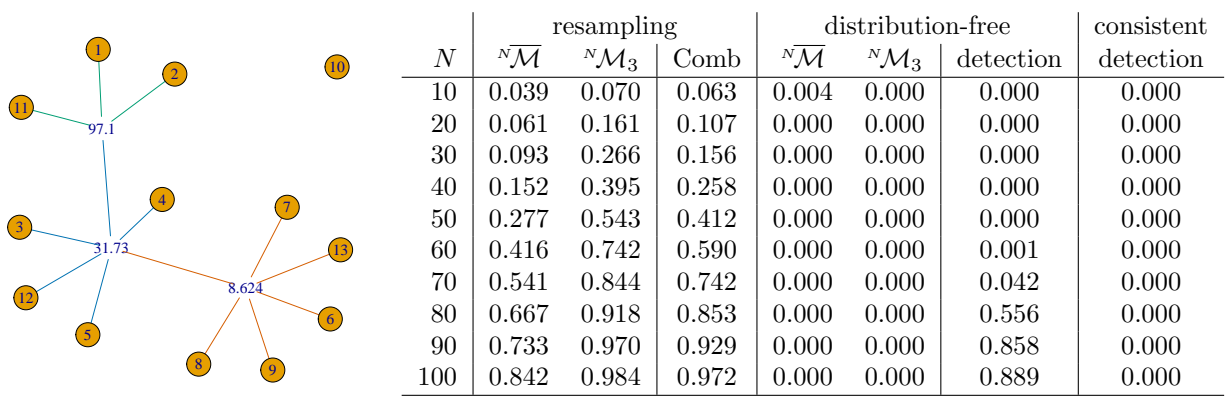}
	\quad 
\begin{tabular}[b]{r|cc|c|cc|c|c}
  & \multicolumn{3}{c|}{resampling}& \multicolumn{3}{c|}{distribution-free}& consistent\\
$N$ & $\hN \overline{\Mskript}$ & $\hN \Mskript_3$ & Comb & $\hN \overline{\Mskript}$ & $\hN \Mskript_3$ & detection & detection \\ 
  \hline
10 & 0.039 & 0.070 & 0.063 & 0.004 & 0.000 & 0.000 & 0.000 \\ 
  20 & 0.061 & 0.161 & 0.107 & 0.000 & 0.000 & 0.000 & 0.000 \\ 
  30 & 0.093 & 0.266 & 0.156 & 0.000 & 0.000 & 0.000 & 0.000 \\ 
  40 & 0.152 & 0.395 & 0.258 & 0.000 & 0.000 & 0.000 & 0.000 \\ 
  50 & 0.277 & 0.543 & 0.412 & 0.000 & 0.000 & 0.000 & 0.000 \\ 
  60 & 0.416 & 0.742 & 0.590 & 0.000 & 0.000 & 0.001 & 0.000 \\ 
  70 & 0.541 & 0.844 & 0.742 & 0.000 & 0.000 & 0.042 & 0.000 \\ 
  80 & 0.667 & 0.918 & 0.853 & 0.000 & 0.000 & 0.556 & 0.000 \\ 
  90 & 0.733 & 0.970 & 0.929 & 0.000 & 0.000 & 0.858 & 0.000 \\ 
  100 & 0.842 & 0.984 & 0.972 & 0.000 & 0.000 & 0.889 & 0.000 \\ 
   \hline
\end{tabular}

\caption{The iterated dependence structure of Ex.\ \ref{ex:iterated}.}	\label{fig:iterated}
\end{figure}
\end{example}

\begin{example}[Ring dependence structure]\label{ex:ring}
The random variables $(X_1,\ldots,X_{15})$ are defined as follows. $X_i$ are i.i.d.\  Bernoulli random variables for $i \in \{1,2,3,5,6,8,$ $9,11,12,14\}$, $X_k:=(\sum_{i=k-3}^{k-1} X_i) \mod 2$ for $k \in \{4,7,10,13\}$ and $X_{15}:=(X_{13}+X_{14}+X_1) \mod 2$.

Since here only quadruple dependence is present, only total multivariance is used to detect it. The dependence structure detection works surprisingly well, also with small sample sizes, see Figure \ref{fig:ring}. 
\begin{figure}[H]
\includegraphics[width = 0.3\textwidth]{./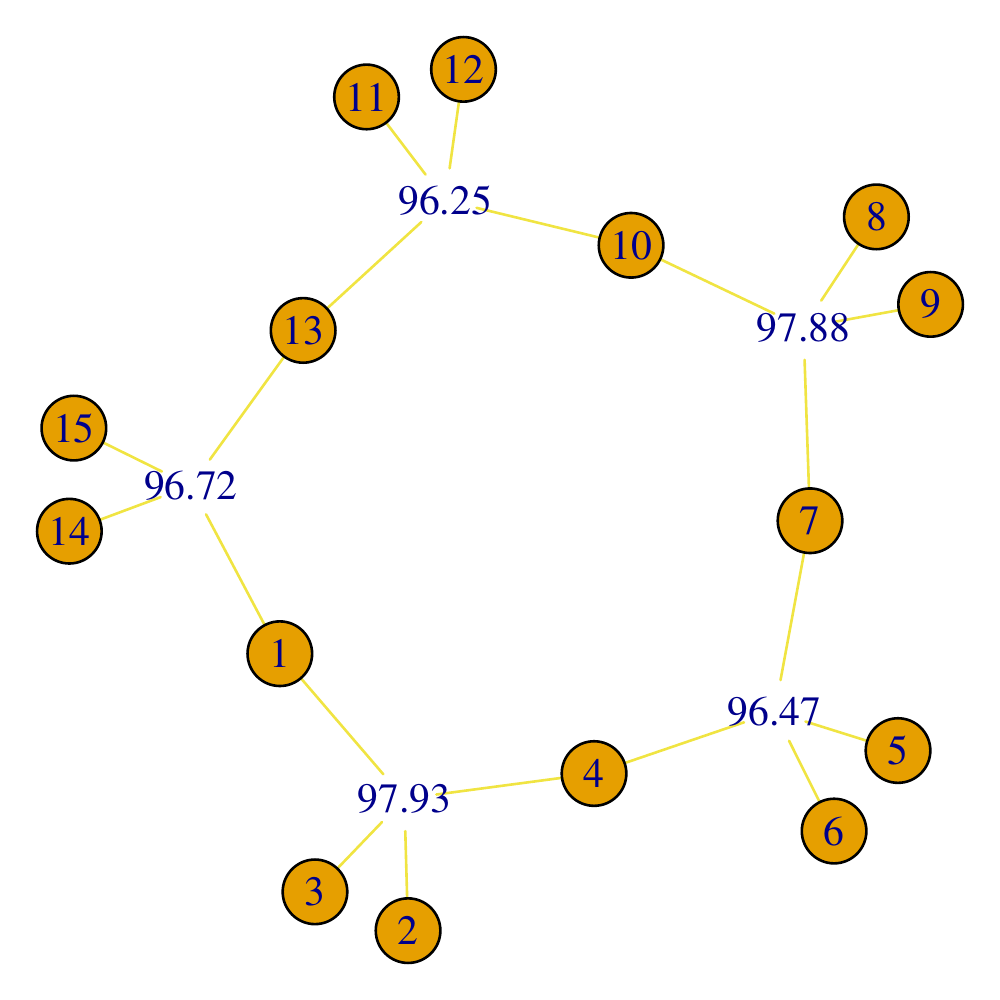}
\quad\centering
\begin{tabular}[b]{r|c|c|c|c|c}
  & \multicolumn{2}{c|}{resampling}& \multicolumn{2}{c|}{distribution-free}& consistent\\
$N$ & $\hN \overline{\Mskript}$ & Comb & $\hN \overline{\Mskript}$ & detection & detection \\ 
  \hline
10 & 0.040 & 0.063 & 0.002 & 0.000 & 0.010 \\ 
  20 & 0.050 & 0.082 & 0.000 & 0.054 & 0.404 \\ 
  30 & 0.089 & 0.120 & 0.000 & 0.939 & 0.750 \\ 
  40 & 0.180 & 0.157 & 0.000 & 0.938 & 0.903 \\ 
  50 & 0.305 & 0.256 & 0.000 & 0.948 & 0.961 \\ 
  60 & 0.481 & 0.383 & 0.000 & 0.928 & 0.971 \\ 
  70 & 0.620 & 0.510 & 0.000 & 0.925 & 0.983 \\ 
  80 & 0.760 & 0.672 & 0.000 & 0.933 & 0.994 \\ 
  90 & 0.851 & 0.769 & 0.000 & 0.931 & 0.998 \\ 
  100 & 0.924 & 0.887 & 0.000 & 0.915 & 0.997 \\ 
   \hline
\end{tabular}

\caption{The ring dependence structure of Ex.\ \ref{ex:ring}.}	\label{fig:ring}
\end{figure}
\end{example}

\begin{example}[The full dependence structures]\label{ex:full}
For the Examples \ref{ex:tetra} to \ref{ex:several} the clustered dependence structure and the full dependence structure coincide. For Examples \ref{ex:star}, \ref{ex:iterated} and \ref{ex:ring} the full dependence structures are given in Figure \ref{fig:full-star}, \ref{fig:full-iterated} and \ref{fig:full-ring}, respectively. The full graph is not as easy to comprehend as the clustered graphs, to improve it we used the order of dependence as labels of the dependency nodes. Moreover, besides the full graph also individual graphs depicting only the dependence of a certain order (for tuples for which no lower order dependence was detected) are presented.  

\begin{figure}[H]\centering
\includegraphics[width = 0.32\textwidth]{./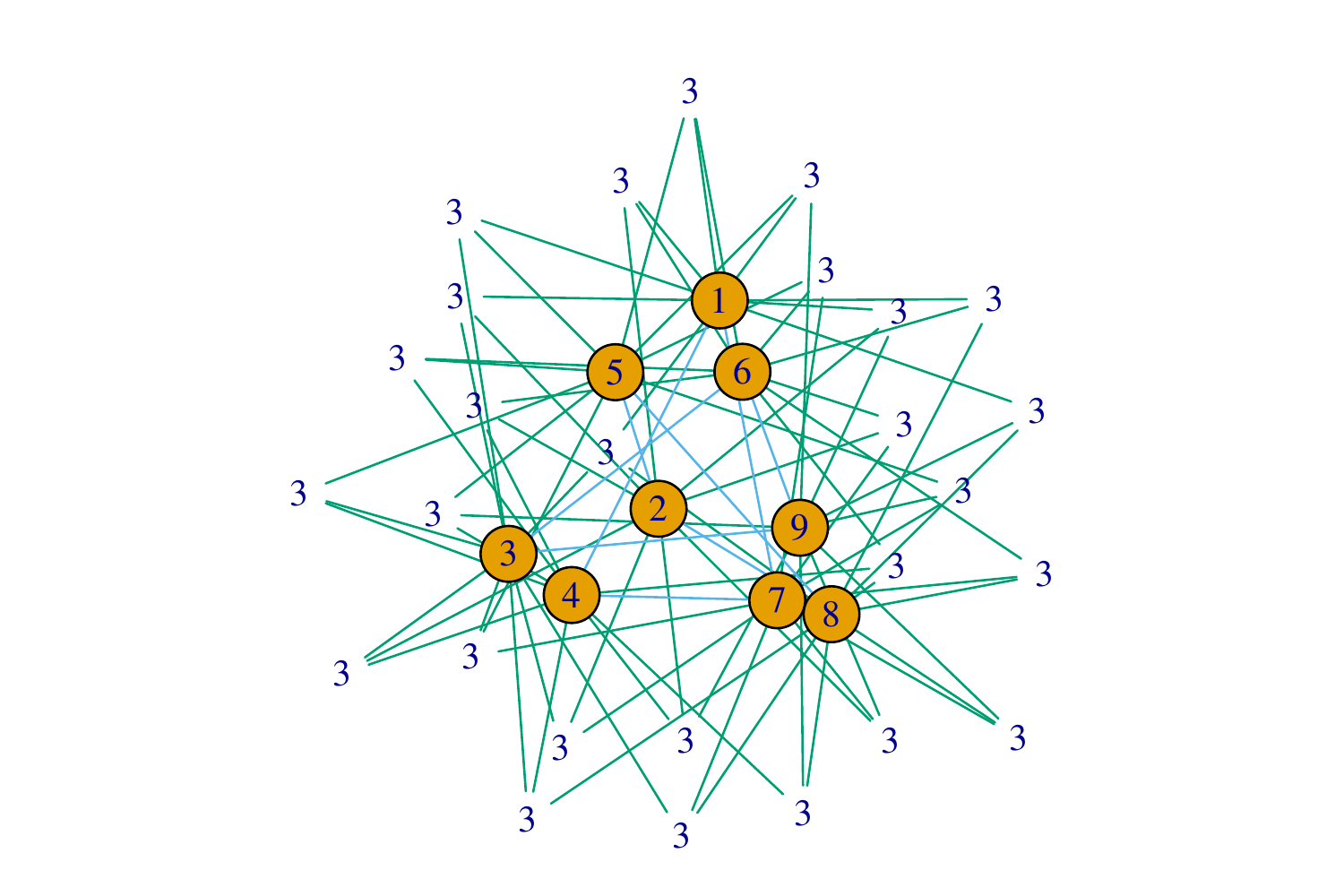}
\includegraphics[width = 0.32\textwidth]{./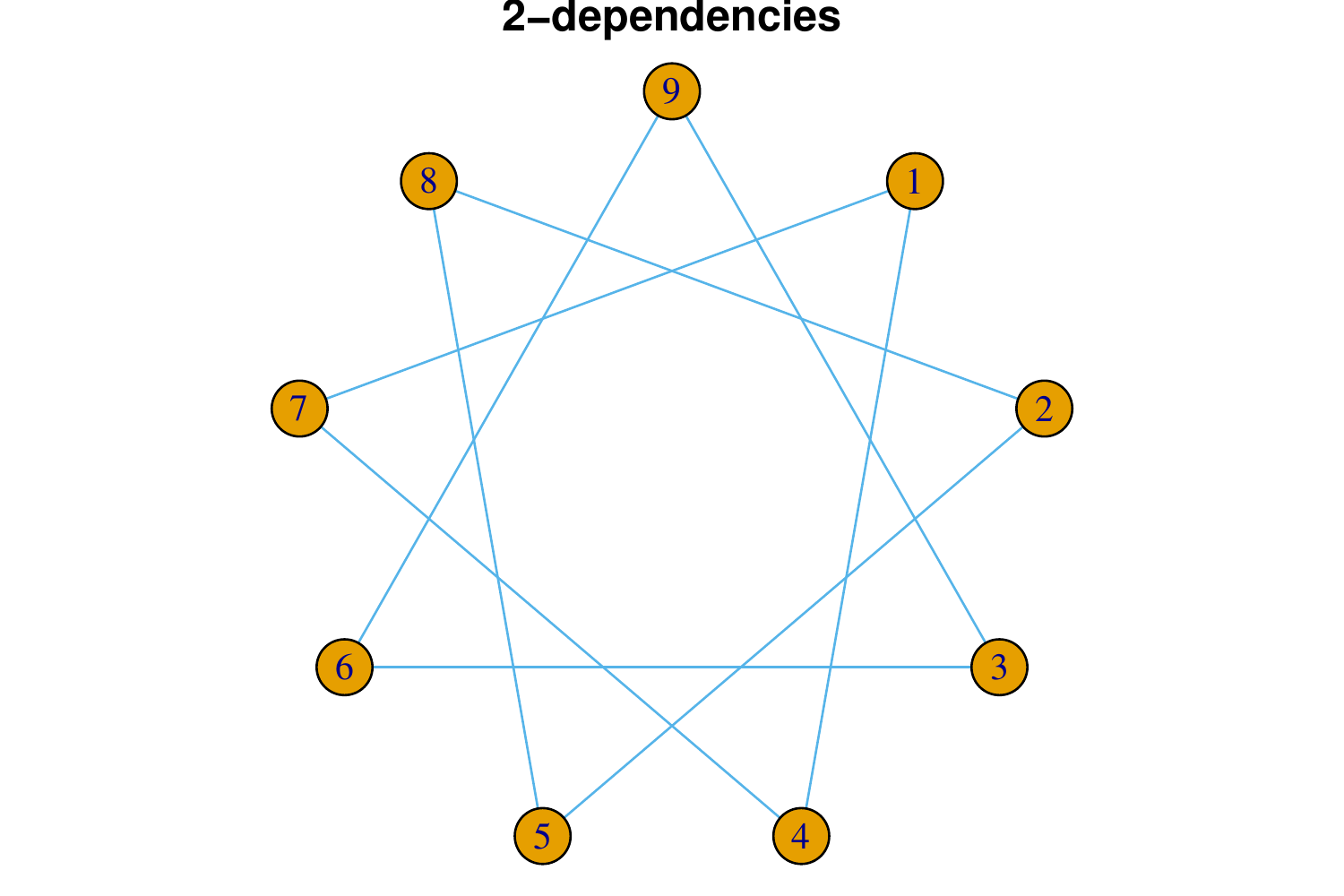}
\includegraphics[width = 0.32\textwidth]{./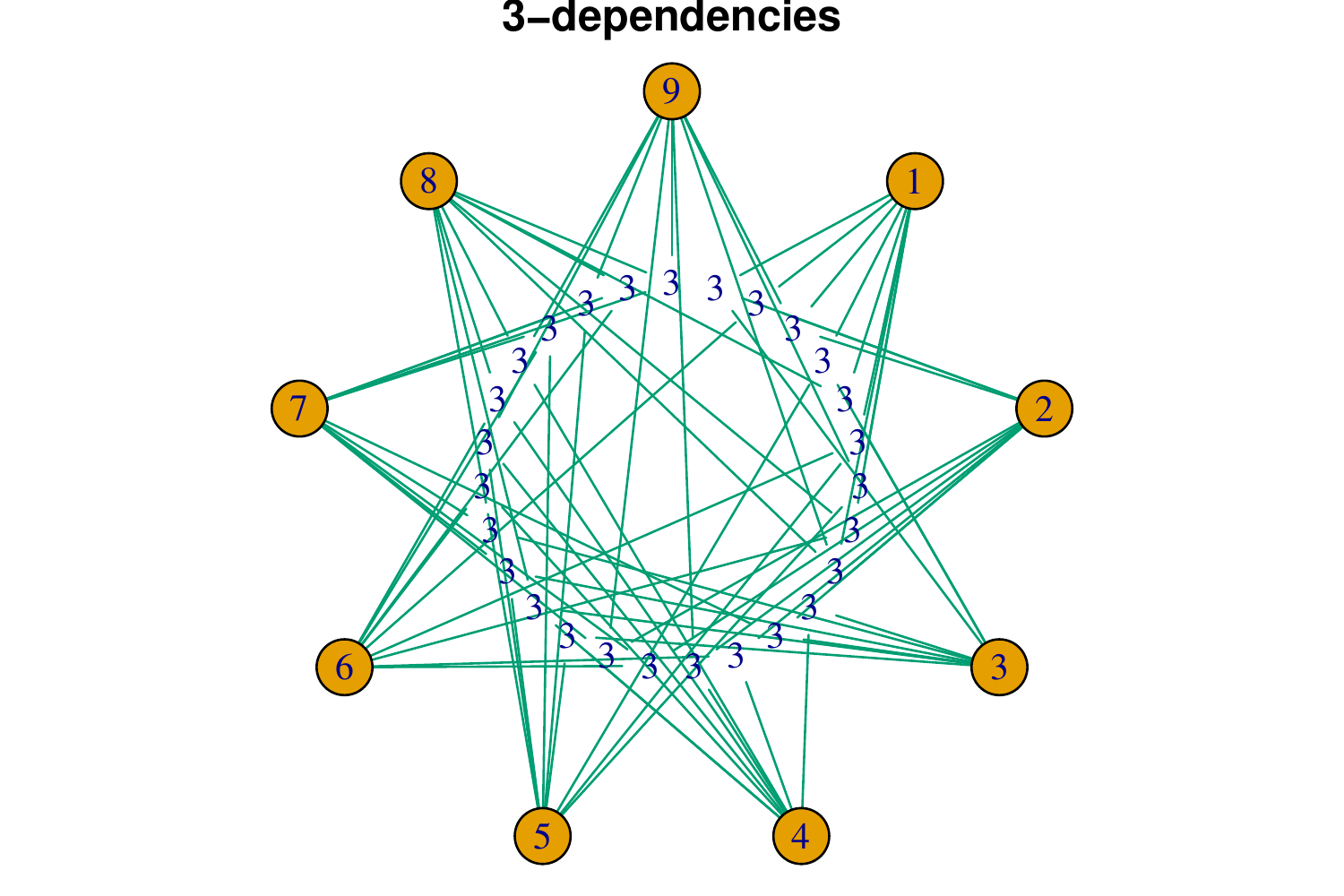}\\
\caption{The full dependence structure of Ex.\ \ref{ex:star} (star dependence structure).}	\label{fig:full-star}
\end{figure}
\begin{figure}[H]\centering
\includegraphics[width = 0.32\textwidth]{./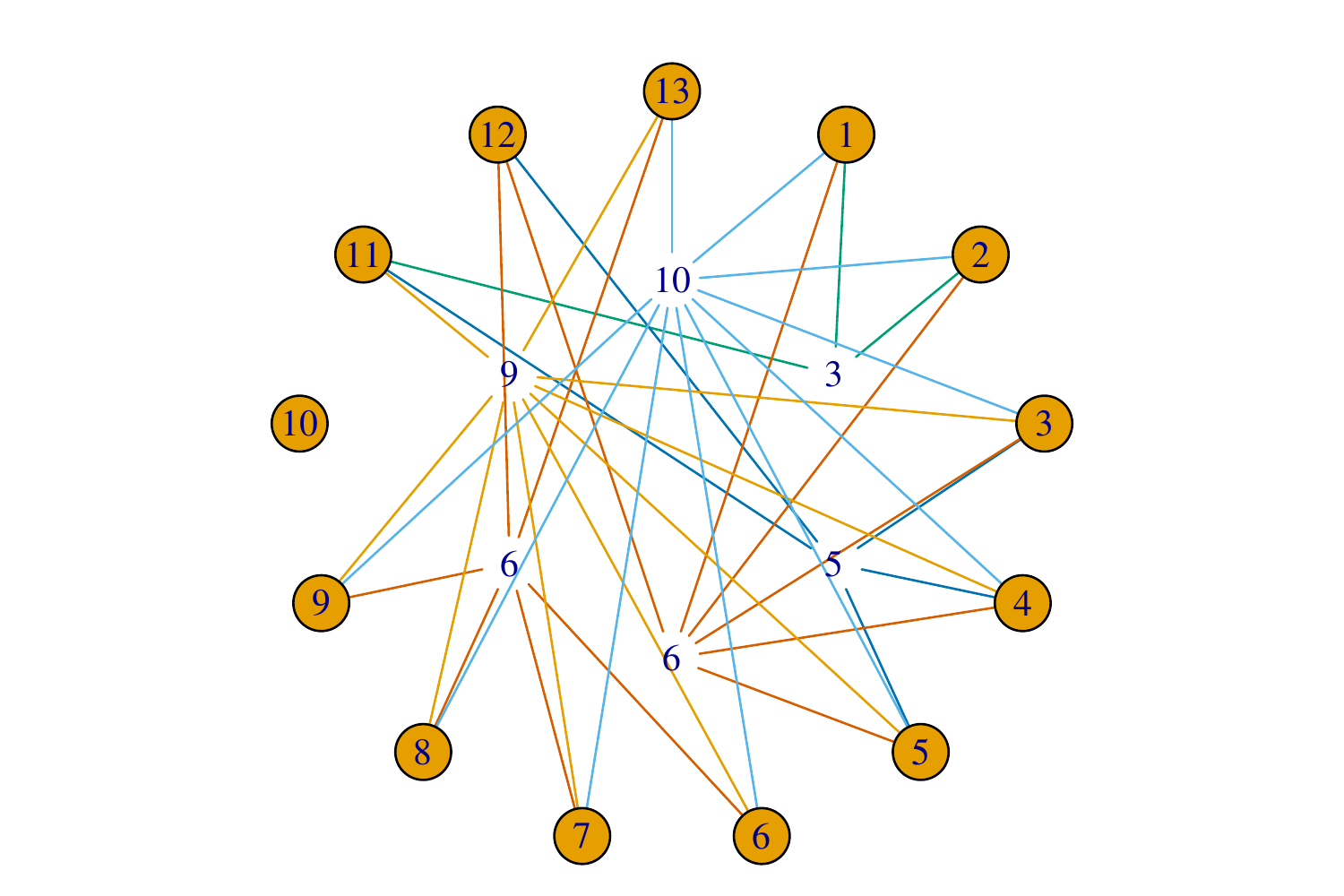}
\includegraphics[width = 0.32\textwidth]{./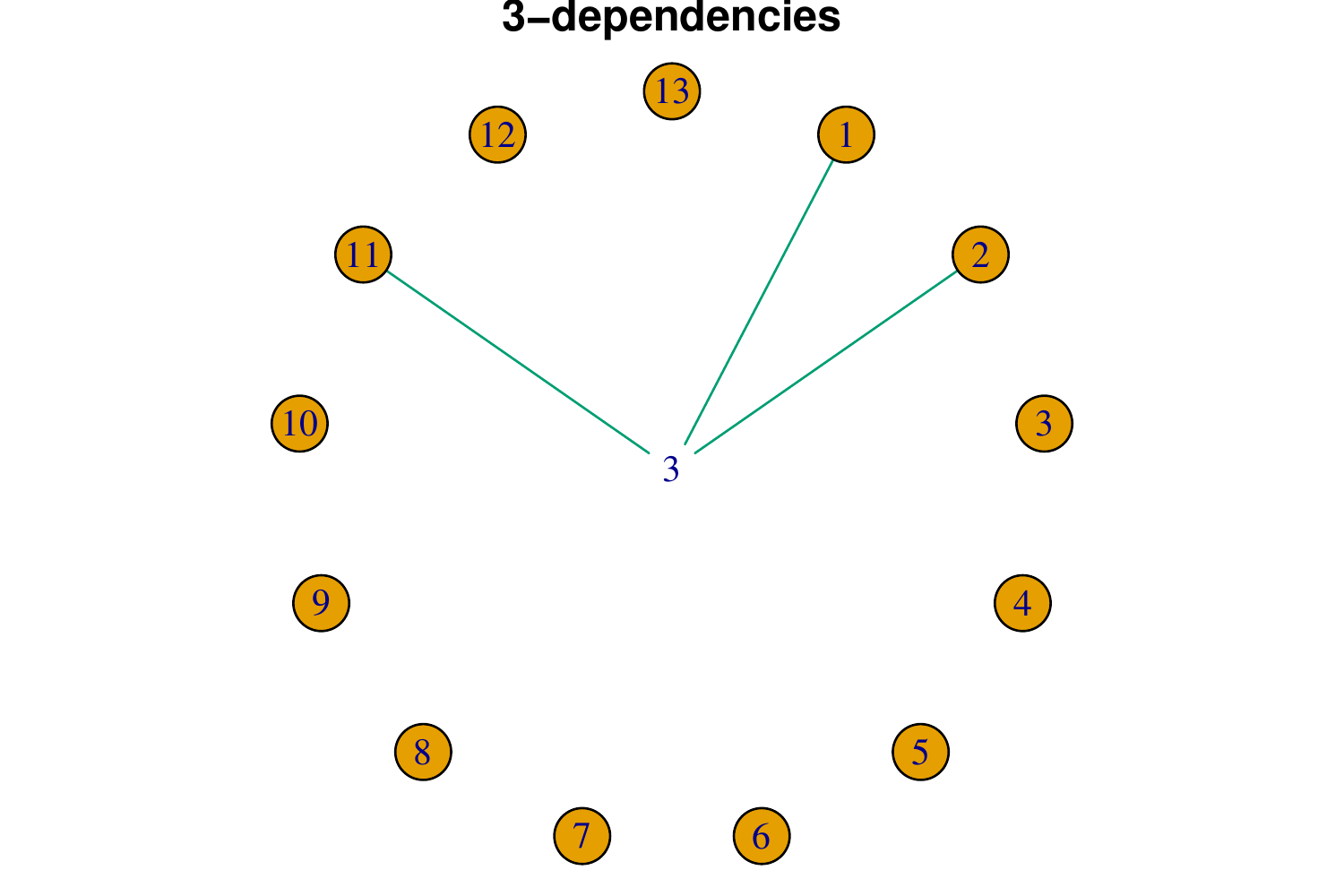}
\includegraphics[width = 0.32\textwidth]{./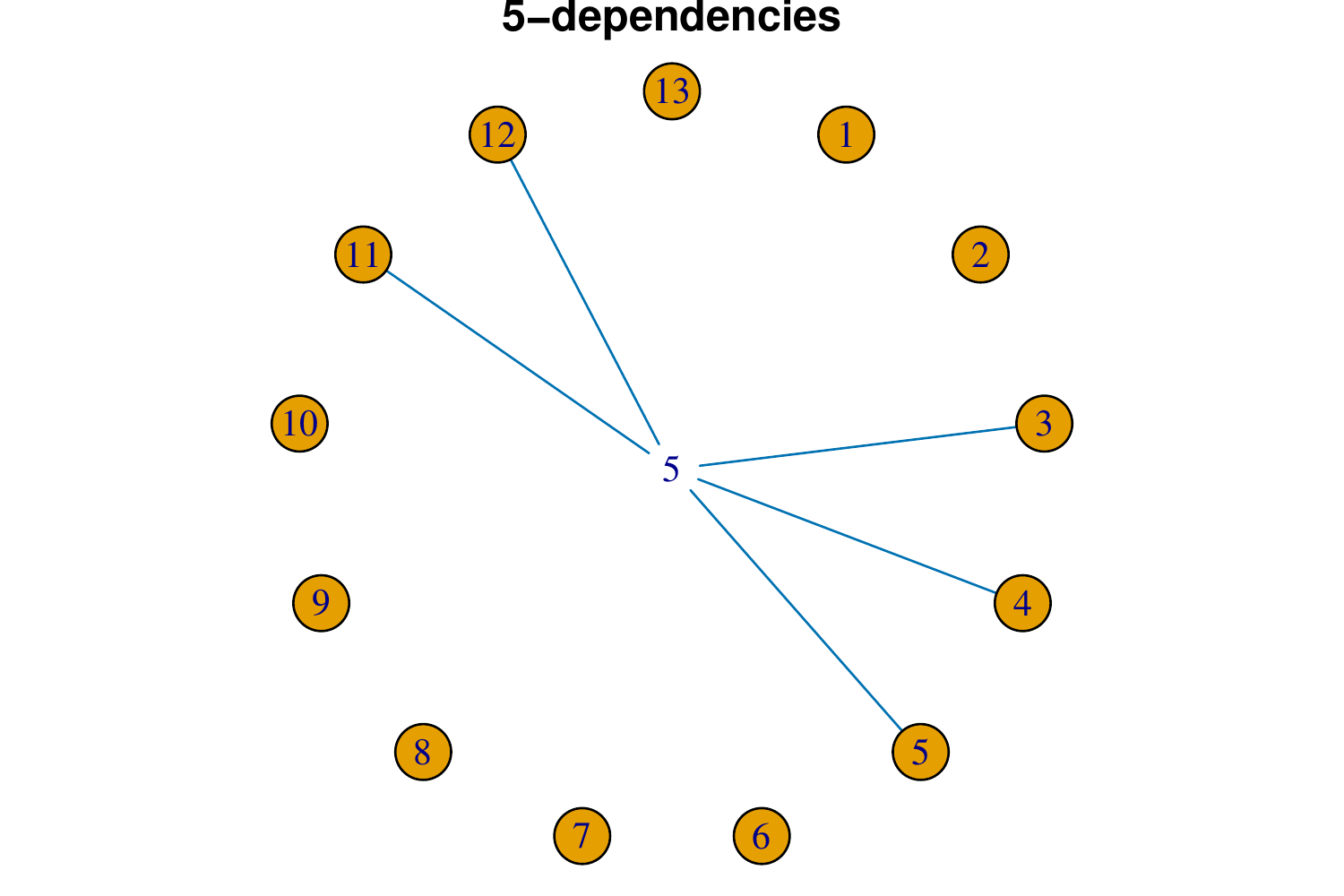}
\includegraphics[width = 0.32\textwidth]{./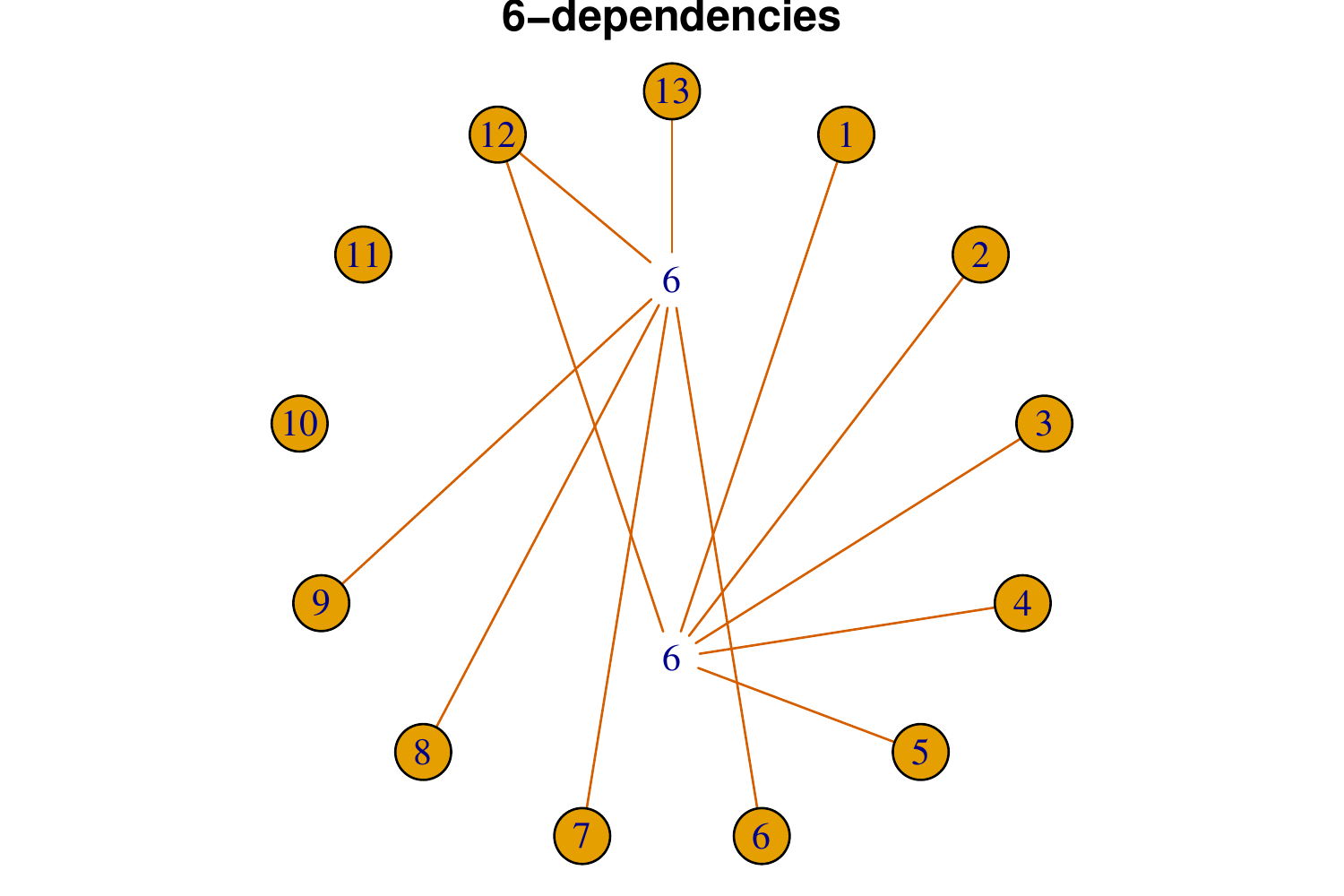}
\includegraphics[width = 0.32\textwidth]{./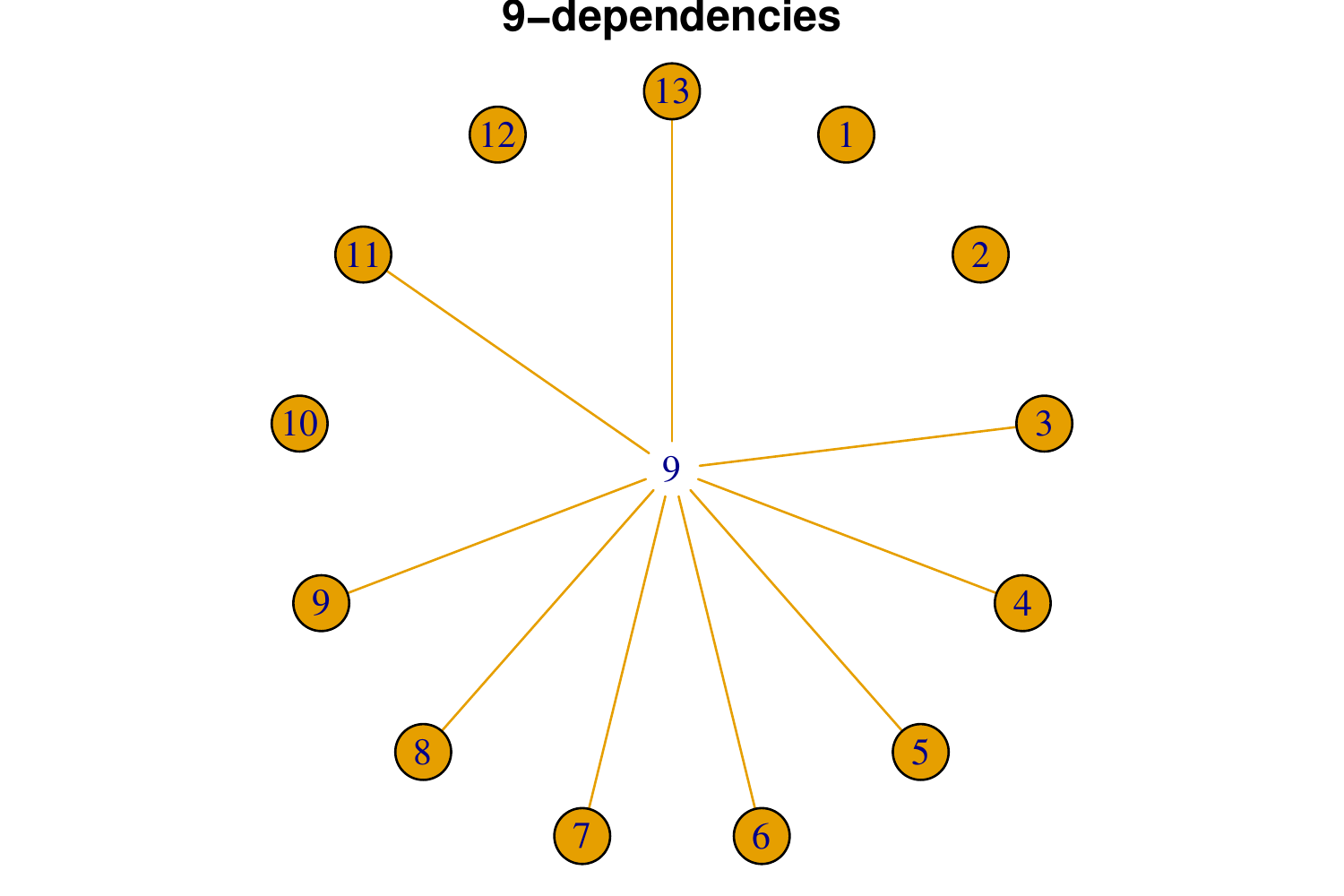}
\includegraphics[width = 0.32\textwidth]{./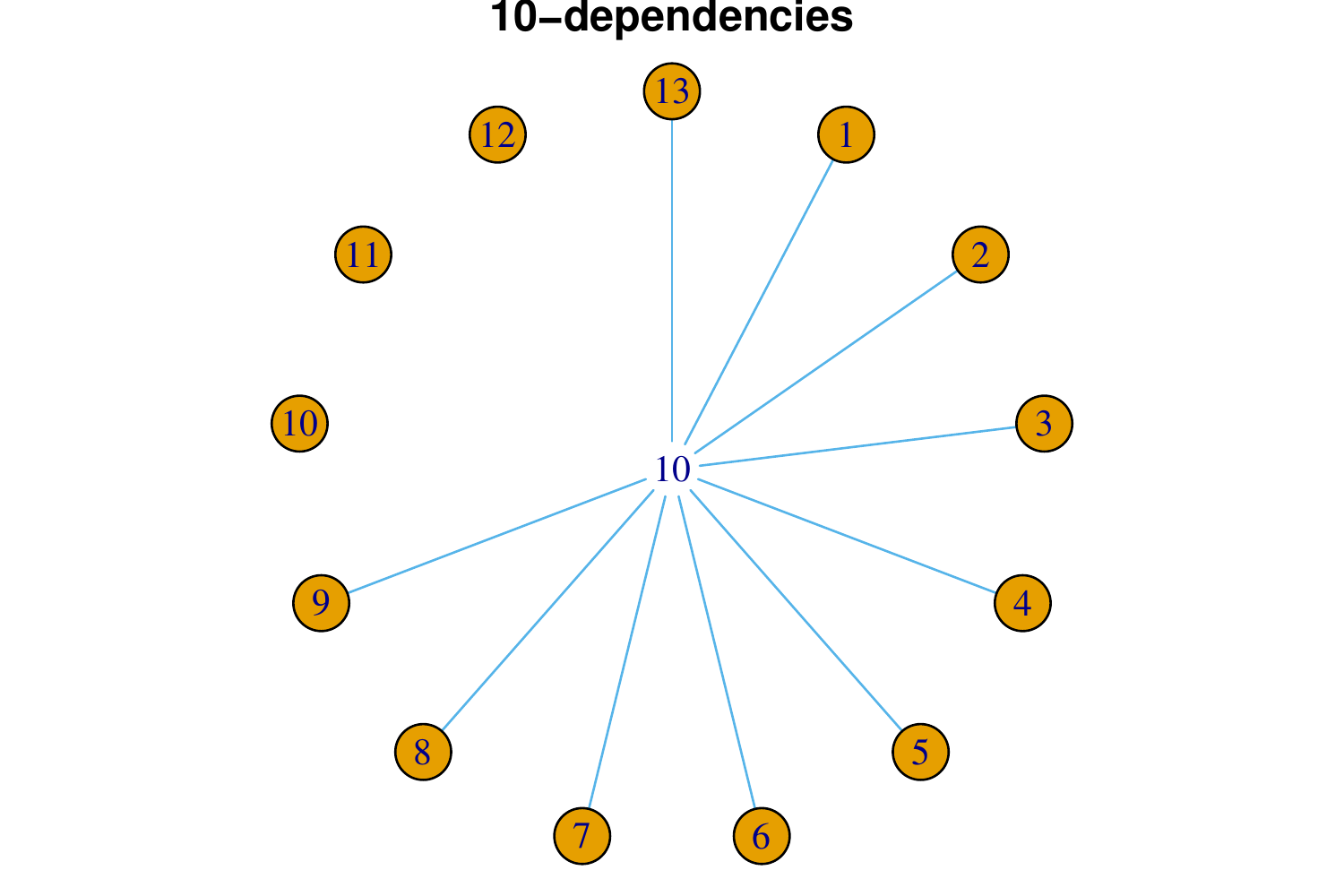}
\caption{The full dependence structure of Ex.\ \ref{ex:iterated} (iterated dependence structure).}	\label{fig:full-iterated}
\end{figure}
\begin{figure}[H]\centering
\includegraphics[width = 0.32\textwidth]{./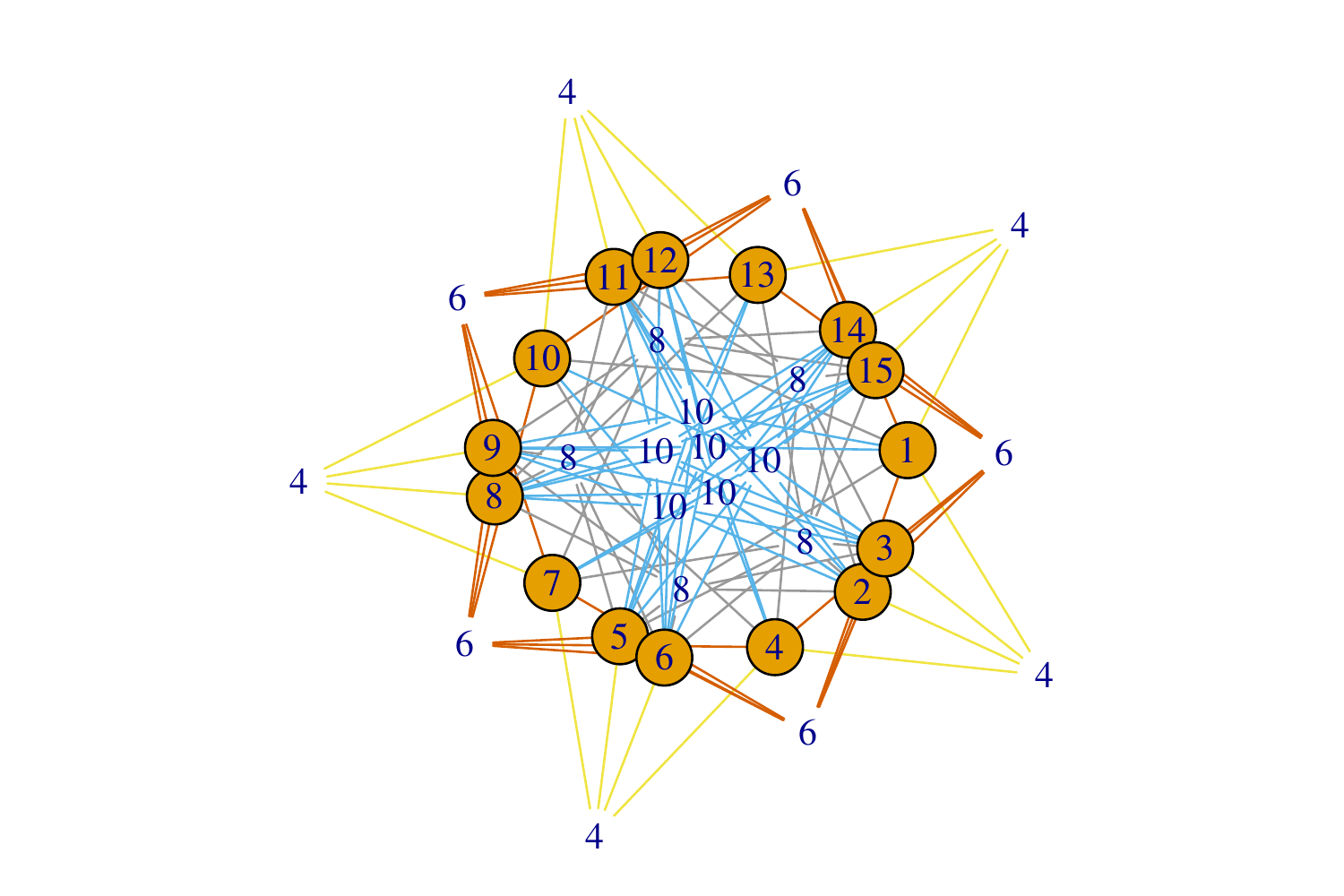}
\includegraphics[width = 0.32\textwidth]{./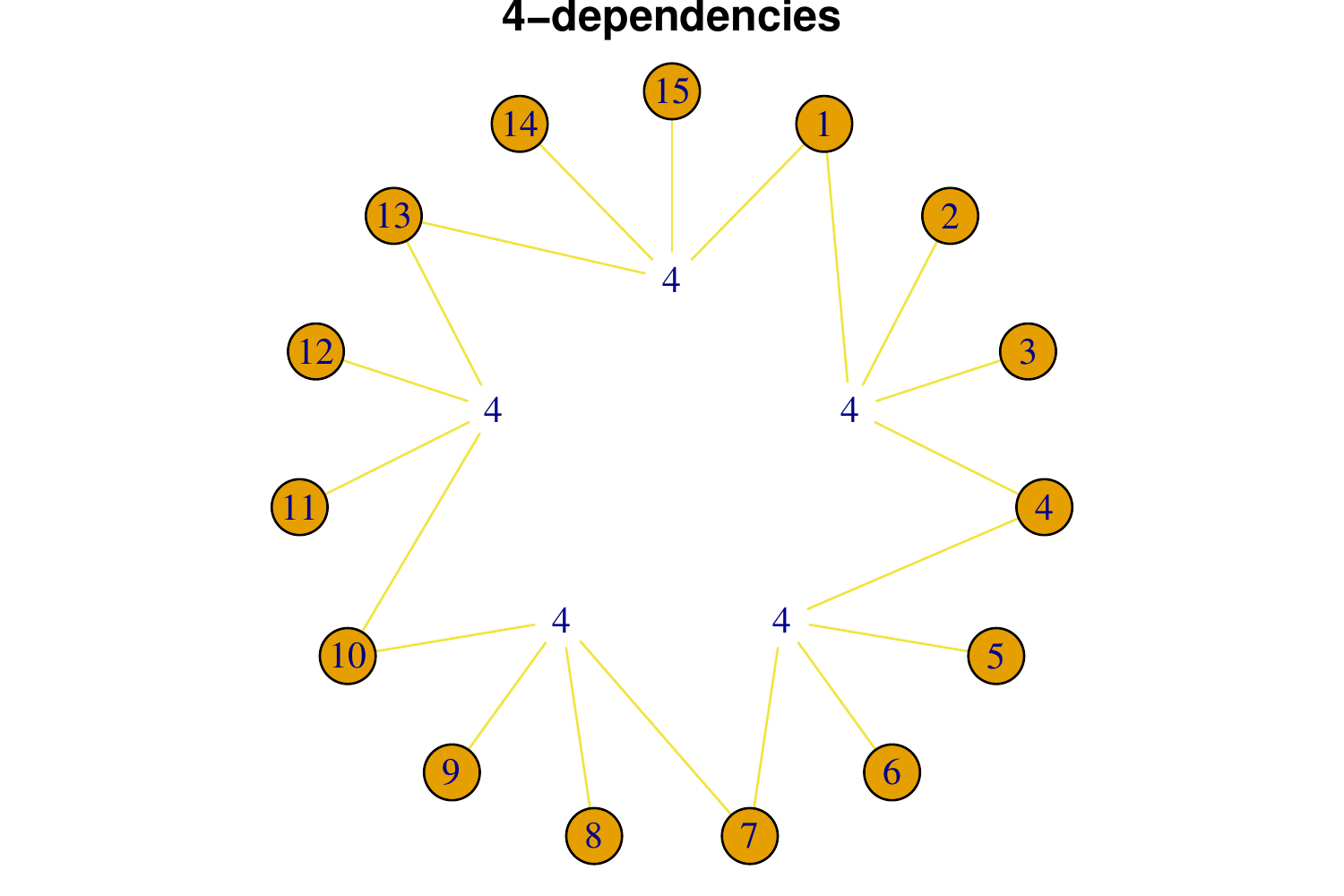}
\includegraphics[width = 0.32\textwidth]{./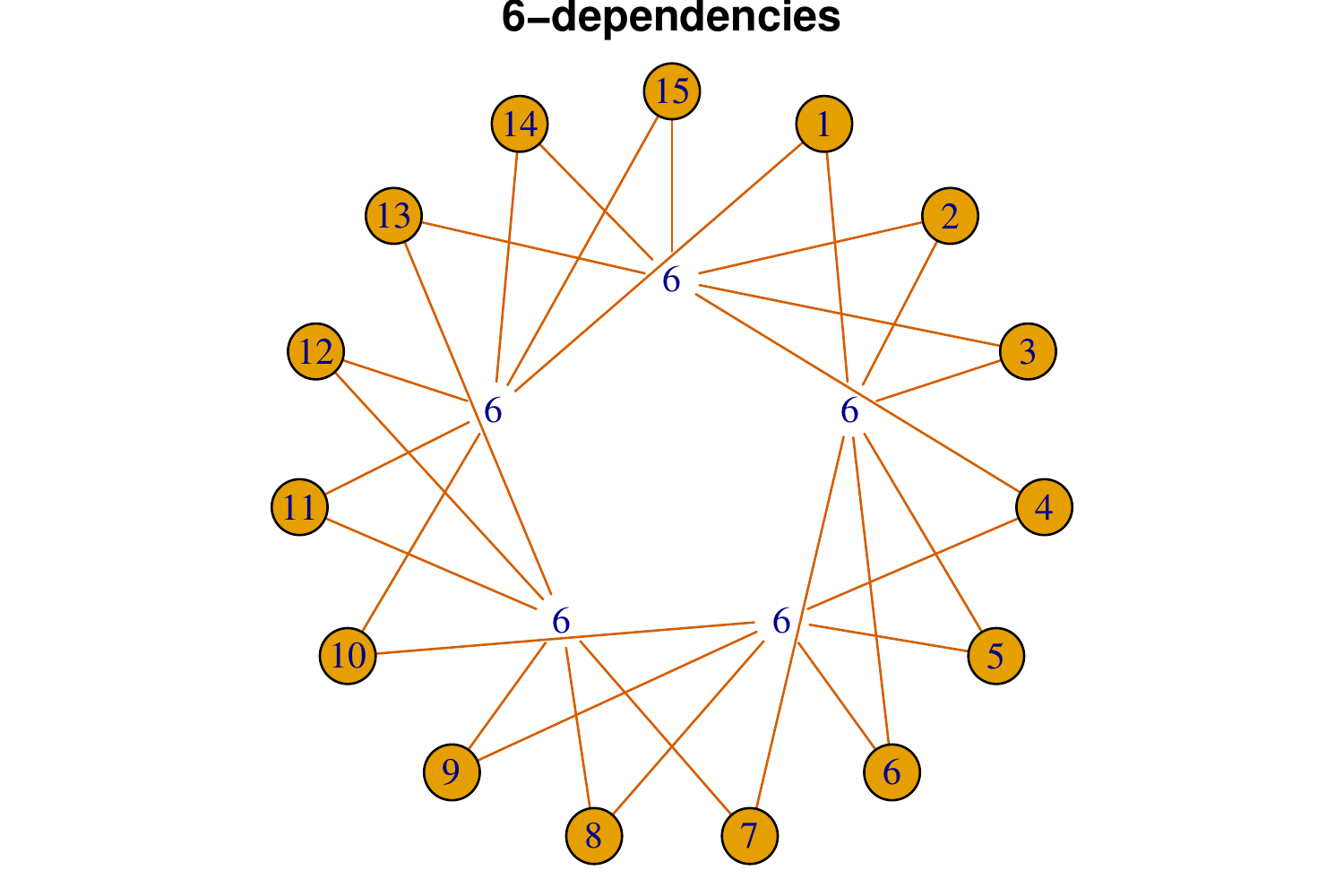}
\includegraphics[width = 0.32\textwidth]{./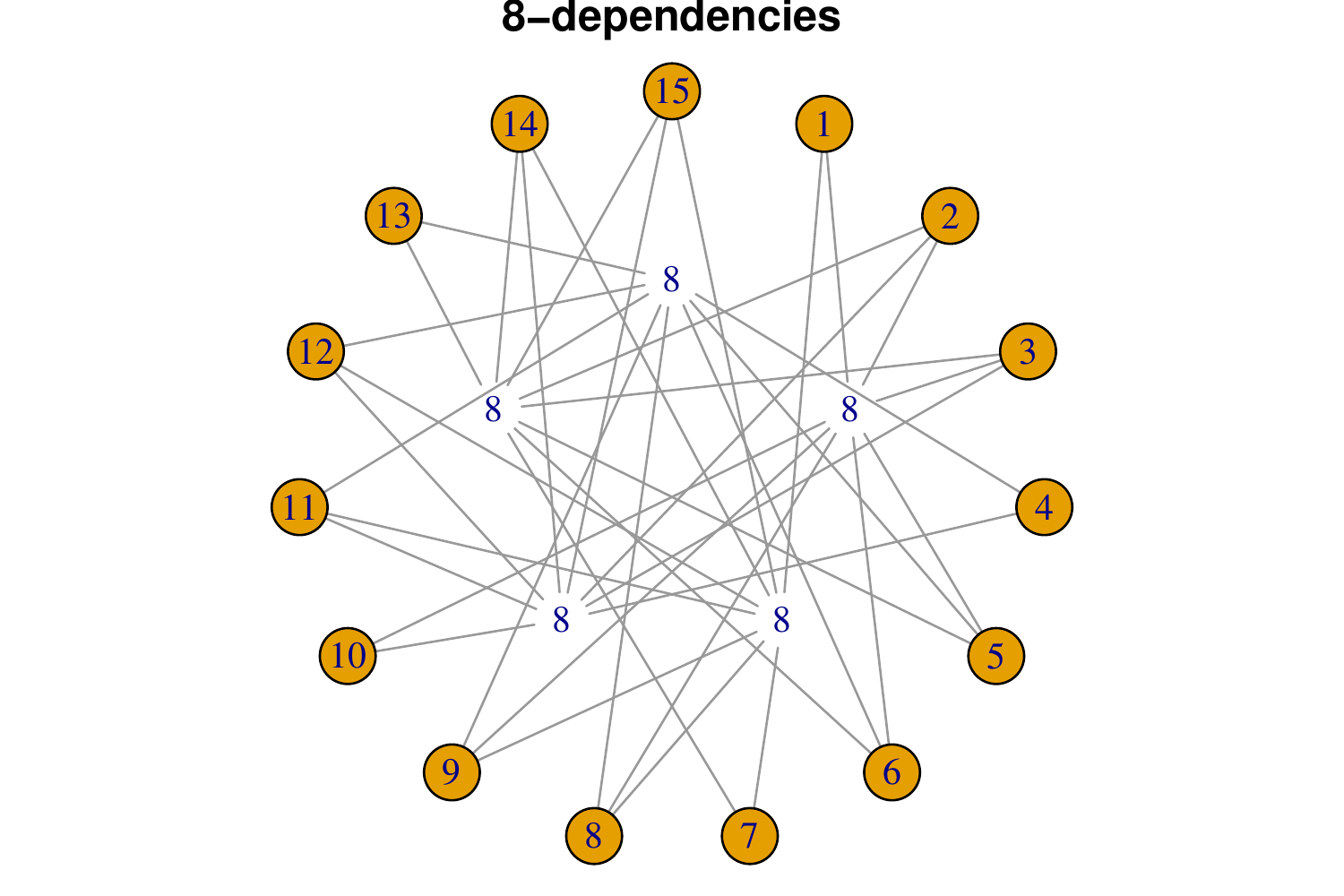}
\includegraphics[width = 0.32\textwidth]{./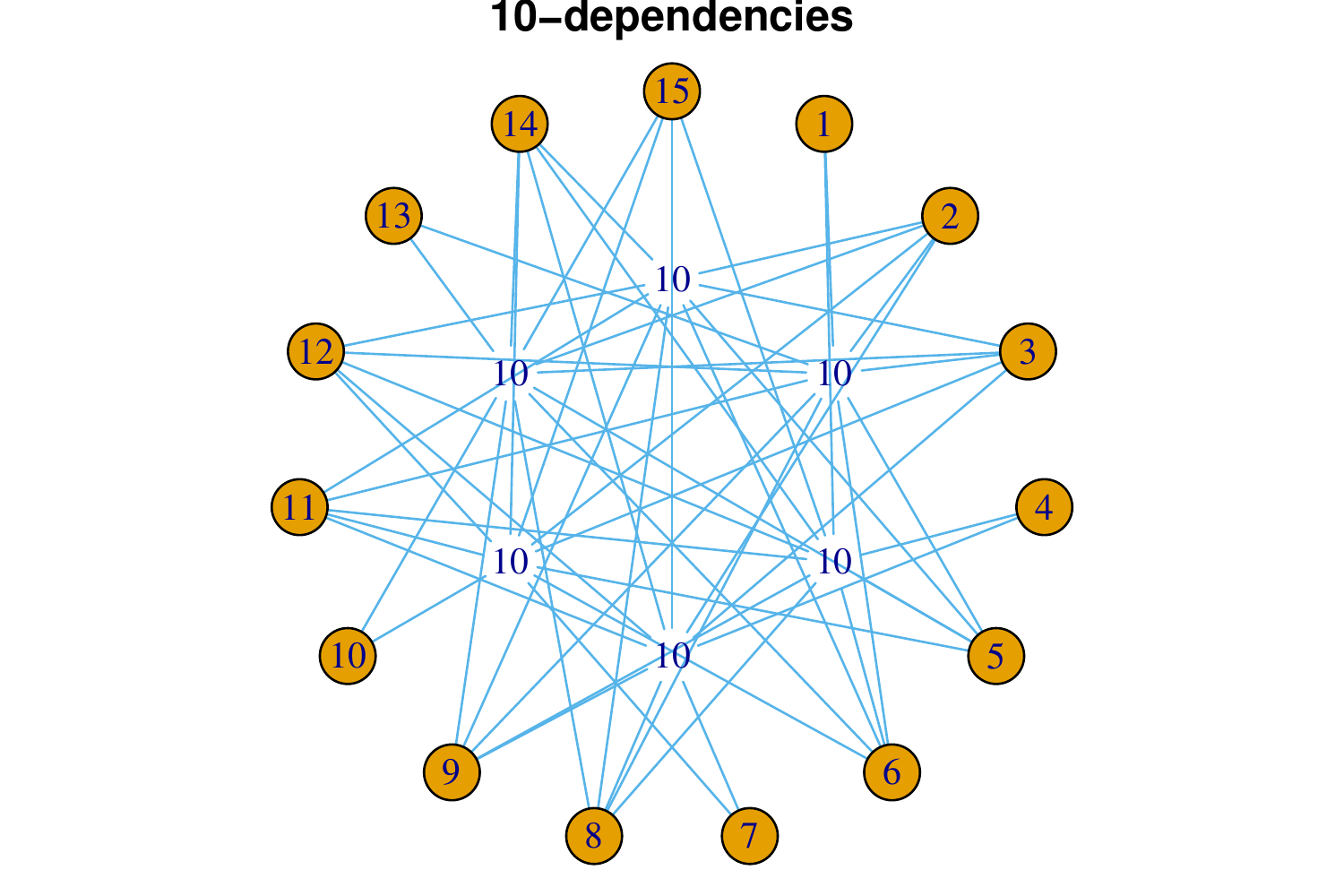}
\quad
\caption{The full dependence structure of Ex.\ \ref{ex:ring} (ring dependence structure).}	\label{fig:full-ring}
\end{figure}

\end{example}

\subsection{Empirical studies of properties of distance multivariance} \label{sec:ex-properties}

Note that in the papers introducing distance multivariance \cite{BoetKellSchi2018,BoetKellSchi2019} only two very elementary examples are contained. Thus simultaneously to illustrating the new measures and methods  provided in the current paper we also provide the first detailed empirical study of distance multivariance. The following aspects of multivariance, total multivariance and $m$-multivariance are discussed: the empirical size of the tests (Example \ref{ex:emp-size}), the dependence of the distribution of the test statistic on marginal distributions, sample size, dimension and the choice of $\psi$ (Example \ref{ex:variousm}), the computational complexity (Example \ref{ex:comp-time}), the moment conditions (Example \ref{ex:mom}) and the statistical curse of dimensions (Example \ref{ex:averaging}). The section closes with a generalization of total multivariance (Example \ref{ex:lambda}).

\begin{example}[Empirical size]\label{ex:emp-size} \begin{figure}[H] \centering
\includegraphics[width = 0.3\textwidth]{./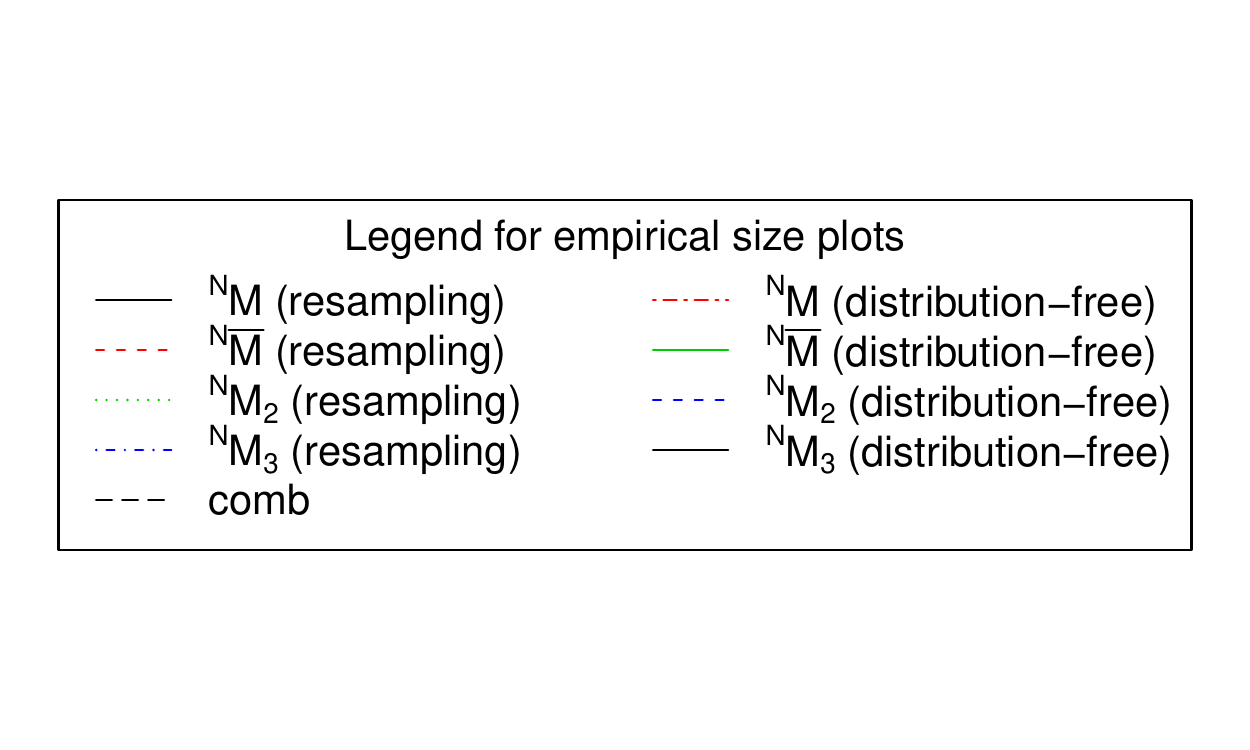}
\includegraphics[width = 0.3\textwidth]{./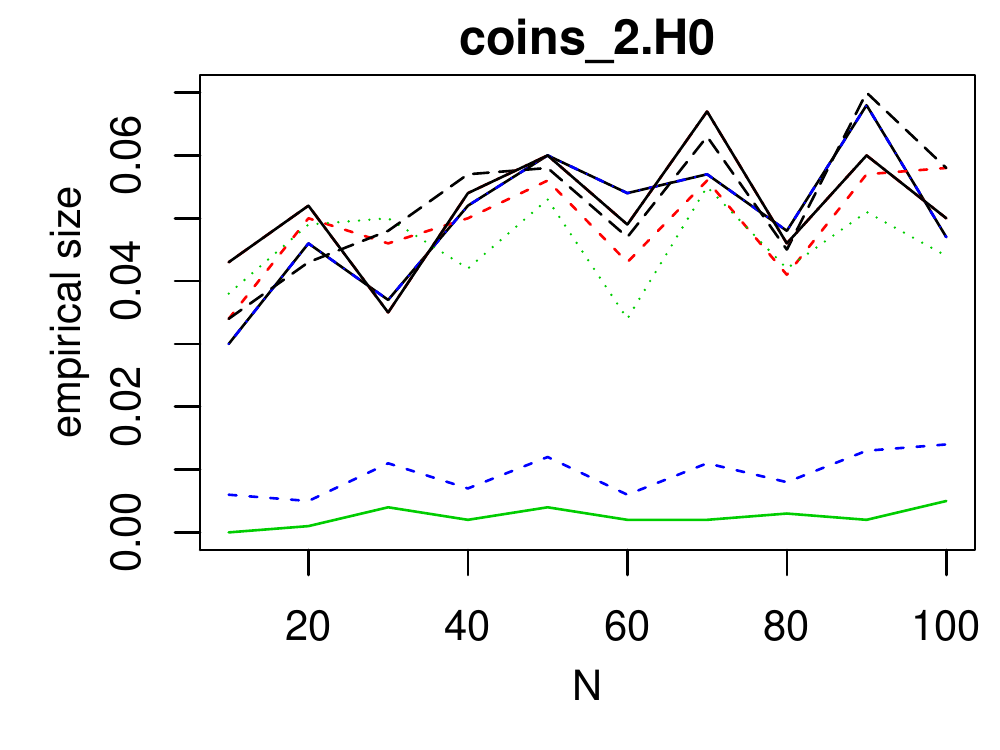}
\includegraphics[width = 0.3\textwidth]{./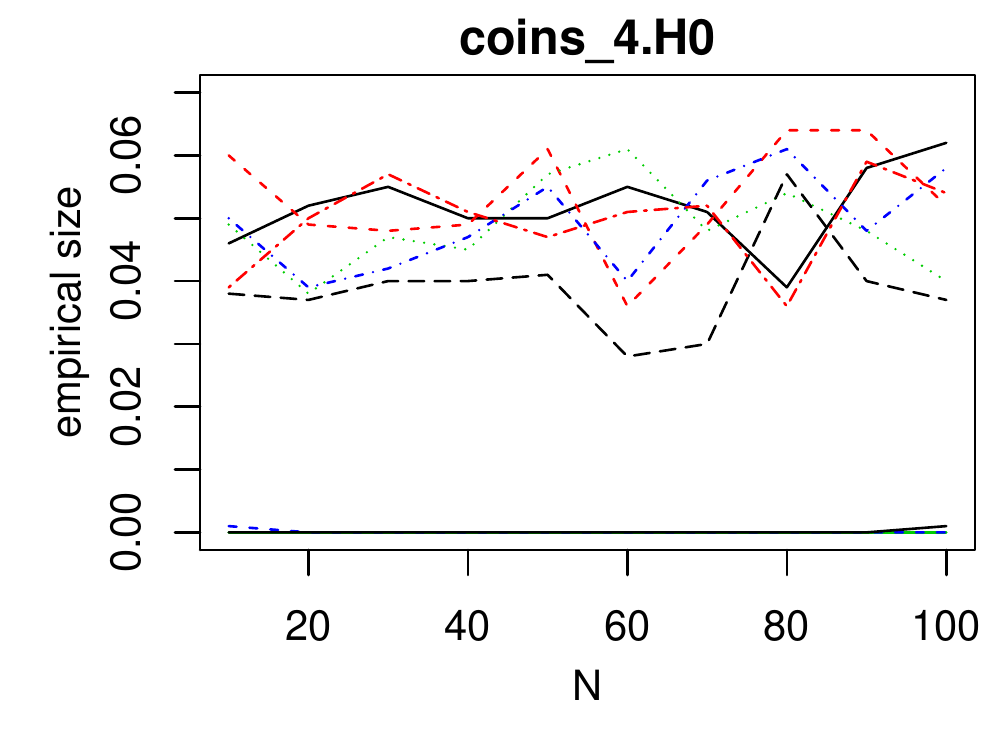}
\includegraphics[width = 0.3\textwidth]{./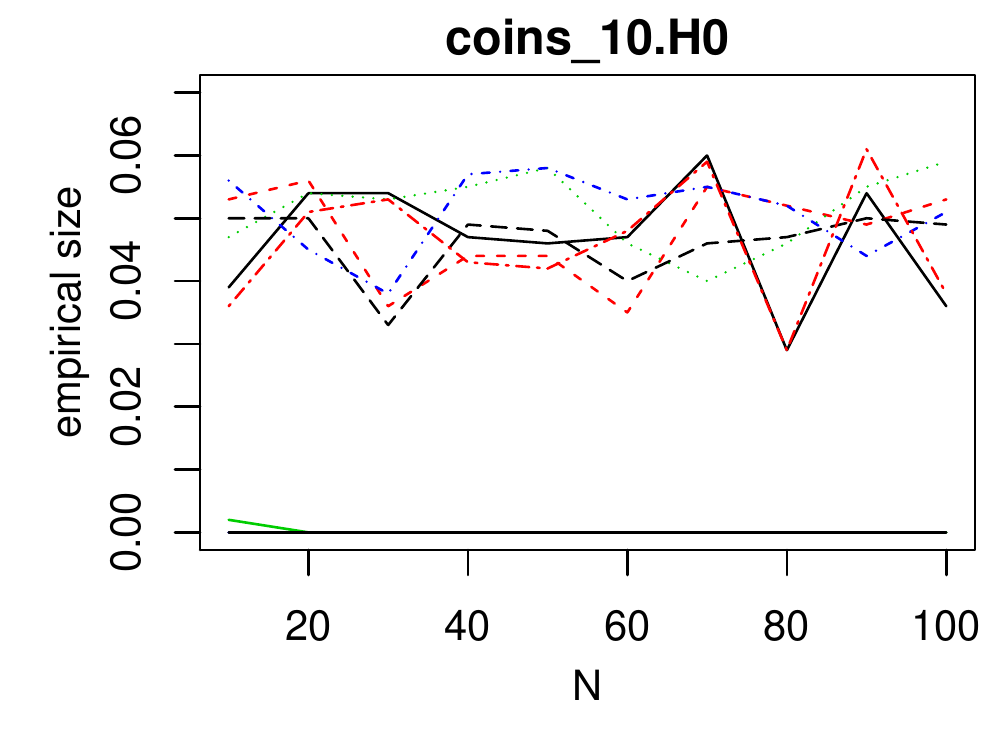}
\includegraphics[width = 0.3\textwidth]{./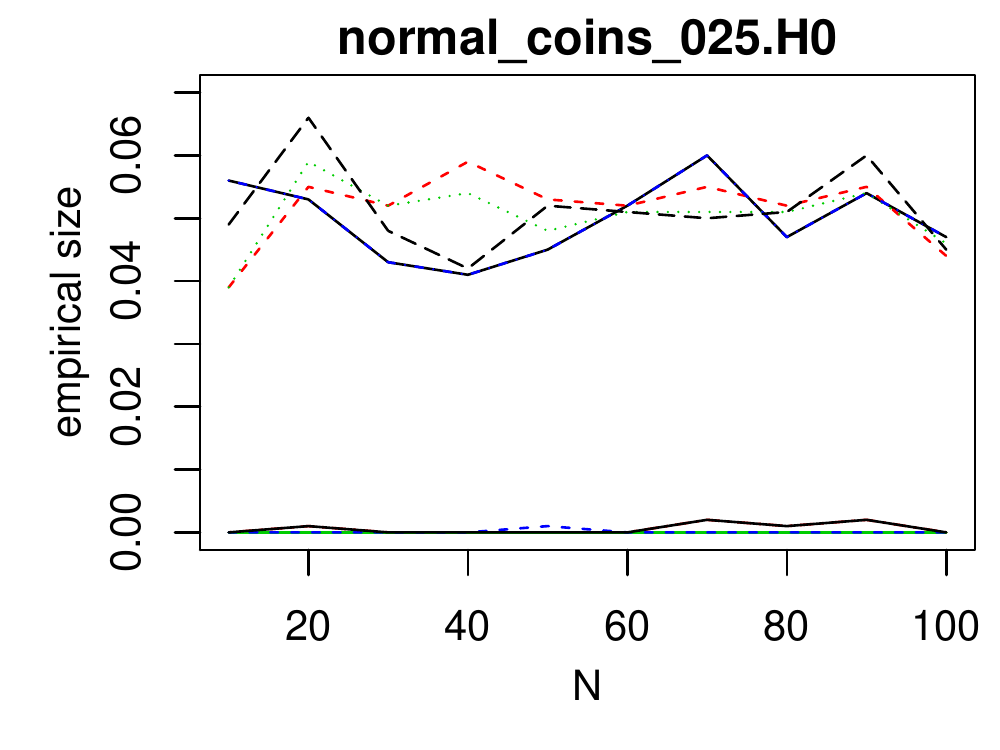}
\includegraphics[width = 0.3\textwidth]{./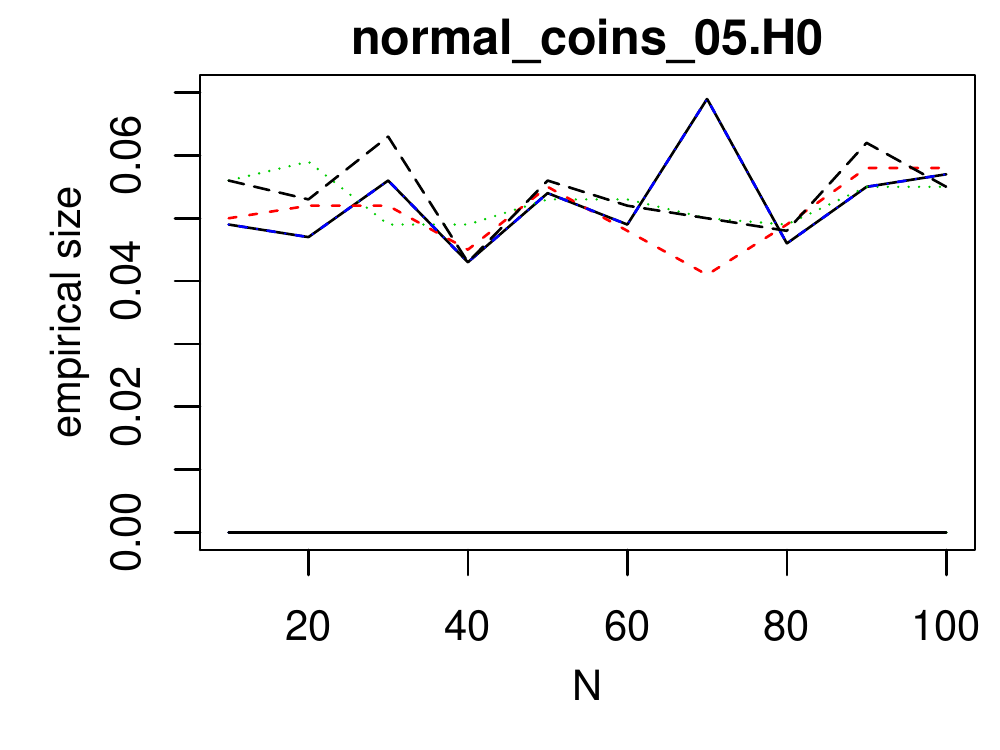}
\includegraphics[width = 0.3\textwidth]{./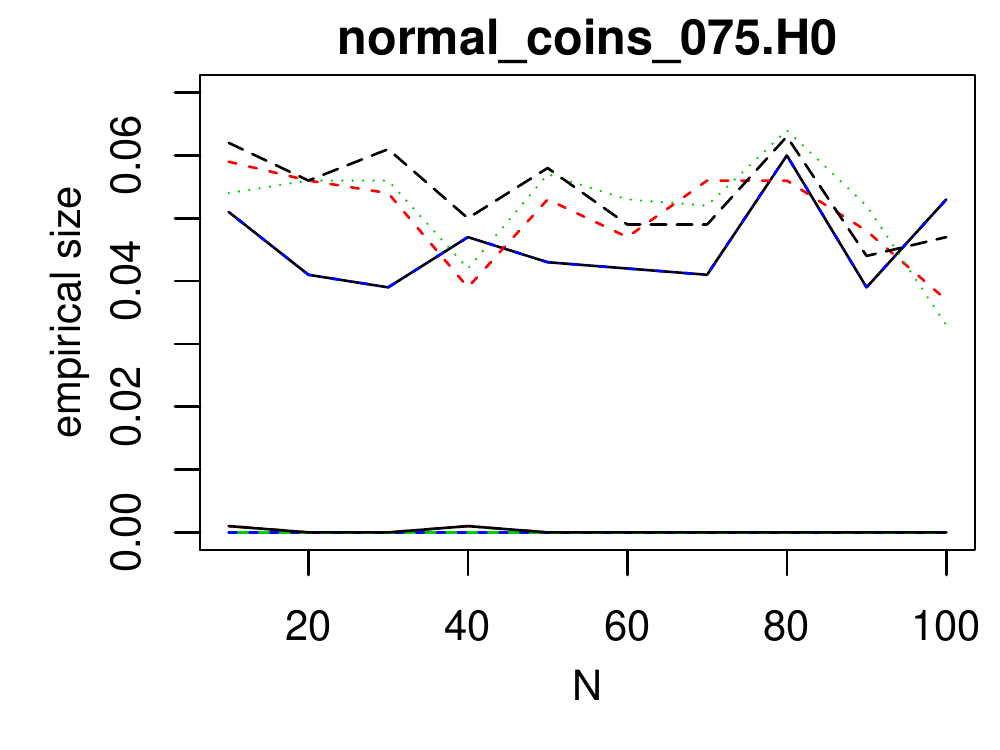}
\includegraphics[width = 0.3\textwidth]{./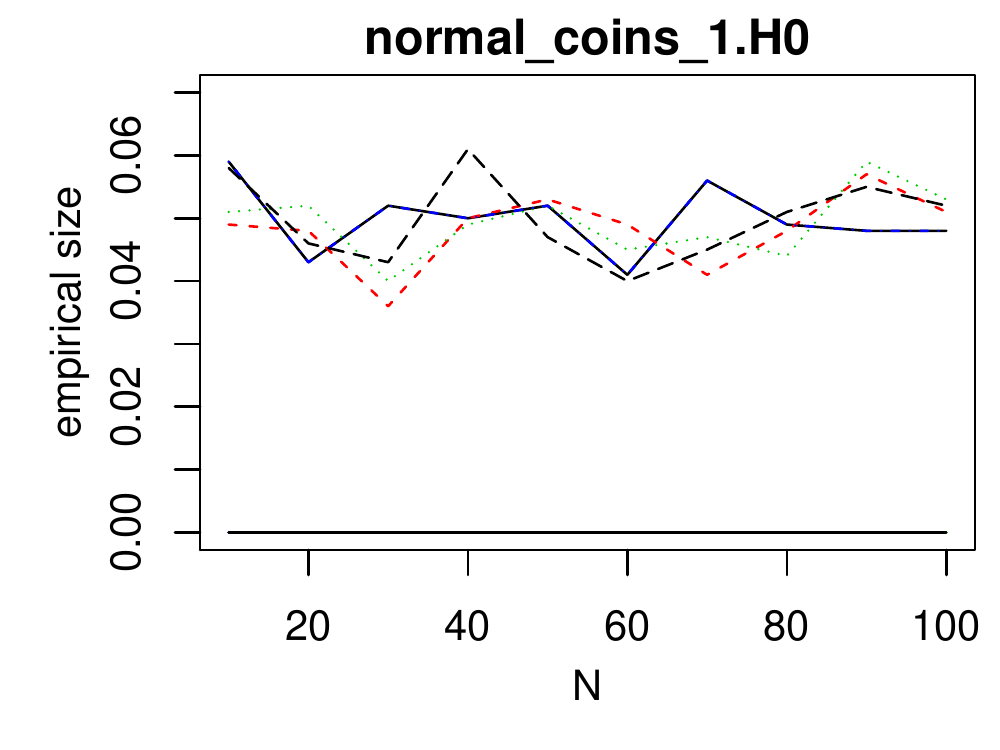}
\includegraphics[width = 0.3\textwidth]{./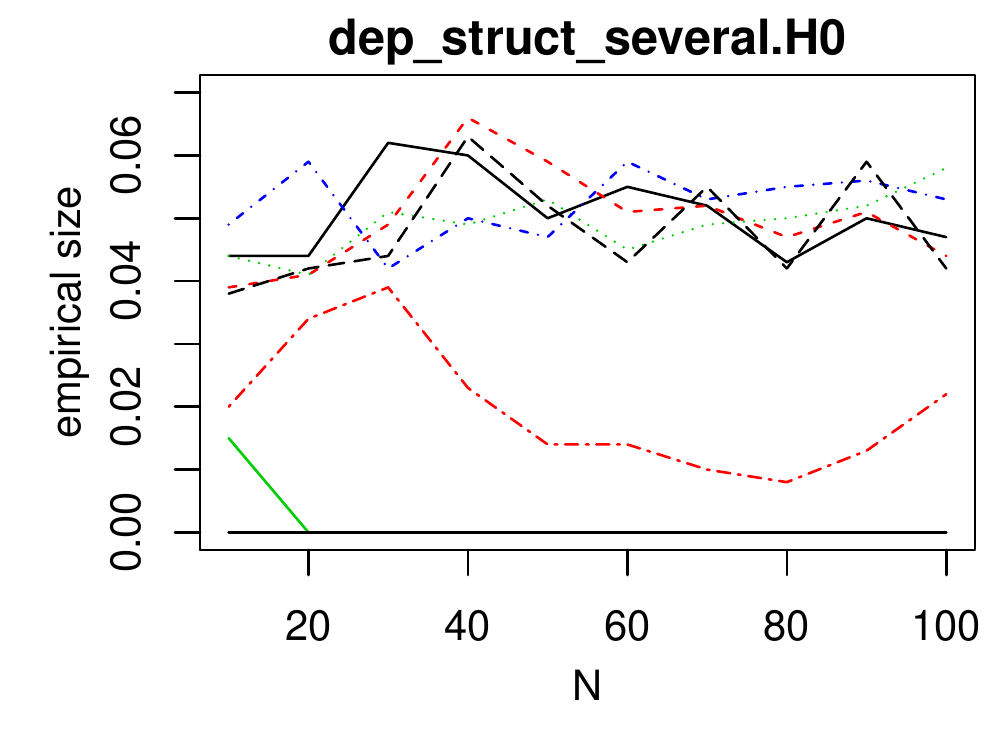}
\includegraphics[width = 0.3\textwidth]{./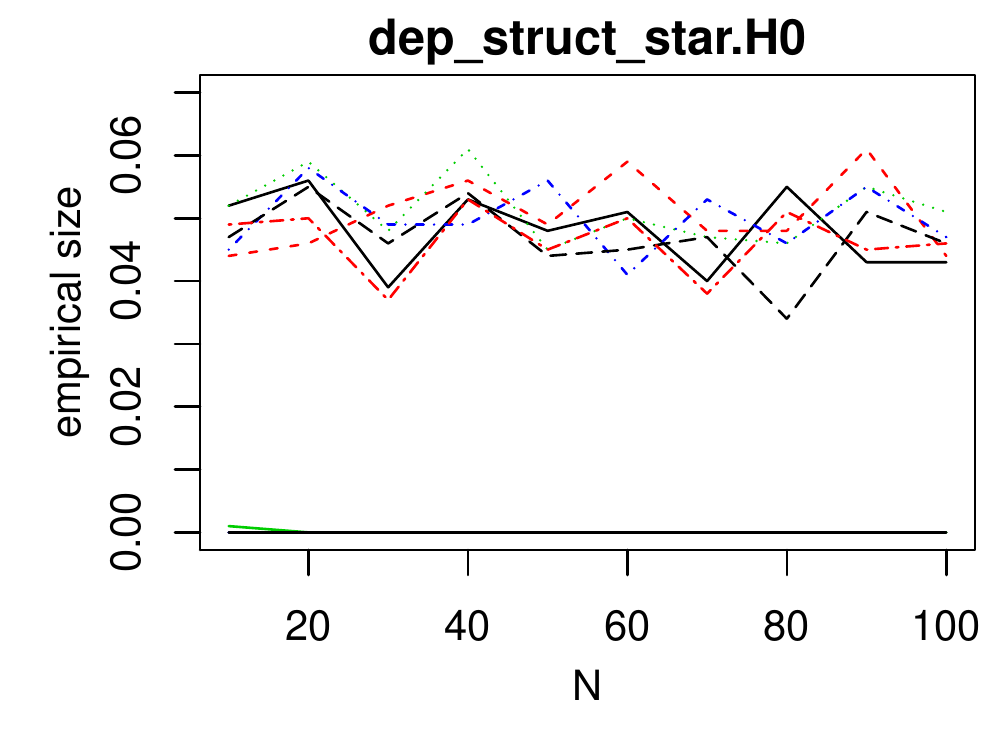}
\includegraphics[width = 0.3\textwidth]{./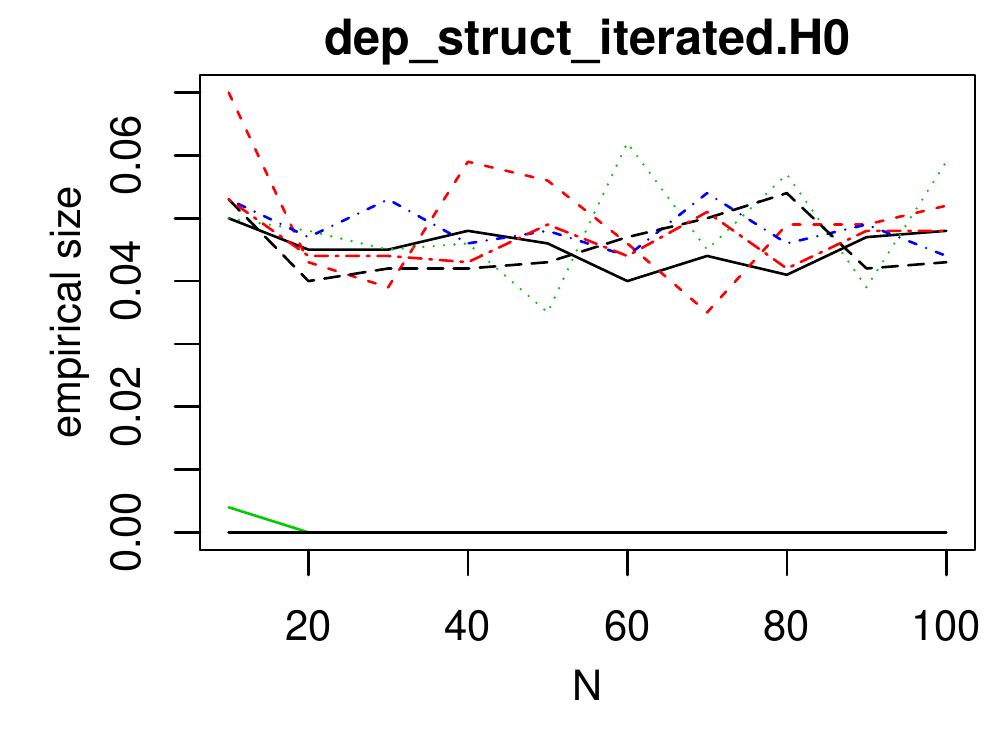}
\includegraphics[width = 0.3\textwidth]{./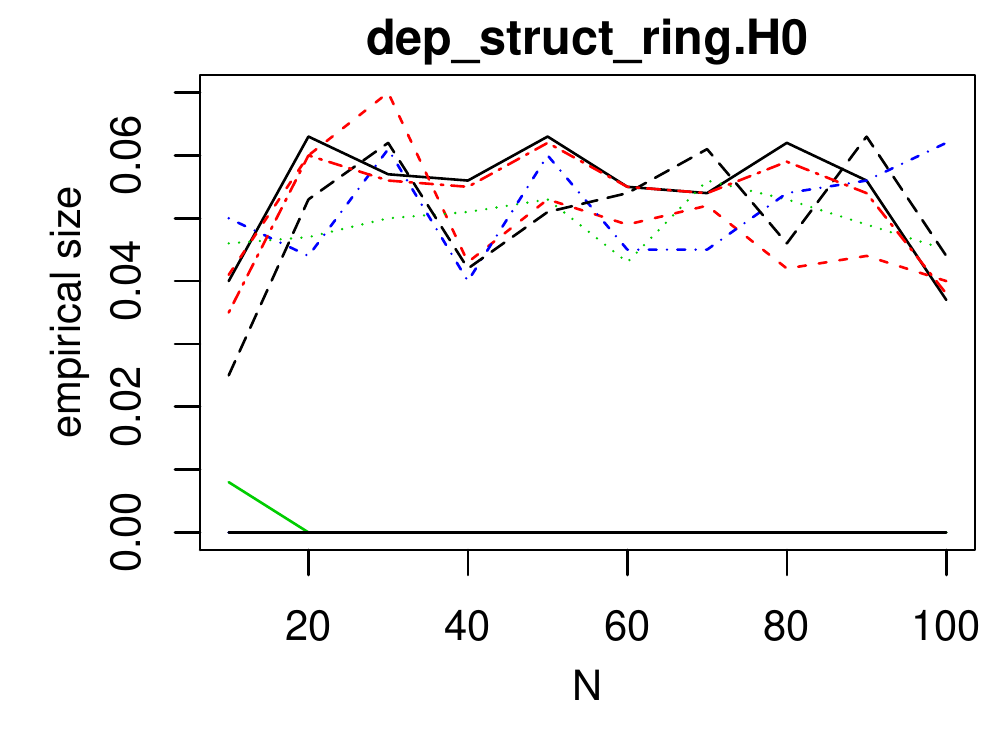}
\caption{The empirical size of the tests for Examples\ \ref{ex:2coins} to \ref{ex:ring} (Ex.\ \ref{ex:emp-size}).}	\label{fig:emp-size}
\end{figure}
Here we consider the same settings as in the previous examples but with $H_0$ data, i.e., the marginal distributions remain as in the examples but the components are now independent. In Figure \ref{fig:emp-size} the empirical sizes are depicted. The resampling methods have (as expected for a sharp test) an empirical size close to 0.05. For Bernoulli marginals also the distribution-free method for multivariance is close to 0.05. In the other cases (and for $m$- and total multivariance) the tests are conservative.

\end{example}

Next we analyze the effect of various parameters on the distribution of the test statistic.

\begin{example}[Influence of sample size, marginal distributions and $\psi_i$]\label{ex:variousm}
The distribution of the test statistic $N \cdot \hN \Mskript^2$ under the hypothesis of independence depends on the marginal distributions of the random variables and also on the number of variables $n$ as Figure \ref{fig:mdist-marginal} illustrates (see also Figure \ref{fig:mdist-dim}). The empirical distributions are based on 3000 samples each.
\begin{figure}[H]\centering
\includegraphics[width = 0.49\textwidth]{./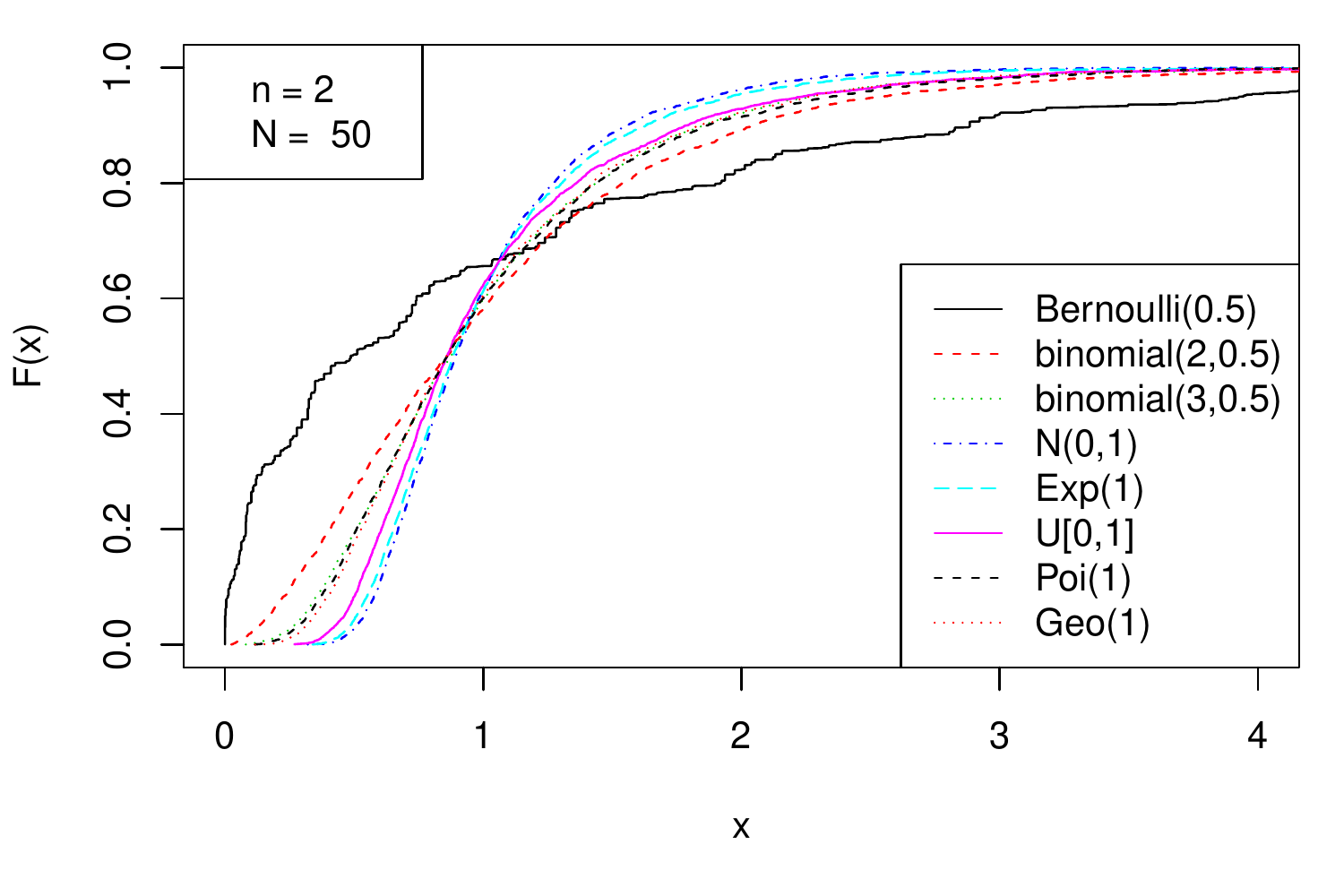}
\includegraphics[width = 0.49\textwidth]{./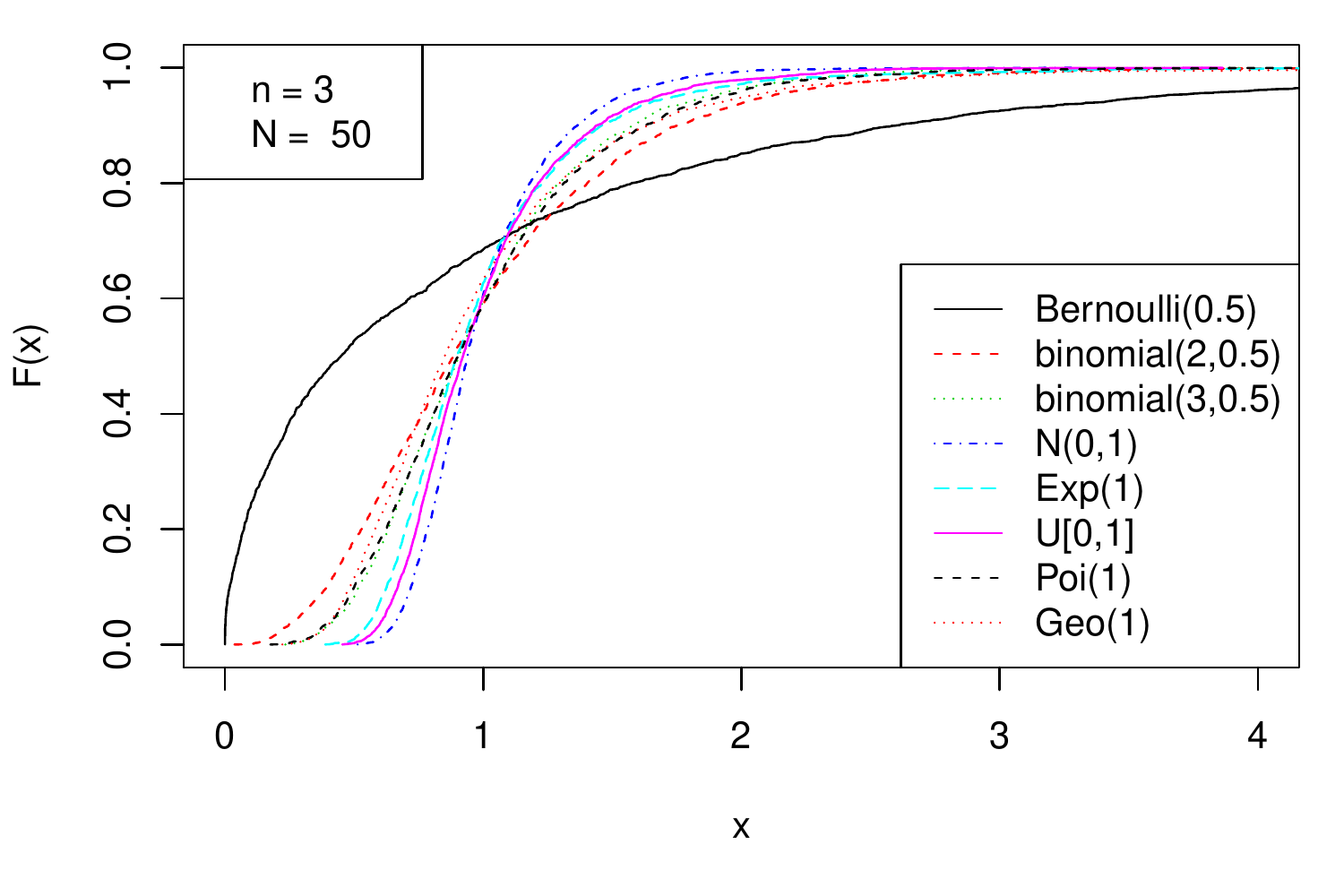}

\includegraphics[width = 0.49\textwidth]{./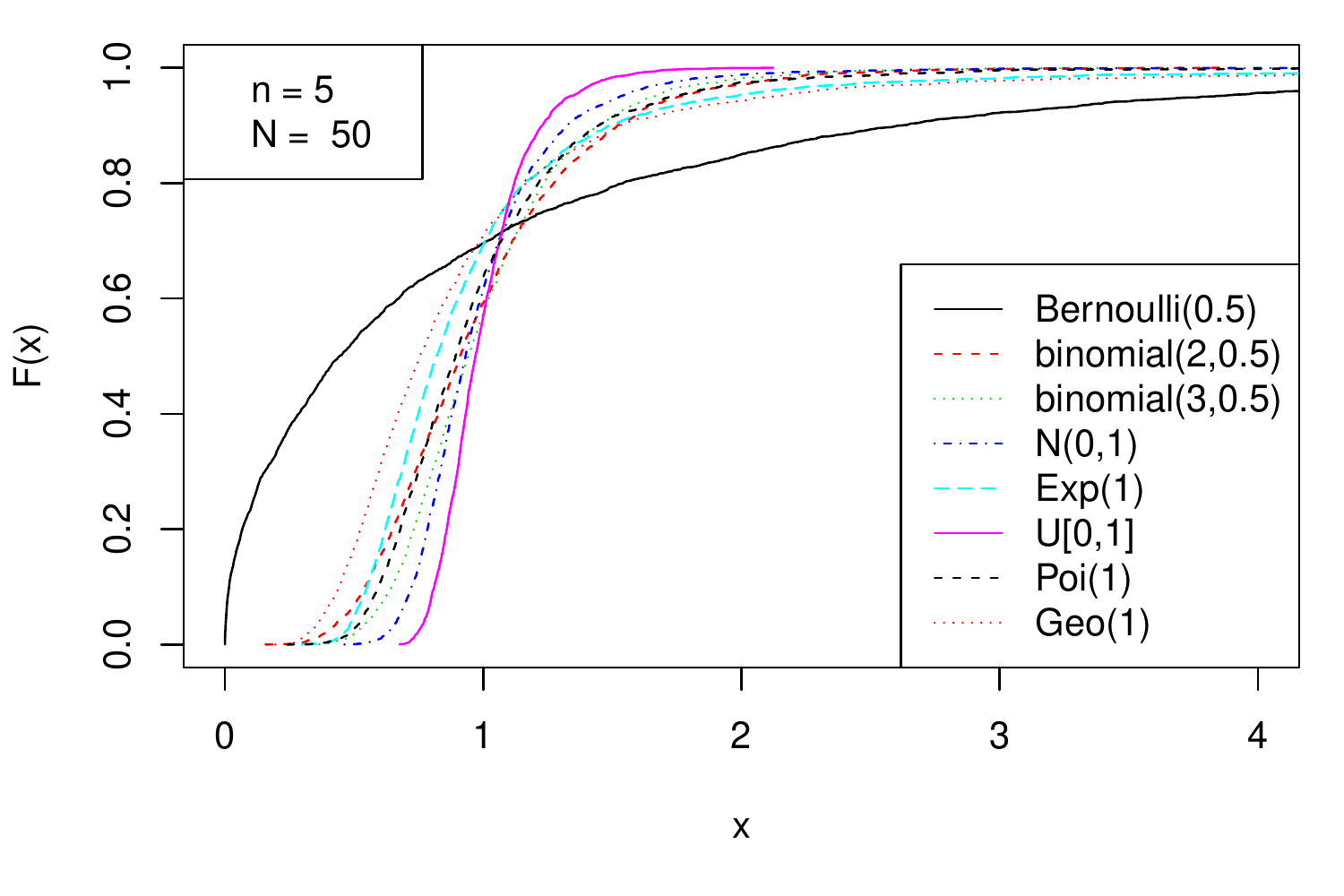}

\caption{Empirical distribution of $N \cdot \hN M_\rho^2(X_1,\ldots,X_n)$ for i.i.d.\  $X_i$ with various distributions (Ex.\ \ref{ex:variousm}).}	\label{fig:mdist-marginal}
\end{figure}

Moreover the distribution also clearly depends on the choice of the reference measure $\rho$ or equivalently (see \eqref{eq:psi} and Remark \ref{rem:choosepsi}) on the distances $\psi_i$. For Figure \ref{fig:mdist-psi} we used $\psi_i(x_i) = |x_i|^\alpha$ with $\alpha \in \{0.5,1,1.5\}$, and the plots show that in general for $\alpha = 1.5$ the upper tail of the distribution of the test statistic comes closer to the distribution-free limit which is the $\chi^2_1$-distribution. Note that the  $\chi^2_1$-distribution is matched in the case of Bernoulli distributed random variables, in this case the choice of $\psi_i$ has no effect on the empirical distribution of the test statistic, since $\psi_i(0) = 0$ and $\psi_i(1)=1$ for all $\alpha$.

\begin{figure}[H]\centering
\includegraphics[width = 0.49\textwidth]{./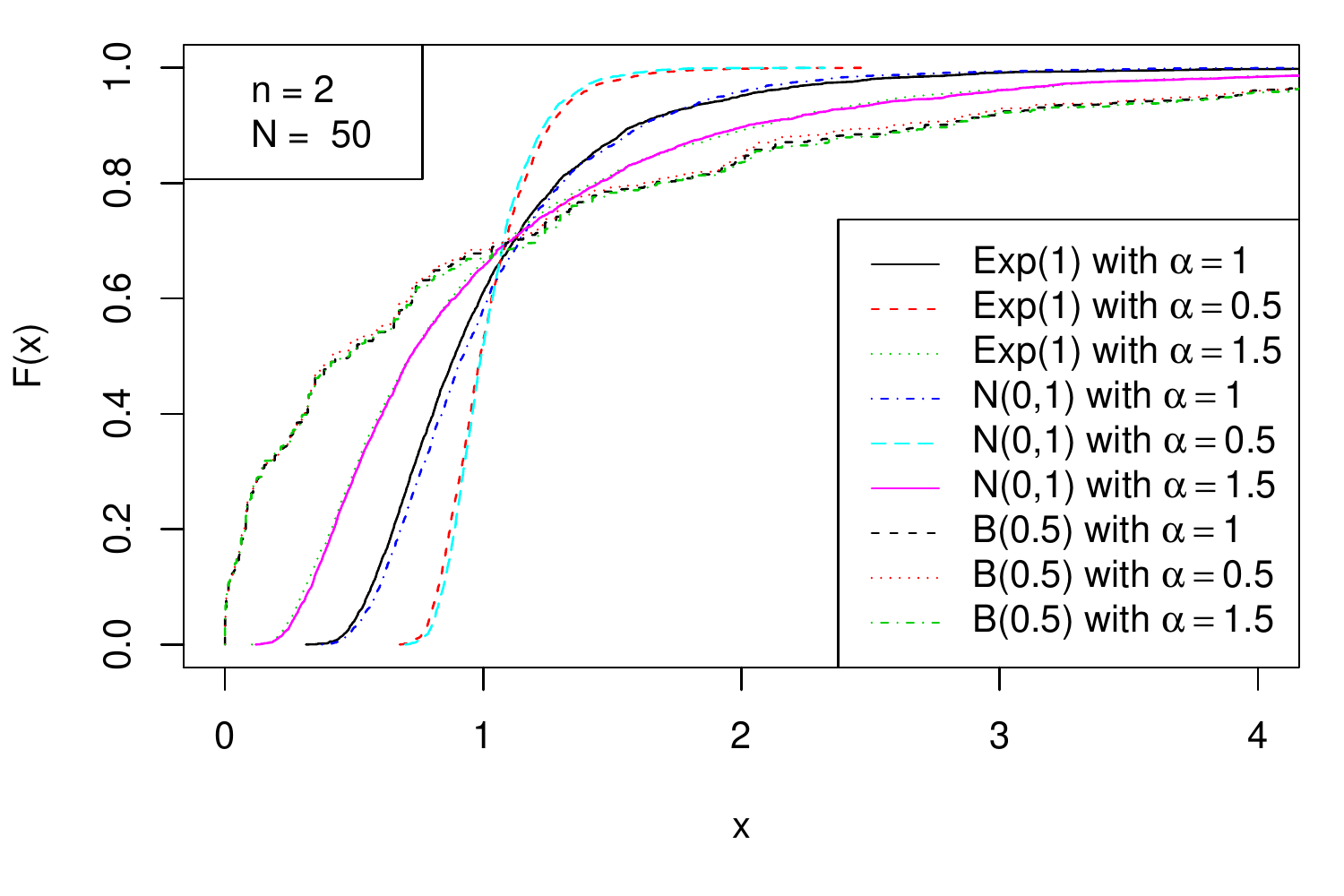}
\includegraphics[width = 0.49\textwidth]{./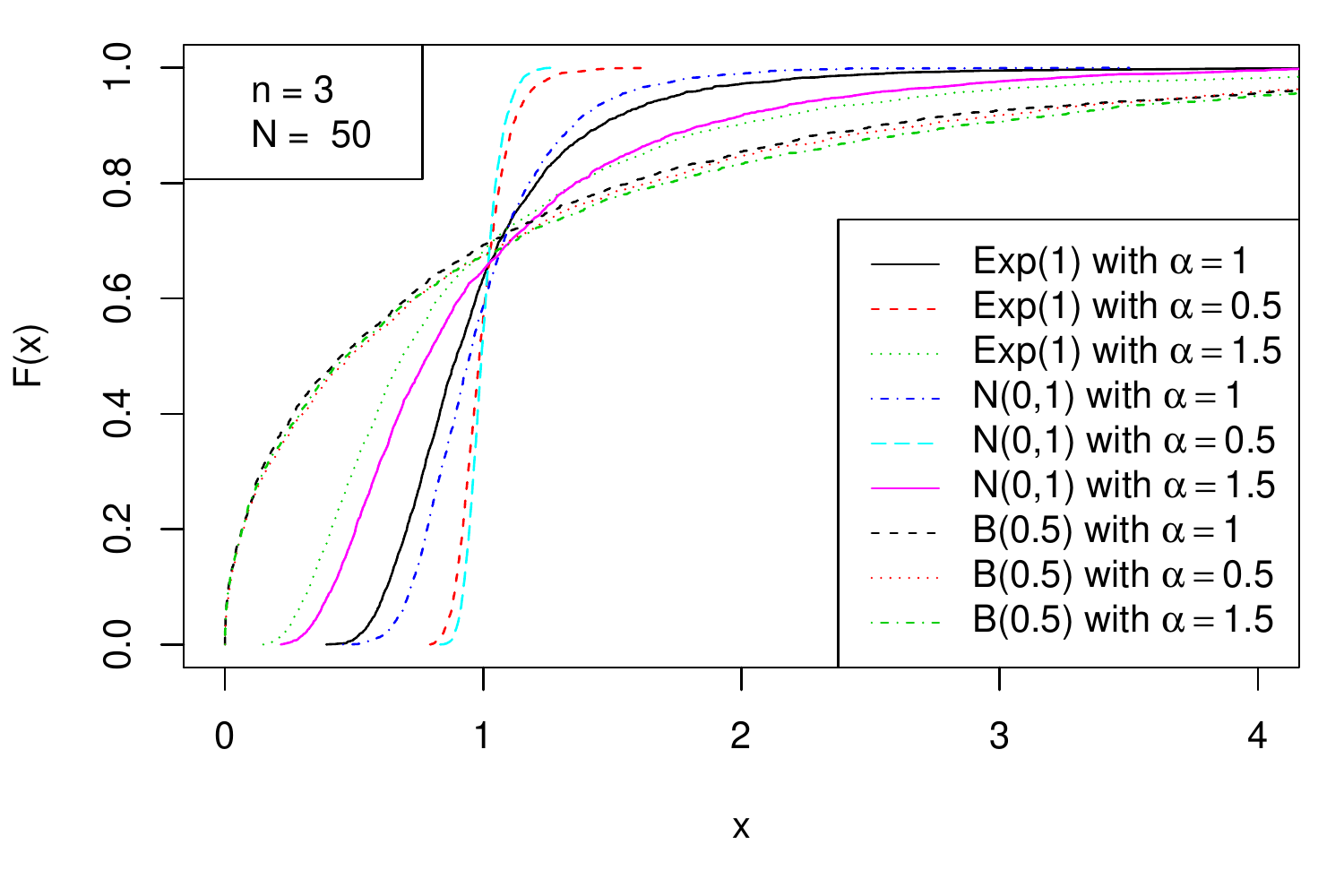}

\includegraphics[width = 0.49\textwidth]{./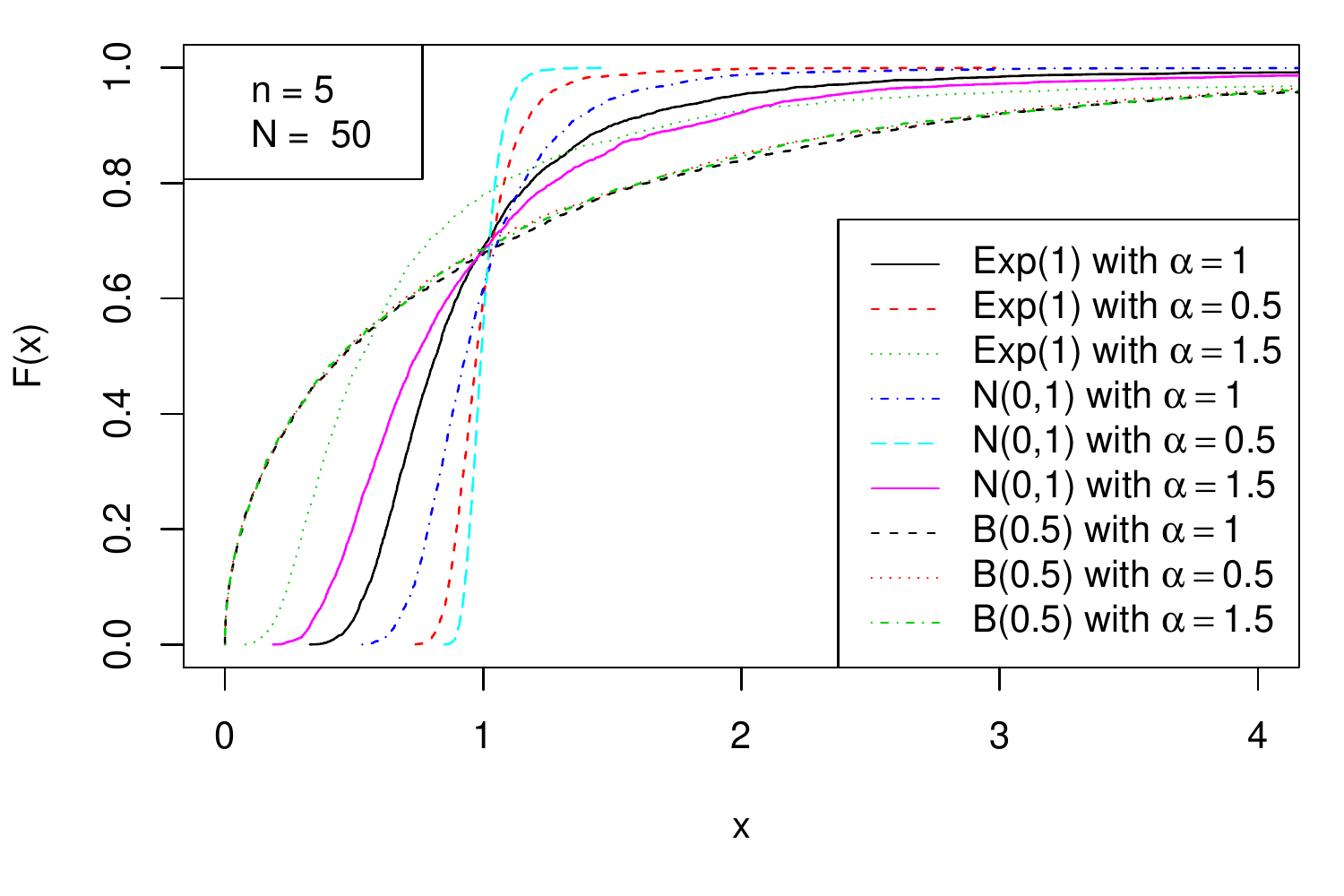}
\caption{Empirical distribution of $N \cdot \hN M_\rho^2(X_1,\ldots,X_n)$ for i.i.d.\  $X_i$ with various distributions and for $\psi_i (x) = |x|^\alpha$ (Ex.\ \ref{ex:variousm}).}	\label{fig:mdist-psi}
\end{figure}
\begin{figure}[H]\centering
\includegraphics[width = 0.49\textwidth]{./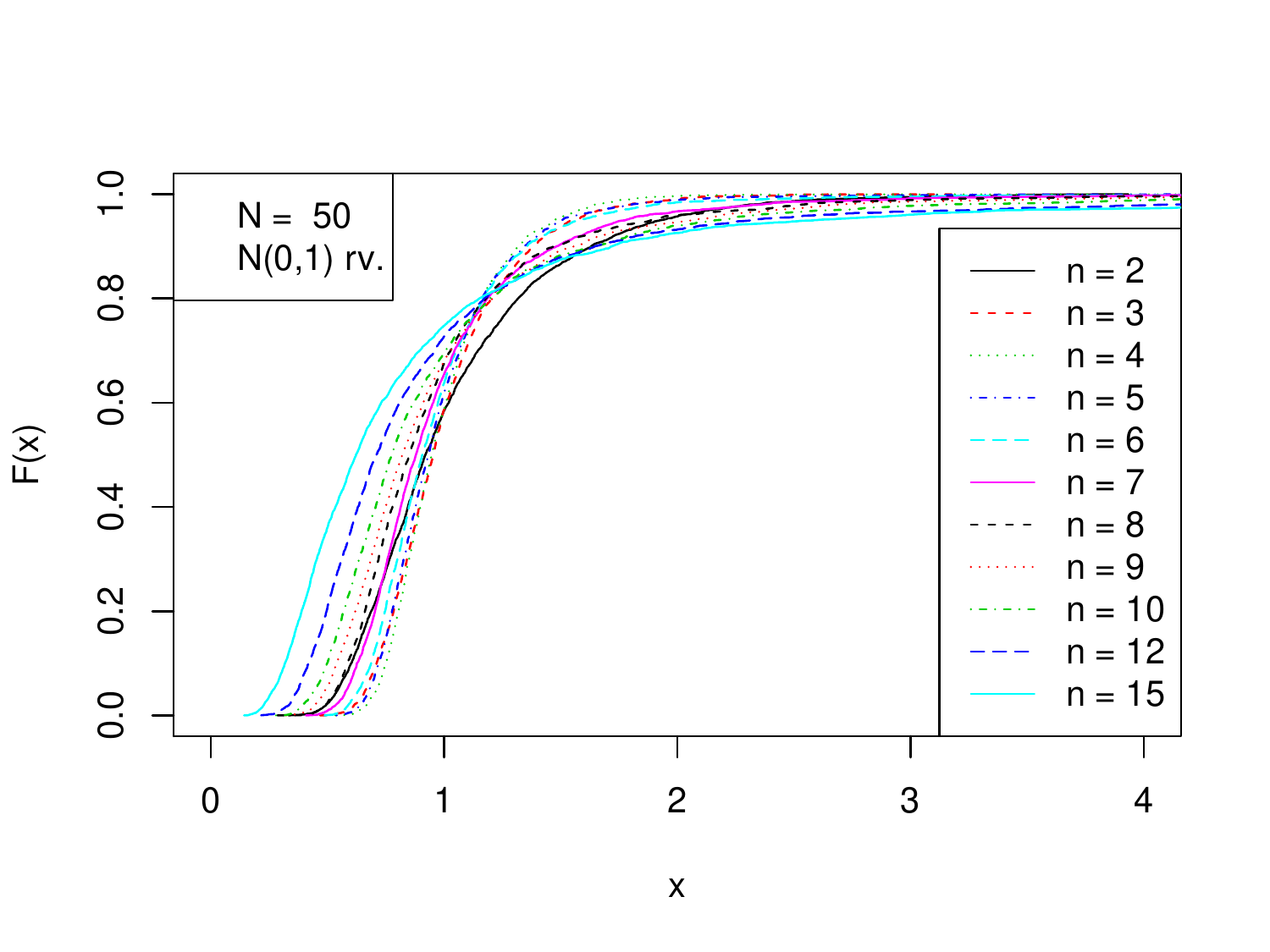}
\includegraphics[width = 0.49\textwidth]{./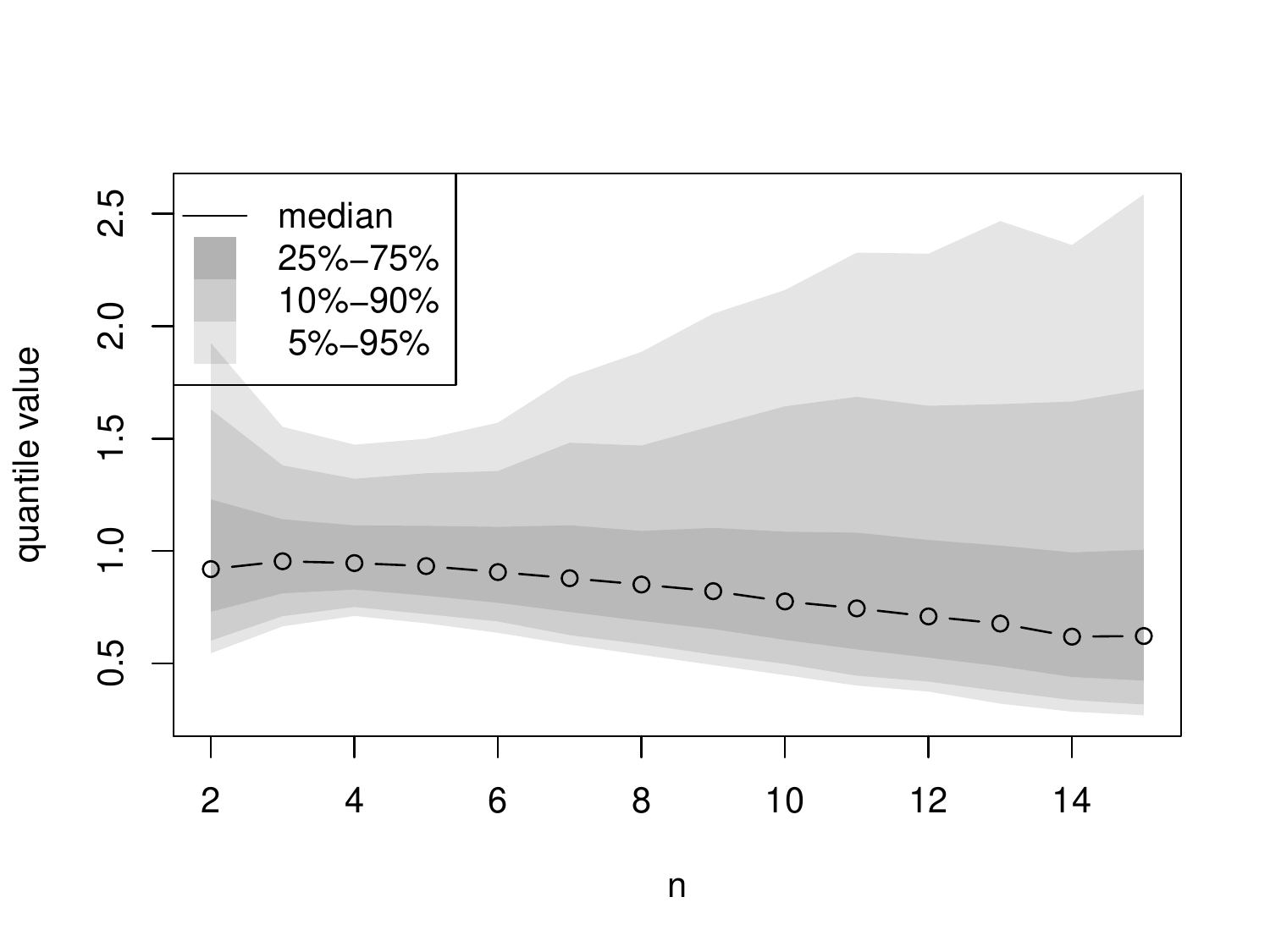}
\caption{The dependence of distribution $N \cdot \hN M_\rho^2(X_1,\ldots,X_n)$ for i.i.d.\  r.v.\ on the number of variables  (Ex.\ \ref{ex:variousm}).}
\label{fig:mdist-dim}
\end{figure}

\begin{figure}[H]\centering
\includegraphics[width = 0.49\textwidth]{./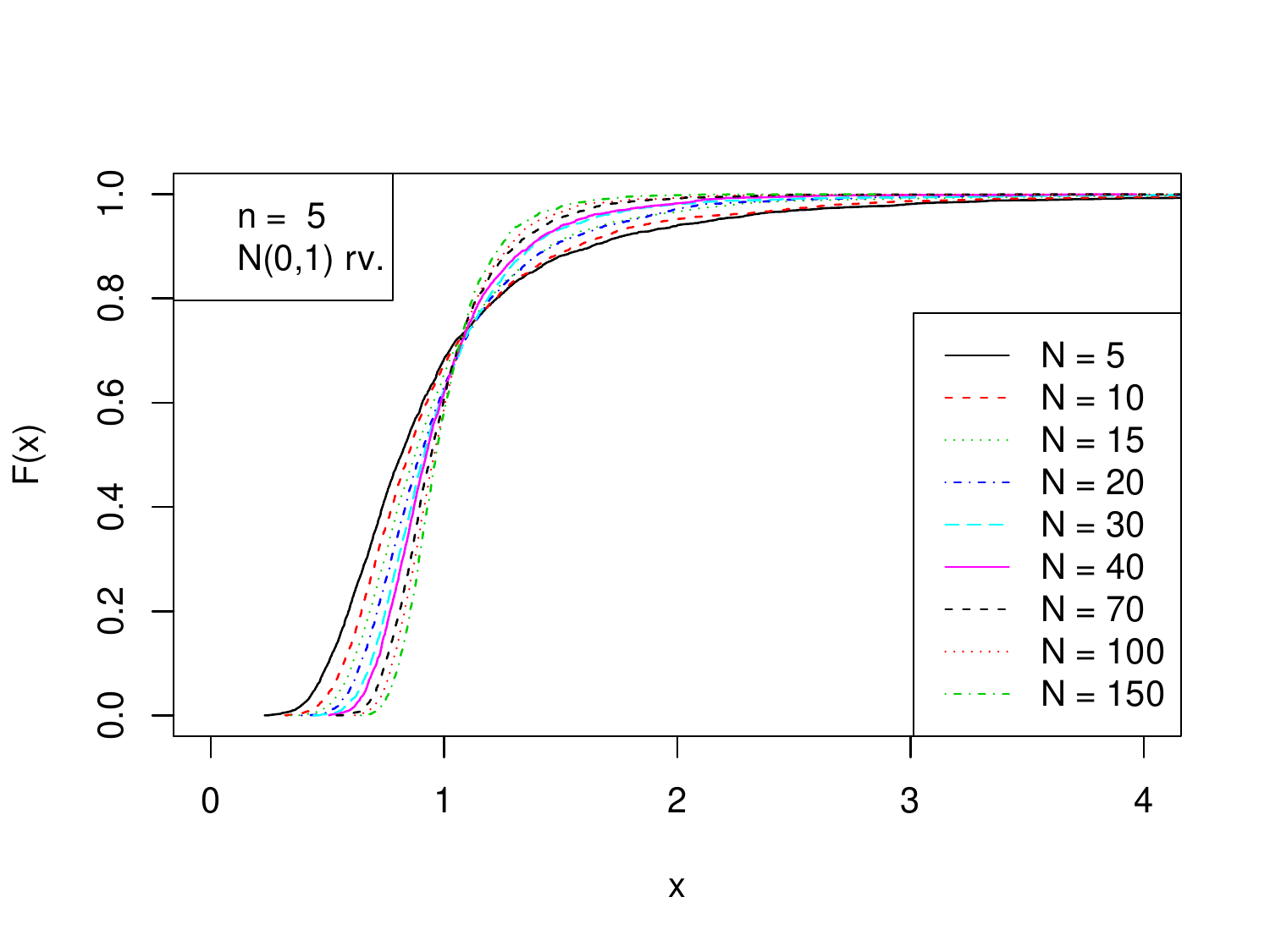}
\includegraphics[width = 0.49\textwidth]{./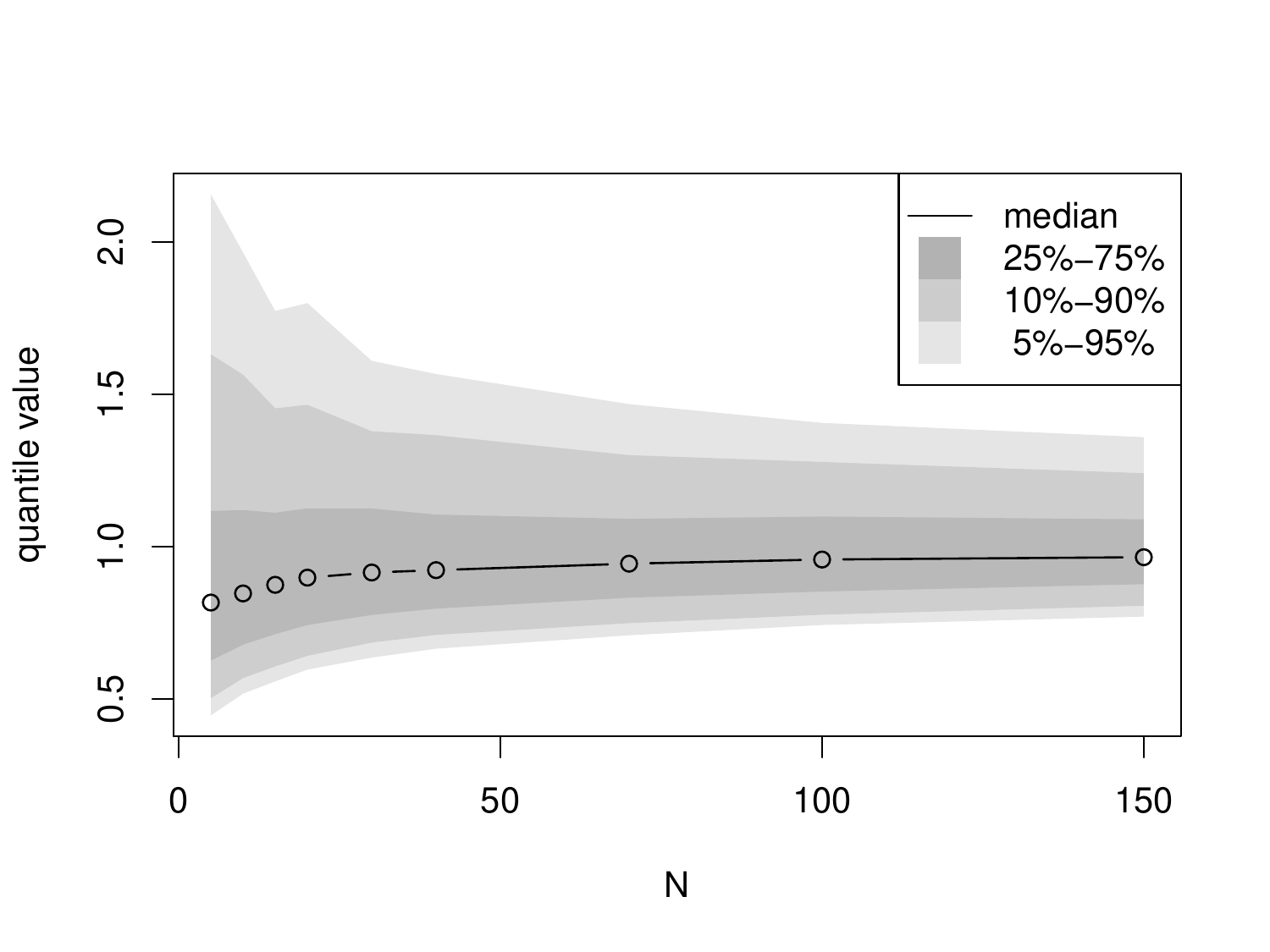}
\caption{The dependence of distribution $N \cdot \hN M_\rho^2(X_1,\ldots,X_n)$ for i.i.d.\  r.v.\ on the sample size  (Ex.\ \ref{ex:variousm}).}
\label{fig:mdist-samplesize}
\end{figure}
For independent normally distributed random variables the dependence of the test statistic on the number of variables $n$ is depicted in Figure \ref{fig:mdist-dim}, and the dependence on the sample size $N$ is illustrated in Figure \ref{fig:mdist-samplesize}. Roughly, the distribution spreads with the number of variables and shrinks to a limiting distribution (as stated in Theorem \ref{thm:convergence}) with increasing sample size.

\end{example}

\begin{example}[Computational complexity] \label{ex:comp-time}
To illustrate that the theoretical complexity $O(n N^2)$ is met by the computations, we computed distance multivariance for various values of $N$ and $n$ (using i.i.d.\  normal samples). In Figure \ref{fig:comp-time} the median of the computation time of 1000 repetitions for each combination of $n\in \{2,3,\ldots,10\}$ and $N\in\{10,20,\ldots,100\}$ is depicted. The linear growth in the dimension $n$ and the non-linear (quadratic) growth in the number of variables $N$ is clearly visible. 
\begin{figure}[H]\centering
		\includegraphics[width = 0.49\textwidth]{./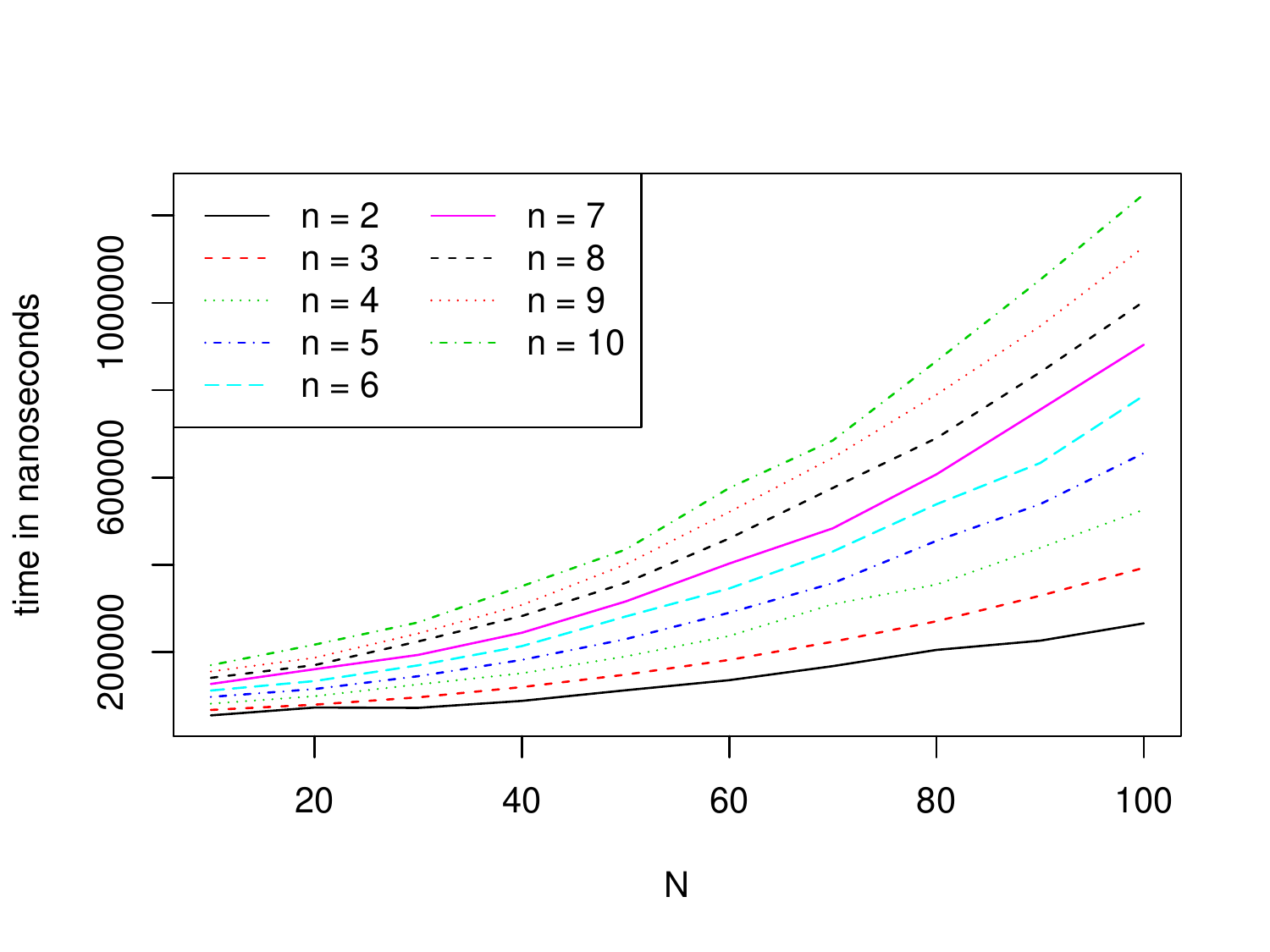}
		\includegraphics[width = 0.49\textwidth]{./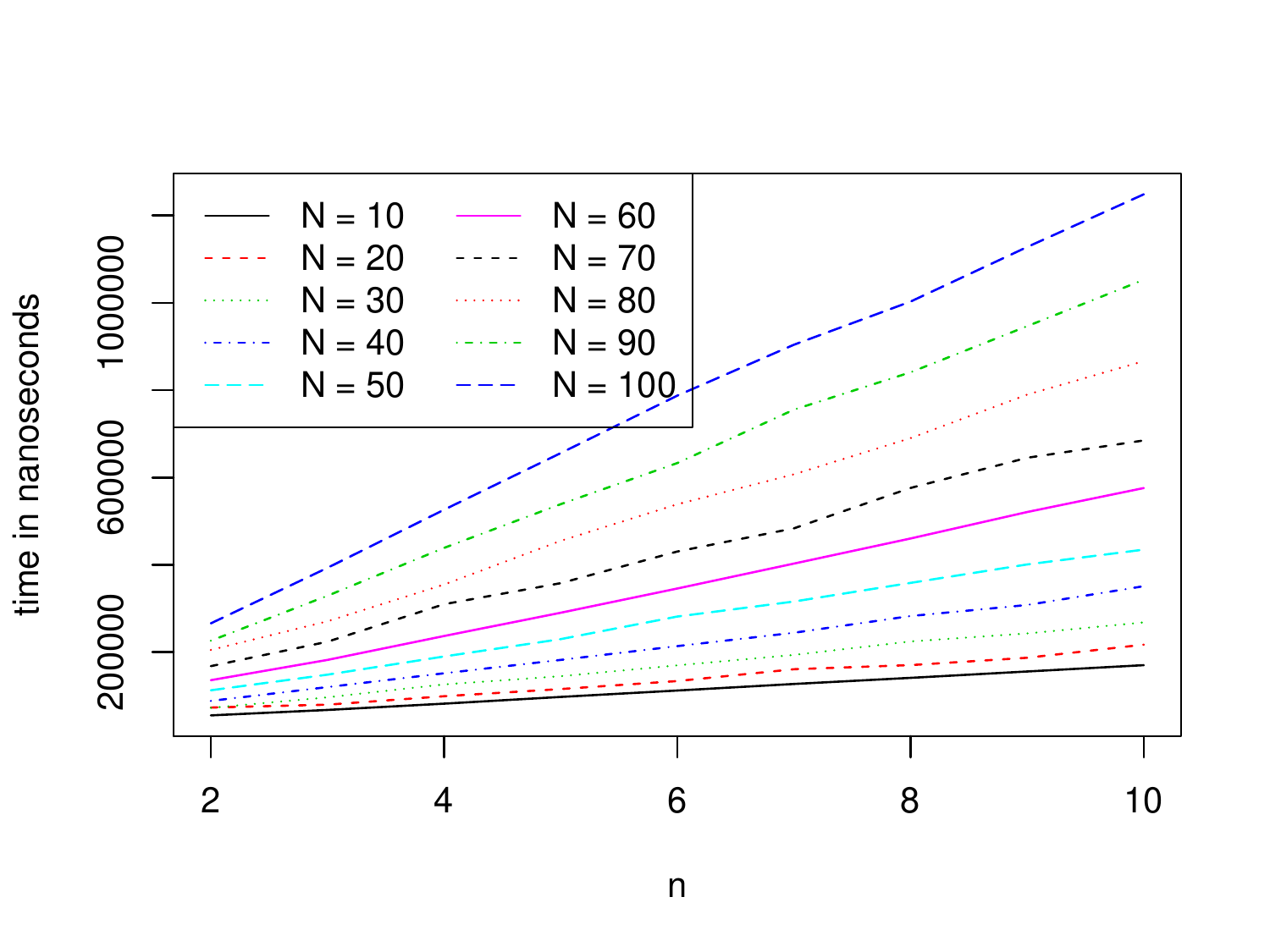}
		\caption{The computation time of multivariance dependent on sample size $N$ and dimension $n$ (Ex.\ \ref{ex:comp-time}).}
		\label{fig:comp-time}
	\end{figure}
\end{example}

\begin{example}[Infinite moments -- cf.\ Remark \ref{rem:choosepsi}] \label{ex:mom} 
Similar to Example \ref{ex:2coinsnorm} let $(Y_1,Y_2,Y_3)$ be the random variables corresponding to the events of $n = 2$ coins in Example \ref{ex:ncoins} and $Z_1,Z_2,Z_3$ be independent Cauchy distributed random variables. Now set $X_i := Y_i + r Z_i^3$ for $i=1,2,3$ and some fixed $r\in\R$ (here we only use $r= 0.001$). Note that $\E(|X_i|^\frac{1}{3})= \infty$, thus clearly the moment condition \eqref{eq:mom1-log} does not hold for the standard $\psi_i(.)=|.|$. Now we compare three methods: a) we don't care (thus we use the standard method); b) we use $\psi_i(\cdot)=\ln(1+\frac{|\cdot|^2}{2})$ which increases slowly enough such that the moments exist; c) we consider the bounded random variables $\arctan(X_i)$ instead of $X_i$ (cf.\ Remark \ref{rem:choosepsi}.\ref{rem:boundedrv}). The results are shown in Figure \ref{fig:mom-cauchy}. It turns out that method a) is not reliable, method b) works reasonably. In our setup method c) works best, but recall that this method destroys the translation and scale invariance of the test statistic, thus already if we shift our data it might not work anymore.
\begin{figure}[H]
\includegraphics[width = 0.3\textwidth]{./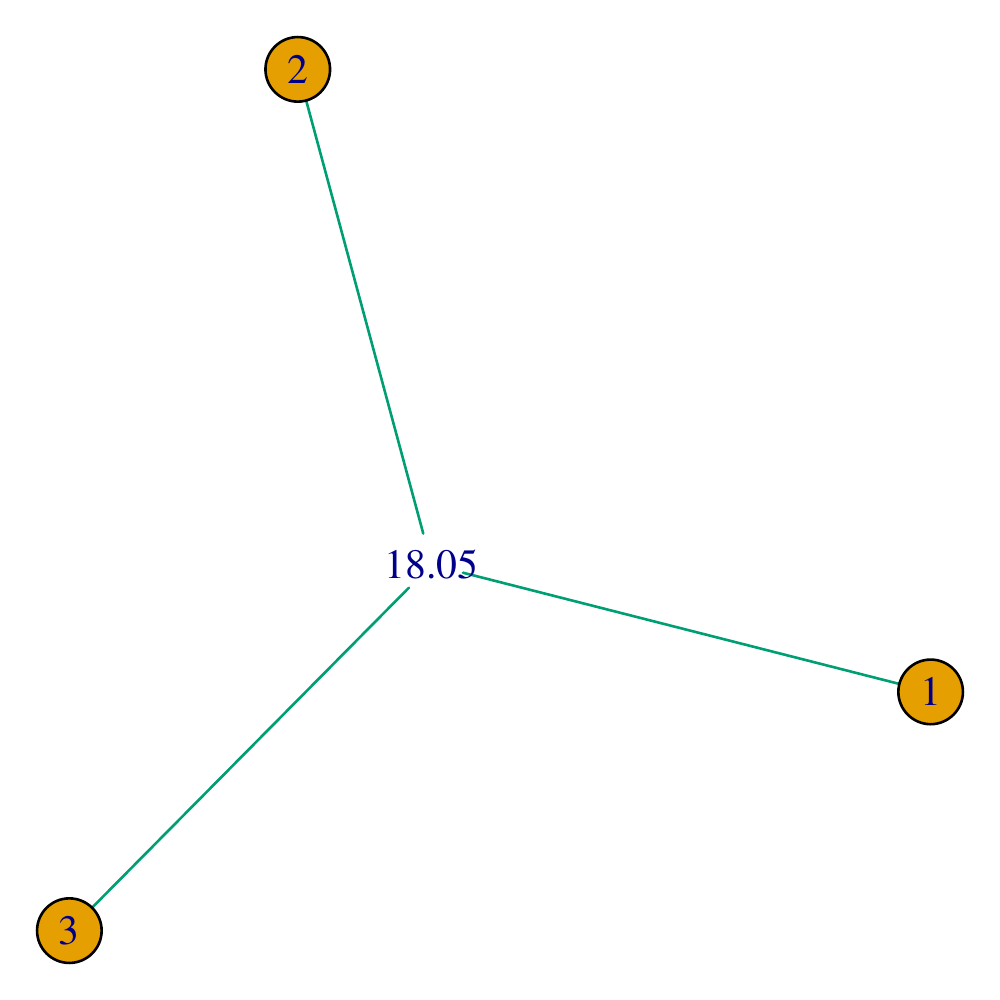}
\quad\centering
\begin{tabular}[b]{r|cc|cc|cc}
&\multicolumn{6}{c}{resampling}\\
& \multicolumn{2}{c|}{$\psi_i(\cdot)=|\cdot|$}& \multicolumn{2}{c|}{$\psi_i(\cdot)=\ln(1+\frac{|\cdot|^2}{2})$}& \multicolumn{2}{c}{$\arctan(X_i)$}\\
 $N$ & $\hN \Mskript$ & $\hN \overline{\Mskript}$ & $\hN \Mskript$ & $\hN \overline{\Mskript}$ & $\hN \Mskript$ & $\hN \overline{\Mskript}$ \\
 \hline
10 & 0.254 & 0.148 & 0.260 & 0.178 & 0.509 & 0.229 \\ 
  20 & 0.244 & 0.169 & 0.266 & 0.158 & 0.919 & 0.693 \\ 
  30 & 0.169 & 0.106 & 0.262 & 0.137 & 0.990 & 0.913 \\ 
  40 & 0.139 & 0.102 & 0.237 & 0.148 & 1.000 & 0.987 \\ 
  50 & 0.106 & 0.075 & 0.200 & 0.115 & 1.000 & 0.996 \\ 
  60 & 0.080 & 0.060 & 0.199 & 0.085 & 1.000 & 0.998 \\ 
  70 & 0.077 & 0.060 & 0.208 & 0.099 & 1.000 & 1.000 \\ 
  80 & 0.087 & 0.064 & 0.225 & 0.102 & 1.000 & 1.000 \\ 
  90 & 0.071 & 0.064 & 0.207 & 0.087 & 1.000 & 1.000 \\ 
  100 & 0.067 & 0.057 & 0.241 & 0.093 & 1.000 & 1.000 \\ 
 \hline	\end{tabular}
\caption{Multivariance for samples of a distribution with infinite expectation (Ex.\ \ref{ex:mom}).}\label{fig:mom-cauchy}
\end{figure}
\end{example}

\begin{example}[(total and $m$-)multivariance -- statistical curse of dimensions] \label{ex:averaging}
Let $X_1,\ldots, X_n$ be independent random variables and set $Y_1:=X_2$. Then (due to the independence of the $X_i$)
\begin{equation}
\overline{M}(Y_1,X_2,\ldots,X_n)-\overline{M}(X_1,\ldots,X_n) = M(Y_1,X_2) - 0 = M(Y_1,X_2)> 0.
\end{equation}
But the corresponding difference of the estimators might be negative, as a direct calculation shows. Empirically we study this setting with $X_i$ i.i.d.\  Bernoulli random variables. The empirical power of the independence test with resampling for $\hN \Mskript_2$ and $\hN \overline{\Mskript}$ is shown in Figure \ref{fig:averaging} for increasing $n$ and various sample sizes. As expected the decrease of power is rapid for total multivariance and at least not as bad for $2$-multivariance.

The resampling method was used, since the distribution-free test is not sharp in this setting. It is for univariate Bernoulli marginals only sharp for multivariance but not for total or $m$-multivariance ($m<n$).
\begin{figure}[H]\centering
\includegraphics[width = 0.49\textwidth]{./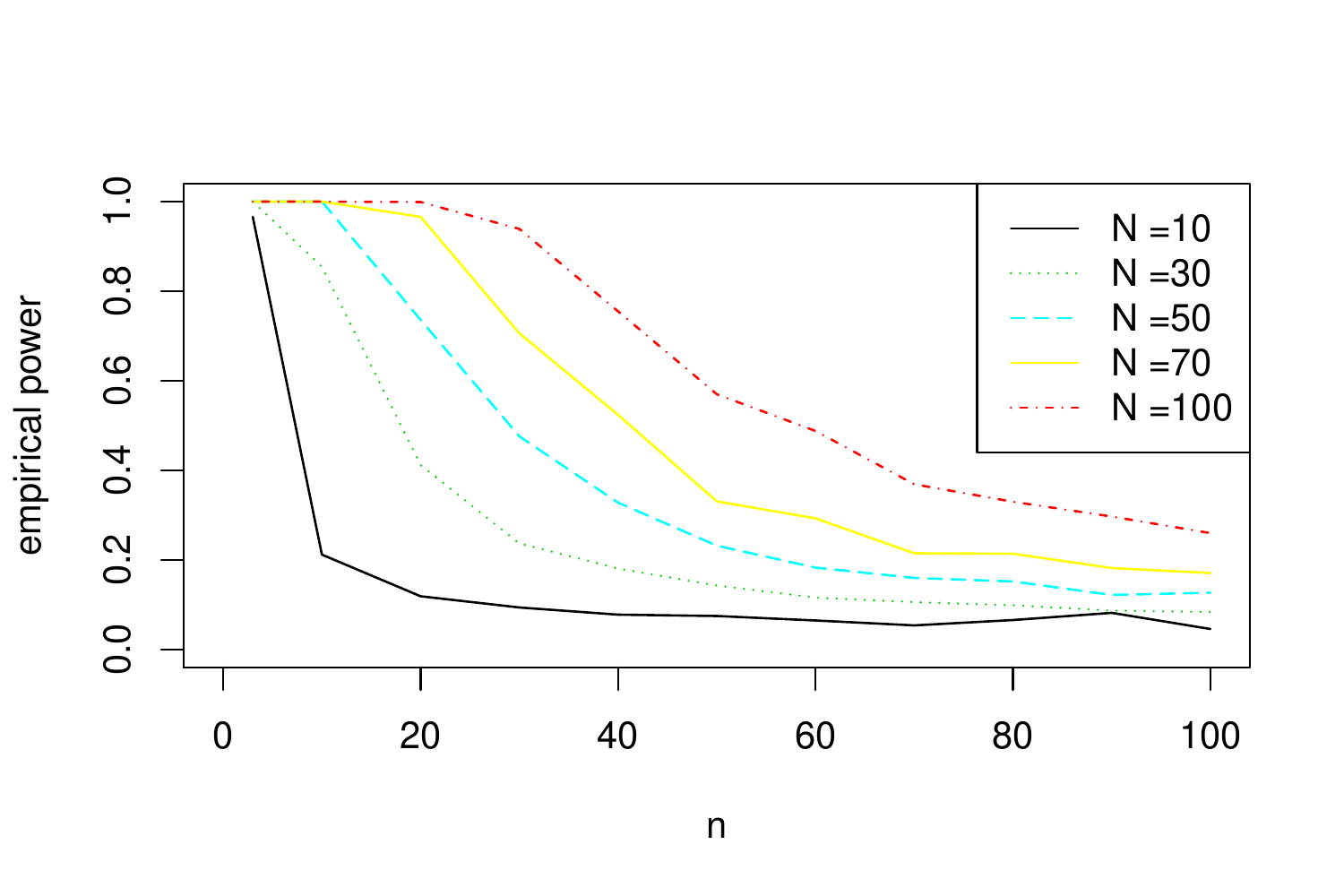}
\includegraphics[width = 0.49\textwidth]{./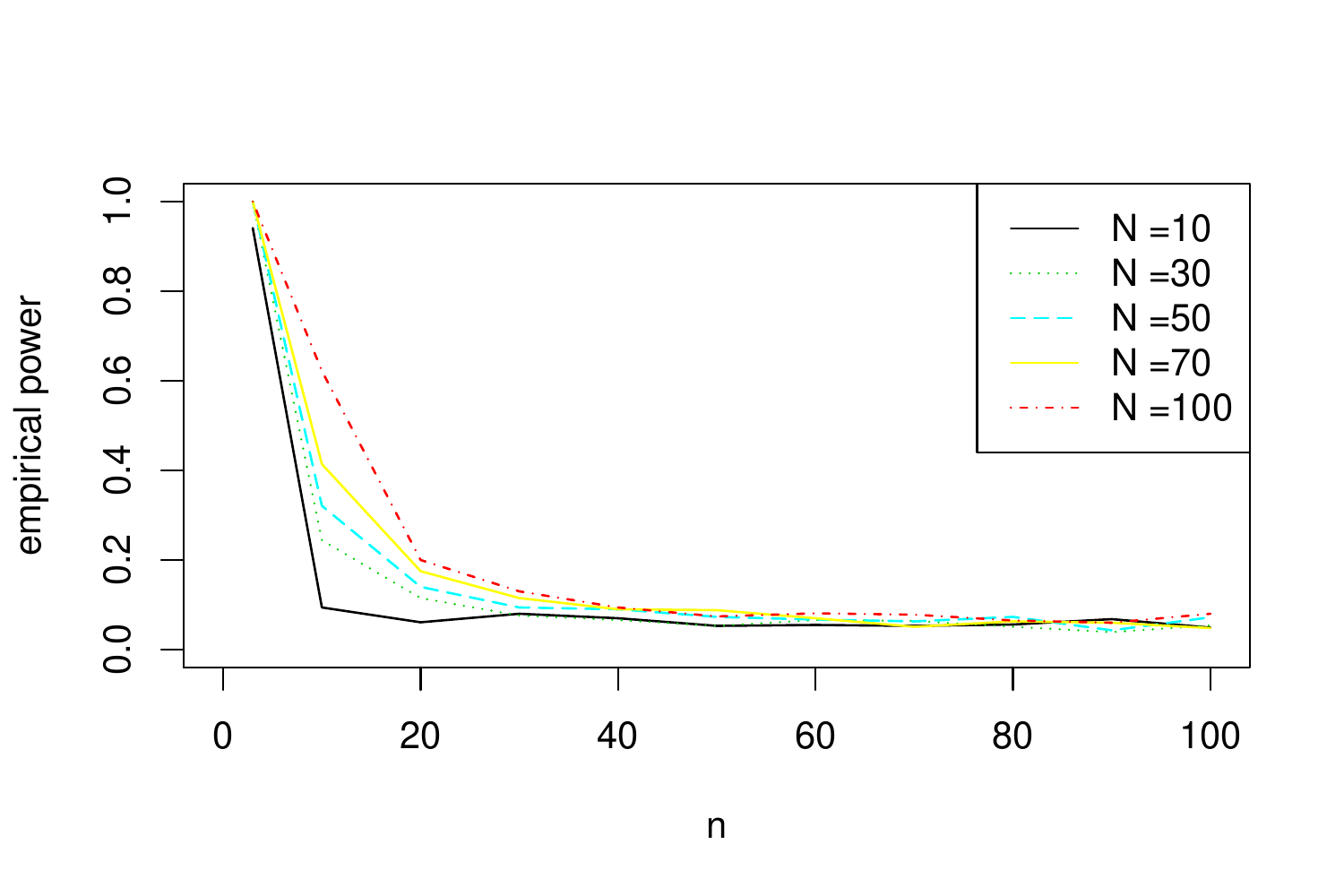}
\caption{The curse of dimension for $\hN\Mskript_2$ (left) and $\hN\overline{\Mskript}$ (right) using the resampling rejection level (Ex.\ \ref{ex:averaging}).}	\label{fig:averaging}
\end{figure}
\end{example}

We close this section with an extension of total multivariance which introduces a further parameter to tune the power of the tests. Recall that we assumed, in order to avoid distracting constants, in Section \ref{sec:compare} that the kernels of HSIC (and the related measures) satisfy $k_i(x_i,x_i) = 1$. Without this assumption additional constants appear naturally. As a special case one is led to the following dependence measure, which was incidentally suggested before \cite{Kellp} and it is for $\psi_i(x_i)=|x_i|$ a special case of the joint distance covariance developed in \cite{ChakZhan2019}.

\begin{example}[total distance multivariance with parameter $\lambda$] \label{ex:lambda} Let $\lambda > 0$ and define \textbf{$\bm{\lambda}$-total multivariance}
	\begin{equation}
	\overline{\Mskript_\rho}^2(\lambda;X_1,\ldots,X_n) := \sum_{\substack{1\leq i_1< \ldots < i_m \leq n\\2 \leq m \leq n}} M_{\otimes_{k=1}^m \rho_{i_k}}^2(X_{i_1},\ldots,X_{i_m}) \lambda^{n-m} 
	\end{equation}
	and its sample version 
	\begin{equation}
	\hN \overline{M}^2(\lambda;\bm{x}^{(1)},\ldots,\bm{x}^{(N)}) :=  \left[\frac{1}{N^2} \sum_{j,k=1}^N (\lambda+(A_1)_{jk}) \cdot \ldots \cdot (\lambda+(A_n)_{jk})\right] - \lambda^n.
	\end{equation}
	Thus one puts the weight $\lambda^{n-k}$ on the multivariance of each $k$-tuple for $k = 2,\ldots,n.$ Therefore with $\lambda<1$ the $n$-tuple gets the biggest weight, with $\lambda>1$ the $2$-tuples (i.e., pairwise dependence) get the biggest weight. This might be used to improve the detection rate of total multivariance as Figure \ref{fig:lambda-coins} and Figure \ref{fig:lambda-several} show. If the random variables are $(n-1)$-independent then clearly the detection improves when $\lambda$ gets closer to 0, Figure \ref{fig:lambda-coins}. If some lower order dependence is present then some optimal $\lambda$ seems to exist,  Figure \ref{fig:lambda-several}, but a priori its value seems unclear.
	
	\begin{figure}[H]
\includegraphics[width = 0.3\textwidth]{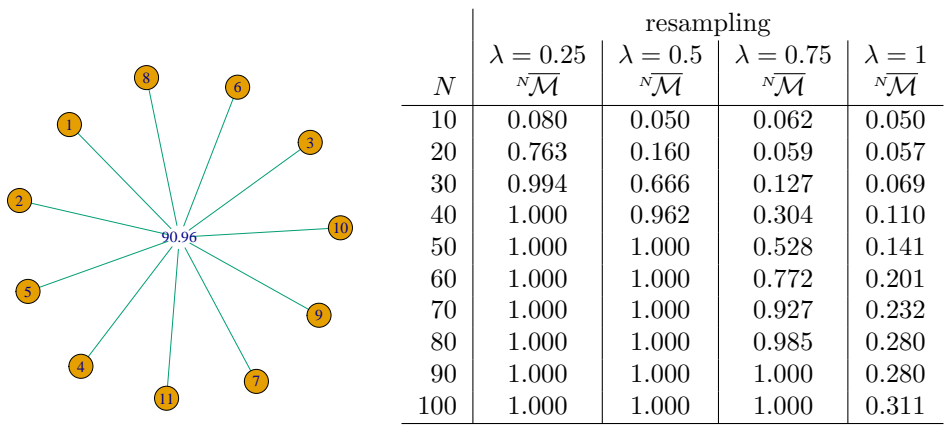}
		\quad\centering
		\begin{tabular}[b]{r|c|c|c|c}
			& \multicolumn{4}{c}{resampling}\\
			&$\lambda = 0.25$ &$\lambda = 0.5$ &$\lambda = 0.75$ &$\lambda = 1$ \\
			$N$ & $\hN \overline{\Mskript}$ & $\hN \overline{\Mskript}$ & $\hN \overline{\Mskript}$ & $\hN \overline{\Mskript}$ \\ 
			\hline
10 & 0.080 & 0.050 & 0.062 & 0.050 \\ 
   20 & 0.763 & 0.160 & 0.059 & 0.057 \\ 
   30 & 0.994 & 0.666 & 0.127 & 0.069 \\ 
   40 & 1.000 & 0.962 & 0.304 & 0.110 \\ 
   50 & 1.000 & 1.000 & 0.528 & 0.141 \\ 
   60 & 1.000 & 1.000 & 0.772 & 0.201 \\ 
   70 & 1.000 & 1.000 & 0.927 & 0.232 \\ 
   80 & 1.000 & 1.000 & 0.985 & 0.280 \\ 
   90 & 1.000 & 1.000 & 1.000 & 0.280 \\ 
   100 & 1.000 & 1.000 & 1.000 & 0.311 \\ 
			\hline
		\end{tabular}
		\caption{Empirical power of $\lambda$-total multivariance for the events of 10 coins, compare to Figure \ref{fig:ncoins} (Ex.\ \ref{ex:lambda}).}
		\label{fig:lambda-coins}
	\end{figure}		
	
	\begin{figure}[H]
			\includegraphics[width = 0.3\textwidth]{./Figs/dep_struct_several-1.pdf}
		\quad\centering
		\begin{tabular}[b]{r|c|c|c|c}
			& \multicolumn{4}{c}{resampling}\\
			&$\lambda = 1$ &$\lambda = 2$ &$\lambda = 4$ &$\lambda = 8$ \\
			$N$ & $\hN \overline{\Mskript}$ & $\hN \overline{\Mskript}$ & $\hN \overline{\Mskript}$ & $\hN \overline{\Mskript}$ \\ 
			\hline
 10 & 0.051 & 0.041 & 0.041 & 0.051 \\ 
   20 & 0.034 & 0.035 & 0.056 & 0.104 \\ 
   30 & 0.031 & 0.042 & 0.093 & 0.147 \\ 
   40 & 0.052 & 0.043 & 0.115 & 0.156 \\ 
   50 & 0.053 & 0.048 & 0.181 & 0.202 \\ 
   60 & 0.043 & 0.055 & 0.263 & 0.272 \\ 
   70 & 0.040 & 0.057 & 0.424 & 0.343 \\ 
   80 & 0.050 & 0.063 & 0.547 & 0.430 \\ 
   90 & 0.046 & 0.069 & 0.693 & 0.501 \\ 
   100 & 0.047 & 0.080 & 0.785 & 0.562 \\ 
			\hline
		\end{tabular}	
		\caption{Empirical power of $\lambda$-total multivariance for the dependence in Example \ref{ex:several}, compare to Figure \ref{fig:several}  (Ex.\ \ref{ex:lambda}).}
		\label{fig:lambda-several}
	\end{figure}		
\end{example}



\begin{thebibliography}{10}

\bibitem{Aitk1978}
M.~Aitkin.
\newblock The analysis of unbalanced cross-classifications.
\newblock {\em Journal of the Royal Statistical Society: Series A (General)},
  141(2):195--211, 1978.

\bibitem{BergFors75}
C.~Berg and G.~Forst.
\newblock {\em {P}otential {T}heory on {L}ocally {C}ompact {A}belian {G}roups}.
\newblock Springer, Berlin, 1975.

\bibitem{BersBoet2018v2}
G.~Berschneider and B.~B{\"o}ttcher.
\newblock On complex {G}aussian random fields, {G}aussian quadratic forms and
  sample distance multivariance.
\newblock arXiv:1808.07280v2, 2019.

\bibitem{BiloNang2017}
M.~Bilodeau and A.~Guetsop~Nangue.
\newblock Tests of mutual or serial independence of random vectors with
  applications.
\newblock {\em The Journal of Machine Learning Research}, 18(1):2518--2557,
  2017.

\bibitem{Boet2019-tech}
B.~B\"ottcher.
\newblock A collection of datasets featuring higher order dependence, 2019.
\newblock
  {www.math.tu-dresden.de/{\raise.17ex\hbox{$\scriptstyle\sim$}}boettch/research/examples-of-higher-order-dependence.pdf}.

\bibitem{Boett2019R-2.2.0}
B.~B\"ottcher.
\newblock {\em multivariance: Measuring Multivariate Dependence Using Distance
  Multivariance}, 2019.
\newblock R package version 2.2.0.

\bibitem{BoetKellSchi2018}
B.~B{\"o}ttcher, M.~Keller-Ressel, and R.~L. Schilling.
\newblock Detecting independence of random vectors: {G}eneralized distance
  covariance and {G}aussian covariance.
\newblock {\em Modern Stochastics: Theory and Applications}, 5(3):353--383,
  2018.

\bibitem{BoetKellSchi2019}
B.~B{\"o}ttcher, M.~Keller-Ressel, and R.~L. Schilling.
\newblock {D}istance multivariance: New dependence measures for random vectors.
\newblock {\em The Annals of Statistics}, 47(5):2757--2789, 2019.

\bibitem{BoetSchiWang2013}
B.~B\"ottcher, R.~L. Schilling, and J.~Wang.
\newblock {\em {L}\'evy-{T}ype {P}rocesses: {C}onstruction, {A}pproximation and
  {S}ample {P}ath {P}roperties}, volume 2099 of {\em Lecture Notes in
  Mathematics, L\'evy Matters}.
\newblock Springer, 2013.

\bibitem{ChakZhan2019}
S.~Chakraborty and X.~Zhang.
\newblock Distance metrics for measuring joint dependence with application to
  causal inference.
\newblock {\em Journal of the American Statistical Association}, 2019.
\newblock 24 pages.

\bibitem{CoxDunn2002}
T.~F. Cox and R.~T. Dunn.
\newblock An analysis of decathlon data.
\newblock {\em Journal of the Royal Statistical Society. Series D (The
  Statistician)}, 51(2):179--187, 2002.

\bibitem{CsarNepu2006}
G.~Csardi and T.~Nepusz.
\newblock The igraph software package for complex network research.
\newblock {\em InterJournal}, Complex Systems:1695, 2006.

\bibitem{Csoe1985}
S.~Cs{\"o}rg{\H{o}}.
\newblock {T}esting for independence by the empirical characteristic function.
\newblock {\em Journal of Multivariate Analysis}, 16(3):290--299, 1985.

\bibitem{Edel2015}
D.~Edelmann.
\newblock {\em Structures of Multivariate Dependence}.
\newblock PhD thesis, Universit\"at Heidelberg, 2015.

\bibitem{Esco1973}
Y.~Escoufier.
\newblock Le traitement des variables vectorielles.
\newblock {\em Biometrics}, 29(4):751--760, 1973.

\bibitem{FanMichPeneSalo2017}
Y.~Fan, P.~L. de~Micheaux, S.~Penev, and D.~Salopek.
\newblock Multivariate nonparametric test of independence.
\newblock {\em Journal of Multivariate Analysis}, 153:189--210, 2017.

\bibitem{GeneRemi2004}
C.~Genest and B.~R{\'e}millard.
\newblock Test of independence and randomness based on the empirical copula
  process.
\newblock {\em Test}, 13(2):335--369, 2004.

\bibitem{GretFukuTeoSongScho2008}
A.~Gretton, K.~Fukumizu, C.~H. Teo, L.~Song, B.~Sch{\"o}lkopf, and A.~J. Smola.
\newblock A kernel statistical test of independence.
\newblock In {\em Advances in Neural Information Processing Systems},
  volume~20, pages 585--592, 2008.

\bibitem{Guet2017}
A.~Guetsop~Nangue.
\newblock {\em Tests de permutation d'ind{\'e}pendance en analyse
  multivari{\'e}e}.
\newblock PhD thesis, Universit\'e de Montr\'eal, 2017.

\bibitem{HanPeiKamb2011}
J.~Han, J.~Pei, and M.~Kamber.
\newblock {\em Data mining: concepts and techniques}.
\newblock Morgan Kaufmann, Burlington, 2011.

\bibitem{HofeKojaMaecYan2018}
M.~Hofert, I.~Kojadinovic, M.~Maechler, and J.~Yan.
\newblock {\em copula: Multivariate Dependence with Copulas}, 2018.
\newblock R package version 0.999-19.

\bibitem{Jaco2001}
N.~Jacob.
\newblock {\em {P}seudo-{D}ifferential {O}perators and {M}arkov {P}rocesses
  {I}. {F}ourier {A}nalysis and {S}emigroups}.
\newblock Imperial College Press, London, 2001.

\bibitem{JinMatt2018}
Z.~Jin and D.~S. Matteson.
\newblock Generalizing distance covariance to measure and test multivariate
  mutual dependence via complete and incomplete {V}-statistics.
\newblock {\em Journal of Multivariate Analysis}, 168:304--322, 2018.

\bibitem{JossHolm2016}
J.~Josse and S.~Holmes.
\newblock Measuring multivariate association and beyond.
\newblock {\em Statistics Surveys}, 10:132, 2016.

\bibitem{Kall1997}
O.~Kallenberg.
\newblock {\em {F}oundations of {M}odern {P}robability}.
\newblock Springer, New York, Berlin, Heidelberg, 1997.

\bibitem{Kank1995}
A.~Kankainen.
\newblock {\em Consistent testing of total independence based on the empirical
  characteristic function}.
\newblock PhD thesis, University of Jyv\"askyl\"a, 1995.

\bibitem{Kellp}
M.~Keller-Ressel, 2017.
\newblock Private Communication.

\bibitem{KoroBoro1994}
V.~S. Korolyuk and Y.~V. Borovskich.
\newblock {\em Theory of U-statistics}, volume 273.
\newblock Springer Science \& Business Media, Dordrecht, 1994.

\bibitem{LiuPenaZhen2018}
Y.~Liu, V.~de~la Pena, and T.~Zheng.
\newblock Kernel-based measures of association.
\newblock {\em Wiley Interdisciplinary Reviews: Computational Statistics},
  10(2):e1422, 2018.

\bibitem{Math1963}
G.~Matheron.
\newblock Principles of {G}eostatistics.
\newblock {\em Economic geology}, 58(8):1246--1266, 1963.

\bibitem{MoriSzek2018}
T.~F. M{\'o}ri and G.~J. Sz{\'e}kely.
\newblock Four simple axioms of dependence measures.
\newblock {\em Metrika}, 82:1--16, 2018.

\bibitem{PfisBuehSchoPete2017}
N.~Pfister, P.~B{\"u}hlmann, B.~Sch{\"o}lkopf, and J.~Peters.
\newblock Kernel-based tests for joint independence.
\newblock {\em Journal of the Royal Statistical Society: Series B (Statistical
  Methodology)}, 80:5--31, 2017.

\bibitem{PfisPete2019}
N.~Pfister and J.~Peters.
\newblock {\em dHSIC: Independence Testing via Hilbert Schmidt Independence
  Criterion}, 2019.
\newblock R package version 2.1.

\bibitem{Reny1959}
A.~R{\'e}nyi.
\newblock On measures of dependence.
\newblock {\em Acta mathematica hungarica}, 10(3-4):441--451, 1959.

\bibitem{RobeEsco1976}
P.~Robert and Y.~Escoufier.
\newblock A unifying tool for linear multivariate statistical methods: The
  {RV}-coefficient.
\newblock {\em Journal of the Royal Statistical Society. Series C (Applied
  Statistics)}, 25(3):257--265, 1976.

\bibitem{Sato99}
K.~Sato.
\newblock {\em {L}\'evy {P}rocesses and {I}nfinitely {D}ivisible
  {D}istributions}.
\newblock Cambridge University Press, Cambridge, 1999.

\bibitem{SejdGretBerg2013}
D.~Sejdinovic, A.~Gretton, and W.~Bergsma.
\newblock {{A Kernel Test for Three-Variable Interactions}}.
\newblock In {\em Advances in Neural Information Processing Systems (NeurIPS)},
  volume~26, pages 1124--1132. 2013.

\bibitem{SejdSripGretFuku2013}
D.~Sejdinovic, B.~Sriperumbudur, A.~Gretton, and K.~Fukumizu.
\newblock Equivalence of distance-based and {RKHS}-based statistics in
  hypothesis testing.
\newblock {\em Annals of Statistics}, 41(5):2263--2291, 2013.

\bibitem{ShenVoge2018}
C.~Shen and J.~T. Vogelstein.
\newblock The exact equivalence of distance and kernel methods for hypothesis
  testing.
\newblock {\em CoRR}, abs/1806.05514, 2018.

\bibitem{SzekBaki2003}
G.~J. Sz{\'e}kely and N.~K. Bakirov.
\newblock {E}xtremal probabilities for {G}aussian quadratic forms.
\newblock {\em Probability Theory and Related Fields}, 126(2):184--202, 2003.

\bibitem{SzekRizz2009}
G.~J. Sz{\'e}kely and M.~L. Rizzo.
\newblock {B}rownian distance covariance.
\newblock {\em Annals of Applied Statistics}, 3(4):1236--1265, 2009.

\bibitem{SzekRizzBaki2007}
G.~J. Sz{\'e}kely, M.~L. Rizzo, and N.~K. Bakirov.
\newblock {M}easuring and testing dependence by correlation of distances.
\newblock {\em The Annals of Statistics}, 35(6):2769--2794, 2007.

\bibitem{TjoesOtneStoev2018}
D.~Tj{\o}stheim, H.~Otneim, and B.~St{\o}ve.
\newblock Statistical dependence: Beyond pearson's $\rho$.
\newblock arXiv:1809.10455v1, 2018.

\bibitem{Unwi2015}
A.~Unwin.
\newblock {\em GDAdata: Datasets for the Book Graphical Data Analysis with R},
  2015.
\newblock R package version 0.93.

\bibitem{VenaRipl2002}
W.~N. Venables and B.~D. Ripley.
\newblock {\em Modern Applied Statistics with S}.
\newblock Springer, New York, fourth edition, 2002.

\bibitem{WoolAnslBidg2007}
A.~Woolf, L.~Ansley, and P.~Bidgood.
\newblock Grouping of decathlon disciplines.
\newblock {\em Journal of Quantitative Analysis in Sports}, 3(4), 2007.

\bibitem{YaoZhanShao2017}
S.~Yao, X.~Zhang, and X.~Shao.
\newblock Testing mutual independence in high dimension via distance
  covariance.
\newblock {\em Journal of the Royal Statistical Society: Series B (Statistical
  Methodology)}, 80:455--480, 2017.

\bibitem{ZingKakoKleb1992}
A.~Zinger, A.~V. Kakosyan, and L.~B. Klebanov.
\newblock A characterization of distributions by mean values of statistics and
  certain probabilistic metrics.
\newblock {\em Journal of Mathematical Sciences}, 59(4):914--920, 1992.

\end{thebibliography}

\end{document}